\documentclass[10pt,a4paper]{article}

\usepackage{geometry}
\usepackage{amsmath,amssymb,amsthm,mathtools}
\usepackage{enumitem}
\usepackage{tikz-cd}
\usetikzlibrary{arrows.meta,calc,positioning}
\colorlet{wqteal}{teal!70!black}
\colorlet{wqorange}{orange!82!black}
\colorlet{wqblue}{blue!68!black}

\tikzset{
 wqstage/.style={rounded corners=2pt,draw=black!24,line width=0.48pt},
 wqqbox/.style={rounded corners=2pt,draw=teal!45!black,line width=0.58pt,
   fill=teal!2},
 wqqboxtitle/.style={font=\scriptsize,anchor=west,fill=white,
   inner sep=0.9pt,text=teal!60!black},
 wqweaveone/.style={draw=black!36,line width=0.58pt,
   line cap=round,line join=round},
 wqweavetwo/.style={wqweaveone,densely dashed},
 wqweavedisplay/.style={line width=0.82pt},
 wqweaveplus/.style={draw=wqteal},
 wqweaveminus/.style={draw=wqorange},
 wqcut/.style={draw=black!22,line width=0.45pt,densely dotted},
 wqguide/.style={draw=black!28,line width=0.45pt},
 wqcycleedge/.style={draw=wqblue,line width=1.05pt,line cap=round},
 wqstring/.style={draw=black!42,line width=0.82pt,line cap=round},
 wqletter/.style={circle,draw=wqorange,fill=orange!15,
   minimum size=3.1mm,inner sep=0pt,font=\tiny,text=black,
   line width=0.42pt},
 wqstringlabel/.style={font=\scriptsize,fill=white,inner sep=0.65pt,
   text=black!82},
 wqlevel/.style={font=\scriptsize,text=black!68},
 wqwordlabel/.style={font=\scriptsize,text=black!64},
 wqboundarylabel/.style={font=\scriptsize,inner sep=0.55pt,text=black!88},
 wqpanellabel/.style={font=\small,text=black!88},
 wqdiagramlabel/.style={font=\scriptsize,inner sep=0.7pt},
 wqbraid/.style={font=\scriptsize,fill=white,inner sep=0.6pt},
 wqhalfedge/.style={dashed,dash pattern=on 2.3pt off 1.6pt},
 wqstringedge/.style={draw=wqteal,line width=0.68pt},
 wqstringarrow/.style={wqstringedge,
   -{Stealth[length=1.65mm,width=1.18mm]}},
 wqstringhalf/.style={wqstringarrow,wqhalfedge},
 wqmixededge/.style={draw=wqorange,line width=0.76pt,
   line cap=round,line join=round},
 wqmixedarrow/.style={wqmixededge,
   -{Stealth[length=1.65mm,width=1.18mm]}},
 wqlocaledge/.style={draw=wqblue,line width=0.82pt,
   line cap=round,line join=round},
 wqlocalarrow/.style={wqlocaledge,
   -{Stealth[length=1.65mm,width=1.18mm]}},
 wqsharedarrow/.style={wqhalfedge,draw=wqorange,line width=1.58pt,
   line cap=round,line join=round,
   -{Stealth[length=1.95mm,width=1.42mm]},
   postaction={draw=wqblue,line width=0.64pt,wqhalfedge,
     -{Stealth[length=1.52mm,width=1.04mm]}}},
 wqcancelorange/.style={wqhalfedge,draw=wqorange,line width=0.72pt,
   line cap=round,line join=round,
   -{Stealth[length=1.48mm,width=1.04mm]}},
 wqcancelblue/.style={wqhalfedge,draw=wqblue,line width=0.72pt,
   line cap=round,line join=round,
   -{Stealth[length=1.48mm,width=1.04mm]}},
 wqtimearrow/.style={-{Stealth[length=2.4mm,width=1.7mm]},
   draw=black!72,line width=0.92pt},
 wqmaparrow/.style={-{Stealth[length=2mm,width=1.4mm]},
   draw=black!70,line width=0.75pt},
 wqqvertex/.style={draw=wqteal,fill=white,line width=0.64pt},
 wqcompactqvertex/.style={minimum width=8.0mm,minimum height=4.4mm,
   inner xsep=0.8pt,inner ysep=0.15pt,font=\tiny},
 wqlargeqvertex/.style={minimum height=5.2mm,inner xsep=2.7pt,
   inner ysep=1.0pt,font=\scriptsize},
 wqfrozen/.style={wqqvertex,rectangle,fill=teal!6},
 wqmutable/.style={wqqvertex,rounded corners=2.1mm,fill=teal!6},
 wqstringfrozen/.style={wqfrozen,wqcompactqvertex},
 wqstringmutable/.style={wqmutable,wqcompactqvertex},
 wqpivot/.style={wqmutable,wqcompactqvertex,fill=yellow!32,
   line width=0.86pt},
 wqlargefrozen/.style={wqfrozen,wqlargeqvertex},
 wqlargemutable/.style={wqmutable,wqlargeqvertex},
 wqlargepivot/.style={wqlargemutable,fill=yellow!32,
   line width=0.86pt},
 wqamalgamatedmutable/.style={wqmutable,wqlargeqvertex,
   draw=wqorange,fill=orange!15,minimum width=6.8mm},
 wqcontraction/.style={circle,draw=black!86,fill=blue!10,
   minimum size=2.0mm,inner sep=0pt,line width=0.58pt},
 wqweavefrozen/.style={rectangle,draw=black!86,fill=blue!10,
   minimum size=2.2mm,inner sep=0pt,line width=0.58pt},
 wqvlabel/.style={font=\scriptsize,fill=white,inner sep=0.45pt,
   text=black!82},
 wqhalf/.style={fill=teal!2,inner sep=0.35pt,font=\tiny,text=wqteal},
 wqhalfblue/.style={fill=white,inner sep=0.32pt,font=\tiny,text=wqblue},
 wqhalforange/.style={fill=white,inner sep=0.32pt,font=\tiny,text=wqorange},
 wqhalfsum/.style={fill=white,inner sep=0.42pt,font=\tiny,text=black!82},
 wqweightlabel/.style={fill=white,inner sep=0.32pt,font=\tiny,
   text=wqorange,outer sep=0pt,above=0.15mm},
 wqmovelabel/.style={fill=white,inner sep=1.3pt,align=left,font=\scriptsize}
}

\NewDocumentCommand{\wqextensiontriangle}{O{} O{} m m m}{%
  \draw[wqmixedarrow] (#3) to[#1] (#4);
  \draw[wqmixedarrow] (#4) to[#2] (#5);
}

\usepackage[unicode,hidelinks]{hyperref}

\usepackage[
  backend=biber,
  style=alphabetic,
  sorting=anyt,
  maxalphanames=6,
  minalphanames=6,
  maxbibnames=99,
  giveninits=true,
  doi=false,
  isbn=false,
  url=false,
]{biblatex}
\DeclareNameAlias{sortname}{given-family}

\renewcommand*{\intitlepunct}{\addspace}
\DefineBibliographyStrings{english}{in = {in}}

\DeclareFieldFormat*{title}{\mkbibemph{#1}}
\DeclareFieldFormat*{booktitle}{\mkbibemph{#1}}
\DeclareFieldFormat*{maintitle}{\mkbibemph{#1}}
\DeclareFieldFormat{journaltitle}{\mkbibemph{#1}}
\DeclareFieldFormat[article]{volume}{\textbf{#1}}
\DeclareFieldFormat[article]{number}{no.~#1}
\DeclareFieldFormat[incollection,inbook]{number}{\textbf{#1}}
\DeclareFieldFormat{pages}{#1}
\DeclareFieldFormat{eprint:arxiv}{arXiv:#1}

\renewbibmacro*{in:}{%
  \ifentrytype{article}{}{\printtext{\bibstring{in}\intitlepunct}}}

\renewbibmacro*{journal+issuetitle}{%
  \usebibmacro{journal}%
  \setunit*{\addspace}%
  \printfield{volume}%
  \setunit*{\addspace}%
  \printtext[parens]{\printfield{year}}%
  \setunit{\addcomma\space}%
  \printfield{number}%
  \setunit{\addcomma\space}%
  \printfield{eid}%
  \newunit}
\renewbibmacro*{issue+date}{}

\DeclareBibliographyDriver{misc}{%
  \usebibmacro{bibindex}%
  \usebibmacro{begentry}%
  \usebibmacro{author/editor+others/translator+others}%
  \setunit{\labelnamepunct}\newblock
  \usebibmacro{title}%
  \newunit\newblock
  \printfield{howpublished}%
  \newunit\newblock
  \usebibmacro{eprint}%
  \newunit\newblock
  \printfield{year}%
  \newunit\newblock
  \printfield{note}%
  \usebibmacro{finentry}}

\AtEveryBibitem{%
  \clearfield{eprintclass}%
  \clearfield{pagetotal}%
  \ifentrytype{misc}{}{\clearfield{eprint}\clearfield{eprinttype}}%
  \ifentrytype{incollection}{\clearname{editor}}{}%
}

\hypersetup{
  pdftitle={Mutation Sequences along Weaves and Amalgamation of Braid Varieties},
  pdfauthor={Yuma Mizuno}
}

\newtheorem{theorem}{Theorem}[section]
\newtheorem{proposition}[theorem]{Proposition}
\newtheorem{lemma}[theorem]{Lemma}
\newtheorem{corollary}[theorem]{Corollary}
\theoremstyle{definition}
\newtheorem{definition}[theorem]{Definition}
\newtheorem{example}[theorem]{Example}
\theoremstyle{remark}
\newtheorem{remark}[theorem]{Remark}

\newcommand{\C}{\mathbb C}
\newcommand{\Gm}{\mathbb G_m}
\newcommand{\Aplus}{\mathcal A_+}
\newcommand{\Aminus}{\mathcal A_-}
\newcommand{\Bplus}{\mathcal B_+}
\newcommand{\Bminus}{\mathcal B_-}
\newcommand{\flagA}{\mathsf{A}}
\newcommand{\flagB}{\mathsf{B}}
\newcommand{\flagD}{\mathsf{D}}
\newcommand{\groupG}{\mathsf{G}}
\newcommand{\borelB}{\mathsf{B}}
\newcommand{\unipotentU}{\mathsf{U}}
\newcommand{\torusH}{\mathsf{H}}
\newcommand{\Conf}{\operatorname{Conf}}
\newcommand{\Confhalf}{\operatorname{Conf}_{\mathrm{half}}}
\newcommand{\Tcal}{\mathcal T}
\newcommand{\Dcal}{\mathcal D}
\newcommand{\Ucal}{\mathcal U}
\newcommand{\Vcal}{\mathcal V}
\newcommand{\eps}{\varepsilon}
\newcommand{\fW}{\mathfrak W}
\newcommand{\rind}[1]{\overrightarrow{\mathfrak w}(#1)}
\newcommand{\lind}[1]{\overleftarrow{\mathfrak w}(#1)}
\newcommand{\pos}{\operatorname{pos}}
\newcommand{\Split}{\operatorname{split}}
\newcommand{\id}{\operatorname{id}}
\newcommand{\len}{\operatorname{len}}
\newcommand{\Br}{\operatorname{Br}}
\newcommand{\Word}{\operatorname{Word}}
\newcommand{\bigast}{\mathop{\vcenter{\hbox{\Large\(\ast\)}}}\displaylimits}

\title{Mutation Sequences along Weaves and Amalgamation of Braid Varieties}
\author{Yuma Mizuno}
\date{}

\begin{document}

\maketitle

\begin{abstract}
Let \(p,q\) be positive braids with Demazure products \(u,v\).  The endpoint
stratum \(\operatorname{Conf}(p,q)_{u,v}\) of the double Bott--Samelson cell
splits as \(\operatorname{Conf}(u,v)\times X(p)\times X(q^{\mathrm{op}})\), a
double Bruhat cell times two braid varieties.  We prove that the stratum's cluster
structure is given by a cluster localization of the one on \(\operatorname{Conf}(p,q)\), 
and that the splitting map is a quasi-cluster isomorphism.  This comes from a
mutation sequence along a double Demazure weave, one mutation per trivalent
vertex, ending at an amalgamation of an extension of the double word quiver 
with the weave quiver.  In the half-decorated case, this proves the conjecture of
Gorsky--Kim--Scroggin--Simental for the splicing map
\(X(p)\times X(\Delta\Delta)\to X(p\Delta)\), where \(\Delta\) is a reduced
positive braid for \(w_0\) and \(\operatorname{dem}(p)=w_0\).
\end{abstract}

\section{Introduction}

\subsection{Background}

Let \(\groupG\) be a connected simply connected semisimple algebraic group of
finite type over \(\C\).  There are two kinds of varieties determined by positive braids,
both of which admit cluster structures.
One is a double Bott--Samelson cell, and the
other is a braid variety.  Both are moduli spaces parametrizing flag sequences
whose relative positions vary according to a positive braid, but their
conditions are imposed differently.

The first variety, the double Bott--Samelson cell \(\Conf(p,q)\), is a space
defined for a pair \(p,q\) of positive braids and was introduced by
Shen--Weng \cite{ShenWeng2021}.  Double Bott--Samelson cells carry cluster
structures that generalize the cluster structures on double Bruhat cells
\cite{BerensteinFominZelevinsky2005}.
Their cluster coordinates are described by combinatorial objects expressed as triangulations 
(or equivalently, strings), and their cluster variables correspond to generalized minors.  
As a notable application, by constructing finite-order automorphisms of undecorated
double Bott--Samelson cells, Shen--Weng gave a geometric proof of the
Zamolodchikov periodicity conjecture.

The second variety, the braid variety \(X(p)\), is a space defined for a positive
braid \(p\).  Braid varieties form a wide class: open Richardson varieties and
half-decorated double Bott--Samelson cells with one word empty both arise as
special cases, the latter from braids of the form \(p\Delta\), where \(\Delta\)
is a positive braid lift of \(w_0\).  The geometric significance of braid
varieties lies in their connection between the algebraic geometry of flag
configurations and contact--symplectic geometry.  In type \(A\), an appropriate
torus quotient of the braid variety \(X(p\Delta)\), a half-decorated double
Bott--Samelson cell, is isomorphic to the augmentation variety of the
corresponding Legendrian link and carries a holomorphic symplectic structure.
Moreover, the maximal-dimensional toric charts associated with Demazure weaves
are exponential Darboux charts for this holomorphic symplectic form
\cite{CasalsGorskyGorskySimental2020}.  Gao--Shen--Weng identified
\(X(p\Delta)\) itself with an augmentation variety for a different choice of
marked points, and showed that it is a cluster \(K_2\) variety whose seeds are
given by exact Lagrangian fillings \cite{GaoShenWeng2024}.
More recently, Asplund--Capovilla-Searle--Hughes--Leverson--Li--Wu extended 
this contact-geometric interpretation beyond the \(X(p\Delta)\) case: 
in type \(A\), when the Demazure product of \(p\) is \(w_0\), the braid variety 
\(X(p)\) is isomorphic to the augmentation variety of the Legendrian \((-1)\)-closure 
of \(p\Delta\) \cite{AsplundEtAlDecompositions2025}.

The diagrammatic calculus that describes this symplectic structure is the
calculus of weaves.  Interpreting a Demazure weave as a Legendrian weave and
appropriately closing its terminal end yields exact Lagrangian fillings
\cite{CasalsZaslow2022,CasalsGorskyGorskySimental2020}.  In the Casals--Weng
microlocal model for certain Legendrian links, rank-one local systems on a
filling give a seed torus, while the intersection pairing of cycles and
Lagrangian surgery realize the exchange matrix and cluster mutation,
respectively \cite{CasalsWeng2024}.  Algebraically as well, the Lusztig cycles
on a Demazure weave and flag propagation construct cluster coordinates on
braid varieties \cite{CasalsEtAlBraidVarieties}.  Cluster structures on braid
varieties were also constructed by different methods, using 3D plabic graphs in
type \(A\) \cite{GalashinLamShermanBennettSpeyer2022} and Deodhar geometry in
general \cite{GalashinLamShermanBennett2023}, and the two approaches are
compared in \cite{CasalsEtAlComparing2025}.

\subsection{Main results}

In this paper, we relate the cluster structures on double Bott--Samelson cells
and on braid varieties as follows: an open stratum of a double Bott--Samelson
cell carries a cluster structure obtained from that of the whole cell by a
cluster localization, and on this stratum the cell decomposes, quasi-cluster
isomorphically, into the double Bott--Samelson cell of the Demazure products
and a product of braid varieties.

Let \(p,q\) be positive braids, and let \(u:=\operatorname{dem}(p)\) and
\(v:=\operatorname{dem}(q)\) be their respective Demazure products.  Let
\(\Conf(p,q)\) be the double Bott--Samelson cell
(Definition~\ref{def:double-bs-cell}), and let \(\Conf(p,q)_{u,v}\) be its open
subvariety on which the relative positions between the two ends of the plus-side
and minus-side flag sequences are \(u\) and \(v\), respectively
(Definition~\ref{def:endpoint-strata}).  Consider the map that replaces the
flag sequences by reduced flag sequences:
\[
 \operatorname{red}:
 \Conf(p,q)_{u,v}
 \longrightarrow \Conf(u,v).
\]
Although not used below, we note that \(\Conf(u,v)\) is isomorphic to the double Bruhat cell
\(G^{u,v}\).  For each
\(c\in\Conf(u,v)\), the fiber \(\operatorname{red}^{-1}(c)\) parametrizes the
flag sequences that reduce to \(c\), and is naturally isomorphic to a product
of braid varieties:
\[
 \operatorname{red}^{-1}(c)
 \xrightarrow{\sim}
 X(p)\times X(q^{\mathrm{op}}).
\]
See Definition~\ref{def:braid-variety} for the definition of braid varieties.
In fact, as stated in Theorem~\ref{thm:flag-product}, there is an isomorphism
\begin{equation}
 \Conf(p,q)_{u,v}
 \xrightarrow{\sim}
 \Conf(u,v)\times X(p)\times X(q^{\mathrm{op}}).
 \tag{\(\ast\)}
 \label{eq:intro-split}
\end{equation}

Our main result is that this isomorphism preserves the cluster structures.

\begin{theorem}[\(=\)~Corollary~\ref{cor:split-quasi-cluster} and Theorem~\ref{thm:endpoint-cluster-localization}]
\label{thm:main}
There exist a seed \(\Sigma\) for the cluster structure on \(\Conf(p,q)\) and a subset \(S\) of its
mutable vertices satisfying the following properties.
\begin{enumerate}[label=\textup{(\arabic*)}]
  \item Write \(\operatorname{freeze}_S(\Sigma)\) for the seed obtained by
  freezing \(S\) in \(\Sigma\).  Its
  cluster algebra and 
  upper cluster algebra are isomorphic to the coordinate ring of
  \(\Conf(p,q)_{u,v}\), that is,
  \[
  \mathcal A\bigl(\operatorname{freeze}_S(\Sigma)\bigr) = 
  \mathcal U\bigl(\operatorname{freeze}_S(\Sigma)\bigr)
  \cong
  \mathcal O\bigl(\Conf(p,q)_{u,v}\bigr).
  \]
  \item This freezing is a cluster localization, that is,
  \[
  \mathcal A\bigl(\operatorname{freeze}_S(\Sigma)\bigr)
  \cong
  \mathcal A(\Sigma)\left[A_s^{-1}\mid s\in S\right].
  \]
  Here \(A_s\) is the cluster variable corresponding to \(s\).
  \item
  The isomorphism in~\eqref{eq:intro-split} is a quasi-cluster
  isomorphism between the cluster structure on \(\Conf(p,q)_{u,v}\) given by
  \(\operatorname{freeze}_S(\Sigma)\) and 
  the product cluster structure on \(\Conf(u,v)\times X(p)\times X(q^{\mathrm{op}})\).
  Moreover, under this isomorphism, the cluster variables of
  \(X(p)\) and \(X(q^{\mathrm{op}})\) pull back to cluster variables of
  \(\operatorname{freeze}_S(\Sigma)\).
\end{enumerate}
\end{theorem}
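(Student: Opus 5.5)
The plan is to build the seed \(\Sigma\) by an explicit mutation sequence. We start from the Shen--Weng seed of \(\Conf(p,q)\) attached to a double word (string diagram) for \((p,q)\). We then fix Demazure weaves \(\fW_+\colon p\to u\) and \(\fW_-\colon q^{\mathrm{op}}\to v^{\mathrm{op}}\); together these form a double Demazure weave from the double word of \((p,q)\) to a reduced double word of \((u,v)\).

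\textbf{Mutation sequence.} We read the weave from top to bottom and apply moves to the double word as follows.
\begin{itemize}
\item A hexavalent (braid) vertex and a tetravalent (commutation) vertex are realized by the standard Shen--Weng braid moves on double words. These are cluster transformations (a mutation, or no mutation).
\item A trivalent vertex \(s_is_i\to s_i\) is realized by exactly one mutation, at the vertex of the current quiver sitting between the two adjacent \(i\)-letters.
\end{itemize}
The key local computation shows that after this mutation the new cluster variable is a generalized minor. This minor is nonvanishing precisely where the two consecutive flags collapse to a single \(s_i\)-step, i.e.\ on the locus where the Demazure product is attained at that stage. We then freeze the mutated vertex. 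The set \(S\) is the set of these trivalent-vertex vertices, and \(\Sigma\) is the seed reached at the end.

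The main structural claim, proved inductively along the weave, is the following. The quiver of \(\Sigma\) with \(S\) frozen is an amalgamation of two pieces along shared frozen vertices:
\begin{itemize}
\item an extension of the double word quiver of \((u,v)\), namely the Shen--Weng seed of \(\Conf(u,v)\) with extra frozen arrows;
\item the weave quivers of \(\fW_+\) and \(\fW_-\), that is, the Lusztig-cycle seeds of \(X(p)\) and \(X(q^{\mathrm{op}})\) of Casals et al.
\end{itemize}
Moreover, cluster variables attached to weave cycles coincide with the pullbacks along \eqref{eq:intro-split} of the flag-propagation cluster variables on the braid varieties. This is checked by comparing exchange matrices move by move. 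The frozen arrows between the pieces change the exchange matrix only in frozen rows, so the identification of variables, up to monomials in frozen variables, gives a quasi-cluster isomorphism. This yields (3), using Theorem~\ref{thm:flag-product}. I expect this bookkeeping, namely tracking the quiver after each trivalent mutation and showing that the arrows between \(S\) and the rest reorganize into an amalgamation, to be the main obstacle. I would first verify it on a single trivalent vertex next to an arbitrary double word, and then show that braid moves commute with the amalgamation.

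\textbf{Items (1) and (2).} For (1), \(\Conf(p,q)_{u,v}\) is exactly the nonvanishing locus of \(\{A_s\}_{s\in S}\), since each \(A_s\) detects the collapse at one trivalent vertex. Hence \(\mathcal O(\Conf(p,q)_{u,v})=\mathcal O(\Conf(p,q))[A_s^{-1}]\). Through (3), this ring is the product of the coordinate rings of \(\Conf(u,v)\), \(X(p)\) and \(X(q^{\mathrm{op}})\). Each factor is known to satisfy \(\mathcal A=\mathcal U\) with the given seeds. The property \(\mathcal A=\mathcal U\) passes to products and is invariant under quasi-cluster isomorphism, which gives \(\mathcal A(\operatorname{freeze}_S\Sigma)=\mathcal U(\operatorname{freeze}_S\Sigma)\cong\mathcal O(\Conf(p,q)_{u,v})\).

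For (2), we use the known equality \(\mathcal A(\Sigma)=\mathcal U(\Sigma)=\mathcal O(\Conf(p,q))\) for double Bott--Samelson cells. This gives \(\mathcal A(\Sigma)[A_S^{-1}]\subseteq\mathcal A(\operatorname{freeze}_S\Sigma)\subseteq\mathcal U(\operatorname{freeze}_S\Sigma)\). The localization \(\mathcal O(\Conf(p,q))[A_S^{-1}]\) equals the right-hand side by the description in (1). So all three rings coincide, and the freezing is a cluster localization.
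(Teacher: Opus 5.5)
\textbf{Gap: the set $S$ you freeze is wrong, and so is the argument for (1).}
Your overall architecture matches the paper's: one mutation per trivalent vertex, Shen--Weng sequences at braid moves, an amalgamation of an extended $Q(\delta)$ with the weave quiver, and localization via $\mathcal A=\mathcal U=\mathcal O$ for $\Conf(p,q)$. The problem is that you freeze \emph{every} trivalent vertex and assert that $\Conf(p,q)_{u,v}$ is their common nonvanishing locus.

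This is false. In $Q(\fW)$ only $\Dcal(\fW)=\{\sigma\mid\partial\gamma_\sigma\neq0\}$ is frozen. The vertices of $V_3(\fW)\setminus\Dcal(\fW)$ are the mutable vertices of the braid-variety seeds, and their cluster variables vanish on divisors of the stratum.

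A concrete case is the $A_2$ weave $1122112\to212$. There $A_{\sigma_1}=z_2$ and $\partial\gamma_{\sigma_1}=0$. Setting $z_2=0$ gives $g_2=x_1(z_1)\dot s_1^2\in\borelB_+$, so $\flagB^2=\flagB^0$. The remaining word $22112$ still has Demazure product $w_0$, so generic $z_3,\dots,z_7$ give a point of $\Conf(\beta,\varnothing)_{w_0,e}$.

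Consequences:
\begin{itemize}
\item $\bigcap_{\sigma\in V_3(\fW)}D(A_\sigma)$ is a proper open subset of the stratum. So $\mathcal O(\Conf(p,q))[A_S^{-1}]$ is the strictly larger coordinate ring of that subset, and (1)--(2) fail.
\item (3) also fails. Freezing all of $V_3(\fW)$ leaves only $I_{\mathrm{int}}(\delta)$ mutable, whereas the product seed has mutable set $I_{\mathrm{int}}(\delta)\sqcup(V_3(\fW)\setminus\Dcal(\fW))$. This contradicts your own claim that the weave piece is the Casals et al.\ seed.
\end{itemize}

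\textbf{Missing idea.} The condition $\pos_+(\flagB^0,\flagB^m)=u$ is global; local degeneracy at one contraction does not change the Demazure product. The paper proceeds as follows.
\begin{itemize}
\item It cuts out $\Conf(p,q)_{u,*}$ by the minors $f_i=\Delta_i(\unipotentU_-\dot u^{-1},\flagA^+_{\beta,m})$. These are all nonzero iff $\pos=u$, by a weight argument.
\item It computes $f_i$ through the weave. For reduced terminal plus word $\mathbf u$, the terminal decoration is $gn\dot u\,h_L^{\fW,+}\unipotentU_+$, with Cartan factor built from the terminal weights. This gives
\[
 f_i=\prod_{\sigma}\bigl(A^{\mathrm{sweep}}_\sigma\bigr)^{\langle\omega_i,\nu^+_\sigma\rangle},
 \qquad
 \nu^+_\sigma=\sum_a\partial\gamma_\sigma(t_a)\,s_{u_r}\cdots s_{u_{a+1}}(\alpha^\vee_{u_a}).
\]
\item Reducedness makes $\nu^+_\sigma$ a nonnegative combination of positive coroots. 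Hence $\nu^+_\sigma\neq0$ iff $\sigma\in\Dcal(\fW)$, and $\Conf(p,q)_{u,v}=\bigcap_{\sigma\in\Dcal(\fW)}D(A^{\mathrm{sweep}}_\sigma)$.
\end{itemize}
So the correct choice is $S=\Dcal(\fW)$.

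\textbf{Two smaller points.}
\begin{itemize}
\item The new amalgamation arrows are not confined to frozen rows. Each $\sigma\in\Dcal(\fW)$ is joined to mutable internal strings by $\operatorname{inc}_\delta(\partial\gamma_\sigma)$, and these arrows enter the $X$-coordinates of mutable string vertices. The quasi-cluster property needs the explicit correction $A^{\mathrm{am}}_\ell=A^\delta_\ell\prod_\sigma A_\sigma^{c_\ell(\partial\gamma_\sigma)}$. It also needs the identity $\sum_\ell\eps^{Q(\delta)}_{k\ell}c_\ell(\lambda)=-(\operatorname{inc}_\delta\lambda)_k$, which makes these arrows cancel in $X_k$.
\item In (2) the canonical inclusion is $\mathcal A(\operatorname{freeze}_S\Sigma)\subseteq\mathcal A(\Sigma)[A_S^{-1}]$, not the reverse. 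The argument still closes, because $\mathcal A(\Sigma)[A_S^{-1}]=\mathcal O(\Conf(p,q))[A_S^{-1}]=\mathcal O(\Conf(p,q)_{u,v})$. The last equality uses that $\Conf(p,q)$ is affine, which is the Shen--Weng result.
\end{itemize}
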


We outline the proof.  In the proof of the theorem, we need to determine the
seed \(\Sigma\) used in the cluster localization and the set \(S\) of vertices
to be frozen.  The core of the proof is that these are obtained from a sweep
mutation sequence along a weave.  Let \(\beta\) be a double word with \(p\) as
its plus part and \(q\) as its minus part.  A double word determines a quiver 
\(Q(\beta)\) describing the cluster coordinates on
\(\Conf(p,q)\).  Also, let \(\delta\) be a double word with \(u\) as its plus
part and \(v\) as its minus part.  Since the Demazure product of \(\beta\) is
\(\delta\), one can connect \(\beta\) and \(\delta\) by repeatedly applying
braid moves and contractions \(ii\to i\).  Such a sequence is represented by a
(double) Demazure weave \(\fW:\beta\to\delta\).  For each weave
\(\fW:\beta\to\delta\), we construct the \emph{sweep mutation sequence} along \(\fW\):
\[
  \mu_\fW:
  Q(\beta)
  \longrightarrow Q^{\mathrm{sweep}}(\fW).
\]
The construction is defined inductively by tracing the weave from top to
bottom.  If the weave is extended by a braid move, the corresponding mutation
sequence is the mutation sequence used by Shen--Weng to replace the word.  The
case of a contraction is the new construction in this paper.  In this case, we
mutate the middle vertex of \(Q(ii)\) and ``exclude'' that vertex.  The quiver
obtained is \(Q(i)\) with the excluded vertex attached.  This construction is
illustrated in Figure~\ref{fig:contraction-quiver-mutation}.  Thus, after all
mutations along the weave are performed, one obtains
\(Q^{\mathrm{sweep}}(\fW)\), the quiver obtained from \(Q(\delta)\) by
attaching the quiver formed by the excluded vertices.  It follows from the
construction that the excluded vertices are parametrized by the trivalent
vertices of the weave.  The most important combinatorial fact established in
this paper is that, in fact, after appropriately correcting the entries
between frozen vertices, the quiver formed by these excluded vertices is the
weave quiver \(Q(\fW)\), which describes the cluster coordinates on braid
varieties~\cite{CasalsEtAlBraidVarieties}. See Section~\ref{subsec:terminal-amalgamation}.

The seed to be given in the main theorem is obtained by combining the
braid-variety seed given by \(Q(\fW)\) with the double Bott--Samelson-cell seed
given by \(Q(\delta)\), after the latter is corrected by monomials determined by
the boundary weights of the Lusztig cycles on the weave.  The set \(S\) to be
frozen is \(\Dcal(\fW)\), the set of trivalent vertices of \(\fW\) whose
Lusztig cycles have nonzero terminal weight.  See
Definitions~\ref{def:frozen-quiver-amalgamation}
and~\ref{def:amalgamated-functions}, and Theorem~\ref{thm:quiver}.

\begin{example}[A sweep mutation sequence along a type \(A_2\) weave]
\label{subsec:a2-mutation-term-example}

Let \(\beta=1122112\) and \(\delta=212\), and
consider the following Demazure weave of type \(A_2\):
\begin{equation}
 \fW:
 1122112\xrightarrow{\,11\to1\,}122112
 \xrightarrow{\,22\to2\,}12112
 \xrightarrow{\,11\to1\,}1212
 \xrightarrow{\,121\to212\,}2122
 \xrightarrow{\,22\to2\,}212.
 \label{eq:a2-direct-weave}
\end{equation}
The sweep mutation sequence \(\mu_\fW\) is illustrated in
Figures~\ref{fig:a2-five-step-mutation-early}
and~\ref{fig:a2-five-step-mutation-late}.  
Each quiver is an amalgamation of an extension of the word quiver below with
the weave quiver above.  The number of arrows used in the amalgamation is
determined by the terminal weights of the Lusztig cycles.
\end{example}

\begin{figure}[p]
\centering
\begin{tikzpicture}[x=1.00cm,y=0.66cm]
\path[use as bounding box] (0,-1.40) rectangle (16,29.95);

\begin{scope}[shift={(0,21.00)}]
\draw[wqstage] (0,0) rectangle (16,8.80);
\node[wqwordlabel,anchor=west] at (0.65,7.75) {$1122112$};

\coordinate (a10pos) at (1.00,2.10);
\coordinate (a11pos) at (3.00,2.10);
\coordinate (a12pos) at (7.00,2.10);
\coordinate (a13pos) at (11.00,2.10);
\coordinate (a14pos) at (13.00,2.10);
\coordinate (a20pos) at (5.00,0.75);
\coordinate (a21pos) at (7.00,0.75);
\coordinate (a22pos) at (11.00,0.75);
\coordinate (a23pos) at (15.00,0.75);
\coordinate (a1endone) at (2.00,3.65);
\coordinate (a1endtwo) at (4.00,3.65);
\coordinate (wqendone) at (6.00,3.65);
\coordinate (wqendtwo) at (8.00,3.65);
\coordinate (a1endthree) at (10.00,3.65);
\coordinate (a1endfour) at (12.00,3.65);
\coordinate (wqendthree) at (14.00,3.65);

\draw[wqweaveone] (2.00,7.25)--(a1endone);
\draw[wqweaveone] (4.00,7.25)--(a1endtwo);
\draw[wqweavetwo] (6.00,7.25)--(wqendone);
\draw[wqweavetwo] (8.00,7.25)--(wqendtwo);
\draw[wqweaveone] (10.00,7.25)--(a1endthree);
\draw[wqweaveone] (12.00,7.25)--(a1endfour);
\draw[wqweavetwo] (14.00,7.25)--(wqendthree);
\draw[wqcut] (1.60,3.65)--(14.40,3.65);
\node[wqwordlabel,anchor=west] at (0.65,3.65) {$1122112$};

\draw[wqqbox] (0.45,0.02) rectangle (15.55,3.15);
\node[wqqboxtitle] at (0.75,3.15)
  {$Q(1122112)$};
\node[wqstringfrozen] (a10) at (a10pos) {$[0,1]$};
\node[wqpivot]        (a11) at (a11pos) {$[1,2]$};
\node[wqstringmutable](a12) at (a12pos) {$[2,5]$};
\node[wqstringfrozen] (a20) at (a20pos) {$[0,3]$};
\node[wqstringmutable](a21) at (a21pos) {$[3,4]$};
\node[wqstringmutable](a22) at (a22pos) {$[4,7]$};
\node[wqstringmutable](a13) at (a13pos) {$[5,6]$};
\node[wqstringfrozen] (a14) at (a14pos) {$[6,\infty]$};
\node[wqstringfrozen] (a23) at (a23pos) {$[7,\infty]$};
\draw[wqstringarrow] (a11)--(a10);
\draw[wqstringhalf] (a10)--(a20);
\draw[wqstringarrow] (a12)--(a11);
\draw[wqstringarrow] (a13)--(a12);
\draw[wqstringarrow] (a20)--(a12);
\draw[wqstringarrow] (a12)--(a22);
\draw[wqstringarrow] (a14)--(a13);
\draw[wqstringarrow] (a22)--(a14);
\draw[wqstringhalf] (a14)--(a23);
\draw[wqstringarrow] (a21)--(a20);
\draw[wqstringarrow] (a22)--(a21);
\draw[wqstringarrow] (a23)--(a22);
\path (a10)--node[wqhalf,pos=0.48] {$\frac12$} (a20);
\path (a14)--node[wqhalf,pos=0.52] {$\frac12$} (a23);
\end{scope}

\draw[wqtimearrow] (8.00,20.82)--node[wqmovelabel,right]
  {$\mu_{(1,[1,2])}\quad 11\to1$} (8.00,19.48);

\begin{scope}[shift={(0,10.50)}]
\draw[wqstage] (0,0) rectangle (16,8.80);
\node[wqwordlabel,anchor=west] at (0.65,7.75) {$1122112$};

\coordinate (b10pos) at (1.50,2.10);
\coordinate (b11pos) at (6.50,2.10);
\coordinate (b12pos) at (11.00,2.10);
\coordinate (b13pos) at (13.00,2.10);
\coordinate (b20pos) at (5.00,0.75);
\coordinate (b21pos) at (7.00,0.75);
\coordinate (b22pos) at (11.00,0.75);
\coordinate (b23pos) at (15.00,0.75);
\coordinate (b1endone) at (3.00,3.65);
\coordinate (b2endone) at (6.00,3.65);
\coordinate (b2endtwo) at (8.00,3.65);
\coordinate (b1endtwo) at (10.00,3.65);
\coordinate (b1endthree) at (12.00,3.65);
\coordinate (b2endthree) at (14.00,3.65);

\coordinate (bvonepos) at (3.00,5.85);
\draw[wqweaveone] (2.00,7.25)--(bvonepos);
\draw[wqweaveone] (4.00,7.25)--(bvonepos);
\draw[wqweaveone] (bvonepos)--(b1endone);
\draw[wqweavetwo] (6.00,7.25)--(b2endone);
\draw[wqweavetwo] (8.00,7.25)--(b2endtwo);
\draw[wqweaveone] (10.00,7.25)--(b1endtwo);
\draw[wqweaveone] (12.00,7.25)--(b1endthree);
\draw[wqweavetwo] (14.00,7.25)--(b2endthree);
  \node[wqweavefrozen] (bvone) at (bvonepos) {};
\draw[wqcut] (2.60,3.65)--(14.40,3.65);
\node[wqwordlabel,anchor=west] at (0.65,3.65) {$122112$};

\draw[wqqbox] (0.45,0.02) rectangle (15.55,3.15);
\node[wqqboxtitle] at (0.75,3.15)
  {$Q(122112)$};
\node[wqstringfrozen] (b10) at (b10pos) {$[0,1]$};
\node[wqstringmutable](b11) at (b11pos) {$[1,4]$};
\node[wqstringfrozen] (b20) at (b20pos) {$[0,2]$};
\node[wqpivot]        (b21) at (b21pos) {$[2,3]$};
\node[wqstringmutable](b22) at (b22pos) {$[3,6]$};
\node[wqstringmutable](b12) at (b12pos) {$[4,5]$};
\node[wqstringfrozen] (b13) at (b13pos) {$[5,\infty]$};
\node[wqstringfrozen] (b23) at (b23pos) {$[6,\infty]$};
\draw[wqstringarrow] (b11)--(b10);
\draw[wqstringhalf] (b10)--(b20);
\draw[wqstringarrow] (b12)--(b11);
\draw[wqstringarrow] (b20)--(b11);
\draw[wqstringarrow] (b11)--(b22);
\draw[wqstringarrow] (b13)--(b12);
\draw[wqstringarrow] (b22)--(b13);
\draw[wqstringhalf] (b13)--(b23);
\draw[wqstringarrow] (b21)--(b20);
\draw[wqstringarrow] (b22)--(b21);
\draw[wqstringarrow] (b23)--(b22);

\wqextensiontriangle{b10}{bvone}{b11}
\path (b10)--node[wqhalf,pos=0.52] {$\frac12$} (b20);
\path (b13)--node[wqhalf,pos=0.50] {$\frac12$} (b23);
\node[wqvlabel,above=0.8mm of bvone] {$\sigma_1$};
\end{scope}

\draw[wqtimearrow] (8.00,10.32)--node[wqmovelabel,right]
  {$\mu_{(2,[2,3])}\quad 22\to2$} (8.00,8.98);

\begin{scope}
\draw[wqstage] (0,0) rectangle (16,8.80);
\node[wqwordlabel,anchor=west] at (0.65,7.75) {$1122112$};

\coordinate (c10pos) at (1.50,2.10);
\coordinate (c11pos) at (5.00,2.10);
\coordinate (c12pos) at (11.00,2.10);
\coordinate (c13pos) at (13.00,2.10);
\coordinate (c20pos) at (5.50,0.75);
\coordinate (c21pos) at (10.50,0.75);
\coordinate (c22pos) at (15.00,0.75);
\coordinate (c1endone) at (3.00,3.65);
\coordinate (c2endone) at (7.00,3.65);
\coordinate (c1endtwo) at (10.00,3.65);
\coordinate (c1endthree) at (12.00,3.65);
\coordinate (c2endtwo) at (14.00,3.65);

\coordinate (cvonepos) at (3.00,6.30);
\coordinate (cvtwopos) at (7.00,5.10);
\draw[wqweaveone] (2.00,7.25)--(cvonepos);
\draw[wqweaveone] (4.00,7.25)--(cvonepos);
\draw[wqweaveone] (cvonepos)--(c1endone);
\draw[wqweavetwo] (6.00,7.25)--(cvtwopos);
\draw[wqweavetwo] (8.00,7.25)--(cvtwopos);
\draw[wqweavetwo] (cvtwopos)--(c2endone);
\draw[wqweaveone] (10.00,7.25)--(c1endtwo);
\draw[wqweaveone] (12.00,7.25)--(c1endthree);
\draw[wqweavetwo] (14.00,7.25)--(c2endtwo);
  \node[wqweavefrozen] (cvone) at (cvonepos) {};
  \node[wqweavefrozen] (cvtwo) at (cvtwopos) {};
\draw[wqcut] (2.60,3.65)--(14.40,3.65);
\node[wqwordlabel,anchor=west] at (0.65,3.65) {$12112$};

\draw[wqqbox] (0.45,0.02) rectangle (15.55,3.15);
\node[wqqboxtitle] at (0.75,3.15)
  {$Q(12112)$};
\node[wqstringfrozen] (c10) at (c10pos) {$[0,1]$};
\node[wqstringmutable](c11) at (c11pos) {$[1,3]$};
\node[wqstringfrozen] (c20) at (c20pos) {$[0,2]$};
\node[wqstringmutable](c21) at (c21pos) {$[2,5]$};
\node[wqpivot]        (c12) at (c12pos) {$[3,4]$};
\node[wqstringfrozen] (c13) at (c13pos) {$[4,\infty]$};
\node[wqstringfrozen] (c22) at (c22pos) {$[5,\infty]$};
\draw[wqstringarrow] (c11)--(c10);
\draw[wqstringhalf] (c10)--(c20);
\draw[wqstringarrow] (c12)--(c11);
\draw[wqstringarrow] (c20)--(c11);
\draw[wqstringarrow] (c11)--(c21);
\draw[wqstringarrow] (c13)--(c12);
\draw[wqstringarrow] (c21)--(c13);
\draw[wqstringhalf] (c13)--(c22);
\draw[wqstringarrow] (c21)--(c20);
\draw[wqstringarrow] (c22)--(c21);

  \wqextensiontriangle{c10}{cvone}{c11}
  \wqextensiontriangle{c20}{cvtwo}{c21}
  \draw[wqcancelblue] (cvtwo) to[bend left=5]
    node[wqhalfblue,pos=0.46,below=0.25mm] {$\frac12$} (cvone);
  \draw[wqcancelorange] (cvone) to[bend left=5]
    node[wqhalforange,pos=0.54,above=0.25mm] {$\frac12$} (cvtwo);
\path (c10)--node[wqhalf,pos=0.52] {$\frac12$} (c20);
\path (c13)--node[wqhalf,pos=0.50] {$\frac12$} (c22);
\node[wqvlabel,above=0.8mm of cvone] {$\sigma_1$};
\node[wqvlabel,above=0.8mm of cvtwo] {$\sigma_2$};
\end{scope}

\draw[wqtimearrow] (8.00,-0.18)--node[wqmovelabel,right]
  {$\mu_{(1,[3,4])}\quad 11\to1$\quad continued on the next page} (8.00,-1.08);

\end{tikzpicture}
\caption{The sweep mutation sequence along the type \(A_2\) weave
\eqref{eq:a2-direct-weave}.  Square vertices in the weave indicate that they
are frozen in the weave quiver.}
\label{fig:a2-five-step-mutation-early}
\end{figure}

\begin{figure}[p]
\centering
\input{a2_intro_mutation_movie_continued}
\caption{Continuation of Figure~\ref{fig:a2-five-step-mutation-early}.}
\label{fig:a2-five-step-mutation-late}
\end{figure}

\subsection{Quasi-cluster property of the splicing map}

\begingroup
\emergencystretch=2em
For positive braids \(p,q\), consider the half-decorated double
Bott--Samelson cells \(\Confhalf^+(p,q)\) and \(\Confhalf^-(p,q)\)
(Definition~\ref{def:half-conf}).  The plus and minus signs indicate which side
retains its decoration.
\par
\endgroup
If \(\Delta\) is a reduced positive braid for \(w_0\), the following natural
isomorphisms are given in Proposition~\ref{prop:delta-extension}:
\[
 \Confhalf^+ (p,\varnothing)\xrightarrow{\sim}X(p \Delta),
 \qquad
 \Confhalf^- (\varnothing,p^{\mathrm{op}})
 \xrightarrow{\sim}X(\Delta p).
\]
When \(\operatorname{dem}(p)=w_0\), transporting the inverses of the
half-decorated flag-configuration decompositions in
Corollary~\ref{cor:half-one-sided-flag-product}
through these identifications gives the splicing maps constructed by
Gorsky--\allowbreak Kim--\allowbreak Scroggin--\allowbreak Simental
\cite[Theorem~1.1]{GorskyKimScrogginSimental2025}.

Gorsky--Kim--Scroggin--Simental conjectured that the splicing map is a
quasi-cluster isomorphism in general
\cite[Conjecture~1.4(c)]{GorskyKimScrogginSimental2025}, and proved the
conjecture for splicing maps arising from double Bott--Samelson varieties
\cite[Theorem~1.8]{GorskyKimScrogginSimental2025}.  Here, as a consequence of
the half-decorated specialization of the main theorem, we prove the conjecture
in a new and different case.

\begin{corollary}[Quasi-cluster property of the splicing map]
\label{cor:splicing-conjecture-four-endpoints}
Let \(p\) be a positive braid of length \(r\) such that
\(\operatorname{dem}(p)=w_0\).  Let \(\Delta\) be a reduced positive braid for
\(w_0\), and let \(N\) be the length of \(\Delta\).  Then both splicing maps
\[
 \Psi_{r,w_0}:
 \bigl(X(p)\times X(\Delta \Delta),
       \Sigma_{\times,w_0}\bigr)
 \xrightarrow{\sim}
 \bigl(\Ucal_{r,w_0}(p\Delta),
       \widehat{\Sigma}_{r,w_0}\bigr),
\]
and
\[
 \Psi_{N,e}:
 \bigl(X(\Delta\Delta)\times X(p),
       \Sigma_{\times,e}\bigr)
 \xrightarrow{\sim}
 \bigl(\Ucal_{N,e}(\Delta p),
       \widehat{\Sigma}_{N,e}\bigr)
\]
are quasi-cluster isomorphisms.
\end{corollary}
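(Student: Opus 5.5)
The plan is to deduce the corollary from the half-decorated specialization of the main theorem, namely Corollary~\ref{cor:half-one-sided-flag-product}, and then carry quasi-cluster structures through the identifications of Proposition~\ref{prop:delta-extension}. I treat \(\Psi_{r,w_0}\) first. Take \(q=\varnothing\), so \(v=e\) and \(u=\operatorname{dem}(p)=w_0\). The half-decorated splitting is then
\[
\Confhalf^+(p,\varnothing)_{w_0,e}\xrightarrow{\sim}\Confhalf^+(w_0,\varnothing)\times X(p).
\]
By the half-decorated form of Theorem~\ref{thm:main}, this is a quasi-cluster isomorphism. The cluster structure on the left is the freezing \(\operatorname{freeze}_S(\Sigma)\) of a seed \(\Sigma\) on \(\Confhalf^+(p,\varnothing)\), with \(S=\Dcal(\fW)\), and this freezing is a cluster localization.

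Next I transport the splitting. Proposition~\ref{prop:delta-extension} identifies \(\Confhalf^+(p,\varnothing)\cong X(p\Delta)\) and \(\Confhalf^+(w_0,\varnothing)\cong X(\Delta\Delta)\). The second identification uses \(\Delta\) itself as the reduced word for \(w_0\). I first check that these isomorphisms are cluster isomorphisms, i.e.\ that they match the double-word seeds of Shen--Weng with the Demazure-weave seeds of \cite{CasalsEtAlBraidVarieties}. I expect this to be either stated with Proposition~\ref{prop:delta-extension} or standard from \cite{GaoShenWeng2024,CasalsEtAlComparing2025}. I then check that the image of the open stratum \(\Confhalf^+(p,\varnothing)_{w_0,e}\) in \(X(p\Delta)\) is exactly the open set \(\Ucal_{r,w_0}(p\Delta)\) of \cite{GorskyKimScrogginSimental2025}. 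That set is defined by requiring the flag after the first \(r\) letters to be in position \(w_0\) relative to the initial flag, which is precisely the endpoint condition \(\operatorname{dem}(p)=w_0\) on the plus side. Since the paper already records that these identifications turn the inverse splitting into \(\Psi_{r,w_0}\), the statement reduces to comparing seeds.

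The main obstacle is matching the seed \(\widehat{\Sigma}_{r,w_0}\) of Gorsky--Kim--Scroggin--Simental with our \(\operatorname{freeze}_S(\Sigma)\), and likewise \(\Sigma_{\times,w_0}\) with the product seed, up to quasi-cluster equivalence. Their \(\widehat{\Sigma}_{r,w_0}\) is obtained by freezing, in a weave seed for \(p\Delta\), the cluster variables that cut out \(\Ucal_{r,w_0}\). I would argue as follows.
\begin{enumerate}[label=\textup{(\roman*)}]
\item Choose the weave for \(p\Delta\) to be a Demazure weave \(p\to\Delta\) followed by the weave \(\Delta\Delta\to\Delta\).
\item Show that, under Proposition~\ref{prop:delta-extension}, the sweep mutation sequence \(\mu_\fW\) of Theorem~\ref{thm:quiver} carries \(Q(p\Delta)\) to a seed in the mutation class of this concatenated weave seed.
\item Show that the frozen set \(\Dcal(\fW)\) corresponds to the vertices whose variables cut out \(\Ucal_{r,w_0}\); these are the trivalent vertices whose Lusztig cycles have nonzero terminal weight.
\end{enumerate}
Since quasi-cluster equivalence allows rescaling by frozen monomials, I only need the mutable parts and the exchange ratios to agree. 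This lets me absorb the monomial corrections from the Lusztig-cycle boundary weights in Definition~\ref{def:amalgamated-functions}. Both sides are cluster localizations of the same cluster structure at the same set of variables. The localization statement in Theorem~\ref{thm:main}(2) guarantees that the resulting structures on \(\Ucal_{r,w_0}(p\Delta)\) coincide.

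For \(\Psi_{N,e}\) I run the mirror argument with \(p=\varnothing\) and minus word \(p^{\mathrm{op}}\), using \(\Confhalf^-(\varnothing,p^{\mathrm{op}})\cong X(\Delta p)\) and \(X((p^{\mathrm{op}})^{\mathrm{op}})=X(p)\). Here \(\operatorname{dem}(p^{\mathrm{op}})=w_0^{-1}=w_0\), so the stratum condition becomes the position condition defining \(\Ucal_{N,e}(\Delta p)\). Alternatively, I would derive it from the first case via the transpose/opposite anti-involution on braid varieties, which is a cluster isomorphism and exchanges \(p\Delta\) with \(\Delta p^{\mathrm{op}}\).
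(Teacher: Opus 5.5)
Your architecture is the paper's. First, \(\Split_{\mathrm{half}}^+\) is a quasi-cluster isomorphism; this is Proposition~\ref{prop:half-split-quasi-cluster}, which you take for granted as the ``half-decorated form'' of Theorem~\ref{thm:main}. The paper derives it from Corollary~\ref{cor:plus-endpoint-cluster-localization} via the normalization section \(s_p^+\), using that left-boundary vertices are never mutated and that \(c_\ell(\partial\gamma_\sigma)=0\) for \(\ell\in I_{\partial_L}(\boldsymbol\Delta)\). Then \(\Psi_{r,w_0}\) is obtained by transport along \(\iota_p^+\), \(\iota_\Delta^+\) and \(\operatorname{swap}\), and \(\Psi_{N,e}\) is the transposed case.

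The gap is in the step meant to close your ``main obstacle''. Step (ii) only places the transported sweep seed in the mutation class of a weave seed on \(X(p\Delta)\). That is automatic once \(\iota_p^+\) is a cluster isomorphism, so it gives nothing. You then conclude from Theorem~\ref{thm:main}(2) that both frozen seeds are cluster localizations at the same variables, so the structures on \(\Ucal_{r,w_0}(p\Delta)\) coincide. That inference is invalid. Part (2) is an equality of rings, \(\mathcal A(\operatorname{freeze}_S\Sigma)=\mathcal A(\Sigma)[A_s^{-1}\mid s\in S]\), whereas being a quasi-cluster isomorphism is a statement about seeds. Freezing the same cluster variables in two mutation-equivalent seeds gives the same frozen mutation class only if the two seeds are joined by mutations that never touch those vertices. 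You give no such argument. Choosing an arbitrary Demazure weave \(\mathbf p\to\boldsymbol\Delta\) in (i) makes this worse, since different weaves give different sweep seeds and nothing in your argument relates their freezings.

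The paper avoids this by fixing the weave and working at the level of seeds. It takes \(\fW=\rind{\mathbf p}\) for \(\Psi_{r,w_0}\) and \(\fW=\lind{\mathbf p}\) for \(\Psi_{N,e}\), the inductive weaves of \cite[Proposition~5.9]{GorskyKimScrogginSimental2025}. The seeds in the statement are defined as \(\widehat\Sigma_{r,w_0}=(\iota_p^+)_*\Sigma^{\mathrm{half}}_{\mathrm{sweep}}(\rind{\mathbf p})\) and \(\Sigma_{\times,w_0}=(\operatorname{swap}\circ(\iota_\Delta^+\times\id_{X(p)}))_*\Sigma^{\mathrm{half}}_\times(\rind{\mathbf p})\), with transposed analogues for \(\lind{\mathbf p}\). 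By \eqref{eq:weave-frozen-set} and that proposition, the vertices frozen by Gorsky--Kim--Scroggin--Simental are exactly \(\Dcal(\fW)\) for these weaves. So their seeds are these seeds themselves, not merely mutation-equivalent ones. The corollary is then pure transport of Proposition~\ref{prop:half-split-quasi-cluster}, and no localization argument is needed.

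To repair your proof, you have two options. Either identify the frozen seeds literally for the specific inductive weaves, as above. Or supply a connectivity result guaranteeing that the seeds containing the variables \(A_\sigma\), \(\sigma\in\Dcal(\fW)\), are linked by mutations avoiding those vertices. Separately, the sweep sequence acts on \(Q_{\mathrm{half}}(\mathbf p)\), the quiver for \(\Confhalf^+(p,\varnothing)\), not on \(Q(p\Delta)\).
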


See Section~\ref{sec:splicing-specialization} for the meaning of the notations in the corollary. 

\begin{figure}[!t]
\centering
\begin{minipage}[c]{0.49\textwidth}
 \centering
 \resizebox{\linewidth}{!}{
\begin{tikzpicture}[
  x=0.94cm,
  y=0.82cm,
  scale=0.80,
  aThreeStrand/.style={line width=0.98pt,line cap=round,line join=round},
  aThreeOne/.style={aThreeStrand,draw=wqteal},
  aThreeTwo/.style={aThreeStrand,draw=wqorange,
    dash pattern=on 3.0pt off 1.45pt},
  aThreeThree/.style={aThreeStrand,draw=wqblue,
    dash pattern=on 0.75pt off 1.35pt},
  aThreeFrame/.style={draw=black!22,line width=0.48pt},
  aThreeT/.style={circle,draw=black!82,fill=blue!12,
    minimum size=2.35mm,inner sep=0pt,line width=0.56pt},
  aThreeH/.style={circle,draw=black!82,fill=white,
    minimum size=3.35mm,inner sep=0pt,line width=0.68pt},
  aThreeVertexLabel/.style={font=\scriptsize,fill=white,
    inner xsep=0.8pt,inner ysep=0.45pt,text=black!88},
  aThreeBoundaryLabel/.style={font=\footnotesize,fill=white,
    inner xsep=1.2pt,inner ysep=0.7pt,text=black!82}
]
\path[use as bounding box] (-6.55,-0.82) rectangle (6.55,10.57);

\coordinate (aCorner) at (-6,0);
\coordinate (bCorner) at ( 6,0);
\coordinate (cCorner) at ( 0,{6*sqrt(3)});

\coordinate (t002) at
  (barycentric cs:aCorner=1,bCorner=1,cCorner=7);
\coordinate (t101) at
  (barycentric cs:aCorner=4,bCorner=1,cCorner=4);
\coordinate (t011) at
  (barycentric cs:aCorner=1,bCorner=4,cCorner=4);
\coordinate (t200) at
  (barycentric cs:aCorner=7,bCorner=1,cCorner=1);
\coordinate (t110) at
  (barycentric cs:aCorner=4,bCorner=4,cCorner=1);
\coordinate (t020) at
  (barycentric cs:aCorner=1,bCorner=7,cCorner=1);

\coordinate (h001) at
  (barycentric cs:aCorner=2,bCorner=2,cCorner=5);
\coordinate (h100) at
  (barycentric cs:aCorner=5,bCorner=2,cCorner=2);
\coordinate (h010) at
  (barycentric cs:aCorner=2,bCorner=5,cCorner=2);
\coordinate (h000) at
  (barycentric cs:aCorner=1,bCorner=1,cCorner=1);

\coordinate (left1) at
  (barycentric cs:aCorner=1,cCorner=6);
\coordinate (left2) at
  (barycentric cs:aCorner=2,cCorner=5);
\coordinate (left3) at
  (barycentric cs:aCorner=3,cCorner=4);
\coordinate (left4) at
  (barycentric cs:aCorner=4,cCorner=3);
\coordinate (left5) at
  (barycentric cs:aCorner=5,cCorner=2);
\coordinate (left6) at
  (barycentric cs:aCorner=6,cCorner=1);
\coordinate (right1) at
  (barycentric cs:bCorner=1,cCorner=6);
\coordinate (right2) at
  (barycentric cs:bCorner=2,cCorner=5);
\coordinate (right3) at
  (barycentric cs:bCorner=3,cCorner=4);
\coordinate (right4) at
  (barycentric cs:bCorner=4,cCorner=3);
\coordinate (right5) at
  (barycentric cs:bCorner=5,cCorner=2);
\coordinate (right6) at
  (barycentric cs:bCorner=6,cCorner=1);
\coordinate (bottom1) at
  (barycentric cs:aCorner=6,bCorner=1);
\coordinate (bottom2) at
  (barycentric cs:aCorner=5,bCorner=2);
\coordinate (bottom3) at
  (barycentric cs:aCorner=4,bCorner=3);
\coordinate (bottom4) at
  (barycentric cs:aCorner=3,bCorner=4);
\coordinate (bottom5) at
  (barycentric cs:aCorner=2,bCorner=5);
\coordinate (bottom6) at
  (barycentric cs:aCorner=1,bCorner=6);
\draw[aThreeFrame]
  (aCorner)--(cCorner)--(bCorner)--cycle;

\draw[aThreeOne] (t002)--(h001);
\draw[aThreeOne] (t101)--(h001);
\draw[aThreeOne] (t011)--(h001);
\draw[aThreeOne] (t101)--(h100);
\draw[aThreeOne] (t200)--(h100);
\draw[aThreeOne] (t110)--(h100);
\draw[aThreeOne] (t011)--(h010);
\draw[aThreeOne] (t110)--(h010);
\draw[aThreeOne] (t020)--(h010);
\draw[aThreeTwo] (h001)--(h000);
\draw[aThreeTwo] (h100)--(h000);
\draw[aThreeTwo] (h010)--(h000);

\draw[aThreeOne]   (t002)--(left1);
\draw[aThreeTwo]   (h001)--(left2);
\draw[aThreeOne]   (t101)--(left4);
\draw[aThreeThree] (h000)--(left3);
\draw[aThreeTwo]   (h100)--(left5);
\draw[aThreeOne]   (t200)--(left6);

\draw[aThreeOne]   (t002)--(right1);
\draw[aThreeTwo]   (h001)--(right2);
\draw[aThreeOne]   (t011)--(right3);
\draw[aThreeThree] (h000)--(right4);
\draw[aThreeTwo]   (h010)--(right5);
\draw[aThreeOne]   (t020)--(right6);

\draw[aThreeOne]   (t200)--(bottom1);
\draw[aThreeTwo]   (h100)--(bottom2);
\draw[aThreeOne]   (t110)--(bottom3);
\draw[aThreeThree] (h000)--(bottom4);
\draw[aThreeTwo]   (h010)--(bottom5);
\draw[aThreeOne]   (t020)--(bottom6);

\node[aThreeT] at (t002) {};
\node[aThreeT] at (t101) {};
\node[aThreeT] at (t011) {};
\node[aThreeT] at (t200) {};
\node[aThreeT] at (t110) {};
\node[aThreeT] at (t020) {};

\node[aThreeH] at (h001) {};
\node[aThreeH] at (h100) {};
\node[aThreeH] at (h010) {};
\node[aThreeH] at (h000) {};

\node[aThreeVertexLabel,above=1.25mm of t002] {$(0,0,2,0)$};
\node[aThreeVertexLabel,above left=0.80mm and 0.60mm of t101]
  {$(1,0,1,0)$};
\node[aThreeVertexLabel,right=1.20mm of t011] {$(0,1,1,0)$};
\node[aThreeVertexLabel,left=1.20mm of t200]  {$(2,0,0,0)$};
\node[aThreeVertexLabel,below=1.10mm of t110] {$(1,1,0,0)$};
\node[aThreeVertexLabel,right=1.20mm of t020] {$(0,2,0,0)$};
\node[aThreeVertexLabel,right=1.20mm of h001] {$(0,0,1,1)$};
\node[aThreeVertexLabel,left=1.20mm of h100]  {$(1,0,0,1)$};
\node[aThreeVertexLabel,right=1.20mm of h010] {$(0,1,0,1)$};
\node[aThreeVertexLabel,left=1.20mm of h000]  {$(0,0,0,2)$};

\node[aThreeBoundaryLabel,rotate=60] at (-4.70,6.45)
  {input: $\boldsymbol\Delta=121321$};
\node[aThreeBoundaryLabel,rotate=-60] at (4.70,6.45)
  {input: $\boldsymbol\Delta=121321$};
\node[aThreeBoundaryLabel] at (0,-0.45)
  {output: $\boldsymbol\Delta=121321$};
\end{tikzpicture}}
 \par\smallskip
 {\small\textup{(a)} Tetrahedral projection}
\end{minipage}\hfill
\begin{minipage}[c]{0.46\textwidth}
 \centering
 \resizebox{\linewidth}{!}{
\begin{tikzpicture}[
  x=0.53cm,
  y=0.36cm,
  aThreeString/.style={line width=0.92pt,line cap=round,line join=round},
  aThreeStringOne/.style={aThreeString,draw=wqteal},
  aThreeStringTwo/.style={aThreeString,draw=wqorange,
    dash pattern=on 2.8pt off 1.35pt},
  aThreeStringThree/.style={aThreeString,draw=wqblue,
    dash pattern=on 0.72pt off 1.22pt},
  aThreeStringT/.style={circle,draw=black!82,fill=blue!12,
    minimum size=2.15mm,inner sep=0pt,line width=0.54pt},
  aThreeStringH/.style={circle,draw=black!82,fill=white,
    minimum size=3.05mm,inner sep=0pt,line width=0.64pt},
  aThreeStringVertexLabel/.style={font=\tiny,fill=white,
    inner xsep=0.55pt,inner ysep=0.25pt,text=black!82},
  aThreeStringBoundary/.style={font=\tiny,text=black!78},
  aThreeStringWord/.style={font=\scriptsize,text=black!84}
]
\path[use as bounding box] (-6.30,-1.70) rectangle (7.15,17.70);

\foreach \name/\x in {
  b0/-5.5,b1/-4.5,b2/-3.5,b3/-2.5,b4/-1.5,b5/-.5,
  b6/.5,b7/1.5,b8/2.5,b9/3.5,b10/4.5,b11/5.5}{
  \coordinate (\name) at (\x,16);}
\foreach \name/\x in {e0/-2.5,e1/-1.5,e2/-.5,e3/.5,e4/1.5,e5/2.5}{
  \coordinate (\name) at (\x,0);}

\coordinate (v1)  at ( 0.0,15);
\coordinate (v2)  at ( 0.0,14);
\coordinate (v4)  at (-2.0,11.5);
\coordinate (v5)  at ( 1.0,11.5);
\coordinate (v7)  at ( 0.0, 8);
\coordinate (v8)  at (-2.0, 6.5);
\coordinate (v9)  at ( 2.0, 6.5);
\coordinate (v11) at (-3.5, 2);
\coordinate (v12) at (-0.35,2);
\coordinate (v13) at ( 2.5, 2);

\coordinate (v3)  at ($(v2)!60/89!(v4)$);
\coordinate (v6)  at ($(v5)!1/6!(v9)$);
\coordinate (v10) at ($(v9)!305/421!(v12)$);

\foreach \u/\v in {
  b5/v1,b6/v1,v1/v2,b2/v4,v2/v3,v3/v4,v2/v5,b8/v5,
  v4/v8,v5/v6,v6/v9,b0/v11,v8/v11,v8/v12,
  v9/v10,v10/v12,v9/v13,b11/v13,
  v11/e0,v12/e2,v13/e5}{
  \draw[aThreeStringOne] (\u)--(\v);}

\foreach \u/\v in {
  b4/v2,b7/v2,v2/v7,v7/v8,v7/v9,b1/v8,b10/v9,
  v8/e1,v9/e4}{
  \draw[aThreeStringTwo] (\u)--(\v);}

\foreach \u/\v in {
  b3/v3,v3/v7,b9/v6,v6/v7,v7/v10,v10/e3}{
  \draw[aThreeStringThree] (\u)--(\v);}

\foreach \v in {v1,v4,v5,v11,v12,v13}{\node[aThreeStringT] at (\v) {};}
\foreach \v in {v2,v7,v8,v9}{\node[aThreeStringH] at (\v) {};}

\node[aThreeStringVertexLabel,right=0.80mm of v1]  {$(0,0,2,0)$};
\node[aThreeStringVertexLabel,right=0.80mm of v2]  {$(0,0,1,1)$};
\node[aThreeStringVertexLabel,left=0.80mm of v4]   {$(1,0,1,0)$};
\node[aThreeStringVertexLabel,right=0.80mm of v5]  {$(0,1,1,0)$};
\node[aThreeStringVertexLabel,right=0.80mm of v7]  {$(0,0,0,2)$};
\node[aThreeStringVertexLabel,left=0.80mm of v8]   {$(1,0,0,1)$};
\node[aThreeStringVertexLabel,right=0.80mm of v9]  {$(0,1,0,1)$};
\node[aThreeStringVertexLabel,left=0.80mm of v11]  {$(2,0,0,0)$};
\node[aThreeStringVertexLabel,left=0.80mm of v12]  {$(1,1,0,0)$};
\node[aThreeStringVertexLabel,right=0.80mm of v13] {$(0,2,0,0)$};

\node[aThreeStringWord] at (0,17.35)
  {$\boldsymbol\Delta\boldsymbol\Delta$};
\foreach \x/\c in {
  -5.5/1,-4.5/2,-3.5/1,-2.5/3,-1.5/2,-.5/1,
  .5/1,1.5/2,2.5/1,3.5/3,4.5/2,5.5/1}{
  \node[aThreeStringBoundary,above=0.45mm] at (\x,16) {$\c$};}
\foreach \x/\c in {-2.5/1,-1.5/2,-.5/1,.5/3,1.5/2,2.5/1}{
  \node[aThreeStringBoundary,below=0.45mm] at (\x,0) {$\c$};}
\node[aThreeStringWord] at (0,-1.35) {$\boldsymbol\Delta$};
\end{tikzpicture}}
 \par\smallskip
 {\small\textup{(b)} String diagram}
\end{minipage}
\caption{An \(A_3\) weave
\(\fW : \boldsymbol\Delta\boldsymbol\Delta\to\boldsymbol\Delta\), where
\(\boldsymbol\Delta=121321\), drawn as the tetrahedral projection and
the string diagram.}
\label{fig:a3-tetrahedral-weave}

\bigskip

\centering
\begingroup
\newcommand{\AThreeSweepCoordinates}[2]{%
  \coordinate (#1top)    at ($(#2)+(0,4)$);
  \coordinate (#1right)  at ($(#2)+(4,0)$);
  \coordinate (#1bottom) at ($(#2)+(0,-4)$);
  \coordinate (#1left)   at ($(#2)+(-4,0)$);
  \coordinate (#1m13pos)  at ($(#2)+(-1,3)$);
  \coordinate (#1p13pos)  at ($(#2)+(1,3)$);
  \coordinate (#1p22pos)  at ($(#2)+(2,2)$);
  \coordinate (#1p31pos)  at ($(#2)+(3,1)$);
  \coordinate (#1p3m1pos) at ($(#2)+(3,-1)$);
  \coordinate (#1p2m2pos) at ($(#2)+(2,-2)$);
  \coordinate (#1p1m3pos) at ($(#2)+(1,-3)$);
  \coordinate (#1m1m3pos) at ($(#2)+(-1,-3)$);
  \coordinate (#1m2m2pos) at ($(#2)+(-2,-2)$);
  \coordinate (#1m3m1pos) at ($(#2)+(-3,-1)$);
  \coordinate (#1m31pos)   at ($(#2)+(-3,1)$);
  \coordinate (#1m22pos)   at ($(#2)+(-2,2)$);
  \coordinate (#1z2pos)   at ($(#2)+(0,2)$);
  \coordinate (#1m11pos)  at ($(#2)+(-1,1)$);
  \coordinate (#1p11pos)  at ($(#2)+(1,1)$);
  \coordinate (#1m20pos)  at ($(#2)+(-2,0)$);
  \coordinate (#1z0pos)   at ($(#2)+(0,0)$);
  \coordinate (#1p20pos)  at ($(#2)+(2,0)$);
  \coordinate (#1m1m1pos) at ($(#2)+(-1,-1)$);
  \coordinate (#1p1m1pos) at ($(#2)+(1,-1)$);
  \coordinate (#1zm2pos)  at ($(#2)+(0,-2)$);
}

\newcommand{\AThreeSweepTriangleGrid}[3]{%
  \foreach \k in {0,...,4}{%
    \pgfmathsetmacro{\aThreeSweepFraction}{\k/4}
    \draw[aThreeSweepGrid]
      ($(#1)!\aThreeSweepFraction!(#2)$)--
      ($(#1)!\aThreeSweepFraction!(#3)$);
    \draw[aThreeSweepGrid]
      ($(#2)!\aThreeSweepFraction!(#1)$)--
      ($(#2)!\aThreeSweepFraction!(#3)$);
    \draw[aThreeSweepGrid]
      ($(#3)!\aThreeSweepFraction!(#1)$)--
      ($(#3)!\aThreeSweepFraction!(#2)$);
  }
}

\newcommand{\AThreeSweepSourceNodes}{%
  \foreach \v in {m13,p13,p22,p31,m31,m22}{%
    \node[aThreeSweepFrozen] (L\v) at (L\v pos) {};}
  \foreach \v in {z2,m11,p11,m20,z0,p20,m1m1,p1m1,zm2}{%
    \node[aThreeSweepMutable] (L\v) at (L\v pos) {};}
}

\newcommand{\AThreeSweepTargetNodes}{%
  \foreach \v in {m13,p13,p22,p31,m31,m22}{%
    \node[aThreeSweepFrozen] (R\v) at (R\v pos) {};}
  \foreach \v in {z2,m11,p11,m20,z0,p20,m1m1,p1m1,zm2}{%
    \node[aThreeSweepMutable] (R\v) at (R\v pos) {};}
}

\newcommand{\AThreeSweepSourceArrows}{%
  \foreach \u/\v in {
    m20/m31,m1m1/m20,m22/m20,m20/m11,zm2/m1m1,
    m11/m1m1,m1m1/z0,p1m1/zm2,p20/p1m1,z0/p1m1,
    p1m1/p11,p31/p20,p11/p20,p20/p22,m11/m22,
    z0/m11,m13/m11,m11/z2,p11/z0,p22/p11,z2/p11,
    p11/p13,z2/m13,p13/z2}{%
    \draw[aThreeSweepArrow] (L\u)--(L\v);}
  \foreach \u/\v in {m31/m22,p22/p31,m22/m13,p13/p22}{%
    \draw[aThreeSweepHalfArrow] (L\u)--(L\v);}
}

\newcommand{\AThreeSweepTargetArrows}{%
  \foreach \u/\v in {
    zm2/m1m1,p1m1/zm2,p1m1/z0,p20/p1m1,p20/p31,
    m1m1/p1m1,m1m1/m20,z0/m1m1,z0/p11,m20/m11,
    m31/m20,m11/z0,m11/m31,m11/z2,p11/p20,
    p11/m11,p11/p22,p31/p11,m22/m11,z2/p11,
    z2/m22,z2/p13,p22/z2,m13/z2,p13/m13}{%
    \draw[aThreeSweepArrow] (R\u)--(R\v);}
  \foreach \u/\v in {m31/m22,m22/m13,p22/p31,p13/p22}{%
    \draw[aThreeSweepHalfArrow] (R\u)--(R\v);}
}

\begin{tikzpicture}[
  x=0.52cm,
  y=0.52cm,
  aThreeSweepGrid/.style={draw=black!16,line width=0.34pt},
  aThreeSweepBoundary/.style={draw=black!56,line width=0.72pt,
    line cap=round,line join=round},
  aThreeSweepDiagonal/.style={draw=wqorange,line width=0.86pt,
    line cap=round},
  aThreeSweepArrow/.style={wqlocalarrow,line width=0.50pt,
    -{Stealth[length=1.18mm,width=0.84mm]}},
  aThreeSweepHalfArrow/.style={aThreeSweepArrow,wqhalfedge},
  aThreeSweepMutable/.style={circle,wqqvertex,fill=white,
    minimum size=1.90mm,inner sep=0pt,line width=0.52pt},
  aThreeSweepFrozen/.style={rectangle,wqqvertex,fill=teal!6,
    minimum size=1.90mm,inner sep=0pt,line width=0.52pt},
  aThreeSweepPanel/.style={font=\scriptsize,text=black!82},
  aThreeSweepMap/.style={wqmaparrow},
  aThreeSweepSequence/.style={font=\scriptsize,text=black!88,
    align=left,inner sep=0pt}
]
\path[use as bounding box] (-13.8,-5.35) rectangle (13.8,4.8);
\coordinate (leftCenter) at (-9,0);
\coordinate (rightCenter) at (9,0);
\AThreeSweepCoordinates{L}{leftCenter}
\AThreeSweepCoordinates{R}{rightCenter}

\AThreeSweepTriangleGrid{Lleft}{Lbottom}{Ltop}
\AThreeSweepTriangleGrid{Lbottom}{Lright}{Ltop}
\AThreeSweepTriangleGrid{Rleft}{Rright}{Rtop}
\AThreeSweepTriangleGrid{Rleft}{Rbottom}{Rright}
\draw[aThreeSweepBoundary]
  (Ltop)--(Lright)--(Lbottom)--(Lleft)--cycle;
\draw[aThreeSweepBoundary]
  (Rtop)--(Rright)--(Rbottom)--(Rleft)--cycle;
\draw[aThreeSweepDiagonal] (Ltop)--(Lbottom);
\draw[aThreeSweepDiagonal] (Rleft)--(Rright);

\AThreeSweepSourceNodes
\AThreeSweepTargetNodes
\AThreeSweepSourceArrows
\AThreeSweepTargetArrows

\node[aThreeSweepPanel,anchor=north] at ($(leftCenter)+(0,-4.55)$)
  {initial $Q(\boldsymbol\Delta\boldsymbol\Delta)$};
\node[aThreeSweepPanel,anchor=north] at ($(rightCenter)+(0,-4.55)$)
  {final $Q^{\mathrm{sweep}}(\fW)$};

\draw[aThreeSweepMap] (-3.35,0)--(3.35,0);
\node[font=\small,fill=white,inner xsep=1.5pt,inner ysep=0.8pt]
  at (0,0.58) {sweep $\mu_\fW$};
\node[font=\scriptsize,fill=white,inner xsep=1.2pt,inner ysep=0.6pt]
  at (0,-0.57)
  {$\boldsymbol\Delta\boldsymbol\Delta\longrightarrow\boldsymbol\Delta$};

\end{tikzpicture}
\endgroup
\caption{The sweep mutation sequence for \(\fW:121321\,121321\to121321\).}
\label{fig:a3-sweep-quiver}
\end{figure}

\subsection{Relation with local systems on a tetrahedron}
\label{subsec:tetrahedron-local-systems}

The \(N\)-decomposition subdivides each ideal tetrahedron of a triangulated
\(3\)-manifold into \(\binom{N+1}{3}\) octahedra labeled by
\((a,b,c,d)\in\mathbb N^4\) with \(a+b+c+d=N-2\).  It gives coordinates on
framed flat \(\mathrm{PGL}_N(\C)\)-local systems.  Each octahedron carries
shape parameters, which satisfy the higher-rank form of Thurston's gluing
equations
\cite{GaroufalidisGoernerZickert2015,DimofteGabellaGoncharov2016}.
Also, each octahedron contributes one Ptolemy relation
for the Ptolemy coordinates \cite{GaroufalidisThurstonZickert2015}.  
A flip of an ideal triangulation is a sequence of mutations, one at
each octahedron \cite{FockGoncharov2006,DimofteGabellaGoncharov2016}.
A sequence of mutations realizing a flip is used to study the Ptolemy relations
in non-simply-laced rank two cases as well \cite{Zickert2020}, but without a combinatorial object
like the \(N\)-decomposition.

For type \(A_{N-1}\), let
\(\boldsymbol\Delta:=(1)(21)\cdots((N-1)(N-2)\cdots1)\), a reduced word
for \(w_0\), and let
\[
 \fW:\boldsymbol\Delta\boldsymbol\Delta\longrightarrow\boldsymbol\Delta
\]
be the Demazure weave of Figure~\ref{fig:a3-tetrahedral-weave}.
Projecting the \(N\)-decomposition of a tetrahedron from one vertex onto the
opposite face, we see that the octahedra correspond to the vertices of \(\fW\) other than the
commutations. The octahedron \((a,b,c,d)\) becomes the vertex at which the
strands of colors \(d\) and \(d+1\) meet, trivalent when \(d=0\) and six-valent
otherwise, and neighboring octahedra are joined by the strands of \(\fW\).
The exchange relation of the sweep mutation at a vertex of \(\fW\) is the
Ptolemy relation of the corresponding octahedron.  In this way \(\mu_\fW\) realizes the
flip.  See Figure~\ref{fig:a3-sweep-quiver} for \(N=4\).  In particular \(\mu_\fW\) consists of
\(\binom N2+\binom N3=\binom{N+1}{3}\) mutations, one for each octahedron,
since a contraction and a type \(A_2\) braid move each contribute one mutation
and a commutation none.

A weave of the type
\(\boldsymbol\Delta\boldsymbol\Delta\to\boldsymbol\Delta\) thus appears as a
combinatorial object generalizing the \(N\)-decomposition, and it is available
outside type \(A\) as well.  For example, a weave for type \(C_3\) is shown in
Figure~\ref{fig:c3-tetrahedral-weave}.  We will discuss this topic in more detail
elsewhere.

\begin{figure}[htbp]
\centering
\resizebox{0.84\linewidth}{!}{
\begingroup
\colorlet{cThreeTeal}{teal!70!black}
\colorlet{cThreeOrange}{orange!82!black}
\colorlet{cThreeBlue}{blue!68!black}

\begin{tikzpicture}[
  x=1cm,y=1cm,
  cThreeStrand/.style={line width=0.92pt,line cap=round,line join=round},
  cThreeOne/.style={cThreeStrand,draw=cThreeTeal},
  cThreeTwo/.style={cThreeStrand,draw=cThreeOrange,
    dash pattern=on 3.0pt off 1.45pt},
  cThreeThree/.style={cThreeStrand,draw=cThreeBlue,
    dash pattern=on 0.78pt off 1.35pt},
  cThreeFrame/.style={draw=black!24,line width=0.52pt},
  cThreeContraction/.style={circle,draw=black!82,fill=blue!12,
    minimum size=2.45mm,inner sep=0pt,line width=0.56pt},
  cThreeBraidThree/.style={circle,draw=black!82,fill=white,
    minimum size=3.35mm,inner sep=0pt,line width=0.68pt},
  cThreeBraidFour/.style={circle,draw=black!82,fill=white,double,
    double distance=0.48pt,minimum size=3.80mm,inner sep=0pt,
    line width=0.52pt},
  cThreeBoundary/.style={font=\footnotesize,fill=white,
    inner xsep=1.2pt,inner ysep=0.7pt,text=black!84},
  cThreeLegend/.style={font=\scriptsize,text=black!78}
]
\path[use as bounding box] (-9.30,-1.82) rectangle (9.30,12.90);
\coordinate (leftCorner) at (-8.350,0);
\coordinate (topCorner) at (0,12.550);
\coordinate (rightCorner) at (8.350,0);
\draw[cThreeFrame] (leftCorner)--(topCorner)--(rightCorner)--cycle;

\draw[cThreeOne] (-5.010,5.020) -- (-4.153,3.886);
\draw[cThreeOne] (0.835,11.295) -- (-1.212,8.184);
\draw[cThreeOne] (-2.505,8.785) -- (-1.212,8.184);
\draw[cThreeOne] (-7.515,1.255) -- (-5.337,2.853);
\draw[cThreeOne] (-4.153,3.886) -- (-5.337,2.853);
\draw[cThreeOne] (-1.212,8.184) -- (-1.071,4.757);
\draw[cThreeOne] (5.845,3.765) -- (3.104,5.257);
\draw[cThreeOne] (3.340,7.530) -- (3.104,5.257);
\draw[cThreeOne] (-1.071,4.757) -- (-3.271,3.387);
\draw[cThreeOne] (-4.153,3.886) -- (-3.271,3.387);
\draw[cThreeOne] (3.104,5.257) -- (0.864,3.434);
\draw[cThreeOne] (-1.071,4.757) -- (0.864,3.434);
\draw[cThreeOne] (-3.271,3.387) -- (-3.535,2.098);
\draw[cThreeOne] (-5.337,2.853) -- (-5.343,1.553);
\draw[cThreeOne] (-3.535,2.098) -- (-5.343,1.553);
\draw[cThreeOne] (0.864,3.434) -- (0.717,1.292);
\draw[cThreeOne] (-3.535,2.098) -- (-1.151,0.791);
\draw[cThreeOne] (0.717,1.292) -- (-1.151,0.791);
\draw[cThreeOne] (0.717,1.292) -- (3.340,0.000);
\draw[cThreeOne] (-5.343,1.553) -- (-6.680,0.000);
\draw[cThreeOne] (-1.151,0.791) -- (-1.670,0.000);

\draw[cThreeTwo] (-1.670,10.040) -- (-1.212,8.184);
\draw[cThreeTwo] (-4.175,6.275) -- (-2.612,5.924);
\draw[cThreeTwo] (-1.212,8.184) -- (-2.612,5.924);
\draw[cThreeTwo] (-6.680,2.510) -- (-4.153,3.886);
\draw[cThreeTwo] (-2.612,5.924) -- (-4.153,3.886);
\draw[cThreeTwo] (-1.212,8.184) -- (0.629,7.497);
\draw[cThreeTwo] (1.670,10.040) -- (0.629,7.497);
\draw[cThreeTwo] (4.175,6.275) -- (3.104,5.257);
\draw[cThreeTwo] (0.629,7.497) -- (1.397,5.971);
\draw[cThreeTwo] (3.104,5.257) -- (1.397,5.971);
\draw[cThreeTwo] (0.629,7.497) to[bend left=-4.20] (0.523,5.435);
\draw[cThreeTwo] (1.397,5.971) -- (0.523,5.435);
\draw[cThreeTwo] (-2.612,5.924) -- (-1.071,4.757);
\draw[cThreeTwo] (0.523,5.435) -- (-1.071,4.757);
\draw[cThreeTwo] (3.104,5.257) -- (3.796,3.004);
\draw[cThreeTwo] (6.680,2.510) -- (3.796,3.004);
\draw[cThreeTwo] (0.523,5.435) -- (0.864,3.434);
\draw[cThreeTwo] (0.864,3.434) -- (2.607,2.579);
\draw[cThreeTwo] (3.796,3.004) -- (2.607,2.579);
\draw[cThreeTwo] (-1.071,4.757) -- (-0.835,2.833);
\draw[cThreeTwo] (0.864,3.434) -- (-0.835,2.833);
\draw[cThreeTwo] (-4.153,3.886) -- (-3.535,2.098);
\draw[cThreeTwo] (-0.835,2.833) -- (-3.535,2.098);
\draw[cThreeTwo] (2.607,2.579) -- (3.073,1.673);
\draw[cThreeTwo] (3.796,3.004) to[bend left=-4.20] (3.073,1.673);
\draw[cThreeTwo] (-0.835,2.833) -- (0.717,1.292);
\draw[cThreeTwo] (3.073,1.673) -- (0.717,1.292);
\draw[cThreeTwo] (-3.535,2.098) -- (-5.010,0.000);
\draw[cThreeTwo] (0.717,1.292) -- (0.000,0.000);
\draw[cThreeTwo] (3.073,1.673) -- (5.010,0.000);

\draw[cThreeThree] (-5.845,3.765) -- (-2.612,5.924);
\draw[cThreeThree] (-0.835,11.295) -- (0.629,7.497);
\draw[cThreeThree] (-3.340,7.530) -- (-2.612,5.924);
\draw[cThreeThree] (-2.612,5.924) -- (-0.088,6.478);
\draw[cThreeThree] (2.505,8.785) -- (0.629,7.497);
\draw[cThreeThree] (0.629,7.497) -- (-0.088,6.478);
\draw[cThreeThree] (5.010,5.020) -- (3.796,3.004);
\draw[cThreeThree] (-0.088,6.478) -- (0.523,5.435);
\draw[cThreeThree] (0.629,7.497) to[bend left=4.20] (0.523,5.435);
\draw[cThreeThree] (-2.612,5.924) -- (-0.835,2.833);
\draw[cThreeThree] (0.523,5.435) -- (3.796,3.004);
\draw[cThreeThree] (3.796,3.004) -- (5.567,1.833);
\draw[cThreeThree] (7.515,1.255) -- (5.567,1.833);
\draw[cThreeThree] (0.523,5.435) -- (-0.835,2.833);
\draw[cThreeThree] (-0.835,2.833) -- (3.073,1.673);
\draw[cThreeThree] (3.796,3.004) to[bend left=4.20] (3.073,1.673);
\draw[cThreeThree] (3.073,1.673) -- (5.293,1.077);
\draw[cThreeThree] (5.567,1.833) -- (5.293,1.077);
\draw[cThreeThree] (-0.835,2.833) -- (-3.340,0.000);
\draw[cThreeThree] (3.073,1.673) -- (1.670,0.000);
\draw[cThreeThree] (5.293,1.077) -- (6.680,0.000);

\node[cThreeBraidThree] (v03) at (-1.212,8.184) {};
\node[cThreeBraidFour] (v04) at (-2.612,5.924) {};
\node[cThreeBraidThree] (v05) at (-4.153,3.886) {};
\node[cThreeContraction] (v06) at (-5.337,2.853) {};
\node[cThreeBraidFour] (v08) at (0.629,7.497) {};
\node[cThreeContraction] (v09) at (-0.088,6.478) {};
\node[cThreeBraidThree] (v11) at (3.104,5.257) {};
\node[cThreeContraction] (v12) at (1.397,5.971) {};
\node[cThreeBraidFour] (v13) at (0.523,5.435) {};
\node[cThreeBraidThree] (v14) at (-1.071,4.757) {};
\node[cThreeContraction] (v16) at (-3.271,3.387) {};
\node[cThreeBraidFour] (v18) at (3.796,3.004) {};
\node[cThreeContraction] (v19) at (5.567,1.833) {};
\node[cThreeBraidThree] (v21) at (0.864,3.434) {};
\node[cThreeContraction] (v22) at (2.607,2.579) {};
\node[cThreeBraidFour] (v23) at (-0.835,2.833) {};
\node[cThreeBraidThree] (v24) at (-3.535,2.098) {};
\node[cThreeContraction] (v25) at (-5.343,1.553) {};
\node[cThreeBraidFour] (v27) at (3.073,1.673) {};
\node[cThreeContraction] (v28) at (5.293,1.077) {};
\node[cThreeBraidThree] (v30) at (0.717,1.292) {};
\node[cThreeContraction] (v31) at (-1.151,0.791) {};

\node[cThreeBoundary,rotate=56.4] at (-5.93,6.55) {input: $\boldsymbol\Delta=123123123$};
\node[cThreeBoundary,rotate=-56.4] at (5.93,6.55) {input: $\boldsymbol\Delta=123123123$};
\node[cThreeBoundary] at (0,-0.43) {output: $\boldsymbol\Delta=123123123$};

\node[cThreeContraction] at (-5.95,-1.25) {};
\node[cThreeLegend,anchor=west] at (-5.68,-1.25) {$ii\to i$};
\node[cThreeBraidThree] at (-1.45,-1.25) {};
\node[cThreeLegend,anchor=west] at (-1.15,-1.25) {$iji\to jij$};
\node[cThreeBraidFour] at (3.75,-1.25) {};
\node[cThreeLegend,anchor=west] at (4.08,-1.25) {$ijij\to jiji$};
\end{tikzpicture}
\endgroup}
\caption{A \(C_3\) weave
\(\fW : \boldsymbol\Delta\boldsymbol\Delta\to\boldsymbol\Delta\), where
\(\boldsymbol\Delta=123123123\), drawn as the tetrahedral projection.}
\label{fig:c3-tetrahedral-weave}
\end{figure}

\paragraph*{Declaration of AI use.}
The author used GPT 5.6 Sol in preparing this paper: to help refine the
mathematical formulation, to draw the TikZ figures, and to help write the text in
English.  The author takes full responsibility for the
contents of this paper.

\paragraph*{Acknowledgments.}
This project originated in discussions with Tsukasa Ishibashi on the local
systems on a tetrahedron.  The author also thanks him for his many comments on
preliminary versions of this paper.  The author was supported by the Irish
Research Council Advanced Laureate Award IRCLA/2023/1934 held by Robert
Osburn.

\section{Basic setup}
\label{sec:basic-setup}

\subsection{Algebraic groups and flags}
For a finite set \(I\), write \(\Word(I)\) for the set of finite words over
\(I\), and write \(\len(\mathbf a):=m\) for the number of letters in a word
\(\mathbf a=a_1\cdots a_m\).

Let \(\groupG\) be a connected simply connected semisimple algebraic group of
finite type over \(\C\), 
let \(\borelB_+,\borelB_-\subset\groupG\) be an opposite pair of Borel subgroups,
and let \(\torusH:=\borelB_+ \cap \borelB_-\) be a maximal torus.  Write 
\(\unipotentU_\pm\) for the unipotent radicals of \(\borelB_\pm\).  
Let \(I\)
be the index set of simple roots relative to \((\borelB_+,\torusH)\), and call
its elements \emph{colors}.  Write \(\alpha_i\in X^*(\torusH)\),
\(\alpha_i^\vee\in X_*(\torusH)\), and \(s_i\) for the simple root, simple
coroot, and simple reflection, respectively.  For each \(i\in I\), choose
\(\varphi_i:\mathrm{SL}_2\to\groupG\), and set
\[
 x_i(t):=\varphi_i\!\begin{pmatrix}1&t\\0&1\end{pmatrix},\qquad
 y_i(t):=\varphi_i\!\begin{pmatrix}1&0\\t&1\end{pmatrix},\qquad
 \alpha_i^\vee(a):=\varphi_i\!\begin{pmatrix}a&0\\0&a^{-1}\end{pmatrix}.
\]
We identify the quotients \(\borelB_\pm/\unipotentU_\pm\) with \(\torusH\).
Define the Cartan matrix and its symmetrizer by
\[
 c_{ij}:=\langle\alpha_i,\alpha_j^\vee\rangle,\qquad
 d_ic_{ij}=d_jc_{ji},\qquad d_i\in\mathbb Z_{>0}
\]
and normalize \((d_i)\) to be primitive on each simple component.  Let
\(W=N_\groupG(\torusH)/\torusH\) be the Weyl group, and denote by
\(\len:W\to\mathbb N\) the length function with respect to the simple
reflections \((s_i)_{i\in I}\).  Set
\[
 \dot s_i:=x_i(-1)y_i(1)x_i(-1).
\]
By the braid relations, for a reduced expression
\(w=s_{i_1}\cdots s_{i_m}\), the element
\[
 \dot w:=\dot s_{i_1}\cdots\dot s_{i_m}
\]
is independent of the expression.  Also set
\[
 B_i:\mathbb G_a\longrightarrow\groupG,
 \qquad t\longmapsto x_i(t)\dot s_i.
\]
Let \(w_0\) be the longest element of \(W\).  Write the transpose as
\[
 (-)^{\mathsf t}:\groupG\longrightarrow\groupG,
 \qquad
 x_i(t)^{\mathsf t}=y_i(t),\quad
 y_i(t)^{\mathsf t}=x_i(t),\quad
 h^{\mathsf t}=h\quad(h\in\torusH).
\]

Define the flag varieties and their decorated versions by
\begin{gather*}
 \Aplus:=\groupG/\unipotentU_+,\qquad
 \Aminus:=\unipotentU_-\backslash\groupG,\qquad
 \Bplus:=\groupG/\borelB_+,\qquad
 \Bminus:=\borelB_-\backslash\groupG,
\\
 \pi_\pm:\mathcal A_\pm\longrightarrow\mathcal B_\pm,\qquad
 \pi_+(g\unipotentU_+)=g\borelB_+,\quad
 \pi_-(\unipotentU_-g)=\borelB_-g.
\end{gather*}
We call points of \(\mathcal A_\pm\) plus/minus decorated flags, call points
of \(\mathcal B_\pm\) plus/minus flags, and call \(\pi_\pm(\flagA)\) the
underlying flag of \(\flagA\).  For \(\epsilon\in\{+,-\}\) and
\(\flagB\in\mathcal B_\epsilon\), an element
\(\flagA\in\mathcal A_\epsilon\) satisfying \(\pi_\epsilon(\flagA)=\flagB\)
is called a \emph{decoration} of \(\flagB\).  We write \(\pi\) when the sign is
clear.  We also denote by the superscript \(\mathsf t\) the isomorphisms
between these spaces induced by the transpose:
\[
 \begin{aligned}
  (g\unipotentU_+)^{\mathsf t}&:=\unipotentU_-g^{\mathsf t},&
  (\unipotentU_-g)^{\mathsf t}&:=g^{\mathsf t}\unipotentU_+,\\
  (g\borelB_+)^{\mathsf t}&:=\borelB_-g^{\mathsf t},&
  (\borelB_-g)^{\mathsf t}&:=g^{\mathsf t}\borelB_+.
 \end{aligned}
\]
We write the natural left actions of \(\groupG\) on
\(\mathcal A_+,\mathcal B_+\) and the natural right actions of the same group on
\(\mathcal A_-,\mathcal B_-\), placing the dot on the side of the action, as
\[
 \begin{gathered}
  a\mathbin{\cdot}(g\unipotentU_+):=ag\unipotentU_+,
  \qquad
  a\mathbin{\cdot}(g\borelB_+):=ag\borelB_+,\\
  (\unipotentU_-g)\mathbin{\cdot}a:=\unipotentU_-ga,
  \qquad
  (\borelB_-g)\mathbin{\cdot}a:=\borelB_-ga
 \end{gathered}
 \qquad(a,g\in\groupG).
\]
On a product containing plus and minus factors, we use the diagonal left
action in which \(a\) acts from the left on the plus factors and \(a^{-1}\)
acts from the right on the minus factors.  We further equip the decorated flag
spaces with the decoration actions of \(\torusH\),
\[
 (g\unipotentU_+)\mathbin{\cdot}h:=gh\unipotentU_+,
 \qquad
 h\mathbin{\cdot}(\unipotentU_-g):=\unipotentU_-hg
 \qquad(h\in\torusH).
\]
These actions commute with the \(\groupG\)-actions above.  Define the
fundamental weights by \(\langle\omega_i,\alpha_j^\vee\rangle=\delta_{ij}\).

For every \(w\in W\), define the relative position of two flags of the same
sign by
\[
 \begin{aligned}
  \pos_+(g\borelB_+,h\borelB_+)=w
  &\quad\Longleftrightarrow\quad
  g^{-1}h\in\borelB_+w\borelB_+,\\
  \pos_-(\borelB_-g,\borelB_-h)=w
  &\quad\Longleftrightarrow\quad
  gh^{-1}\in\borelB_-w\borelB_-.
 \end{aligned}
\]
For \(\epsilon\in\{+,-\}\), when the sign is clear, we also write
\(\flagB\xrightarrow{w}\flagB'\) for
\(\pos_\epsilon(\flagB,\flagB')=w\), and when only the plus side is used, we
abbreviate \(\pos:=\pos_+\).  When \(w\) occurs in a flag coset or in a
conjugate, we use any representative in \(N_{\groupG}(\torusH)\).  The
resulting cosets and subgroups are independent of the representative.  Two
flags of the same sign whose relative position is \(w_0\) are called
\emph{generic}.  For a minus flag \(\flagB=\borelB_-x\) and a plus flag
\(\flagB'=y\borelB_+\), define
\[
 \flagB\pitchfork \flagB'
 \quad\Longleftrightarrow\quad
 xy\in \borelB_-\borelB_+,
\]
and also call two flags of opposite signs satisfying this relation generic.

For a word \(\mathbf p=p_1\cdots p_m\in\Word(I)\), set
\[
 \mathbf p^{\mathrm{op}}:=p_m\cdots p_1.
\]
Define the Demazure product \(\operatorname{dem}:\Word(I)\to W\) recursively by
\[
 \operatorname{dem}(\varnothing):=e,\qquad
 \operatorname{dem}(\mathbf a i):=
 \begin{cases}
  \operatorname{dem}(\mathbf a)s_i,
  &\len(\operatorname{dem}(\mathbf a)s_i)=\len(\operatorname{dem}(\mathbf a))+1,\\
  \operatorname{dem}(\mathbf a),&\text{otherwise}
 \end{cases}.
\]
Here \(e\in W\) is the identity element.

For a word \(\mathbf p=p_1\cdots p_m\in\Word(I)\), a flag chain of type
\(\mathbf p\) is the following \(\Bplus\)-valued sequence, for which we also
use the indicated abbreviation:
\[
 (\flagB^0\xrightarrow{\mathbf p}\flagB^m)
 :=
  (\flagB^0\xrightarrow{s_{p_1}}\flagB^1
  \xrightarrow{s_{p_2}}\cdots
  \xrightarrow{s_{p_m}}\flagB^m).
\]
We similarly define a minus-side \(\Bminus\)-flag chain by
\(\flagB_0,\ldots,\flagB_m\in\Bminus\) and
\(\pos_-(\flagB_{a-1},\flagB_a)=s_{p_a}\ (1\le a\le m)\).

\begin{lemma}[Uniqueness of a flag chain of reduced type]
\label{lem:reduced-flag-chain}
Let \(\epsilon\in\{+,-\}\) and \(u\in W\), and let
\(\mathbf u\in\Word(I)\) be a reduced expression of \(u\).  If
\(\pos_\epsilon(\flagB,\flagB')=u\), then there exists a unique
\(\mathcal B_\epsilon\)-flag chain of type \(\mathbf u\) joining the two flags
\(\flagB,\flagB'\), and it depends algebraically on
\(\flagB,\flagB'\).
\end{lemma}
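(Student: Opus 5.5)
We reduce to the plus side and to an explicit normal form, then induct on \(\len(u)\).  The transpose isomorphisms \((-)^{\mathsf t}\) interchange \(\Bplus\) and \(\Bminus\).  For \(g,h\in\groupG\) we have
\[
 (gh^{-1})^{\mathsf t}=(h^{-1})^{\mathsf t}g^{\mathsf t}=(h^{\mathsf t})^{-1}g^{\mathsf t},
\]
and \((-)^{\mathsf t}\) maps \(\borelB_-\) to \(\borelB_+\) and \(\dot w\) into \(N_\groupG(\torusH)\) representing \(w^{-1}\).  Hence \(\pos_-(\flagB,\flagB')=w\) if and only if \(\pos_+(\flagB'^{\mathsf t},\flagB^{\mathsf t})=w^{-1}\).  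So a minus chain of type \(\mathbf u\) from \(\flagB\) to \(\flagB'\) corresponds bijectively, and algebraically, to a plus chain of type \(\mathbf u^{\mathrm{op}}\) from \(\flagB'^{\mathsf t}\) to \(\flagB^{\mathsf t}\).  Since \(\mathbf u^{\mathrm{op}}\) is a reduced expression of \(u^{-1}\), it suffices to treat \(\epsilon=+\).

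On the plus side, \(\groupG\) acts transitively on \(\Bplus\) and preserves relative positions.  Using the map \(B_i\), the plus flags \(\flagB'\) with \(\pos(\borelB_+,\flagB')=s_i\) are exactly \(B_i(t)\borelB_+\) with \(t\in\mathbb G_a\), each occurring for a unique \(t\).  This is the rank-one Bruhat cell \(\borelB_+s_i\borelB_+/\borelB_+\cong\unipotentU_{\alpha_i}\dot s_i\borelB_+\).  More generally, for a reduced word \(\mathbf u=u_1\cdots u_m\), the product map
\[
 (t_1,\dots,t_m)\longmapsto B_{u_1}(t_1)\cdots B_{u_m}(t_m)\borelB_+
\]
is an isomorphism \(\mathbb A^m\xrightarrow{\sim}\borelB_+u\borelB_+/\borelB_+\).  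This is the standard parametrization of Schubert cells by the root subgroups \(\unipotentU_+\cap \dot u\unipotentU_-\dot u^{-1}\).

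Existence then follows directly.  Write \(\flagB=g\borelB_+\) and \(\flagB'=g\,B_{u_1}(t_1)\cdots B_{u_m}(t_m)\borelB_+\), and set
\[
 \flagB^a:=g\,B_{u_1}(t_1)\cdots B_{u_a}(t_a)\borelB_+ .
\]
Consecutive flags then have relative position \(s_{u_a}\).

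Uniqueness goes by induction on \(m\), and this is the main point.  Suppose \(\flagB^0,\dots,\flagB^m\) is any chain of type \(\mathbf u\).  The multiplicativity \(\borelB_+ s_i\borelB_+\cdot\borelB_+w\borelB_+=\borelB_+s_iw\borelB_+\) when \(\len(s_iw)=\len(w)+1\) gives \(\pos(\flagB^1,\flagB^m)=s_{u_2}\cdots s_{u_m}\), and the same rule gives \(\pos(\flagB^0,\flagB^m)=u\).  Translating by \(g^{-1}\), we may assume \(\flagB^0=\borelB_+\).  Then \(\flagB^1=B_{u_1}(t)\borelB_+\) for some \(t\).  Since \(\flagB^m\) lies in \(B_{u_1}(t)\,\borelB_+u'\borelB_+/\borelB_+\) with \(u':=s_{u_2}\cdots s_{u_m}\), the uniqueness of the coordinates \((t_1,\dots,t_m)\) in the cell isomorphism forces \(t=t_1\).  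Hence \(\flagB^1\) is determined by \(\flagB^0\) and \(\flagB^m\).  The induction hypothesis applied to \(\flagB^1,\flagB^m\) and the reduced word \(u_2\cdots u_m\) determines the rest of the chain.

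Algebraic dependence comes from the fact that the inverse of the cell isomorphism is a morphism.  Globally, \(\{(\flagB,\flagB'):\pos=u\}\) is the \(\groupG\)-homogeneous space \(\groupG\times^{\borelB_+}(\borelB_+u\borelB_+/\borelB_+)\).  The map \((\flagB,\flagB')\mapsto\flagB^1\) is \(\groupG\)-equivariant and regular on the fiber over \(\borelB_+\), hence regular.  Iterating gives the whole chain.

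The only step that needs care is the claim that the first coordinate \(t_1\) is determined by \(\flagB'\), which is where reducedness of \(\mathbf u\) enters.  I would justify it through the multiplicativity of Bruhat cells and the standard fact that \(\unipotentU_{\alpha_{u_1}}\times(\borelB_+u'\borelB_+/\borelB_+)\to\borelB_+u\borelB_+/\borelB_+\), \((x,\flagB'')\mapsto x\dot s_{u_1}\cdot\flagB''\), is an isomorphism.
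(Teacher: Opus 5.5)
Your proof is correct. The paper gives no argument of its own for this lemma; it only cites \cite[Lemma~2.5]{ShenWeng2021}. So what you have written is a self-contained version of the Bruhat-cell argument that the citation stands in for, not a competing route.

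The transpose reduction checks out. \((-)^{\mathsf t}\) is an anti-automorphism with \(\dot s_i^{\mathsf t}=\dot s_i^{-1}\), so \(\pos_-(\flagB,\flagB')=w\) if and only if \(\pos_+((\flagB')^{\mathsf t},\flagB^{\mathsf t})=w^{-1}\). Hence minus chains of type \(\mathbf u\) correspond to plus chains of type \(\mathbf u^{\mathrm{op}}\), which is how the paper treats minus sides elsewhere.

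On the plus side you use the chart \((t_1,\dots,t_m)\mapsto B_{u_1}(t_1)\cdots B_{u_m}(t_m)\borelB_+\). This is the same parametrization that underlies \eqref{eq:component-bruhat-chart} and \eqref{eq:endpoint-factorization}, and it gives an explicit formula for the chain.

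Two steps can be streamlined:
\begin{itemize}
\item Uniqueness needs no induction. Any chain of type \(\mathbf u\) starting at \(\borelB_+\) is forced, one rank-one step at a time, to be \(\flagB^a=B_{u_1}(t'_1)\cdots B_{u_a}(t'_a)\borelB_+\). Injectivity of the cell map for the reduced word then gives \(t'=t\).
\item For algebraic dependence you need not descend along \(\groupG\times^{\borelB_+}(\borelB_+u\borelB_+/\borelB_+)\). Take a local section \(\kappa\) of \(\groupG\to\Bplus\) near \(\flagB\), and write \(\flagB^a=\kappa(\flagB)B_{u_1}(t_1)\cdots B_{u_a}(t_a)\borelB_+\), where \((t_1,\dots,t_m)\) are the cell coordinates of \(\kappa(\flagB)^{-1}\flagB'\). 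These local formulas are visibly regular, and they glue by uniqueness.
\end{itemize}
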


\begin{proof}
See \cite[Lemma~2.5]{ShenWeng2021}.
\end{proof}

\subsection{Basic properties of decorated flags}

Write
\[
 \Delta_{\omega_i}^{\groupG}:\groupG\longrightarrow\mathbb A^1
\]
for the generalized minor obtained from the highest-weight matrix coefficient
in the \(i\)-th fundamental representation.  Define the resulting pair
function, invariant under the diagonal \(\groupG\)-action above, by
\[
 \Delta_i:\Aminus\times\Aplus\longrightarrow\mathbb A^1,\qquad
 \Delta_i(\unipotentU_-x,y\unipotentU_+)
 :=\Delta_{\omega_i}^{\groupG}(xy)
\]
\cite[Section~2.1 and Definition~3.20]{ShenWeng2021}.

\begin{lemma}[Cartan semi-invariance of fundamental generalized minors]
\label{lem:minor-cartan-semi-invariance}
For every \(\flagA\in\Aminus\), \(\flagA'\in\Aplus\), and
\(h,h'\in\torusH\),
\[
 \Delta_i\bigl(
  h\mathbin{\cdot}\flagA,
  \flagA'\mathbin{\cdot}h'
 \bigr)
 =
 \omega_i(h)\omega_i(h')
 \Delta_i(\flagA,\flagA').
\]
\end{lemma}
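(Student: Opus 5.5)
The plan is to unwind the definitions and reduce everything to the standard fact that the highest-weight matrix coefficient is a character on the torus from either side.

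Write \(\flagA=\unipotentU_-x\) and \(\flagA'=y\unipotentU_+\). By the definition of the decoration actions, \(h\mathbin{\cdot}\flagA=\unipotentU_-hx\) and \(\flagA'\mathbin{\cdot}h'=yh'\unipotentU_+\). Hence
\[
 \Delta_i(h\mathbin{\cdot}\flagA,\flagA'\mathbin{\cdot}h')
 =\Delta^{\groupG}_{\omega_i}(hxyh').
\]
It therefore suffices to prove
\[
 \Delta^{\groupG}_{\omega_i}(hgh')=\omega_i(h)\,\omega_i(h')\,\Delta^{\groupG}_{\omega_i}(g)
 \qquad(g\in\groupG,\ h,h'\in\torusH).
\]
Well-definedness of \(\Delta_i\) on cosets is assumed from its definition; it is the special case of \(\unipotentU_\mp\)-invariance below.

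To prove this, realize \(\Delta^{\groupG}_{\omega_i}(g)=\langle v_{\omega_i}^*,\,g\,v_{\omega_i}\rangle\). Here \(v_{\omega_i}\) is a highest-weight vector of the \(i\)-th fundamental representation \(V(\omega_i)\) and \(v^*_{\omega_i}\) is the dual functional, normalized so that \(\langle v^*_{\omega_i},v_{\omega_i}\rangle=1\). The vector \(v_{\omega_i}\) is a \(\torusH\)-weight vector of weight \(\omega_i\) fixed by \(\unipotentU_+\), and dually \(v^*_{\omega_i}\) is a weight covector of weight \(\omega_i\) fixed by \(\unipotentU_-\) acting on the right. Then
\[
 \langle v^*_{\omega_i},hgh'v_{\omega_i}\rangle
 =\omega_i(h')\langle v^*_{\omega_i}h,\,gv_{\omega_i}\rangle
 =\omega_i(h)\omega_i(h')\langle v^*_{\omega_i},gv_{\omega_i}\rangle.
\]

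The only point needing care is the convention check. We must confirm that the covector \(v^*_{\omega_i}\) transforms under right multiplication by \(h\) by \(\omega_i(h)\) and not \(\omega_i(h)^{-1}\). This holds because \(v^*_{\omega_i}\) is the coordinate functional on the \(\omega_i\)-weight line, which is one-dimensional. So for any vector \(w\) we have \(\langle v^*_{\omega_i},hw\rangle=\omega_i(h)\langle v^*_{\omega_i},w\rangle\), by projecting \(w\) onto weight spaces. We also need the identification \(\borelB_\pm/\unipotentU_\pm\cong\torusH\) to be compatible with this, which is immediate since the torus here is literally \(\torusH\subset\groupG\).

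Alternatively, one can cite \cite[Section~2.1]{ShenWeng2021} for the bi-semi-invariance of \(\Delta^{\groupG}_{\omega_i}\), but the direct argument above is short enough to give in full.
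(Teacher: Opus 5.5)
Your proof is correct and follows the same route as the paper. The paper also unwinds the decoration actions to reach \(\Delta^{\groupG}_{\omega_i}(hxyh')\) and then applies the left and right \(\torusH\)-semi-invariance of the generalized minor. The only difference is that the paper quotes this semi-invariance, while you verify it directly from the highest-weight matrix coefficient; your verification, including the sign convention for \(v^*_{\omega_i}\), is correct.
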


\begin{proof}
Writing \(\flagA=\unipotentU_-x\) and
\(\flagA'=y\unipotentU_+\), the left and right
\(\torusH\)-semi-invariance of the generalized minor gives
\[
 \Delta_i(\unipotentU_-h_-x,yh_+\unipotentU_+)
 =\Delta_{\omega_i}^{\groupG}(h_-xyh_+)
 =\omega_i(h_-)\omega_i(h_+)\Delta_{\omega_i}^{\groupG}(xy).
\]
\end{proof}

\subsection{Quivers and seeds}

\begin{definition}[Quiver]
\label{def:quiver}
In this paper, a quiver is data consisting of a finite set \(I\), a subset
\(I_{\mathrm{uf}}\subset I\), a map
\[
 \eps:I\times I\longrightarrow\mathbb Q,
\]
and \(d=(d_i)_{i\in I}\in\mathbb Z_{>0}^{I}\), written as
\[
 Q=(I,I_{\mathrm{uf}},\eps,d),
\]
such that, for every \(i,j\in I\),
\[
 i\in I_{\mathrm{uf}}\ \text{or}\ j\in I_{\mathrm{uf}}
 \quad\Longrightarrow\quad \eps_{ij}\in\mathbb Z,
 \qquad
 d_i\eps_{ij}=-d_j\eps_{ji}.
\]
Write these four components as
\[
 I(Q):=I,\qquad I_{\mathrm{uf}}(Q):=I_{\mathrm{uf}},\qquad
 \eps^Q:=\eps,\qquad d^Q:=d,
\]
and call them the vertex set, mutable-vertex set, exchange matrix, and
multiplier, respectively.  We call
\(I_{\mathrm f}(Q):=I(Q)\setminus I_{\mathrm{uf}}(Q)\) the frozen-vertex set.
Thus \(\eps^Q_{ij}\) can be a nonintegral rational number only when
\(i,j\) are both frozen vertices.
\end{definition}

For a finite set \(S\), write
\[
 \Tcal_S
 :=
 \operatorname{Spec}
 \mathbb C[A_s^{\pm1}\mid s\in S]
\]
for the algebraic torus with standard coordinates \((A_s)_{s\in S}\).  Define
the algebraic torus associated with a quiver \(Q\) by
\(\Tcal_Q:=\Tcal_{I(Q)}\).

\begin{definition}[Freezing and defrosting]
\label{def:freeze-defrost}
For a quiver \(Q\) and \(D\subset I_{\mathrm{uf}}(Q)\), define
\[
 \operatorname{freeze}_D(Q)
 :=\bigl(I(Q),I_{\mathrm{uf}}(Q)\setminus D,\eps^Q,d^Q\bigr).
\]
On the other hand, a subset \(D\subset I(Q)\setminus I_{\mathrm{uf}}(Q)\) of
frozen vertices is \emph{defrostable} in \(Q\) if, for every
\(i,j\in I(Q)\),
\begin{equation}
 i\in D\ \text{or}\ j\in D
 \quad\Longrightarrow\quad \eps^Q_{ij}\in\mathbb Z.
 \label{eq:defrost-integrality}
\end{equation}
For a defrostable \(D\), define
\[
 \operatorname{defrost}_D(Q)
 :=\bigl(I(Q),I_{\mathrm{uf}}(Q)\cup D,\eps^Q,d^Q\bigr).
\]
Because the entries of the exchange matrix of \(Q\) incident to mutable
vertices are already integral, condition~\eqref{eq:defrost-integrality} is
necessary and sufficient for this quadruple to be a quiver.
\end{definition}

\begin{definition}[Quiver amalgamation]
\label{def:quiver-amalgamation}
Let \((Q_\alpha)_{\alpha\in A}\) be a family of quivers indexed by a finite
set \(A\).  Let \(J\) be a finite set, let \(F\subseteq J\), and let
\[
 \iota_\alpha:I(Q_\alpha)\longrightarrow J
 \quad(\alpha\in A),
 \qquad
 d=(d_x)_{x\in J}\in\mathbb Z_{>0}^{J}.
\]
Assume that
\[
 \iota_\alpha\bigl(I_{\mathrm f}(Q_\alpha)\bigr)\subseteq F,
 \qquad
 d^{Q_\alpha}_u=d_{\iota_\alpha(u)}
 \qquad(\alpha\in A,\ u\in I(Q_\alpha)),
\]
and that, for \(\alpha,\beta\in A\),
\(u\in I_{\mathrm{uf}}(Q_\alpha)\), and \(v\in I(Q_\beta)\),
\[
 \iota_\alpha(u)=\iota_\beta(v)
 \quad\Longrightarrow\quad
 \alpha=\beta\ \text{ and }\ u=v.
\]
Define the amalgamation along these maps by
\[
 Q:=\bigast_{\alpha\in A}Q_\alpha
 :=(J,J\setminus F,\eps^{\mathrm{am}},d),
\]
where
\begin{equation}
 \eps^{\mathrm{am}}_{xy}
 :=
 \sum_{\alpha\in A}
 \sum_{\substack{u\in I(Q_\alpha)\\ \iota_\alpha(u)=x}}
 \sum_{\substack{v\in I(Q_\alpha)\\ \iota_\alpha(v)=y}}
 \eps^{Q_\alpha}_{uv}
 \qquad(x,y\in J).
 \label{eq:quiver-amalgamation-sum}
\end{equation}
If \(x\notin F\) or \(y\notin F\), every summand in
\eqref{eq:quiver-amalgamation-sum} is integral.  The multiplier condition also
gives
\(d_x\eps^{\mathrm{am}}_{xy}=-d_y\eps^{\mathrm{am}}_{yx}\), so this is a
quiver.

If \(D\) is a common frozen subset of \(Q_1,Q_2\) and their multipliers agree
on \(D\), write \(Q_1\ast_DQ_2\) for this amalgamation with
\[
 J=I(Q_1)\sqcup_D I(Q_2),
 \qquad
 F=I_{\mathrm f}(Q_1)\sqcup_D I_{\mathrm f}(Q_2),
\]
and the canonical maps from the two vertex sets.
\end{definition}

\begin{definition}[Seed]
\label{def:seed}
A seed is a triple \(\Sigma=(Q,\Tcal_\Sigma,(A_i)_{i\in I(Q)})\), where \(Q\)
is a quiver, \(\Tcal_\Sigma\) is an algebraic torus, and
\((A_i)_{i\in I(Q)}\) is a \(\mathbb Z\)-basis of
\(X^*(\Tcal_\Sigma)\).  Thus
\[
 \C[\Tcal_\Sigma]
 =
 \C[A_i^{\pm1}\mid i\in I(Q)].
\]
Write the coordinate basis as the map
\[
 \mathbf A:I(Q)\longrightarrow X^*(\Tcal_\Sigma),
 \qquad i\longmapsto A_i.
\]
Under the identification \(\Tcal_\Sigma\cong\Tcal_Q\) induced by this basis,
we also write \(\Sigma=(Q,\mathbf A)\).  In particular, write \(\Sigma_Q\) for
the seed consisting of a quiver \(Q\) and the standard coordinate basis of
\(\Tcal_Q\).
\end{definition}

For a seed \(\Sigma\), write
\[
 I(\Sigma):=I(Q),\qquad
 I_{\mathrm{uf}}(\Sigma):=I_{\mathrm{uf}}(Q),\qquad
 I_{\mathrm f}(\Sigma):=I(Q)\setminus I_{\mathrm{uf}}(Q),
 \qquad \eps^\Sigma:=\eps^Q.
\]
We extend freezing on the quiver part, and defrosting for a defrostable \(D\),
to seeds by retaining the torus and coordinate basis, and write the results as
\(\operatorname{freeze}_D(\Sigma)\) and
\(\operatorname{defrost}_D(\Sigma)\), respectively.  The direct sum
\(\Sigma_1\sqcup\Sigma_2\) of two seeds is defined by the product of the tori
and the direct sums of the character bases and exchange matrices, with the
entries of the exchange matrix between different factors set to zero.  A
frozen Laurent monomial is a function of the form
\[
 \prod_{f\in I_{\mathrm f}(\Sigma)}A_f^{n_f},
 \qquad n_f\in\mathbb Z.
\]

\begin{definition}[Realization of a seed on a scheme]
A realization of a seed \(\Sigma=(Q,\mathbf A)\) on a scheme \(Y\) is a
birational map
\[
 \mathbf A_\Sigma:Y\dashrightarrow\Tcal_\Sigma.
\]
For each character \(A_i:\Tcal_\Sigma\to\Gm\), write its pullback as
\[
 A_i^\Sigma:= \mathbf A_\Sigma^* A_i : Y\dashrightarrow\Gm.
\]
\end{definition}

\begin{definition}[Pushforward of a seed realization]
For a birational map \(g:Y\dashrightarrow Y'\) and a realization
\(\mathbf A_\Sigma\) of a seed \(\Sigma\) on \(Y\), define its pushforward by
\[
 g_*\mathbf A_\Sigma
 :=\mathbf A_\Sigma\circ g^{-1}
 :Y'\dashrightarrow\Tcal_\Sigma.
\]
This is a realization of \(\Sigma\) on \(Y'\).  When the realization
\(\mathbf A_\Sigma\) is fixed, write \(g_*\Sigma\) for the seed equipped with
this pushforward.
\end{definition}

\begin{definition}[Quiver mutation and relabeling]
\label{def:mutation-relabeling}
For a quiver \(Q\) and \(k\in I_{\mathrm{uf}}(Q)\), set
\[
 I\bigl(\mu_k(Q)\bigr):=I(Q),\qquad
 I_{\mathrm{uf}}\bigl(\mu_k(Q)\bigr):=I_{\mathrm{uf}}(Q),\qquad
 d^{\mu_k(Q)}:=d^Q,
\]
and define the remaining component by
\[
 \eps^{\mu_k(Q)}_{ij}
 =
 \begin{cases}
  -\eps^Q_{ij},
    &i=k\ \text{or}\ j=k,\\[1mm]
  \eps^Q_{ij}
  +\dfrac{|\eps^Q_{ik}|\eps^Q_{kj}
          +\eps^Q_{ik}|\eps^Q_{kj}|}{2},
    &i,j\ne k
 \end{cases}.
\]
We also write \(\mu_k\) for the corresponding rational coordinate
transformation of seed \(\mathcal A\)-tori,
\[
 \mu_k:\Tcal_Q\dashrightarrow\Tcal_{\mu_k(Q)},
\]
defined by
\[
 \mu_k^*(A'_i)=A_i\quad(i\ne k),\qquad
 \mu_k^*(A'_k)
 =
 A_k^{-1}\bigg(
  \prod_{\eps^Q_{ki}>0}A_i^{\eps^Q_{ki}}
  +
  \prod_{\eps^Q_{ki}<0}A_i^{-\eps^Q_{ki}}
 \bigg).
\]

For a bijection of sets \(\rho:I(Q)\xrightarrow{\sim}I'\), define the
relabeling \(\rho_*Q\) by
\[
 I(\rho_*Q)=I',\qquad
 I_{\mathrm{uf}}(\rho_*Q)=\rho(I_{\mathrm{uf}}(Q)),\qquad
 d^{\rho_*Q}_{\rho(i)}=d^Q_i,\qquad
 \eps^{\rho_*Q}_{\rho(i),\rho(j)}=\eps^Q_{ij}.
\]
Call the corresponding seed-torus isomorphism
\[
 \rho_{\Tcal}:\Tcal_Q\xrightarrow{\sim}\Tcal_{\rho_*Q},
 \qquad
 \rho_{\Tcal}^*(A'_{\rho(i)})=A_i
\]
the relabeling map.
\end{definition}

\section{Decomposition of decorated flag configurations}
\label{sec:flag-product}

\subsection{Double Bott--Samelson cells and braid varieties}
\label{subsec:double-moduli}

\begin{definition}[Decorated double Bott--Samelson cell]
\label{def:double-bs-cell}
For words \(\mathbf p=p_1\cdots p_m\) and
\(\mathbf q=q_1\cdots q_n\), call
\begin{equation}
 \Conf(\mathbf p,\mathbf q)
 :=
 \groupG\backslash
 \left\{
 (\flagA^0;\flagB^\bullet;\flagB_\bullet;\flagA_n)
 \ \middle|\
 \begin{gathered}
  \flagA^0\in\Aplus,\quad
  \flagA_n\in\Aminus, \\
  \flagB^\bullet=(\flagB^0,\ldots,\flagB^m)\in\Bplus^{m+1},\\
  \flagB_\bullet=(\flagB_0,\ldots,\flagB_n)\in\Bminus^{n+1},\\
  \pi_+(\flagA^0)=\flagB^0,\quad \pi_-(\flagA_n)=\flagB_n,\\
  \flagB^0\xrightarrow{s_{p_1}}\flagB^1
  \xrightarrow{s_{p_2}}\cdots
  \xrightarrow{s_{p_m}}\flagB^m,\\
  \flagB_0\xrightarrow{s_{q_1}}\flagB_1
  \xrightarrow{s_{q_2}}\cdots
  \xrightarrow{s_{q_n}}\flagB_n,\\
  \flagB_0\pitchfork\flagB^0,\quad \flagB_n\pitchfork\flagB^m
 \end{gathered}
 \right\}
 \label{eq:definition-double-conf}
\end{equation}
the decorated double Bott--Samelson cell.  The quotient uses the diagonal
\(\groupG\)-action defined above.  This corresponds to Shen--Weng's
\(\Conf^{\,\mathbf p}_{\mathbf q}(\mathcal A_{\mathrm{sc}})\)
\cite[Definition~2.21 and Remark~2.22]{ShenWeng2021}.
\end{definition}

\begin{definition}[Half-decorated double Bott--Samelson cells]
\label{def:half-conf}
For words \(\mathbf p=p_1\cdots p_m\) and
\(\mathbf q=q_1\cdots q_n\), define
\begin{equation}
 \Confhalf^+(\mathbf p,\mathbf q)
 :=
 \groupG\backslash
 \left\{
 (\flagA^0;\flagB^\bullet;\flagB_\bullet)
 \ \middle|\
 \substack{
  \text{the conditions in~\eqref{eq:definition-double-conf}}\\
  \text{that do not involve }\flagA_n
  }
 \right\},
 \label{eq:definition-half-conf-plus}
\end{equation}
and
\begin{equation}
 \Confhalf^-(\mathbf p,\mathbf q)
 :=
 \groupG\backslash
 \left\{
 (\flagB^\bullet;\flagB_\bullet;\flagA_n)
 \ \middle|\
 \substack{
  \text{the conditions in~\eqref{eq:definition-double-conf}}\\
  \text{that do not involve }\flagA^0
  }
 \right\}.
 \label{eq:definition-half-conf-minus}
\end{equation}
Write the maps that forget the terminal minus decoration and the initial plus
decoration, respectively, as
\begin{align*}
 \operatorname{half}_+:
 \Conf(\mathbf p,\mathbf q)
 &\longrightarrow\Confhalf^+(\mathbf p,\mathbf q),\\
 \operatorname{half}_-:
 \Conf(\mathbf p,\mathbf q)
 &\longrightarrow\Confhalf^-(\mathbf p,\mathbf q).
\end{align*}
\end{definition}

\begin{figure}[htbp]
\centering
\begin{minipage}{\textwidth}
\centering
\(\Conf(\mathbf p,\mathbf q)\)

\smallskip
\begin{tikzcd}[column sep=2.5em,row sep=2.3em]
 \flagA^0
 \arrow[r,"s_{p_1}"]
 \arrow[d,dash,"\pitchfork" description]
 & \flagB^1 \arrow[r]
 & \cdots \arrow[r]
 & \flagB^{m-1} \arrow[r,"s_{p_m}"]
 & \flagB^m \arrow[d,dash,"\pitchfork" description]
 \\
 \flagB_0 \arrow[r,"s_{q_1}"']
 & \flagB_1 \arrow[r]
 & \cdots \arrow[r]
 & \flagB_{n-1} \arrow[r,"s_{q_n}"']
 & \flagA_n
\end{tikzcd}

\medskip
\(\Confhalf^+(\mathbf p,\mathbf q)\)

\smallskip
\begin{tikzcd}[column sep=2.5em,row sep=2.3em]
 \flagA^0
 \arrow[r,"s_{p_1}"]
 \arrow[d,dash,"\pitchfork" description]
 & \flagB^1 \arrow[r]
 & \cdots \arrow[r]
 & \flagB^{m-1} \arrow[r,"s_{p_m}"]
 & \flagB^m \arrow[d,dash,"\pitchfork" description]
 \\
 \flagB_0 \arrow[r,"s_{q_1}"']
 & \flagB_1 \arrow[r]
 & \cdots \arrow[r]
 & \flagB_{n-1} \arrow[r,"s_{q_n}"']
 & \flagB_n
\end{tikzcd}

\medskip
\(\Confhalf^-(\mathbf p,\mathbf q)\)

\smallskip
\begin{tikzcd}[column sep=2.5em,row sep=2.3em]
 \flagB^0
 \arrow[r,"s_{p_1}"]
 \arrow[d,dash,"\pitchfork" description]
 & \flagB^1 \arrow[r]
 & \cdots \arrow[r]
 & \flagB^{m-1} \arrow[r,"s_{p_m}"]
 & \flagB^m \arrow[d,dash,"\pitchfork" description]
 \\
 \flagB_0 \arrow[r,"s_{q_1}"']
 & \flagB_1 \arrow[r]
 & \cdots \arrow[r]
 & \flagB_{n-1} \arrow[r,"s_{q_n}"']
 & \flagA_n
\end{tikzcd}
\end{minipage}
\caption{Flag configurations for the double Bott--Samelson cell and its two
half-decorated versions.}
\label{fig:double-bs-configurations}
\end{figure}

\begin{definition}[Double Bott--Samelson transpose]
\label{def:double-bs-transposition}
Define the transpose of flag sequences by
\[
 (\flagB^\bullet)^{\mathsf t}
 :=\bigl((\flagB^m)^{\mathsf t},\ldots,(\flagB^0)^{\mathsf t}\bigr),
 \qquad
 (\flagB_\bullet)^{\mathsf t}
 :=\bigl((\flagB_n)^{\mathsf t},\ldots,(\flagB_0)^{\mathsf t}\bigr).
\]
\[
\begin{aligned}
 (-)^{\mathsf t}:
 \Conf(\mathbf p,\mathbf q)
 &\xrightarrow{\sim}
 \Conf(\mathbf q^{\mathrm{op}},\mathbf p^{\mathrm{op}}),
 \\
 [\flagA^0;\flagB^\bullet;\flagB_\bullet;\flagA_n]
 &\longmapsto
 \bigl[(\flagA_n)^{\mathsf t};
       (\flagB_\bullet)^{\mathsf t};
       (\flagB^\bullet)^{\mathsf t};
       (\flagA^0)^{\mathsf t}\bigr]
\end{aligned}
\]
is the transposition map \cite[Subsection~2.3]{ShenWeng2021}.  The same
operation induces the isomorphism between half-decorated versions
\begin{equation}
\begin{aligned}
 (-)^{\mathsf t}:
 \Confhalf^+(\mathbf p,\mathbf q)
 &\xrightarrow{\sim}
 \Confhalf^-(\mathbf q^{\mathrm{op}},
             \mathbf p^{\mathrm{op}}),\\
 [\flagA^0;\flagB^\bullet;\flagB_\bullet]
 &\longmapsto
 \bigl[(\flagB_\bullet)^{\mathsf t};
       (\flagB^\bullet)^{\mathsf t};
       (\flagA^0)^{\mathsf t}\bigr].
\end{aligned}
 \label{eq:half-conf-transposition}
\end{equation}
\end{definition}

\begin{definition}[Subspaces determined by endpoint relative positions]
\label{def:endpoint-strata}
For every \(u,v\in W\), set
\[
 \begin{aligned}
 \Conf(\mathbf p,\mathbf q)_{u,*}
 &:=%
 \left\{[\flagA^0;\flagB^\bullet;\flagB_\bullet;\flagA_n]
 \in\Conf(\mathbf p,\mathbf q)\ \middle|\
 \pos_+(\flagB^0,\flagB^m)=u\right\},\\
 \Conf(\mathbf p,\mathbf q)_{*,v}
 &:=%
 \left\{[\flagA^0;\flagB^\bullet;\flagB_\bullet;\flagA_n]
 \in\Conf(\mathbf p,\mathbf q)\ \middle|\
 \pos_-(\flagB_0,\flagB_n)=v\right\},\\
 \Conf(\mathbf p,\mathbf q)_{u,v}
 &:=%
 \Conf(\mathbf p,\mathbf q)_{u,*}
 \cap
 \Conf(\mathbf p,\mathbf q)_{*,v}.
\end{aligned}
\]
Here \(*\) indicates that no endpoint condition is imposed for that subscript.
For the half-decorated versions, use the same endpoint conditions to define
\(\Confhalf^\epsilon(\mathbf p,\mathbf q)_{u,*}\),
\(\Confhalf^\epsilon(\mathbf p,\mathbf q)_{*,v}\), and
\(\Confhalf^\epsilon(\mathbf p,\mathbf q)_{u,v}\)
\((\epsilon\in\{+,-\})\).
\end{definition}

Set
\[
 (\Aplus\times\Bplus)_{w_0}
 :=\{(\flagA,\flagB)\in\Aplus\times\Bplus
       \mid\pos(\pi(\flagA),\flagB)=w_0\}
\]
and call its elements \emph{pinning pairs}.  The diagonal \(\groupG\)-action is
simply transitive on the space \((\Aplus\times\Bplus)_{w_0}\) of pinning
pairs, and its orbit map
\begin{equation}
 \groupG\xrightarrow{\sim}(\Aplus\times\Bplus)_{w_0},
 \qquad
 g\longmapsto(g\unipotentU_+,gw_0\borelB_+)
 \label{eq:plus-opposite-pair-orbit}
\end{equation}
is an isomorphism.  Using this isomorphism, for each \(w\in W\), define the
\(\groupG\)-equivariant morphism
\[
 \partial_w:
 (\Aplus\times\Bplus)_{w_0}
 \longrightarrow
 \Bplus,
 \qquad
 \partial_w(g\unipotentU_+,gw_0\borelB_+)
 :=gw\borelB_+.
\]
Then
\[
 \pos\bigl(\pi(\flagA),\partial_w(\flagA,\flagB)\bigr)=w.
\]

\begin{definition}[Braid variety]
\label{def:braid-variety}
For a word \(\mathbf p=p_1\cdots p_m\), let
\(u=\operatorname{dem}(\mathbf p)\), and call
\begin{equation}
 X(\mathbf p):=
 \groupG\backslash
 \left\{
 (\flagA,\flagB;\flagB^\bullet)
 \ \middle|\
 \begin{gathered}
  (\flagA,\flagB)\in(\Aplus\times\Bplus)_{w_0},\\
  \flagB^\bullet=(\flagB^0,\ldots,\flagB^m)\in\Bplus^{m+1},\\
  \flagB^0=\pi(\flagA),\quad
  \flagB^m=\partial_u(\flagA,\flagB),\\
  \flagB^0\xrightarrow{s_{p_1}}\flagB^1
  \xrightarrow{s_{p_2}}\cdots
  \xrightarrow{s_{p_m}}\flagB^m
 \end{gathered}
 \right\}
 \label{eq:braid-variety}
\end{equation}
the braid variety associated with \(\mathbf p\)
\cite[Subsection~3.3]{CasalsEtAlBraidVarieties}.
\end{definition}

\begin{definition}[Root-subgroup coordinates on a braid variety]
\label{def:endpoint-factorization}
By~\eqref{eq:plus-opposite-pair-orbit}, a point of \(X(\mathbf p)\) can be
written uniquely as
\[
 [\unipotentU_+,w_0\borelB_+;\flagB^\bullet].
\]
Then \(\flagB^0=\borelB_+\), and
\begin{equation}
 \flagB^a
 =B_{p_1}(z_1)\cdots B_{p_a}(z_a)\borelB_+
  \qquad(1\le a\le m)
 \label{eq:endpoint-factorization}
\end{equation}
defines unique algebraic functions
\(z_a:X(\mathbf p)\to\mathbb A^1\).
\end{definition}

\subsection{Decomposition of flag configurations}
\label{subsec:double-flag-decomposition}

For \(w\in W\), set
\[
 \unipotentU_+(w):=\unipotentU_+\cap w\unipotentU_-w^{-1}.
\]
The Bruhat decomposition gives an isomorphism of algebraic varieties
\begin{equation}
 \unipotentU_+(w)
 \xrightarrow{\sim}
 \{\flagB\in\Bplus\mid\pos_+(\borelB_+,\flagB)=w\},
 \qquad
 n\longmapsto nw\borelB_+.
 \label{eq:component-bruhat-chart}
\end{equation}
There is also an isomorphism of algebraic varieties
\begin{equation}
 \groupG
 \xrightarrow{\sim}
 \{(\flagA,\flagB)\in\Aplus\times\Bminus
     \mid \flagB\pitchfork\pi_+(\flagA)\},
 \qquad
 g\longmapsto(g\unipotentU_+,\borelB_-g^{-1}).
 \label{eq:component-opposite-pair-isomorphism}
\end{equation}

Let \(\mathbf p=p_1\cdots p_m\) and \(\mathbf q=q_1\cdots q_n\) be words.

\begin{definition}[Braid-variety factor maps]
\label{def:double-braid-factors}
Set \(u:=\operatorname{dem}(\mathbf p)\) and
\(v:=\operatorname{dem}(\mathbf q)\).  First define the plus-side
braid-variety factor map
\[
 \operatorname{braid}_+:
 \Confhalf^+(\mathbf p,\mathbf q)_{u,*}
 \longrightarrow X(\mathbf p)
\]
as follows.  For
\(x=[\flagA^0;\flagB^\bullet;\flagB_\bullet]
\in\Confhalf^+(\mathbf p,\mathbf q)_{u,*}\), write the unique elements
\(g\in\groupG\) and \(n\in\unipotentU_+(u)\) determined
by~\eqref{eq:component-opposite-pair-isomorphism}
and~\eqref{eq:component-bruhat-chart} as
\[
 \flagA^0=g \unipotentU_+,
 \qquad
 \flagB_0=\borelB_-g^{-1},
 \qquad
 \flagB^m=g n u\borelB_+
\]
and set
\begin{equation}
 \operatorname{braid}_+(x)
 :=[\flagA^0,g n w_0\borelB_+;\flagB^\bullet].
 \label{eq:braid-plus-definition}
\end{equation}
Next, using the transpose isomorphism in~\eqref{eq:half-conf-transposition},
\[
 (-)^{\mathsf t}:
 \Confhalf^-(\mathbf p,\mathbf q)_{*,v}
 \xrightarrow{\sim}
 \Confhalf^+(\mathbf q^{\mathrm{op}},
             \mathbf p^{\mathrm{op}})_{v^{-1},*}
\]
define the minus-side braid-variety factor map
\begin{equation}
 \begin{aligned}
 \operatorname{braid}_-:
 \Confhalf^-(\mathbf p,\mathbf q)_{*,v}
 \longrightarrow X(\mathbf q^{\mathrm{op}}),\quad
 x\longmapsto
 \operatorname{braid}_+
 \bigl(x^{\mathsf t}\bigr).
 \end{aligned}
 \label{eq:braid-minus-definition}
\end{equation}
Below, we also write \(\operatorname{braid}_+\) and
\(\operatorname{braid}_-\), respectively, for the composites obtained by
restricting the forgetful maps to the corresponding endpoint subspaces,
\[
\begin{aligned}
 \operatorname{braid}_+\circ\operatorname{half}_+:
 \Conf(\mathbf p,\mathbf q)_{u,*}
 &\longrightarrow X(\mathbf p),\\
 \operatorname{braid}_-\circ\operatorname{half}_-:
  \Conf(\mathbf p,\mathbf q)_{*,v}
  &\longrightarrow X(\mathbf q^{\mathrm{op}}).
\end{aligned}
\]
Restricting these maps to \(\Conf(\mathbf p,\mathbf q)_{u,v}\) defines the
combined map
\begin{equation}
 \operatorname{braid}
 :=\bigl(\operatorname{braid}_+,\operatorname{braid}_-\bigr):
 \Conf(\mathbf p,\mathbf q)_{u,v}
 \longrightarrow
 X(\mathbf p)\times X(\mathbf q^{\mathrm{op}}).
 \label{eq:double-braid-factor-map}
\end{equation}
\end{definition}

\begin{definition}[Reduction to reduced flag sequences]
\label{def:double-reduction}
Let \(u,v\in W\), and let \(\mathbf u,\mathbf v\) be reduced words for
\(u,v\), respectively.  First define the map
\[
 \operatorname{red}_{\mathbf u,*}:
 \Conf(\mathbf p,\mathbf q)_{u,*}
 \longrightarrow\Conf(\mathbf u,\mathbf q)
\]
as follows.  For
\(x=[\flagA^0;\flagB^\bullet;\flagB_\bullet;\flagA_n]\), let
\(\flagD^\bullet\) be the unique reduced flag sequence of type \(\mathbf u\)
joining the two flags \(\flagB^0,\flagB^m\), and set
\[
 \operatorname{red}_{\mathbf u,*}(x)
 :=[\flagA^0;\flagD^\bullet;\flagB_\bullet;\flagA_n].
\]
Similarly, define the map
\[
 \operatorname{red}_{*,\mathbf v}:
 \Conf(\mathbf p,\mathbf q)_{*,v}
 \longrightarrow\Conf(\mathbf p,\mathbf v)
\]
as follows.  For
\(x=[\flagA^0;\flagB^\bullet;\flagB_\bullet;\flagA_n]\), let
\(\flagD_\bullet\) be the unique reduced flag sequence of type \(\mathbf v\)
joining the two flags \(\flagB_0,\flagB_n\), and set
\[
 \operatorname{red}_{*,\mathbf v}(x)
 :=[\flagA^0;\flagB^\bullet;\flagD_\bullet;\flagA_n].
\]
Under both endpoint conditions, define the map
\[
 \operatorname{red}_{\mathbf u,\mathbf v}:
 \Conf(\mathbf p,\mathbf q)_{u,v}
 \longrightarrow\Conf(\mathbf u,\mathbf v)
\]
by
\[
 \operatorname{red}_{\mathbf u,\mathbf v}(x)
 :=[\flagA^0;\flagD^\bullet;\flagD_\bullet;\flagA_n].
\]
These maps are well-defined by Lemma~\ref{lem:reduced-flag-chain}.
\end{definition}

\begin{proposition}
\label{prop:one-sided-flag-product}
Let \(u:=\operatorname{dem}(\mathbf p)\) and
\(v:=\operatorname{dem}(\mathbf q)\), and let \(\mathbf u,\mathbf v\) be
reduced words for \(u,v\), respectively.  Then
\begin{align}
 \Split_+
 &:=(\operatorname{red}_{\mathbf u,*},\operatorname{braid}_+):
 \Conf(\mathbf p,\mathbf q)_{u,*}
 \xrightarrow{\sim}
 \Conf(\mathbf u,\mathbf q)\times X(\mathbf p),
 \label{eq:plus-one-sided-split}\\
 \Split_-
 &:=(\operatorname{red}_{*,\mathbf v},\operatorname{braid}_-):
 \Conf(\mathbf p,\mathbf q)_{*,v}
 \xrightarrow{\sim}
 \Conf(\mathbf p,\mathbf v)
 \times X(\mathbf q^{\mathrm{op}})
 \label{eq:minus-one-sided-split}
\end{align}
are isomorphisms of algebraic varieties.
\end{proposition}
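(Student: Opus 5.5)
It suffices to treat \(\Split_+\).  The minus-side statement is obtained by conjugating with the transposition isomorphism of Definition~\ref{def:double-bs-transposition}.  That isomorphism sends \(\Conf(\mathbf p,\mathbf q)_{*,v}\) to \(\Conf(\mathbf q^{\mathrm{op}},\mathbf p^{\mathrm{op}})_{v^{-1},*}\), and \(\operatorname{dem}(\mathbf q^{\mathrm{op}})=v^{-1}\) with \(\mathbf v^{\mathrm{op}}\) reduced for \(v^{-1}\).  By construction~\eqref{eq:braid-minus-definition} it intertwines \(\operatorname{braid}_-\) with \(\operatorname{braid}_+\), and it intertwines \(\operatorname{red}_{*,\mathbf v}\) with \(\operatorname{red}_{\mathbf v^{\mathrm{op}},*}\), since the transposes of reduced chains are reduced chains.

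For \(\Split_+\), I would construct an explicit inverse morphism.  Use the \(\groupG\)-action to normalize a point \(x=[\flagA^0;\flagB^\bullet;\flagB_\bullet;\flagA_n]\) via~\eqref{eq:component-opposite-pair-isomorphism} so that \(\flagA^0=\unipotentU_+\) and \(\flagB_0=\borelB_-\), i.e.\ \(g=1\).  This uses up the whole group, so the remaining data are honest coordinates.  Then \(\flagB^m=nu\borelB_+\) with \(n\in\unipotentU_+(u)\) by~\eqref{eq:component-bruhat-chart}, and \(\operatorname{braid}_+(x)=[\unipotentU_+,nw_0\borelB_+;\flagB^\bullet]\).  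Translating by \(n^{-1}\), which fixes \(\unipotentU_+\), this becomes the normal form \([\unipotentU_+,w_0\borelB_+;n^{-1}\flagB^\bullet]\) of Definition~\ref{def:endpoint-factorization}.

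The inverse is then defined as follows.  Given \((c,y)\in\Conf(\mathbf u,\mathbf q)\times X(\mathbf p)\), normalize \(c\) so that \(\flagA^0=\unipotentU_+\) and \(\flagB_0=\borelB_-\).  Its reduced chain ends at some \(\flagD^{\ell(u)}\) with \(\pos(\borelB_+,\flagD^{\ell(u)})=u\), hence \(\flagD^{\ell(u)}=nu\borelB_+\) for a unique \(n\in\unipotentU_+(u)\), depending algebraically on \(c\).  Take the normal-form chain \(\flagC^\bullet\) of \(y\); it runs from \(\borelB_+\) to \(u\borelB_+\).  Replace the plus chain of \(c\) by \(n\flagC^\bullet\), keeping \(\flagA^0\), \(\flagB_\bullet\), and \(\flagA_n\).

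The key checks, in order, are as follows.
\begin{enumerate}[label=\textup{(\roman*)}]
\item The new configuration lies in \(\Conf(\mathbf p,\mathbf q)_{u,*}\).  The chain has type \(\mathbf p\), starts at \(n\borelB_+=\borelB_+\), and ends at \(nu\borelB_+=\flagD^{\ell(u)}\), so its endpoint position is \(u\).  The genericity conditions \(\flagB_0\pitchfork\flagB^0\) and \(\flagB_n\pitchfork\flagB^m\) involve only the endpoints, which are unchanged.
\item Applying \(\operatorname{red}_{\mathbf u,*}\) recovers \(c\).  This holds by the uniqueness in Lemma~\ref{lem:reduced-flag-chain}, since the endpoints are those of \(c\).
\item Applying \(\operatorname{braid}_+\) recovers \(y\), by the normalization computation above.
\item Conversely, starting from \(x\), the element \(n\) determined by \(\operatorname{red}_{\mathbf u,*}(x)\) is the same \(n\) as in the definition of \(\operatorname{braid}_+\).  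Hence the plus chain is reconstructed as \(n\cdot n^{-1}\flagB^\bullet=\flagB^\bullet\).
\end{enumerate}
Each step is algebraic, since the normalizations come from the isomorphisms~\eqref{eq:component-opposite-pair-isomorphism} and~\eqref{eq:component-bruhat-chart}, so the two maps are mutually inverse morphisms.

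The main obstacle I expect is well-definedness on the \(\groupG\)-quotients.  One must check that \(\operatorname{braid}_+\) as defined in~\eqref{eq:braid-plus-definition} does not depend on the representative, and that the normalization slice meets each orbit exactly once, so that it is a global section and the quotient is a variety.  I would handle this by verifying that for \(a\in\groupG\), replacing \(x\) by \(a\cdot x\) replaces \(g\) by \(ag\) and leaves \(n\) unchanged.  I also need to confirm that the pinning pair \((g\unipotentU_+,gnw_0\borelB_+)\) lies in \((\Aplus\times\Bplus)_{w_0}\), which holds because \(n\in\unipotentU_+\) fixes \(\borelB_+\).  Finally, the condition \(\flagB^m=\partial_u\) required in Definition~\ref{def:braid-variety} holds because \(gnu\borelB_+=\partial_u(g n\unipotentU_+,gnw_0\borelB_+)\) and \(gn\unipotentU_+=g\unipotentU_+\).
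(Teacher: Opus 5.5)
Your proposal is correct and follows the paper's proof. The paper also inverts \(\Split_+\) by replacing the reduced plus chain of \(c\) with the chain of \(x\), written relative to the same pinning pair as \(\operatorname{braid}_+(c)\); your normalization \(g=1\) makes that pinning pair \((\unipotentU_+,nw_0\borelB_+)\) explicit. As in the paper, algebraicity comes from \eqref{eq:component-opposite-pair-isomorphism}, \eqref{eq:component-bruhat-chart}, and \eqref{eq:plus-opposite-pair-orbit}, and the minus side follows by transposition, with your checks (i)--(iv) and the representative-independence of \(\operatorname{braid}_+\) spelling out details the paper leaves implicit.
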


\begin{proof}
Let \((c,x)\) be a point on the right-hand side
of~\eqref{eq:plus-one-sided-split}, where
\[
 c=[\flagA^0;\flagD^\bullet;\flagB_\bullet;\flagA_n],
 \qquad x\in X(\mathbf p).
\]
Using the pinning pair of \(\operatorname{braid}_+(c)\), we can write the two
points uniquely as
\[
 \operatorname{braid}_+(c)=[F_+;\flagD^\bullet],
 \qquad
 x=[F_+;\flagB_x^\bullet].
\]
The two plus flag sequences have the same initial and terminal flags, so
\[
 (c,x)\longmapsto
 [\flagA^0;\flagB_x^\bullet;\flagB_\bullet;\flagA_n]
\]
defines the inverse of \(\Split_+\).  It is algebraic
by~\eqref{eq:component-opposite-pair-isomorphism},
\eqref{eq:component-bruhat-chart}, and~\eqref{eq:plus-opposite-pair-orbit}.
\eqref{eq:minus-one-sided-split} follows from the transpose
isomorphism in Definition~\ref{def:double-bs-transposition}.
\end{proof}

\begin{theorem}
\label{thm:flag-product}
Let \(u:=\operatorname{dem}(\mathbf p)\) and
\(v:=\operatorname{dem}(\mathbf q)\), and let \(\mathbf u,\mathbf v\) be
reduced words for \(u,v\), respectively.  Then
\begin{equation}
 \Split
 :=
 \bigl(
  \operatorname{red}_{\mathbf u,\mathbf v},
  \operatorname{braid}
 \bigr):
 \Conf(\mathbf p,\mathbf q)_{u,v}
 \longrightarrow
 \Conf(\mathbf u,\mathbf v)
 \times X(\mathbf p)\times X(\mathbf q^{\mathrm{op}})
 \label{eq:double-split-definition}
\end{equation}
is an isomorphism of algebraic varieties.
\end{theorem}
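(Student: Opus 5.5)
The plan is to compose the two one-sided splittings of Proposition~\ref{prop:one-sided-flag-product}. Since the plus-side and minus-side constructions involve disjoint parts of the configuration, they should commute.

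First, restrict \(\Split_+\) to \(\Conf(\mathbf p,\mathbf q)_{u,v}\). The map \(\operatorname{red}_{\mathbf u,*}\) leaves \(\flagB_\bullet\) and \(\flagA_n\) unchanged, so the condition \(\pos_-(\flagB_0,\flagB_n)=v\) is preserved. Conversely, the inverse constructed in the proof of Proposition~\ref{prop:one-sided-flag-product} also leaves the minus data untouched. Hence \(\Split_+\) restricts to an isomorphism
\[
 \Conf(\mathbf p,\mathbf q)_{u,v}\xrightarrow{\sim}\Conf(\mathbf u,\mathbf q)_{*,v}\times X(\mathbf p).
\]
Here \(\Conf(\mathbf u,\mathbf q)_{*,v}\) is taken inside \(\Conf(\mathbf u,\mathbf q)\); the plus endpoint condition is automatic because \(\mathbf u\) is reduced with product \(u\).

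Next, apply \(\Split_-\) for the pair \((\mathbf u,\mathbf q)\). Since \(\operatorname{dem}(\mathbf q)=v\), this gives
\[
 \Conf(\mathbf u,\mathbf q)_{*,v}\xrightarrow{\sim}\Conf(\mathbf u,\mathbf v)\times X(\mathbf q^{\mathrm{op}}).
\]
Composing the two yields an isomorphism \(\Conf(\mathbf p,\mathbf q)_{u,v}\to\Conf(\mathbf u,\mathbf v)\times X(\mathbf p)\times X(\mathbf q^{\mathrm{op}})\).

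It remains to check that this composite equals \(\Split=(\operatorname{red}_{\mathbf u,\mathbf v},\operatorname{braid})\). Two facts are needed.
\begin{enumerate}[label=\textup{(\roman*)}]
\item \(\operatorname{red}_{*,\mathbf v}\circ\operatorname{red}_{\mathbf u,*}=\operatorname{red}_{\mathbf u,\mathbf v}\). This is immediate, because the two reductions replace disjoint flag sequences.
\item \(\operatorname{braid}_-\bigl(\operatorname{red}_{\mathbf u,*}(x)\bigr)=\operatorname{braid}_-(x)\). This holds because \(\operatorname{braid}_-\) depends only on \((\flagB^0,\flagB^m,\flagB_\bullet,\flagA_n)\), and reduction on the plus side fixes these data.
\end{enumerate}
For (ii), I will unwind \(\operatorname{braid}_-=\operatorname{braid}_+\circ(-)^{\mathsf t}\) from \eqref{eq:braid-minus-definition}. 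After transposing, \(\operatorname{braid}_+\) uses only the initial decorated flag \((\flagA_n)^{\mathsf t}\), the first minus flag \((\flagB^m)^{\mathsf t}\), the terminal plus flag \((\flagB_0)^{\mathsf t}\), and the plus sequence \((\flagB_\bullet)^{\mathsf t}\). None of these involve \(\flagB^1,\dots,\flagB^{m-1}\), so replacing the interior of \(\flagB^\bullet\) has no effect.

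Algebraicity of \(\Split\) and of its inverse is inherited from the two factors. Both are isomorphisms by Proposition~\ref{prop:one-sided-flag-product}, the first restricted to a subvariety that corresponds to a subvariety.

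I expect the main obstacle to be justifying that restriction. The stratum \(\Conf(\mathbf p,\mathbf q)_{u,v}\) must correspond exactly to \(\Conf(\mathbf u,\mathbf q)_{*,v}\times X(\mathbf p)\), and not merely map into it. This follows from the observation above: the explicit inverse of \(\Split_+\) preserves \(\flagB_0\) and \(\flagB_n\), hence preserves the minus endpoint condition in both directions.
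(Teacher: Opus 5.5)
Your proposal is correct and is essentially the paper's argument: compose the two one-sided splittings of Proposition~\ref{prop:one-sided-flag-product}. The only difference is the order. The paper restricts \(\Split_-\) first and then applies \(\Split_+\) to \(\Conf(\mathbf p,\mathbf v)_{u,*}\), whereas you apply \(\Split_+\) first. Your explicit checks spell out what the paper compresses into ``by definition'': that the stratum corresponds exactly under the restriction, and that \(\operatorname{braid}_-\) is unchanged by the plus-side reduction.
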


\begin{proof}
When \(\Split_-\) is restricted to \(\Conf(\mathbf p,\mathbf q)_{u,v}\), its
first factor belongs to \(\Conf(\mathbf p,\mathbf v)_{u,*}\).  Apply
\(\Split_+\) to this first factor.  By
Proposition~\ref{prop:one-sided-flag-product}, the composite
\[
 \Conf(\mathbf p,\mathbf q)_{u,v}
 \xrightarrow{\sim}
 \Conf(\mathbf u,\mathbf v)
 \times X(\mathbf p)\times X(\mathbf q^{\mathrm{op}})
\]
is an isomorphism.  By definition, this composite equals
\(\bigl(\operatorname{red}_{\mathbf u,\mathbf v},
\operatorname{braid}_+,\operatorname{braid}_-\bigr)=\Split\).
\end{proof}

\begin{corollary}
\label{cor:half-one-sided-flag-product}
Under the assumptions of Proposition~\ref{prop:one-sided-flag-product}, the
one-sided reductions and braid-variety factor maps commute with the forgetful
maps.  We use the same notation for the reductions induced on the
half-decorated versions.  Then
\begin{align}
 \Split_{\mathrm{half}}^+
 &:=(\operatorname{red}_{\mathbf u,*},\operatorname{braid}_+):
 \Confhalf^+(\mathbf p,\mathbf q)_{u,*}
 \xrightarrow{\sim}
 \Confhalf^+(\mathbf u,\mathbf q)
 \times X(\mathbf p),
 \label{eq:plus-half-one-sided-split}\\
 \Split_{\mathrm{half}}^-
 &:=(\operatorname{red}_{*,\mathbf v},\operatorname{braid}_-):
 \Confhalf^-(\mathbf p,\mathbf q)_{*,v}
 \xrightarrow{\sim}
 \Confhalf^-(\mathbf p,\mathbf v)
 \times X(\mathbf q^{\mathrm{op}})
 \label{eq:minus-half-one-sided-split}
\end{align}
are isomorphisms of algebraic varieties.
\end{corollary}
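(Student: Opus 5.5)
The plan is to transport the proof of Proposition~\ref{prop:one-sided-flag-product} to the half-decorated setting, checking that every ingredient uses only data that survives the forgetful maps \(\operatorname{half}_\pm\).

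\textbf{Compatibility with forgetting.}
First I verify that \(\operatorname{red}_{\mathbf u,*}\) and \(\operatorname{braid}_+\) on \(\Conf(\mathbf p,\mathbf q)_{u,*}\) factor through \(\operatorname{half}_+\).
\begin{itemize}
 \item The reduction \(\operatorname{red}_{\mathbf u,*}\) only replaces \(\flagB^\bullet\) by the reduced chain \(\flagD^\bullet\) joining \(\flagB^0,\flagB^m\), which exists by Lemma~\ref{lem:reduced-flag-chain}. It leaves \(\flagA_n\) untouched, so it induces \(\operatorname{red}_{\mathbf u,*}\) on \(\Confhalf^+\) with \(\operatorname{half}_+\circ\operatorname{red}=\operatorname{red}\circ\operatorname{half}_+\).
 \item The map \(\operatorname{braid}_+\) was defined directly on \(\Confhalf^+(\mathbf p,\mathbf q)_{u,*}\). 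By construction, its composite with \(\operatorname{half}_+\) is the map used in Proposition~\ref{prop:one-sided-flag-product}.
 \item The same holds on the minus side with \(\operatorname{half}_-\) and \(\flagA^0\) forgotten.
\end{itemize}

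\textbf{Isomorphism on the plus side.}
Next I build the inverse of \(\Split^+_{\mathrm{half}}\) exactly as before. Take
\[
 c=[\flagA^0;\flagD^\bullet;\flagB_\bullet]\in\Confhalf^+(\mathbf u,\mathbf q),
 \qquad x\in X(\mathbf p).
\]
By~\eqref{eq:component-opposite-pair-isomorphism} and~\eqref{eq:component-bruhat-chart}, the data \(\flagA^0,\flagB_0,\flagD^m\) determine the pinning pair \(F_+\) of \(\operatorname{braid}_+(c)\). This uses only \(\flagA^0\), \(\flagB_0\), and \(\flagB^m\), and never \(\flagA_n\). Then \(F_+\) lets us write \(x=[F_+;\flagB_x^\bullet]\) uniquely via~\eqref{eq:plus-opposite-pair-orbit}, and we set
\[
 (c,x)\mapsto[\flagA^0;\flagB_x^\bullet;\flagB_\bullet].
\]
The output lies in \(\Confhalf^+(\mathbf p,\mathbf q)_{u,*}\) for the following reasons.
\begin{itemize}
 \item The endpoints of \(\flagB_x^\bullet\) are \(\flagB^0\) and \(\partial_u(F_+)=\flagD^m\), so their relative position is \(u\) and the genericity condition \(\flagB_n\pitchfork\flagB^m\) is inherited from \(c\).
 \item Both compositions are the identity, by uniqueness of the reduced chain and of the pinning-pair normal form.
 \item Algebraicity follows from the same three isomorphisms.
\end{itemize}
Alternatively, one can observe that \(\operatorname{half}_+\) realizes \(\Confhalf^+\) as a quotient of \(\Conf\) by the decoration action of \(\torusH\) on \(\flagA_n\). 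Both sides of~\eqref{eq:plus-one-sided-split} are then \(\torusH\)-equivariant, with \(\torusH\) acting trivially on \(X(\mathbf p)\), so the isomorphism descends. I would take the direct construction as primary, because it avoids arguing about quotients.

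\textbf{Minus side.}
For~\eqref{eq:minus-half-one-sided-split} I use the transpose isomorphism~\eqref{eq:half-conf-transposition}. It identifies \(\Confhalf^-(\mathbf p,\mathbf q)_{*,v}\) with \(\Confhalf^+(\mathbf q^{\mathrm{op}},\mathbf p^{\mathrm{op}})_{v^{-1},*}\), and I check three things.
\begin{itemize}
 \item Transpose sends the reduced chain of type \(\mathbf v\) to the reduced chain of type \(\mathbf v^{\mathrm{op}}\), which is a reduced word for \(v^{-1}\).
 \item Transpose intertwines \(\operatorname{red}_{*,\mathbf v}\) with \(\operatorname{red}_{\mathbf v^{\mathrm{op}},*}\).
 \item Transpose intertwines \(\operatorname{braid}_-\) with \(\operatorname{braid}_+\); this holds by definition~\eqref{eq:braid-minus-definition}.
\end{itemize}
The plus case then gives the result.

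The only step needing real care is the bookkeeping that \(\operatorname{dem}(\mathbf q^{\mathrm{op}})=v^{-1}\) and that the transposed reduction is the reduction of the transposed configuration. This follows from uniqueness in Lemma~\ref{lem:reduced-flag-chain}.
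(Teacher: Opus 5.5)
Your proposal is correct, but on the plus side your primary argument differs from the paper's. The paper does not rebuild the inverse. Its argument is:
\begin{itemize}
 \item each fiber of \(\operatorname{half}_+\) is an \(\torusH\)-torsor, namely the choice of the forgotten decoration \(\flagA_n\);
 \item \(\Split_+\) is \(\torusH\)-equivariant, with \(\torusH\) acting only on the factor \(\Conf(\mathbf u,\mathbf q)\) because \(\operatorname{braid}_+\) ignores \(\flagA_n\);
 \item hence \eqref{eq:plus-one-sided-split} passes to the quotient.
\end{itemize}
This is exactly the argument you list as your alternative.

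Your main argument instead re-runs the inverse construction from the proof of Proposition~\ref{prop:one-sided-flag-product} directly on \(\Confhalf^+\). It observes that \(\flagA_n\) is never used, and that the genericity \(\flagB_n\pitchfork\flagD^m\) is part of the half-decorated data.

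The paper's version is shorter and makes the \(\torusH\)-equivariance explicit, which Section~\ref{sec:splicing-specialization} later exploits through the section \(s_p^+\). It does tacitly use that an \(\torusH\)-equivariant isomorphism of \(\torusH\)-torsors descends to an isomorphism of the base varieties. Your direct construction avoids that descent step and is self-contained. The cost is repeating the checks: well-definedness on \(\groupG\)-orbits, that the output lands in \(\Confhalf^+(\mathbf p,\mathbf q)_{u,*}\), and algebraicity via the slice \(\flagA^0=\unipotentU_+\), \(\flagB_0=\borelB_-\). On the minus side you and the paper both transpose.
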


\begin{proof}
Each fiber of \(\operatorname{half}_+\) is the set of choices of the forgotten
minus decoration and is a \(\torusH\)-torsor.  The map \(\Split_+\) commutes
with this action, and the action remains only on the first factor.  Therefore,
passing~\eqref{eq:plus-one-sided-split} to the quotient gives
\eqref{eq:plus-half-one-sided-split}.  The minus side follows from the
transpose isomorphism in~\eqref{eq:half-conf-transposition}.
\end{proof}

\subsection{Descent to positive braids}
\label{subsec:positive-braid-descent}

Write \(\Br^+\) for the quotient monoid obtained by imposing the braid
relations on \(\Word(I)\), and call its elements \emph{positive braids}.
Because the braid relations preserve word length, word length induces
\(\len:\Br^+\to\mathbb N\).

Shen--Weng proved that, between the flag sequences associated with two words
representing the same positive braid, flag propagation along braid relations
gives a canonical isomorphism independent of the choice of a sequence of braid
relations \cite[Theorem~2.18]{ShenWeng2021}.  This isomorphism preserves the
initial and terminal flags of the flag sequence and commutes with the
\(\groupG\)-action.  Consequently, the varieties \(\Conf\),
\(\Confhalf^\pm\), and \(X\) defined above for words glue under these
isomorphisms to define algebraic varieties associated with positive braids.
Below, for positive braids \(p,q\), we use the same notation, such as
\(\Conf(p,q)\) and \(X(p)\), for the corresponding algebraic varieties.

The braid-variety factor maps and reductions to reduced flag sequences
constructed in the preceding section are also compatible with this gluing and
define the maps
\[
 \begin{aligned}
  \operatorname{braid}_+&:
  \Confhalf^+(p,q)_{u,*}\longrightarrow X(p),\\
  \operatorname{braid}_-&:
  \Confhalf^-(p,q)_{*,v}\longrightarrow X(q^{\mathrm{op}}),\\
  \operatorname{red}&:
  \Conf(p,q)_{u,v}\longrightarrow\Conf(u,v)
 \end{aligned}
\]
and the isomorphism
\[
 \Split
 :=(\operatorname{red},\operatorname{braid}_+,\operatorname{braid}_-):
 \Conf(p,q)_{u,v}
 \xrightarrow{\sim}
 \Conf(u,v)\times X(p)\times X(q^{\mathrm{op}}).
\]
Similarly, the one-sided reductions
\(\operatorname{red}_{u,*},\operatorname{red}_{*,v}\), the one-sided
decompositions \(\Split_\pm\), and their half-decorated versions
\(\Split_{\mathrm{half}}^\pm\) define maps for positive braids.

\section{Sweep mutation sequences along weaves}

\subsection{Strings of a double word and the Shen--Weng quiver}
\label{subsec:double-word-data}

In this subsection, we review the cluster structure on a double
Bott--Samelson cell given by Shen--Weng \cite{ShenWeng2021}.  Its cluster
coordinates are given by objects called double words.  We also review the
diagrammatic representation of a double word called a string expression.

Define two copies of the index set \(I\), together with their disjoint union,
by
\[
 I^+:=\{i^+\mid i\in I\},\qquad
 I^-:=\{i^-\mid i\in I\},\qquad
 \widetilde I:=I^+\sqcup I^-,
\]
and define the map that forgets the tag by
\[
 |-|:\widetilde I\longrightarrow I,
 \qquad |i^\epsilon|:=i
 \quad(i\in I,\ \epsilon\in\{+,-\}).
\]
When a sign is used numerically, we regard \(+\) as \(1\) and \(-\) as
\(-1\), and define \(\operatorname{sgn}(i^\epsilon):=\epsilon\).  A word over
\(\widetilde I\) is called a \emph{double word}.  When displaying a double
word, as in
\[
 \beta=\beta(1)\cdots\beta(m)\in\Word(\widetilde I),
\]
we use ordinary parentheses for letter indices, and we denote by
\[
 \operatorname{Pos}(\beta):=\{1,\ldots,m\}
\]
its set of positions.
Write \(\mathbf p,\mathbf q\in\Word(I)\) for the words obtained by removing
the tags from the plus and minus subsequences of a double word \(\beta\),
respectively.  When two double words \(\beta,\delta\) are considered
simultaneously, write \(\mathbf p',\mathbf q'\) for the words obtained in the
same way from \(\delta\).  Write \(p,q,p',q'\) for the positive braids
represented by these words, respectively.  Below, by the canonical
isomorphisms of Section~\ref{subsec:positive-braid-descent}, we identify the
variety associated with a word with the variety associated with the positive
braid represented by that word.

\begin{definition}[Strings of a double word]
Let
\[
 \beta=\beta(1)\cdots\beta(m)\in\Word(\widetilde I)
\]
be a double word.  Write \(0\) for the left boundary and \(\infty\)
for the right boundary, and totally order
\(\{0\}\sqcup\operatorname{Pos}(\beta)\sqcup\{\infty\}\) so that
\(0<t<\infty\) for \(t\in\operatorname{Pos}(\beta)\).  Define the set of
positions of each color \(i\in I\) by
\[
 \operatorname{Pos}_i(\beta)
 :=\{t\in\operatorname{Pos}(\beta)\mid |\beta(t)|=i\},
\]
and use the totally ordered subset
\[
 \overline{\operatorname{Pos}}_i(\beta)
 :=
 \{0\}\sqcup\operatorname{Pos}_i(\beta)\sqcup\{\infty\}.
\]
For \(r,s\in\overline{\operatorname{Pos}}_i(\beta)\), write
\[
 r\mathrel{\prec}s
 \quad\Longleftrightarrow\quad
 r<s\ \text{and}\
 \neg\exists\,t\in\operatorname{Pos}_i(\beta)\ \bigl(r<t<s\bigr).
\]
Define the set
\begin{equation}
 \Vcal(\beta)
 :=
 \{(i,[r,s])\mid
   i\in I,\ r,s\in\overline{\operatorname{Pos}}_i(\beta),\
   r\mathrel{\prec}s\}
 \label{eq:string-set}
\end{equation}
and call its elements the \emph{strings} of \(\beta\).

Set
\[
 \begin{aligned}
 I_{\partial_L}(\beta)
 &:=
 \{(i,[0,s])\in\Vcal(\beta)\},\\
 I_{\partial_R}(\beta)
 &:=
 \{(i,[r,\infty])\in\Vcal(\beta)\},\\
 I_{\mathrm{int}}(\beta)
 &:=
 \{(i,[r,s])\in\Vcal(\beta)\mid
  r,s\in\operatorname{Pos}_i(\beta)\}.
 \end{aligned}
\]
Below, we call the elements of \(I_{\partial_L}(\beta)\) and
\(I_{\partial_R}(\beta)\) boundary strings, and call the elements of
\(I_{\mathrm{int}}(\beta)\) internal strings.
\end{definition}

\begin{definition}[Slice of a string]
\label{def:string-slice}
For a double word \(\beta\) and
\(\ell=(i,[r,s])\in\Vcal(\beta)\), define
\[
 \operatorname{Slice}_\beta(\ell)
 :=\{t\in\{0\}\sqcup\operatorname{Pos}(\beta)\mid r\le t<s\}.
\]
\end{definition}

\begin{definition}[Elementary triangle quiver]
\label{def:SW-basic-triangle}
For \(i^\epsilon\in\widetilde I\), all vertices of
\(Q_\triangle(i^\epsilon)\) are frozen, and its vertex set is
\[
 I\bigl(Q_\triangle(i^\epsilon)\bigr)
 =
 (I\setminus\{i\})\sqcup
 \{i_L,i_R\}.
\]
The multiplier at a vertex \(j\in I\setminus\{i\}\) is \(d_j\), and the
multipliers at \(i_L,i_R\) are \(d_i\).  Define the exchange matrix by
\begin{align}
 \eps_{i_L,i_R}
 &=-\epsilon,
 \label{eq:SW-basic-horizontal}\\
 \eps_{j,i_L}
 &=-\eps_{j,i_R}
   =\frac{\epsilon c_{ji}}2
 \qquad(j\ne i),
 \label{eq:SW-basic-slanted}\\
 \eps_{j,k}
 &=0
 \qquad(j,k\ne i).
 \label{eq:SW-basic-other}
\end{align}
\end{definition}

\begin{figure}[htbp]
\centering
\begin{tikzpicture}
\begin{scope}[xshift=-3.2cm]
 \node[font=\small] at (0,2.35) {\(Q_\triangle(i^+)\)};
 \draw[wqguide] (-2.15,1.60)--(2.15,1.60);
 \draw[wqguide] (-2.15,0.55)--(2.15,0.55);
 \draw[wqguide] (-2.15,-0.50)--(2.15,-0.50);
 \draw[wqguide] (-2.15,-1.55)--(2.15,-1.55);
 \node[wqlargefrozen] (plusa) at (0,1.60) {\(j\)};
 \node[wqlargefrozen] (plusl) at (-1.35,0.55) {\(i_L\)};
 \node[wqlargefrozen] (plusr) at (1.35,0.55) {\(i_R\)};
 \node[wqlargefrozen] (plusb) at (0,-0.50) {\(k\)};
 \node[wqlargefrozen] (plusc) at (0,-1.55) {\(\ell\)};
 \draw[wqstringarrow] (plusr)--(plusl);
 \draw[wqstringhalf] (plusl)--(plusa);
 \draw[wqstringhalf] (plusa)--(plusr);
 \draw[wqstringhalf] (plusl)--(plusb);
 \draw[wqstringhalf] (plusb)--(plusr);
 \draw[wqstringhalf] (plusl)--(plusc);
 \draw[wqstringhalf] (plusc)--(plusr);
\end{scope}
\begin{scope}[xshift=3.2cm]
 \node[font=\small] at (0,2.35) {\(Q_\triangle(i^-)\)};
 \draw[wqguide] (-2.15,1.60)--(2.15,1.60);
 \draw[wqguide] (-2.15,0.55)--(2.15,0.55);
 \draw[wqguide] (-2.15,-0.50)--(2.15,-0.50);
 \draw[wqguide] (-2.15,-1.55)--(2.15,-1.55);
 \node[wqlargefrozen] (minusa) at (0,1.60) {\(j\)};
 \node[wqlargefrozen] (minusl) at (-1.35,0.55) {\(i_L\)};
 \node[wqlargefrozen] (minusr) at (1.35,0.55) {\(i_R\)};
 \node[wqlargefrozen] (minusb) at (0,-0.50) {\(k\)};
 \node[wqlargefrozen] (minusc) at (0,-1.55) {\(\ell\)};
 \draw[wqstringarrow] (minusl)--(minusr);
 \draw[wqstringhalf] (minusa)--(minusl);
 \draw[wqstringhalf] (minusr)--(minusa);
 \draw[wqstringhalf] (minusb)--(minusl);
 \draw[wqstringhalf] (minusr)--(minusb);
 \draw[wqstringhalf] (minusc)--(minusl);
 \draw[wqstringhalf] (minusr)--(minusc);
\end{scope}
\end{tikzpicture}
\caption{Elementary triangle quivers.  The displayed
\(j,k,\ell\in I\setminus\{i\}\) represent simply-laced colors adjacent to
\(i\).  A dotted edge has exchange-matrix entry \(1/2\), and a solid edge has
entry \(1\).  The exchange-matrix entries are determined
by~\eqref{eq:SW-basic-horizontal}--\eqref{eq:SW-basic-other}, and all vertices
are frozen.}
\label{fig:SW-basic-triangles}
\end{figure}

For a position \(t\in\operatorname{Pos}(\beta)\) and \(i:=|\beta(t)|\), define
the map
\[
 \iota_t^\beta:
 I\bigl(Q_\triangle(\beta(t))\bigr)
 \longrightarrow\Vcal(\beta)
\]
by the following three formulas:
\begin{alignat}{2}
 \iota_t^\beta(i_L)
 &:=(i,[r,t])
 &\quad&\bigl(r\mathrel{\prec}t\bigr),
 \label{eq:SW-basic-map-left}\\
 \iota_t^\beta(i_R)
 &:=(i,[t,s])
 &\quad&\bigl(t\mathrel{\prec}s\bigr),
 \label{eq:SW-basic-map-right}\\
 \iota_t^\beta(j)
 &:=(j,[r,s])
 &\quad&\bigl(j\ne i,\ r<t<s,\ r\mathrel{\prec}s\bigr).
 \label{eq:SW-basic-map-crossing}
\end{alignat}
With multiplier \(d_{(i,[r,s])}=d_i\) on \(\Vcal(\beta)\), these maps define
an amalgamation with \(J=F=\Vcal(\beta)\).
By the Shen--Weng integrality property
\cite[Remark~3.6]{ShenWeng2021}, \(I_{\mathrm{int}}(\beta)\) is defrostable in
the amalgamation along this family of maps.  We therefore defrost these
internal strings and define
\begin{equation}
 Q(\beta)
 :=
 \operatorname{defrost}_{I_{\mathrm{int}}(\beta)}
 \left(
  \bigast_{t\in\operatorname{Pos}(\beta)}
  Q_\triangle(\beta(t))
 \right).
 \label{eq:SW-quiver-as-amalgamation}
\end{equation}

Write the Shen--Weng cluster coordinate corresponding to each
\(\ell\in\Vcal(\beta)\) as
\[
 A_\ell^\beta:\Conf(p,q)\longrightarrow\mathbb A^1
\]
\cite[Definition~3.20]{ShenWeng2021}.  Write the
birational map with these coordinates as
\[
 \mathbf A_\beta^{\mathrm{SW}}
 :=(A_\ell^\beta)_{\ell\in\Vcal(\beta)}:
 \Conf(p,q)\dashrightarrow\Tcal_{Q(\beta)}.
\]

The string expression for \(\beta=1122112\) in type \(A_2\), overlaid with the
quiver in~\eqref{eq:SW-quiver-as-amalgamation}, is shown in
Figure~\ref{fig:SW-1122112-string-quiver}.  Here the \(+\)-tags on all letters
are suppressed.
\begin{figure}[htbp]
\centering
\begin{tikzpicture}[x=0.94cm,y=0.90cm]
\path[use as bounding box] (-0.15,-5.15) rectangle (16.15,3.65);

\node[anchor=west,font=\small] at (0,3.35)
  {\textup{(a)} string expression};
\draw[wqstring] (0,2.20)--(16,2.20);
\draw[wqstring] (0,0.60)--(16,0.60);
\node[wqlevel,anchor=east] at (-0.12,2.20) {\(1\)};
\node[wqlevel,anchor=east] at (-0.12,0.60) {\(2\)};

\node[wqstringlabel] at (1.20,2.47) {$[0,1]$};
\node[wqstringlabel] at (3.60,2.47) {$[1,2]$};
\node[wqstringlabel] at (7.30,2.47) {$[2,5]$};
\node[wqstringlabel] at (11.00,2.47) {$[5,6]$};
\node[wqstringlabel] at (14.10,2.47) {$[6,\infty]$};
\node[wqstringlabel] at (3.20,0.33) {$[0,3]$};
\node[wqstringlabel] at (7.25,0.33) {$[3,4]$};
\node[wqstringlabel] at (11.10,0.33) {$[4,7]$};
\node[wqstringlabel] at (15.05,0.33) {$[7,\infty]$};

\node[wqletter] at (2.40,2.20) {$1$};
\node[wqletter] at (4.80,2.20) {$1$};
\node[wqletter] at (6.40,0.60) {$2$};
\node[wqletter] at (8.10,0.60) {$2$};
\node[wqletter] at (9.80,2.20) {$1$};
\node[wqletter] at (12.20,2.20) {$1$};
\node[wqletter] at (14.10,0.60) {$2$};

\begin{scope}[yshift=-4.20cm]
\node[anchor=west,font=\small] at (0,3.35)
  {\textup{(b)} \(Q(1122112)\) on the string expression};
\draw[wqstring] (0,2.20)--(16,2.20);
\draw[wqstring] (0,0.60)--(16,0.60);
\node[wqlevel,anchor=east] at (-0.12,2.20) {\(1\)};
\node[wqlevel,anchor=east] at (-0.12,0.60) {\(2\)};

\node[wqlargefrozen]  (q10) at (1.20,2.20) {$[0,1]$};
\node[wqlargemutable] (q11) at (3.60,2.20) {$[1,2]$};
\node[wqlargemutable] (q12) at (7.30,2.20) {$[2,5]$};
\node[wqlargemutable] (q13) at (11.00,2.20) {$[5,6]$};
\node[wqlargefrozen]  (q14) at (14.10,2.20) {$[6,\infty]$};
\node[wqlargefrozen]  (q20) at (3.20,0.60) {$[0,3]$};
\node[wqlargemutable] (q21) at (7.25,0.60) {$[3,4]$};
\node[wqlargemutable] (q22) at (11.10,0.60) {$[4,7]$};
\node[wqlargefrozen]  (q23) at (15.05,0.60) {$[7,\infty]$};

\draw[wqstringarrow] (q11)--(q10);
\draw[wqstringarrow] (q12)--(q11);
\draw[wqstringarrow] (q13)--(q12);
\draw[wqstringarrow] (q14)--(q13);
\draw[wqstringarrow] (q21)--(q20);
\draw[wqstringarrow] (q22)--(q21);
\draw[wqstringarrow] (q23)--(q22);
\draw[wqstringhalf] (q10)--(q20);
\draw[wqstringarrow] (q20)--(q12);
\draw[wqstringarrow] (q12)--(q22);
\draw[wqstringarrow] (q22)--(q14);
\draw[wqstringhalf] (q14)--(q23);
\path (q10)--node[wqhalf,pos=0.50] {$\frac12$} (q20);
\path (q14)--node[wqhalf,pos=0.50] {$\frac12$} (q23);

\node[wqletter] at (2.40,2.20) {$1$};
\node[wqletter] at (4.80,2.20) {$1$};
\node[wqletter] at (6.40,0.60) {$2$};
\node[wqletter] at (8.10,0.60) {$2$};
\node[wqletter] at (9.80,2.20) {$1$};
\node[wqletter] at (12.20,2.20) {$1$};
\node[wqletter] at (14.10,0.60) {$2$};
\end{scope}
\end{tikzpicture}
\caption{The string expression for \(\beta=1122112\) and its associated quiver.
Dotted edges represent exchange-matrix entries equal to \(1/2\).}
\label{fig:SW-1122112-string-quiver}
\end{figure}

\subsection{Demazure weaves and double weaves}
\label{subsec:demazure-double-weave}

Following \cite{CasalsEtAlBraidVarieties}, we review the construction of a
quiver associated with a Demazure weave.  It defines a cluster chart on a
braid variety.  Below, in order to compare it with the cluster chart on a
double Bott--Samelson cell, we introduce double Demazure weaves, which extend
Demazure weaves.  As shown in
Proposition~\ref{prop:double-weave-plus-minus-decomposition}, however, the
resulting quiver decomposes into its plus and minus parts.

For distinct \(i,j\in I\), set
\[
 m_{ij}:=\operatorname{ord}(s_is_j)\in\{2,3,4,6\},\qquad
 \mathbf b_{ij}:=\underbrace{ijij\cdots}_{m_{ij}\ \mathrm{letters}},\qquad
 \mathbf b_{ji}:=\underbrace{jiji\cdots}_{m_{ij}\ \mathrm{letters}}.
\]

\begin{definition}[Demazure weave]
\label{def:demazure-weave}
Let \(\mathsf{Weave}(I)\) be the free monoidal category generated by \(I\) as
object generators and by the following morphisms:
\[
 \mathsf c_i:ii\longrightarrow i
 \quad(i\in I),
 \qquad
 \mathsf b_{ij}:\mathbf b_{ij}\longrightarrow\mathbf b_{ji}
 \quad(i,j\in I,\ i\ne j).
\]
Thus the objects are the elements of \(\Word(I)\), and the tensor product is
concatenation of words.  We call the generating morphism \(\mathsf c_i\) a
\emph{contraction}, call \(\mathsf b_{ij}\) a \emph{braid move}, and call
them collectively \emph{elementary weaves}.  The morphisms of
\(\mathsf{Weave}(I)\) are called \emph{Demazure weaves}
\cite[Section~4]{CasalsGorskyGorskySimental2020}.  Below, using string
diagrams, we represent a weave \(\fW:\mathbf p\to\mathbf p'\) as a directed
colored graph with initial boundary word \(\mathbf p\) and terminal boundary
word \(\mathbf p'\).  A contraction is represented by a trivalent vertex, and
a braid move by a \(2m_{ij}\)-valent vertex.  Write \(V(\fW)\) and \(E(\fW)\)
for the vertex set and edge set of this directed colored graph, respectively.
Write \(V_3(\fW)\subset V(\fW)\) for its set of trivalent vertices.
\end{definition}

Below, we sometimes call a Demazure weave simply a weave.

Horizontally reflecting the diagram and reading the word in each slice in
reverse order turns a Demazure weave \(\fW:\mathbf p\to\mathbf p'\) into the
Demazure weave
\begin{equation}
 \fW^{\mathrm{op}}:\mathbf p^{\mathrm{op}}\longrightarrow(\mathbf p')^{\mathrm{op}}.
 \label{eq:opposite-weave}
\end{equation}
The reflection defines natural bijections on edges and on trivalent vertices.
Moreover, elementary weaves preserve the Demazure product, so for every
\(\fW:\mathbf p\to\mathbf p'\),
\begin{equation}
 \operatorname{dem}(\mathbf p)=\operatorname{dem}(\mathbf p').
 \label{eq:demazure-weave-preserves-dem}
\end{equation}

We also define a double-word version of a weave.  For a word \(\mathbf b\) and
a sign \(\epsilon\), let \(\mathbf b^\epsilon\) denote the double word
obtained by attaching the tag \(\epsilon\) to every letter.

\begin{definition}[Double Demazure weave]
\label{def:double-weave}
Let \(\mathsf{DWeave}(I)\) be the free monoidal category generated by
\(\widetilde I\) as object generators and by the following morphisms:
\[
 \begin{gathered}
  \mathsf c_i^\epsilon: i^\epsilon i^\epsilon\longrightarrow i^\epsilon,
  \qquad
  \mathsf b_{ij}^\epsilon: \mathbf b_{ij}^\epsilon\longrightarrow\mathbf b_{ji}^\epsilon
  \quad(i\ne j,\ \epsilon\in\{+,-\}),\\
  \mathsf s_{i^+ j^-}: i^+j^-\longrightarrow j^-i^+,
  \qquad
  \mathsf s_{j^- i^+}: j^-i^+\longrightarrow i^+j^-
  \quad(i,j\in I).
 \end{gathered}
\]
Thus its objects are double words.  We call the generating morphisms in the
first line \emph{tagged elementary weaves}, call those in the second line
\emph{shuffles}, and call both collectively \emph{elementary double weaves}.
The morphisms of \(\mathsf{DWeave}(I)\) are called \emph{double Demazure
weaves}.  As for weaves, we represent a double weave as a directed colored
graph and write \(V(\fW)\), \(E(\fW)\), and
\(V_3(\fW)\subset V(\fW)\) for its vertex set, edge set, and set of trivalent
vertices, respectively.
\end{definition}

\begin{figure}[htbp]
\centering
\begin{tikzpicture}[x=0.82cm,y=0.56cm]
\path[use as bounding box] (-0.25,-0.45) rectangle (14.55,5.55);

\coordinate (basic-contraction) at (1.65,2.75);
\node[wqboundarylabel] at (1.10,5.05) {\(i\)};
\node[wqboundarylabel] at (2.20,5.05) {\(i\)};
\draw[wqweaveone,wqweavedisplay]
  (1.10,4.70)--(basic-contraction);
\draw[wqweaveone,wqweavedisplay]
  (2.20,4.70)--(basic-contraction);
\draw[wqweaveone,wqweavedisplay]
  (basic-contraction)--(1.65,0.85);
\node[wqboundarylabel] at (1.65,0.50) {\(i\)};
\node[wqpanellabel] at (1.65,-0.20) {contraction};

\coordinate (basic-braid) at (6.85,2.75);
\node[wqboundarylabel] at (5.65,5.05) {\(i\)};
\node[wqboundarylabel] at (6.85,5.05) {\(j\)};
\node[wqboundarylabel] at (8.05,5.05) {\(i\)};
\draw[wqweaveone,wqweavedisplay]
  (5.65,4.70)--(basic-braid);
\draw[wqweavetwo,wqweavedisplay]
  (6.85,4.70)--(basic-braid);
\draw[wqweaveone,wqweavedisplay]
  (8.05,4.70)--(basic-braid);
\draw[wqweavetwo,wqweavedisplay]
  (basic-braid)--(5.65,0.85);
\draw[wqweaveone,wqweavedisplay]
  (basic-braid)--(6.85,0.85);
\draw[wqweavetwo,wqweavedisplay]
  (basic-braid)--(8.05,0.85);
\node[wqboundarylabel] at (5.65,0.50) {\(j\)};
\node[wqboundarylabel] at (6.85,0.50) {\(i\)};
\node[wqboundarylabel] at (8.05,0.50) {\(j\)};
\node[wqpanellabel] at (6.85,-0.20) {\(A_2\) braid move};

\coordinate (basic-shuffle) at (12.30,2.75);
\node[wqboundarylabel] at (11.50,5.05) {\(i^+\)};
\node[wqboundarylabel] at (13.10,5.05) {\(j^-\)};
\draw[wqweaveone,wqweavedisplay]
  (11.50,4.70)--(basic-shuffle)--(13.10,0.85);
\draw[wqweavetwo,wqweavedisplay]
  (13.10,4.70)--(basic-shuffle)--(11.50,0.85);
\node[wqboundarylabel] at (11.50,0.50) {\(j^-\)};
\node[wqboundarylabel] at (13.10,0.50) {\(i^+\)};
\node[wqpanellabel] at (12.30,-0.20) {shuffle};
\end{tikzpicture}
\caption{Elementary weaves and shuffles.}
\label{fig:basic-weave-generators}
\end{figure}

A Demazure weave can be regarded as a double Demazure weave by attaching a
plus tag to every edge.

Suppose that a double weave \(\fW:\beta\to\eta\), double words
\(\mathbf L,\mathbf R\), and an elementary double weave
\(\sigma:\mathbf b\to\mathbf b'\) satisfy
\[
 \eta=\mathbf L\,\mathbf b\,\mathbf R.
\]
Then
\begin{equation}
 \fW'
 :=(\id_{\mathbf L}\otimes\sigma\otimes\id_{\mathbf R})\circ\fW:
 \beta\longrightarrow\mathbf L\,\mathbf b'\,\mathbf R
 \label{eq:double-weave-one-step-extension}
\end{equation}
is called the \emph{one-step extension} of \(\fW\) by \(\sigma\).

Below, we abbreviate double Demazure weave to double weave.

To define the boundary pairing of cycles while separating the two tag
components, we use the lattices with componentwise pairings
\[
 X^*(\torusH^2)
 :=X^*(\torusH)\oplus X^*(\torusH),
 \qquad
 X_*(\torusH^2)
 :=X_*(\torusH)\oplus X_*(\torusH).
\]
The pairing is
\[
 \left\langle(\lambda_+,\lambda_-),
 (\mu_+^\vee,\mu_-^\vee)\right\rangle
 :=
 \langle\lambda_+,\mu_+^\vee\rangle
 +\langle\lambda_-,\mu_-^\vee\rangle
\]
and the pairing of a weight and coweight supported on different tags is zero.
For \(i\in I\), set
\[
 \begin{gathered}
 \alpha_{i^+}:=(\alpha_i,0),\qquad
 \alpha_{i^-}:=(0,\alpha_i),\\
 \alpha_{i^+}^\vee:=(\alpha_i^\vee,0),\qquad
 \alpha_{i^-}^\vee:=(0,\alpha_i^\vee),\\
 s_{i^+}:=(s_i,1),\qquad
 s_{i^-}:=(1,s_i).
 \end{gathered}
\]
Define the root sequence by
\begin{align}
 \rho_{\beta,r}
 &:=s_{\beta(1)}\cdots s_{\beta(r-1)}(\alpha_{\beta(r)}),&
 \rho_{\beta,r}^\vee
 &:=s_{\beta(1)}\cdots s_{\beta(r-1)}(\alpha_{\beta(r)}^\vee),
 \label{eq:component-root-sequence}
\end{align}
and the oriented order by
\begin{equation}
 \operatorname{ord}_\beta(r,s)
 :=
 \begin{cases}
  \operatorname{sgn}(\beta(r)),&r<s,\\
  -\operatorname{sgn}(\beta(r)),&s<r,\\
  0,&r=s
 \end{cases}.
 \label{eq:component-oriented-order}
\end{equation}

\begin{definition}[Boundary pairing of a double word]
\label{def:double-boundary-pairing}
Define
\begin{equation}
 \begin{aligned}
 \Omega_\beta:
 \mathbb Z^{\operatorname{Pos}(\beta)}
 \times\mathbb Z^{\operatorname{Pos}(\beta)}
 \longrightarrow\tfrac12\mathbb Z, \quad
 (a,b)
 \longmapsto
 \frac12\sum_{r,s=1}^{m}
 \operatorname{ord}_\beta(r,s)a_r b_s
 \left\langle\rho_{\beta,r},\rho_{\beta,s}^\vee\right\rangle.
 \end{aligned}
 \label{eq:double-boundary-pairing}
\end{equation}
\end{definition}

Let \(\mathbf b\to\mathbf b'\) be a rank-two braid move or contraction, and
write \(\mathbf b=b_1\cdots b_m\in\Word(I)\) and
\(\mathbf b'=b'_1\cdots b'_{m'}\in\Word(I)\).  Set
\[
 x_{\mathbf b}(t_1,\ldots,t_m)
 :=x_{b_1}(t_1)\cdots x_{b_m}(t_m)
\]
and define the rational map
\[
 R_{\mathbf b\to\mathbf b'}:
 (\Gm)^{\operatorname{Pos}(\mathbf b)}
 \dashrightarrow
 (\Gm)^{\operatorname{Pos}(\mathbf b')}
\]
as follows.  In the braid-move case, on the common open subset on which the
Lusztig factorization coordinates on both sides are defined, it is the unique
subtraction-free birational map
\((t_1,\ldots,t_m)\mapsto(t'_1,\ldots,t'_{m'})\) satisfying
\begin{equation}
 x_{\mathbf b}(t_1,\ldots,t_m)
 =x_{\mathbf b'}(t'_1,\ldots,t'_{m'})
 \label{eq:component-positive-lusztig-factorization}
\end{equation}
\cite[Proposition~2.5]{Lusztig1994},
\cite[Theorem~5.2 and Proposition~7.1]{BerensteinZelevinsky2001}.

In the contraction case, define, using the identity
\(x_i(t_1)x_i(t_2)=x_i(t_1+t_2)\),
\[
 R_{ii\to i}(t_1,t_2):=t_1+t_2.
\]
Write the map obtained by the same construction for the Langlands dual root
datum as
\[
 R_{\mathbf b\to\mathbf b'}^\vee:
 (\Gm)^{\operatorname{Pos}(\mathbf b)}
 \dashrightarrow
 (\Gm)^{\operatorname{Pos}(\mathbf b')},
\]
and write the integer min-plus tropicalizations as
\begin{equation}
 \begin{aligned}
  \Phi_{\mathbf b\to\mathbf b'}
  &:=\operatorname{Trop}(R_{\mathbf b\to\mathbf b'}):
  \mathbb Z^{\operatorname{Pos}(\mathbf b)}
  \longrightarrow
  \mathbb Z^{\operatorname{Pos}(\mathbf b')},\\
  \Phi_{\mathbf b\to\mathbf b'}^\vee
  &:=\operatorname{Trop}(R_{\mathbf b\to\mathbf b'}^\vee):
  \mathbb Z^{\operatorname{Pos}(\mathbf b)}
  \longrightarrow
  \mathbb Z^{\operatorname{Pos}(\mathbf b')}.
 \end{aligned}
 \label{eq:lusztig-weight-transport}
\end{equation}
For a contraction, the dual map is the same addition map, and hence
\[
 \Phi_{ii\to i}(a,c)
 =\Phi_{ii\to i}^\vee(a,c)=\min(a,c).
\]

\begin{definition}[Tropical transport]
\label{def:lusztig-weight-transport}
For an elementary weave \(\sigma\) representing a braid move or contraction
with a single tag, write \(\mathbf b\to\mathbf b'\) for the transformation of
its source and target with the tag forgotten, and define
\[
 \Phi_\sigma:=\Phi_{\mathbf b\to\mathbf b'},
 \qquad
 \Phi_\sigma^\vee:=\Phi_{\mathbf b\to\mathbf b'}^\vee.
\]
For a shuffle \(\sigma:i^+j^-\to j^-i^+\) or
\(\sigma:j^-i^+\to i^+j^-\), define
\begin{equation}
 \Phi_\sigma(a,c):=(c,a),
 \qquad
 \Phi_\sigma^\vee(a,c):=(c,a).
 \label{eq:shuffle-chart-transport}
\end{equation}
\end{definition}

\begin{remark}[minus tag]
For a braid move or contraction with a minus tag, we use the factorization map
for the transformation
\(\mathbf b^{\mathrm{op}}\to(\mathbf b')^{\mathrm{op}}\), in which the orders
of the source and target are reversed.  Here the coordinate at position
\(m+1-a\) of \(\mathbf b^{\mathrm{op}}\) corresponds to position \(a\) of
\(\mathbf b=b_1\cdots b_m\), and the coordinates are indexed in the original
orders of \(\mathbf b,\mathbf b'\).  Indeed, applying the transpose
\((-)^{\mathsf t}\) to~\eqref{eq:component-positive-lusztig-factorization}
gives
\[
 y_{b_m}(t_m)\cdots y_{b_1}(t_1)
 =y_{b'_{m'}}(t'_{m'})\cdots y_{b'_1}(t'_1),
 \qquad
 t'=R_{\mathbf b\to\mathbf b'}(t).
\]
Thus, with this indexing, even in the minus-tag case we have
\(\Phi_\sigma=\Phi_{\mathbf b\to\mathbf b'}\).
\end{remark}

For example, for the type \(A_2\) braid move,
\[
 R_{iji\to jij}(t_1,t_2,t_3)
 =\left(
   \frac{t_2t_3}{t_1+t_3},
   t_1+t_3,
   \frac{t_1t_2}{t_1+t_3}
  \right),
\]
and hence
\begin{equation}
 \Phi_{iji\to jij}(a,b,c)
 =\bigl(b+c-\min(a,c),\min(a,c),a+b-\min(a,c)\bigr).
 \label{eq:Phi3}
\end{equation}

\begin{definition}[{Lusztig cycle
  (cf.~\cite[Definition~4.8 and Section~6.1]{CasalsEtAlBraidVarieties})}]
\label{def:lusztig-cycle}
Let \(\fW\) be a double weave from \(\beta\) to \(\delta\).  If a map
\(\gamma:E(\fW)\to\mathbb N\) has, at every elementary double weave \(\tau\)
in \(\fW\), output-edge weights obtained by applying \(\Phi_\tau\) to its
input-edge weights, we call \(\gamma\) a \emph{Lusztig cycle}.  Similarly, if
a map \(\gamma^\vee:E(\fW)\to\mathbb N\) satisfies the rule given by
\(\Phi_\tau^\vee\) at every \(\tau\), we call \(\gamma^\vee\) a
\emph{Langlands dual Lusztig cycle}.
\end{definition}

\begin{definition}[Lusztig cycles starting at trivalent vertices and terminal weights]
\label{def:based-lusztig-cycle}
Let \(\fW\) be a double weave from \(\beta\) to \(\delta\), and let
\(\sigma\in V_3(\fW)\).  In the slice immediately after \(\sigma\), assign
weight \(1\) to the output edge of \(\sigma\) and weight \(0\) to every other
edge.  Proceeding toward the terminal end, at each elementary double weave
\(\tau\) on the terminal side of \(\sigma\), define the output-edge weights by
applying \(\Phi_\tau\) to the input-edge weights, and assign weight \(0\) to
the edges on the initial side of \(\sigma\).  This defines a map
\[
 \gamma_\sigma:E(\fW)\longrightarrow\mathbb N
\]
which we call the Lusztig cycle starting at \(\sigma\).

Write \(d_e,d_\sigma\) for the multipliers associated with the colors of an
edge \(e\) and the contraction vertex \(\sigma\), respectively.  Applying
\cite[Lemma~6.1]{CasalsEtAlBraidVarieties} in each tag component and
using~\eqref{eq:shuffle-chart-transport} at shuffles shows that
\((d_e/d_\sigma)\gamma_\sigma(e)\) is a natural number for every
\(e\in E(\fW)\).  We therefore define the map
\(\gamma_\sigma^\vee:E(\fW)\to\mathbb N\) by
\begin{equation}
 \gamma_\sigma^\vee(e):=\frac{d_e}{d_\sigma}\gamma_\sigma(e)
 \qquad(e\in E(\fW))
 \label{eq:dual-cycle-scaling}
\end{equation}
and call it the Langlands dual Lusztig cycle starting at \(\sigma\).  Reading
the terminal boundary edges in the order of the positions in \(\delta\), write
the restrictions of \(\gamma_\sigma\) and \(\gamma_\sigma^\vee\) to these
edges as
\[
 \partial\gamma_\sigma,
 \partial\gamma_\sigma^\vee
 \in\mathbb N^{\operatorname{Pos}(\delta)}
\]
and call them the \emph{terminal weights} of \(\gamma_\sigma\) and
\(\gamma_\sigma^\vee\), respectively.
\end{definition}

\begin{remark}
\label{rem:based-lusztig-cycle}
As also noted in \cite{CasalsEtAlBraidVarieties}, there is a slight abuse of
terminology here.  The map \(\gamma_\sigma\) satisfies the Lusztig-cycle
condition at every elementary double weave other than \(\sigma\).  At
\(\sigma\), however, its input weights are \((0,0)\), while its output weight
\(1\) differs from \(\Phi_\sigma(0,0)=0\).  Hence it is not a Lusztig cycle in
the sense of Definition~\ref{def:lusztig-cycle}.
\end{remark}

\begin{definition}[Local intersection pairing]
\label{def:local-intersection}
For two three-component vectors, set
\[
 \omega((a_1,a_2,a_3),(b_1,b_2,b_3))
 :=\det\!\begin{pmatrix}
  1&1&1\\
  a_1&a_2&a_3\\
  b_1&b_2&b_3
 \end{pmatrix}.
\]
For an elementary double weave \(\sigma:\mathbf b\to\mathbf b'\), define the
local term
\[
 \sharp_\sigma:
 \left(
  \mathbb Z^{\operatorname{Pos}(\mathbf b)}
  \times\mathbb Z^{\operatorname{Pos}(\mathbf b')}
  \right)^{\!2}
 \longrightarrow\tfrac12\mathbb Z
\]
as follows.  Let \(a,b\in\mathbb Z^{\operatorname{Pos}(\mathbf b)}\) and
\(a',b'\in\mathbb Z^{\operatorname{Pos}(\mathbf b')}\).  At an elementary
braid or shuffle vertex, define
\begin{equation}
 \sharp_\sigma\bigl((a;a')\mathbin{\cdot}(b;b')\bigr)
 :=\Omega_{\mathbf b'}(a',b')-\Omega_{\mathbf b}(a,b).
 \label{eq:braid-local}
\end{equation}
For a contraction \(\sigma:i^\epsilon i^\epsilon\to i^\epsilon\), write
\(a=(a_1,a_2)\), \(b=(b_1,b_2)\), \(a'=(a'_1)\), and \(b'=(b'_1)\), and set
\begin{equation}
 \sharp_\sigma\bigl((a;a')\mathbin{\cdot}(b;b')\bigr)
 :=
 \begin{cases}
  \omega\bigl((a_1,a_2;a'_1),(b_1,b_2;b'_1)\bigr),&\epsilon=+,\\
  \omega\bigl((a_2,a_1;a'_1),(b_2,b_1;b'_1)\bigr),&\epsilon=-
 \end{cases}.
 \label{eq:contraction-local}
\end{equation}

Let \(\fW:\beta\to\delta\) be a double weave and let
\(\lambda,\mu\in\mathbb Z^{E(\fW)}\).  For each elementary double weave
\(\sigma:\mathbf b\to\mathbf b'\), write the restrictions to the position
sets of its source and target as
\[
 \lambda_\sigma
 \in\mathbb Z^{\operatorname{Pos}(\mathbf b)},
 \qquad
 \lambda'_\sigma
 \in\mathbb Z^{\operatorname{Pos}(\mathbf b')}
\]
and use analogous notation for \(\mu\).  Define, by summing the local terms,
\begin{equation}
 \sharp_\fW:\mathbb Z^{E(\fW)}\times\mathbb Z^{E(\fW)}
 \longrightarrow\tfrac12\mathbb Z,\qquad
 \sharp_\fW(\lambda\mathbin{\cdot}\mu):=
 \sum_{\sigma\in V(\fW)}
 \sharp_\sigma\left(
  (\lambda_\sigma;\lambda'_\sigma)
  \mathbin{\cdot}
  (\mu_\sigma;\mu'_\sigma)
 \right).
 \label{eq:local-intersection}
\end{equation}
\end{definition}
\begin{definition}[Double weave quiver]
\label{def:weave-quiver}
For a double weave \(\fW:\beta\to\delta\), set
\begin{equation}
 \eps^\fW_{\sigma\tau}
 :=\sharp_\fW(\gamma_\sigma^\vee\mathbin{\cdot}\gamma_\tau)
   -\Omega_\delta(\partial\gamma_\sigma^\vee,\partial\gamma_\tau),
 \qquad \sigma,\tau\in V_3(\fW),
 \label{eq:def-weave-quiver}
\end{equation}
and set
\begin{equation}
 \Dcal(\fW):=\{\sigma\in V_3(\fW)\mid\partial\gamma_\sigma\ne0\},
 \label{eq:weave-frozen-set}
\end{equation}
and define
\[
 Q(\fW):=
 \bigl(V_3(\fW),V_3(\fW)\setminus\Dcal(\fW),
       \eps^\fW,(d_\sigma)_{\sigma\in V_3(\fW)}\bigr).
\]
We apply this definition also to ordinary Demazure weaves and use the same
notation \(Q(\fW)\).
\end{definition}

From the diagram of a double weave \(\fW:\beta\to\delta\), retain only the
plus-tagged part and forget the tags.  Let \(\fW_+\) be the resulting Demazure
weave.  Similarly, let \(\fW_-\) be the Demazure weave obtained by retaining
only the minus-tagged part.  Thus
\begin{equation}
 \fW_+:\mathbf p\longrightarrow\mathbf p',
 \qquad
 \fW_-:\mathbf q\longrightarrow\mathbf q'.
 \label{eq:double-weave-projections}
\end{equation}
See Figure~\ref{fig:double-weave-plus-minus-separation}.

\begin{proposition}[Plus--minus separation of the double weave quiver]
\label{prop:double-weave-plus-minus-decomposition}
Under the vertex bijections defined by the projections
in~\eqref{eq:double-weave-projections} and horizontal reflection of the lower
part,
\begin{equation}
 \begin{aligned}
  V_3(\fW)
  &=V_3(\fW_+)\sqcup V_3(\fW_-^{\mathrm{op}}),\\
  \Dcal(\fW)
  &=\Dcal(\fW_+)\sqcup\Dcal(\fW_-^{\mathrm{op}}),\\
  Q(\fW)
  &=Q(\fW_+)\sqcup Q(\fW_-^{\mathrm{op}}).
 \end{aligned}
 \label{eq:double-weave-quiver-decomposition}
\end{equation}
In particular, \(\eps^\fW_{\sigma\tau}=0\) for two vertices \(\sigma,\tau\)
belonging to different tag components.
\end{proposition}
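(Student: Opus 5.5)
The plan is to show that, for any two trivalent vertices, every local term in \(\sharp_\fW\) and every term in \(\Omega_\delta\) splits according to tag. The cross-tag terms are then zero, and the same-tag terms reproduce the quantities computed from \(\fW_+\) or \(\fW_-^{\mathrm{op}}\).

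\textbf{Cycles and terminal weights.} First we describe how Lusztig cycles behave under the projections. Let \(\sigma\in V_3(\fW)\) be plus-tagged. The tropical transports \(\Phi_\tau\) of tagged elementary weaves act only on edges of one tag. Shuffles merely swap the weights of their two edges, by~\eqref{eq:shuffle-chart-transport}. By induction along the slices, \(\gamma_\sigma\) is therefore supported on plus-tagged edges. Its restriction to these edges is the Lusztig cycle of \(\fW_+\) starting at the corresponding vertex, and the same holds for \(\gamma_\sigma^\vee\). For minus-tagged \(\sigma\), the restriction is the cycle of \(\fW_-\). Reading it on \(\fW_-^{\mathrm{op}}\) is justified by the minus-tag Remark: with reversed indexing, \(\Phi_\tau=\Phi_{\mathbf b\to\mathbf b'}\) for the reflected move. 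Consequently \(\partial\gamma_\sigma\) is the terminal weight of the projected cycle, placed at the positions of \(\delta\) of the right tag. This gives the first two identities in~\eqref{eq:double-weave-quiver-decomposition}.

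\textbf{Splitting the pairing terms.} Next we split \(\Omega_\beta\) and the local terms \(\sharp\).
\begin{itemize}
\item In \(\Omega_\beta(a,b)\), the pairing \(\langle\rho_{\beta,r},\rho_{\beta,s}^\vee\rangle\) vanishes when \(\beta(r),\beta(s)\) have different tags. This is because \(\rho_{\beta,r}\) lies in the summand of \(X^*(\torusH^2)\) of its tag, and \(s_{i^\pm}\) preserves the summands.
\item The plus summands of \(\rho_{\beta,r}\) are exactly the root sequence of the plus word \(\mathbf p\): letters \(j^-\) act trivially on the plus component. Also \(\operatorname{ord}_\beta(r,s)=\operatorname{ord}_{\mathbf p}(r,s)\) for two plus positions.
\item Hence \(\Omega_\beta(a,b)=\Omega_{\mathbf p}(a_+,b_+)+\Omega^{-}_{\mathbf q}(a_-,b_-)\), where \(\Omega^{-}\) uses signs \(-1\) and roots \(s_{q_1}\cdots s_{q_{r-1}}\alpha_{q_r}\).
\item For cross-tag pairs \(\sigma,\tau\), the supports are disjoint, so contraction local terms vanish: one of the two argument vectors is zero at every contraction. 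Braid and shuffle local terms are differences of \(\Omega\)'s, which have no cross terms. So \(\eps^\fW_{\sigma\tau}=0\).
\item For same-tag plus pairs, the shuffle local terms vanish. Indeed, a shuffle does not change the plus root sequence or the plus order, so \(\Omega_{\mathbf b'}-\Omega_{\mathbf b}\) restricted to plus data is zero. Braid and contraction terms coincide with those of \(\fW_+\), and \(\eps^\fW_{\sigma\tau}=\eps^{\fW_+}_{\sigma\tau}\).
\end{itemize}

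\textbf{Main obstacle.} The hard step is the minus component: we must match \(\Omega^{-}_{\mathbf q}\) and the minus-contraction term \(\omega((a_2,a_1;a'_1),\dots)\) with \(\Omega_{\mathbf q^{\mathrm{op}}}\) and the plus contraction term for \(\fW_-^{\mathrm{op}}\). For the contraction term the match is direct, since the reversed entries are exactly the reflected contraction. For \(\Omega\), the plan is to show
\[
\Omega^{-}_{\mathbf q}(a,b)=\Omega_{\mathbf q^{\mathrm{op}}}(a^{\mathrm{op}},b^{\mathrm{op}}).
\]
This uses \(\langle w\lambda,w\mu^\vee\rangle=\langle\lambda,\mu^\vee\rangle\) and rewrites each root in terms of reversed prefixes. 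Set \(v=s_{q_1}\cdots s_{q_n}\). Then \(s_{q_1}\cdots s_{q_{r-1}}\alpha_{q_r}\) equals \(-v\) applied to \(s_{q_n}\cdots s_{q_{r+1}}\alpha_{q_r}\). The global sign \(-1\) appears in both roots of the pair, so it cancels. Reversing the order flips \(\operatorname{ord}\), which compensates the minus sign in \(\operatorname{ord}\). Since these identities hold slice by slice, summing over vertices gives \(\eps^\fW_{\sigma\tau}=\eps^{\fW_-^{\mathrm{op}}}_{\sigma\tau}\) for minus pairs, together with the multipliers. This completes~\eqref{eq:double-weave-quiver-decomposition}.
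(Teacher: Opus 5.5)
Your proposal is correct and takes the same route as the paper. Lusztig cycles and their duals are supported on edges carrying the tag of their starting contraction, and root--coroot pairings between different tags vanish, so every cross-tag local term and boundary term is zero. Beyond the paper's terse proof, you explicitly check that the same-tag entries agree with those of \(Q(\fW_+)\) and \(Q(\fW_-^{\mathrm{op}})\), verifying that shuffle terms vanish and proving the minus-side reversal identity \(\Omega^{-}_{\mathbf q}(a,b)=\Omega_{\mathbf q^{\mathrm{op}}}(a^{\mathrm{op}},b^{\mathrm{op}})\); the paper leaves both points implicit.
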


\begin{proof}
Each Lusztig cycle is supported only on edges with the same tag as its
contraction.
Because the root--coroot pairing between different tags is zero, both the local
intersection term and the terminal boundary term are also zero.
\end{proof}

\begin{figure}[htbp]
\centering
\begin{tikzpicture}[x=0.72cm,y=0.66cm]
\path[use as bounding box] (-0.35,-5.18) rectangle (13.75,4.40);

\coordinate (dwsep-w-plus-contract) at (0.35,3.05);
\coordinate (dwsep-w-minus-contract) at (3.50,2.82);
\coordinate (dwsep-w-shuffle-one) at (4.02,1.23);
\coordinate (dwsep-w-shuffle-two) at (3.10,-0.12);
\coordinate (dwsep-w-braid) at (1.50,-2.18);

\node[wqboundarylabel] at (0.00,4.05) {\(1^+\)};
\node[wqboundarylabel] at (0.70,4.05) {\(1^+\)};
\node[wqboundarylabel] at (1.50,4.05) {\(2^+\)};
\node[wqboundarylabel] at (2.35,4.05) {\(1^-\)};
\node[wqboundarylabel] at (3.15,4.05) {\(2^-\)};
\node[wqboundarylabel] at (3.85,4.05) {\(2^-\)};
\node[wqboundarylabel] at (4.65,4.05) {\(1^+\)};

\draw[wqweaveone,wqweavedisplay,wqweaveplus] (0.00,3.75)--(dwsep-w-plus-contract);
\draw[wqweaveone,wqweavedisplay,wqweaveplus] (0.70,3.75)--(dwsep-w-plus-contract);
\draw[wqweaveone,wqweavedisplay,wqweaveplus]
  (dwsep-w-plus-contract)--(0.35,-1.12)--(dwsep-w-braid);
\draw[wqweavetwo,wqweavedisplay,wqweaveplus] (1.50,3.75)--(dwsep-w-braid);
\draw[wqweaveone,wqweavedisplay,wqweaveplus]
  (4.65,3.75)--(4.65,1.78)--(dwsep-w-shuffle-one)
  --(dwsep-w-shuffle-two)--(dwsep-w-braid);
\draw[wqweavetwo,wqweavedisplay,wqweaveplus] (dwsep-w-braid)--(0.50,-3.75);
\draw[wqweaveone,wqweavedisplay,wqweaveplus] (dwsep-w-braid)--(1.50,-3.75);
\draw[wqweavetwo,wqweavedisplay,wqweaveplus] (dwsep-w-braid)--(2.50,-3.75);

\draw[wqweaveone,wqweavedisplay,wqweaveminus]
  (2.35,3.75)--(2.35,0.65)--(3.42,-1.05)--(3.42,-3.75);
\draw[wqweavetwo,wqweavedisplay,wqweaveminus] (3.15,3.75)--(dwsep-w-minus-contract);
\draw[wqweavetwo,wqweavedisplay,wqweaveminus] (3.85,3.75)--(dwsep-w-minus-contract);
\draw[wqweavetwo,wqweavedisplay,wqweaveminus]
  (dwsep-w-minus-contract)--(dwsep-w-shuffle-one)
  --(4.25,0.15)--(4.25,-3.75);

\node[wqboundarylabel] at (0.50,-4.05) {\(2^+\)};
\node[wqboundarylabel] at (1.50,-4.05) {\(1^+\)};
\node[wqboundarylabel] at (2.50,-4.05) {\(2^+\)};
\node[wqboundarylabel] at (3.42,-4.05) {\(1^-\)};
\node[wqboundarylabel] at (4.25,-4.05) {\(2^-\)};
\node[wqpanellabel] at (2.30,-4.90) {\(\fW\)};

\coordinate (dwsep-plus-contract) at (6.95,3.05);
\coordinate (dwsep-plus-braid) at (8.15,-2.18);

\node[wqboundarylabel] at (6.60,4.05) {\(1\)};
\node[wqboundarylabel] at (7.30,4.05) {\(1\)};
\node[wqboundarylabel] at (8.15,4.05) {\(2\)};
\node[wqboundarylabel] at (9.35,4.05) {\(1\)};

\draw[wqweaveone,wqweavedisplay,wqweaveplus] (6.60,3.75)--(dwsep-plus-contract);
\draw[wqweaveone,wqweavedisplay,wqweaveplus] (7.30,3.75)--(dwsep-plus-contract);
\draw[wqweaveone,wqweavedisplay,wqweaveplus]
  (dwsep-plus-contract)--(6.95,-1.12)--(dwsep-plus-braid);
\draw[wqweavetwo,wqweavedisplay,wqweaveplus] (8.15,3.75)--(dwsep-plus-braid);
\draw[wqweaveone,wqweavedisplay,wqweaveplus] (9.35,3.75)--(9.35,-0.82)--(dwsep-plus-braid);
\draw[wqweavetwo,wqweavedisplay,wqweaveplus] (dwsep-plus-braid)--(7.15,-3.75);
\draw[wqweaveone,wqweavedisplay,wqweaveplus] (dwsep-plus-braid)--(8.15,-3.75);
\draw[wqweavetwo,wqweavedisplay,wqweaveplus] (dwsep-plus-braid)--(9.15,-3.75);

\node[wqboundarylabel] at (7.15,-4.05) {\(2\)};
\node[wqboundarylabel] at (8.15,-4.05) {\(1\)};
\node[wqboundarylabel] at (9.15,-4.05) {\(2\)};
\node[wqpanellabel] at (8.15,-4.90) {\(\fW_+\)};

\coordinate (dwsep-minus-contract) at (11.90,3.05);

\node[wqboundarylabel] at (11.55,4.05) {\(2\)};
\node[wqboundarylabel] at (12.25,4.05) {\(2\)};
\node[wqboundarylabel] at (13.35,4.05) {\(1\)};

\draw[wqweavetwo,wqweavedisplay,wqweaveminus] (11.55,3.75)--(dwsep-minus-contract);
\draw[wqweavetwo,wqweavedisplay,wqweaveminus] (12.25,3.75)--(dwsep-minus-contract);
\draw[wqweavetwo,wqweavedisplay,wqweaveminus] (dwsep-minus-contract)--(11.90,-3.75);
\draw[wqweaveone,wqweavedisplay,wqweaveminus] (13.35,3.75)--(13.35,-3.75);

\node[wqboundarylabel] at (11.90,-4.05) {\(2\)};
\node[wqboundarylabel] at (13.35,-4.05) {\(1\)};
\node[wqpanellabel] at (12.62,-4.90) {\(\fW_-^{\mathrm{op}}\)};
\end{tikzpicture}
\caption{An example of the plus--minus separation of a double weave appearing
in Proposition~\ref{prop:double-weave-plus-minus-decomposition}.}
\label{fig:double-weave-plus-minus-separation}
\end{figure}

Let \(\fW\) be an ordinary Demazure weave from \(\mathbf p\) to
\(\mathbf p'\).  For \(\sigma\in V_3(\fW)\), write \(A_\sigma^\fW\) for the
corresponding cluster variable on \(X(p)\) given by
\cite[Theorem~5.12]{CasalsEtAlBraidVarieties}.  Write the rational map with
these components as
\[
 \mathbf A_\fW:=(A_\sigma^\fW)_{\sigma\in V_3(\fW)}:
 X(p)\dashrightarrow\Tcal_{Q(\fW)}.
\]
If the terminal word is reduced, this is a birational map
\cite[Lemma~5.14]{CasalsEtAlBraidVarieties}.

For a double weave \(\fW\), under the vertex identifications of
Proposition~\ref{prop:double-weave-plus-minus-decomposition}, define
\(A_\sigma^\fW:=A_\sigma^{\fW_+}\) on the plus component and
\(A_\sigma^\fW:=A_\sigma^{\fW_-^{\mathrm{op}}}\) on the minus component.
Write
\[
 \mathbf A_\fW
 :=(A_\sigma^\fW)_{\sigma\in V_3(\fW)}:
 X(p)\times X(q^{\mathrm{op}})
 \dashrightarrow\Tcal_{Q(\fW)}
\]
and set \(\Sigma_\fW:=\Sigma_{Q(\fW)}\).  If
\(\mathbf p',\mathbf q'\) are reduced, then \(\mathbf A_\fW\) equals
\(\mathbf A_{\fW_+}\times\mathbf A_{\fW_-^{\mathrm{op}}}\) and is therefore
birational.  Hence it is a realization of \(\Sigma_\fW\) on
\(X(p)\times X(q^{\mathrm{op}})\).

\subsection{Terminal amalgamation and sweep mutation sequences along weaves}
\label{subsec:terminal-amalgamation}

In this subsection, we construct the sweep mutation sequence along a weave by
induction on the identity double weave and one-step extensions, and show that
its target quiver equals the defrosted terminal amalgamation.  

We first define the terminal amalgamation.
For a double word \(\beta=\beta(1)\cdots\beta(m)\), using each map \(\iota_t^\beta\),
write the strings to the left and right of position \(t\) as
\[
 \operatorname{left}_\beta(t)
 :=\iota_t^\beta(i_L),
 \qquad
 \operatorname{right}_\beta(t)
 :=\iota_t^\beta(i_R)
 \qquad(i=|\beta(t)|).
\]
Because \(\operatorname{Pos}_i(\beta)\) was defined after forgetting the tags,
the \(+\)-positions and \(-\)-positions jointly delimit the strings of the same
color.  Write \(\mathbf e_\ell\) for the standard basis vector of
\(\mathbb Z^{\Vcal(\beta)}\) corresponding to \(\ell\in\Vcal(\beta)\), and
define the incidence map by
\begin{equation}
 \operatorname{inc}_\beta:
 \mathbb Z^{\operatorname{Pos}(\beta)}
 \longrightarrow\mathbb Z^{\Vcal(\beta)},
 \qquad
 \operatorname{inc}_\beta\lambda
 :=\sum_{t=1}^m
 \operatorname{sgn}(\beta(t))\lambda(t)
 \bigl(\mathbf e_{\operatorname{left}_\beta(t)}
       -\mathbf e_{\operatorname{right}_\beta(t)}\bigr).
 \label{eq:component-terminal-incidence}
\end{equation}

\begin{definition}[Frozen extension of the terminal quiver]
\label{def:extended}
Let \(\fW:\beta\to\delta\) be a double weave.  Using the incidence map
\(\operatorname{inc}_\delta\) in~\eqref{eq:component-terminal-incidence},
define the vertex set of \(Q^{\mathrm{ext}}(\delta;\fW)\) by
\[
 I\bigl(Q^{\mathrm{ext}}(\delta;\fW)\bigr)
 :=\Vcal(\delta)\sqcup\Dcal(\fW)
\]
and define the exchange-matrix entries by the following formulas.
\begin{equation}
\begin{aligned}
 \widetilde\eps_{\ell\ell'}
 &=\eps^{Q(\delta)}_{\ell\ell'},
 &\qquad
 \widetilde\eps_{\ell\sigma}
 &=\bigl(\operatorname{inc}_\delta(\partial\gamma_\sigma)\bigr)_\ell,\\
 \widetilde\eps_{\sigma\ell}
 &=-\bigl(\operatorname{inc}_\delta
   (\partial\gamma_\sigma^\vee)\bigr)_\ell,
 &\qquad
 \widetilde\eps_{\sigma\tau}
 &=\Omega_\delta
   \bigl(\partial\gamma_\sigma^\vee,\partial\gamma_\tau\bigr).
\end{aligned}
\label{eq:ext-exchange-matrix}
\end{equation}
Here \(\ell,\ell'\in\Vcal(\delta)\) and
\(\sigma,\tau\in\Dcal(\fW)\).  Define the mutable-vertex set by
\[
 I_{\mathrm{uf}}\bigl(Q^{\mathrm{ext}}(\delta;\fW)\bigr)
 :=I_{\mathrm{uf}}(Q(\delta))
\]
and let the multiplier be \(d_\ell=d_i\) for \(\ell=(i,[r,s])\) and
\(d_\sigma\) for \(\sigma\in\Dcal(\fW)\).  By~\eqref{eq:dual-cycle-scaling}
and the symmetrizability of the root--coroot pairing,
\(\widetilde\eps\) is skew-symmetrizable.  The mixed entries are integral, and
all newly added vertices are frozen, so these data define a quiver.
\end{definition}

\begin{figure}[htbp]
 \centering
 \begin{tikzpicture}[x=0.75cm,y=0.75cm]
  \path[use as bounding box] (-0.45,-0.40) rectangle (15.05,5.25);

  \node[wqdiagramlabel] at (2.30,4.95)
    {$\text{boundary weight of }\gamma_\sigma\text{ at }p$};
  \coordinate (input-left) at (1.65,4.45);
  \coordinate (input-right) at (2.95,4.45);
  \node[wqweavefrozen] (source-sigma) at (2.30,3.80) {};
  \draw[wqweaveone] (input-left)--(source-sigma);
  \draw[wqweaveone] (input-right)--(source-sigma);
  \node[wqdiagramlabel,above=1mm of source-sigma] {$\sigma$};
  \draw[wqcycleedge] (source-sigma)--(2.30,3.15);
  \node[wqdiagramlabel,text=wqblue] at (2.30,2.85) {$\vdots$};
  \draw[wqcycleedge] (2.30,2.55)--(2.30,1.30);

  \draw[wqweaveone] (0.20,1.30)--(4.40,1.30);
  \node[wqdiagramlabel,below=1.2mm] (position-p) at (2.30,1.30) {$p$};
  \node[wqdiagramlabel,anchor=base west]
    at ([xshift=0.4mm]position-p.base east)
    {$\in\operatorname{Pos}(\delta)$};
  \node[wqdiagramlabel,text=wqblue,right=1mm] at (2.30,1.85)
    {$a=(\partial\gamma_\sigma)(p)$};

  \draw[wqmaparrow]
    (5.05,2.55)--node[wqdiagramlabel,above]
    {$\operatorname{inc}_\delta$}(6.45,2.55);

  \node[wqdiagramlabel] at (10.55,4.95)
    {$\text{corresponding triangle in }
      Q^{\mathrm{ext}}(\delta;\fW)$};
  \node[wqlargefrozen] (left-terminal) at (8.30,1.30)
    {$\operatorname{left}_\delta(p)$};
  \node[wqlargefrozen] (right-terminal) at (12.80,1.30)
    {$\operatorname{right}_\delta(p)$};
  \node[wqweavefrozen] (sigma) at (10.55,3.80) {};
  \node[wqdiagramlabel,above=1mm of sigma] {$\sigma$};
  \wqextensiontriangle{left-terminal}{sigma}{right-terminal}
  \path (left-terminal)--node[wqdiagramlabel,text=wqorange,above left]
    {$a$}(sigma);
  \path (sigma)--node[wqdiagramlabel,text=wqorange,above right]
    {$a$}(right-terminal);
  \draw[wqstringarrow]
    (right-terminal.west)--node[wqdiagramlabel,below] {$Q(\delta)$}
    (left-terminal.east);
\end{tikzpicture}
 \caption{Extension of the quiver \(Q(\delta)\) by the terminal weights of
 Lusztig cycles.  In this figure, the weave is assumed to be simply laced and
 to have only plus tags.}
 \label{fig:lusztig-boundary-arrow-general}
\end{figure}

\begin{definition}[Terminal amalgamation]
\label{def:frozen-quiver-amalgamation}
Let \(\fW:\beta\to\delta\) be a double weave.  Define
\[
 Q^{\mathrm{am,fr}}(\fW)
 :=Q^{\mathrm{ext}}(\delta;\fW)
       \ast_{\Dcal(\fW)}Q(\fW).
\]
\end{definition}

\begin{proposition}
\label{prop:double-amalgamation-exchange-matrix}
Let \(\fW:\beta\to\delta\) be a double weave and let
\(\sigma,\tau\in V_3(\fW)\).  The exchange-matrix entry of
\(Q^{\mathrm{am,fr}}(\fW)\) is given by
\begin{equation}
 \eps^{Q^{\mathrm{am,fr}}(\fW)}_{\sigma\tau}
 =\sharp_\fW(\gamma_\sigma^\vee\mathbin{\cdot}\gamma_\tau).
 \label{eq:double-am-contraction-contraction}
\end{equation}
\end{proposition}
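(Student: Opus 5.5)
The plan is to unwind the amalgamation formula~\eqref{eq:quiver-amalgamation-sum} for \(Q^{\mathrm{am,fr}}(\fW)=Q^{\mathrm{ext}}(\delta;\fW)\ast_{\Dcal(\fW)}Q(\fW)\). The vertex set is \(\Vcal(\delta)\sqcup V_3(\fW)\), and \(\Dcal(\fW)\) is the shared subset. For \(\sigma,\tau\in V_3(\fW)\), the entry is therefore the sum of the contributions from the two factors in which both vertices occur. Since \(\Vcal(\delta)\) and \(V_3(\fW)\) are disjoint in \(J\), no vertex of \(Q^{\mathrm{ext}}\) other than \(\sigma\) itself maps to \(\sigma\), so each inner sum has at most one term per factor.

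I will split into two cases. In the first case, at least one of \(\sigma,\tau\) lies outside \(\Dcal(\fW)\). Then only \(Q(\fW)\) contributes, and the entry is \(\eps^\fW_{\sigma\tau}\). By definition this equals \(\sharp_\fW(\gamma_\sigma^\vee\cdot\gamma_\tau)-\Omega_\delta(\partial\gamma_\sigma^\vee,\partial\gamma_\tau)\). Now \(\partial\gamma_\sigma=0\) if \(\sigma\notin\Dcal(\fW)\), and by the scaling~\eqref{eq:dual-cycle-scaling} also \(\partial\gamma_\sigma^\vee=0\), since \(d_e/d_\sigma>0\). The same holds for \(\tau\). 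The pairing \(\Omega_\delta\) is bilinear, so it vanishes whenever either argument is zero. This gives~\eqref{eq:double-am-contraction-contraction}.

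In the second case, \(\sigma,\tau\in\Dcal(\fW)\). Then both factors contribute, and the entry is
\[
 \widetilde\eps_{\sigma\tau}+\eps^\fW_{\sigma\tau}
 =\Omega_\delta(\partial\gamma_\sigma^\vee,\partial\gamma_\tau)
 +\sharp_\fW(\gamma_\sigma^\vee\cdot\gamma_\tau)
 -\Omega_\delta(\partial\gamma_\sigma^\vee,\partial\gamma_\tau),
\]
by~\eqref{eq:ext-exchange-matrix} and~\eqref{eq:def-weave-quiver}. The boundary terms cancel, which leaves the claim. The same computation also covers the case \(\sigma=\tau\).

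The only point needing care, and the one I expect to be the mild obstacle, is checking that the amalgamation hypotheses of Definition~\ref{def:quiver-amalgamation} hold and that the multipliers agree on \(\Dcal(\fW)\). This makes the entry genuinely the plain sum above, with no hidden identifications. Both factors assign the multiplier \(d_\sigma\), and \(\Dcal(\fW)\) is frozen in both \(Q^{\mathrm{ext}}(\delta;\fW)\) and \(Q(\fW)\). The former holds by Definition~\ref{def:extended}, the latter by Definition~\ref{def:weave-quiver}. So the notation \(\ast_{\Dcal(\fW)}\) applies directly.
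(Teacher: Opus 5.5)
Your proposal is correct and follows essentially the same route as the paper: split according to whether both vertices lie in \(\Dcal(\fW)\), and read off the amalgamation sum. When both are shared, the two boundary terms cancel. Otherwise only \(Q(\fW)\) contributes, and its boundary term vanishes because a terminal weight is zero.
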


\begin{proof}
If \(\sigma,\tau\in\Dcal(\fW)\), the amalgamation adds the entries
in~\eqref{eq:ext-exchange-matrix} and~\eqref{eq:def-weave-quiver}, so the two
boundary terms cancel.  If at least one of them does not belong to
\(\Dcal(\fW)\), a vertex outside \(\Dcal(\fW)\) is not shared with
\(Q^{\mathrm{ext}}(\delta;\fW)\), and its terminal weight is zero.  Thus the
boundary term in~\eqref{eq:def-weave-quiver} is also zero.  In either case, the
remaining entry is the local intersection pairing on the right-hand side.
\end{proof}

We now turn to the inductive construction along a weave.  We first define the
extensions of \(Q(\beta)\) on which it operates.

\begin{definition}[Extension datum for \(Q(\beta)\)]
\label{def:double-word-quiver-extension-datum}
An extension datum for \(Q(\beta)\), for a double word \(\beta\), consists of
the following data:
\begin{enumerate}
 \item a quiver \(Q\) containing \(Q(\beta)\) as a full subquiver;
 \item a family of weights indexed by the set
 \[
  T:=I(Q)\setminus\Vcal(\beta),
 \]
 written as
\[
  (a_\tau)_{\tau\in T},
  \qquad
  a_\tau\in\mathbb N^{\operatorname{Pos}(\beta)}
  \quad(\tau\in T),
 \]
 and satisfying the following condition.  For each \(\tau\in T\), if we define
\[
  a_\tau^\vee(t):=\frac{d_{|\beta(t)|}}{d_\tau}a_\tau(t)
  \quad\bigl(t\in\operatorname{Pos}(\beta)\bigr),
 \]
 then
\begin{equation}
  a_\tau^\vee\in\mathbb N^{\operatorname{Pos}(\beta)},
  \qquad
  \bigl(\eps^Q_{\ell\tau}\bigr)_{\ell\in\Vcal(\beta)}
  =\operatorname{inc}_\beta(a_\tau)
  \label{eq:extension-datum-mixed-blocks}
 \end{equation}
 holds.
\end{enumerate}
Write this extension datum as
\[
 \mathcal E:=\bigl(Q,(a_\tau)_{\tau\in T}\bigr).
\]
\end{definition}

By skew-symmetrizability of the exchange matrix, an extension datum satisfies
\begin{equation}
 \bigl(\eps^Q_{\tau\ell}\bigr)_{\ell\in\Vcal(\beta)}
 =-\operatorname{inc}_\beta(a_\tau^\vee).
 \label{eq:extension-datum-dual-mixed-block}
\end{equation}

\begin{figure}[htbp]
\centering
\begin{tikzpicture}[
  x=0.94cm,y=0.90cm,
  wquvalue/.style={font=\scriptsize,fill=white,inner sep=0.65pt,
    text=black!82},
  wqmvalue/.style={font=\tiny,fill=white,inner sep=0.45pt,
    text=black!82}
]
\path[use as bounding box] (-0.20,-0.28) rectangle (16.15,5.30);
\draw[wqstring] (0,2.00)--(16,2.00);
\draw[wqstring] (0,0.40)--(16,0.40);
\node[wqlevel,anchor=east] at (-0.12,2.00) {$1$};
\node[wqlevel,anchor=east] at (-0.12,0.40) {$2$};

\node[wqlargefrozen]  (q10) at (1.20,2.00) {$[0,1]$};
\node[wqlargemutable] (q11) at (3.60,2.00) {$[1,2]$};
\node[wqlargemutable] (q12) at (7.30,2.00) {$[2,5]$};
\node[wqlargemutable] (q13) at (11.00,2.00) {$[5,6]$};
\node[wqlargefrozen]  (q14) at (14.10,2.00) {$[6,\infty]$};
\node[wqlargefrozen]  (q20) at (3.20,0.40) {$[0,3]$};
\node[wqlargemutable] (q21) at (7.25,0.40) {$[3,4]$};
\node[wqlargemutable] (q22) at (11.10,0.40) {$[4,7]$};
\node[wqlargefrozen]  (q23) at (15.05,0.40) {$[7,\infty]$};

\node[wqweavefrozen] (ext-sigma) at (5.45,4.05) {};
\node[wqweavefrozen] (ext-tau) at (9.05,4.05) {};
\node[wqvlabel,above=0.7mm of ext-sigma] {$\sigma$};
\node[wqvlabel,above=0.7mm of ext-tau] {$\tau$};

\wqextensiontriangle{q11}{ext-sigma}{q12}
\wqextensiontriangle{q11}{ext-tau}{q12}
\wqextensiontriangle{q21}{ext-tau}{q22}
\wqextensiontriangle{q13}{ext-tau}{q14}

\draw[wqstringarrow] (q11)--(q10);
\draw[wqstringarrow] (q12)--(q11);
\draw[wqstringarrow] (q13)--(q12);
\draw[wqstringarrow] (q14)--(q13);
\draw[wqstringarrow] (q21)--(q20);
\draw[wqstringarrow] (q22)--(q21);
\draw[wqstringarrow] (q23)--(q22);
\draw[wqstringhalf] (q10)--(q20);
\draw[wqstringarrow] (q20)--(q12);
\draw[wqstringarrow] (q12)--(q22);
\draw[wqstringarrow] (q22)--(q14);
\draw[wqstringhalf] (q14)--(q23);

\node[wqmvalue,anchor=north] at ([yshift=-0.65mm]q10.south) {$1$};
\node[wqmvalue,anchor=north] at ([yshift=-0.65mm]q11.south) {$1$};
\node[wqmvalue,anchor=north] at ([yshift=-0.65mm]q12.south) {$A_\sigma A_\tau$};
\node[wqmvalue,anchor=north] at ([yshift=-0.65mm]q13.south) {$A_\sigma^{-1}$};
\node[wqmvalue,anchor=north] at ([yshift=-0.65mm]q14.south) {$A_\sigma A_\tau^2$};
\node[wqmvalue,anchor=north] at ([yshift=-0.65mm]q20.south) {$1$};
\node[wqmvalue,anchor=north] at ([yshift=-0.65mm]q21.south) {$A_\sigma A_\tau$};
\node[wqmvalue,anchor=north] at ([yshift=-0.65mm]q22.south) {$A_\tau$};
\node[wqmvalue,anchor=north] at ([yshift=-0.65mm]q23.south) {$A_\sigma A_\tau$};

\node[wquvalue] at (2.40,2.00) {$1$};
\node[wquvalue] at (5.45,2.00) {$A_\sigma A_\tau$};
\node[wquvalue] at (5.23,0.40) {$1$};
\node[wquvalue] at (9.18,0.40) {$A_\tau$};
\node[wquvalue] at (9.15,2.00) {$1$};
\node[wquvalue] at (12.55,2.00) {$A_\tau$};
\node[wquvalue] at (13.08,0.40) {$1$};
\end{tikzpicture}
\caption{An extension datum for \(Q(1122112)\) in type~\(A_2\).  Here
\(T=\{\sigma,\tau\}\), \(a_\sigma=(0,1,0,0,0,0,0)\), and
\(a_\tau=(0,1,0,1,0,1,0)\).  The two additional vertices are frozen, and
\(\eps^Q_{\sigma\tau}=0\). We put a \(u\)-variable on each horizontal edge and
a monomial \(M_\ell\) below each string vertex \(\ell\).  These functions are
defined in
Section~\ref{sec:cartan-corrections-extension-data}.}
\label{fig:SW-1122112-extension-datum}
\end{figure}

\begin{lemma}
\label{lem:terminal-extension-datum}
\(\bigl(Q^{\mathrm{ext}}(\delta;\fW), (\partial\gamma_\sigma)_{\sigma\in\Dcal(\fW)}\bigr)\) is an extension datum
for \(Q(\delta)\).
\end{lemma}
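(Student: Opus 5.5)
We check the two clauses of Definition~\ref{def:double-word-quiver-extension-datum} directly from Definition~\ref{def:extended}, with \(\beta\) replaced by \(\delta\), \(Q:=Q^{\mathrm{ext}}(\delta;\fW)\), \(T:=\Dcal(\fW)\) and \(a_\sigma:=\partial\gamma_\sigma\).

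First, \(Q(\delta)\) is a full subquiver of \(Q\). The vertex set of \(Q\) is \(\Vcal(\delta)\sqcup\Dcal(\fW)\), so \(T=I(Q)\setminus\Vcal(\delta)=\Dcal(\fW)\). By~\eqref{eq:ext-exchange-matrix}, \(\widetilde\eps_{\ell\ell'}=\eps^{Q(\delta)}_{\ell\ell'}\) for \(\ell,\ell'\in\Vcal(\delta)\). The multipliers on \(\Vcal(\delta)\) are \(d_\ell=d_i\), which agree with those of \(Q(\delta)\). The mutable set of \(Q\) is \(I_{\mathrm{uf}}(Q(\delta))\), so restricting to \(\Vcal(\delta)\) recovers \(Q(\delta)\) exactly, including which vertices are frozen. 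That \(Q\) is a quiver at all was already stated at the end of Definition~\ref{def:extended}.

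Second, the weights. Each \(a_\sigma=\partial\gamma_\sigma\) lies in \(\mathbb N^{\operatorname{Pos}(\delta)}\), since \(\gamma_\sigma\) takes values in \(\mathbb N\). For \(t\in\operatorname{Pos}(\delta)\), let \(e\) be the terminal edge at position \(t\). Its color is \(|\delta(t)|\), so \(d_e=d_{|\delta(t)|}\). Hence
\[
 a_\sigma^\vee(t)=\frac{d_{|\delta(t)|}}{d_\sigma}\,\partial\gamma_\sigma(t)
 =\frac{d_e}{d_\sigma}\gamma_\sigma(e)=\gamma_\sigma^\vee(e).
\]
This is a natural number by the integrality established in Definition~\ref{def:based-lusztig-cycle} via~\eqref{eq:dual-cycle-scaling}. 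So \(a_\sigma^\vee=\partial\gamma_\sigma^\vee\in\mathbb N^{\operatorname{Pos}(\delta)}\). The mixed-block condition in~\eqref{eq:extension-datum-mixed-blocks} is then literally the defining formula \(\widetilde\eps_{\ell\sigma}=(\operatorname{inc}_\delta(\partial\gamma_\sigma))_\ell\).

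The only point needing care is the identification \(d_e=d_{|\delta(t)|}\) for terminal edges. This holds because the terminal boundary edges of \(\fW\) are ordered by \(\operatorname{Pos}(\delta)\) and each carries the color of the corresponding letter. It is also worth checking that the remaining entry \(\widetilde\eps_{\sigma\ell}=-(\operatorname{inc}_\delta(\partial\gamma_\sigma^\vee))_\ell\) is consistent with skew-symmetrizability, as recorded in~\eqref{eq:extension-datum-dual-mixed-block}. Since \(d_\ell=d_{|\delta(t)|}\) for both strings incident to position \(t\), we get \(d_\ell\,\widetilde\eps_{\ell\sigma}=-d_\sigma\,\widetilde\eps_{\sigma\ell}\) term by term. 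Beyond this, no real obstacle is expected: the lemma is essentially a matter of matching definitions.
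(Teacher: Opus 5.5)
Your proposal is correct and takes the same approach as the paper, whose proof is a one-line citation of \eqref{eq:dual-cycle-scaling} (giving \(a_\sigma^\vee=\partial\gamma_\sigma^\vee\in\mathbb N^{\operatorname{Pos}(\delta)}\)) and \eqref{eq:ext-exchange-matrix} (giving the full-subquiver and mixed-block conditions). You have simply written out that bookkeeping in more detail, including the identification \(d_e=d_{|\delta(t)|}\) at the terminal edges.
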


\begin{proof}
\eqref{eq:dual-cycle-scaling} and~\eqref{eq:ext-exchange-matrix} give
the conditions in Definition~\ref{def:double-word-quiver-extension-datum}.
\end{proof}

\begin{definition}[Extended cluster transformation associated with an elementary double weave]
\label{def:basic-mutation-step-with-extra-vertices}
Let \(\sigma:\mathbf b\to\mathbf b'\) be an elementary double weave from
Definition~\ref{def:double-weave}, and let
\[
 \mathcal E
 =\bigl(Q,(a_\tau)_{\tau\in T}\bigr)
\]
be an extension datum for \(Q(\mathbf b)\).  Define the extended cluster
transformation \(\mu_{\sigma}:Q\to Q'\) associated with the elementary double
weave \(\sigma\) as follows.

If \(\sigma\) is a braid move or shuffle, apply to \(Q\) the mutation sequence
and relabeling of Shen--Weng \cite[Propositions~3.7 and~3.9]{ShenWeng2021}
that give \(Q(\mathbf b)\to Q(\mathbf b')\), and let \(Q'\) be its target.

If \(\sigma:i^\epsilon i^\epsilon\to i^\epsilon\) is a contraction, mutate at
\(k=(i,[1,2])\), then relabel by
\[
 \begin{gathered}
  (i,[0,1])\longmapsto(i,[0,1]),\qquad
  (i,[1,2])\longmapsto\sigma,\qquad
  (i,[2,\infty])\longmapsto(i,[1,\infty]),\\
  (a,[0,\infty])\longmapsto(a,[0,\infty])\quad(a\ne i)
 \end{gathered}
\]
and let \(Q'\) be the resulting quiver.  See
Figure~\ref{fig:contraction-quiver-mutation}.

In either case,
\[
 I(Q')=\Vcal(\mathbf b')\sqcup V_3(\sigma)\sqcup T.
\]
\end{definition}

\begin{figure}[htbp]
\centering
\begin{tikzpicture}[x=0.92cm,y=0.72cm]
\path[use as bounding box] (-0.20,0.30) rectangle (16.00,5.95);

\node[wqdiagramlabel] at (3.10,5.70) {$Q(i^+i^+)$};
\node[wqlevel,anchor=east] at (0.02,4.05) {$i$};
\draw[wqguide] (0.15,4.05)--(6.05,4.05);

\node[wqlargefrozen] (plus-source-left) at (1.15,4.05)
  {$[0,1]$};
\node[wqlargepivot] (plus-source-pivot) at (3.10,4.05)
  {$[1,2]$};
\node[wqlargefrozen] (plus-source-right) at (5.05,4.05)
  {$[2,\infty]$};
\draw[wqstringarrow]
  (plus-source-pivot)--(plus-source-left);
\draw[wqstringarrow]
  (plus-source-right)--(plus-source-pivot);
\node[wqdiagramlabel,above=0.7mm of plus-source-pivot] {$k$};

\draw[wqtimearrow] (6.55,4.05)--(8.85,4.05);
\node[wqmovelabel,above=1.0mm] at (7.70,4.05) {$\mu_k$};
\node[wqmovelabel,below=1.0mm] at (7.70,4.05) {$k\mapsto\sigma$};

\node[wqdiagramlabel] at (12.75,5.70)
  {$\mu_\sigma\bigl(Q(i^+i^+)\bigr)$};
\node[wqlevel,anchor=east] at (9.42,4.05) {$i$};
\draw[wqguide] (9.55,4.05)--(15.95,4.05);

\node[wqlargefrozen] (plus-target-left) at (10.85,4.05)
  {$[0,1]$};
\node[wqlargefrozen] (plus-target-right) at (14.65,4.05)
  {$[1,\infty]$};
\node[wqcontraction] (plus-target-sigma) at (12.75,4.65) {};
\draw[wqstringarrow]
  (plus-target-right)--(plus-target-left);
\wqextensiontriangle
  {plus-target-left}{plus-target-sigma}{plus-target-right}

\node[wqvlabel,above=0.8mm of plus-target-sigma] {$\sigma$};

\node[wqdiagramlabel] at (3.10,2.65) {$Q(i^-i^-)$};
\node[wqlevel,anchor=east] at (0.02,1.00) {$i$};
\draw[wqguide] (0.15,1.00)--(6.05,1.00);

\node[wqlargefrozen] (minus-source-left) at (1.15,1.00)
  {$[0,1]$};
\node[wqlargepivot] (minus-source-pivot) at (3.10,1.00)
  {$[1,2]$};
\node[wqlargefrozen] (minus-source-right) at (5.05,1.00)
  {$[2,\infty]$};
\draw[wqstringarrow]
  (minus-source-left)--(minus-source-pivot);
\draw[wqstringarrow]
  (minus-source-pivot)--(minus-source-right);
\node[wqdiagramlabel,above=0.7mm of minus-source-pivot] {$k$};

\draw[wqtimearrow] (6.55,1.00)--(8.85,1.00);
\node[wqmovelabel,above=1.0mm] at (7.70,1.00) {$\mu_k$};
\node[wqmovelabel,below=1.0mm] at (7.70,1.00) {$k\mapsto\sigma$};

\node[wqdiagramlabel] at (12.75,2.65)
  {$\mu_\sigma\bigl(Q(i^-i^-)\bigr)$};
\node[wqlevel,anchor=east] at (9.42,1.00) {$i$};
\draw[wqguide] (9.55,1.00)--(15.95,1.00);

\node[wqlargefrozen] (minus-target-left) at (10.85,1.00)
  {$[0,1]$};
\node[wqlargefrozen] (minus-target-right) at (14.65,1.00)
  {$[1,\infty]$};
\node[wqcontraction] (minus-target-sigma) at (12.75,1.60) {};
\draw[wqstringarrow]
  (minus-target-left)--(minus-target-right);
\wqextensiontriangle
  {minus-target-right}{minus-target-sigma}{minus-target-left}

\node[wqvlabel,above=0.8mm of minus-target-sigma] {$\sigma$};
\end{tikzpicture}
\caption{The cluster transformation associated with a contraction.  The top
row is the case \(\sigma:i^+i^+\to i^+\), and the bottom row is the case
\(\sigma:i^-i^-\to i^-\).  In both cases, after mutation at \(k=(i,[1,2])\),
the vertex \(k\) is relabeled as \(\sigma\).}
\label{fig:contraction-quiver-mutation}
\end{figure}

\begin{lemma}[One-step transition of extension data]
\label{lem:basic-mutation-step-with-extra-vertices}
Under the setup of Definition~\ref{def:basic-mutation-step-with-extra-vertices},
set
\[
 T':=I(Q')\setminus\Vcal(\mathbf b')=T\sqcup V_3(\sigma).
\]
For each \(\tau\in T'\), define
\(a'_\tau\in\mathbb N^{\operatorname{Pos}(\mathbf b')}\) by
\[
 a'_\tau
 :=
 \begin{cases}
  \Phi_\sigma(a_\tau),&\tau\in T,\\
  (1),&\tau\in V_3(\sigma)
 \end{cases}.
\]
Then
\[
 \mathcal E'
 :=\bigl(Q',(a'_\tau)_{\tau\in T'}\bigr)
\]
is an extension datum for \(Q(\mathbf b')\), and
\[
 (a'_\tau)^\vee
 =
 \begin{cases}
  \Phi_\sigma^\vee(a_\tau^\vee),&\tau\in T,\\
  (1),&\tau\in V_3(\sigma)
 \end{cases}.
\]
\end{lemma}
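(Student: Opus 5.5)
The plan is to verify the two conditions of Definition~\ref{def:double-word-quiver-extension-datum} for \(\mathcal E'\). First, \(Q(\mathbf b')\) is a full subquiver of \(Q'\): the mutation sequence only mutates vertices of \(\Vcal(\mathbf b)\), and by Shen--Weng (braid moves, shuffles) or by a direct computation (contraction) it carries \(Q(\mathbf b)\) to \(Q(\mathbf b')\) on the string vertices. Second, the mixed block must be \(\operatorname{inc}_{\mathbf b'}(a'_\tau)\), together with integrality of \((a'_\tau)^\vee\) and the stated dual formula. The computation is local: since \(\mathbf b\) is the source of an elementary double weave, the strings are few, and the mutations of a cluster transformation only touch string vertices.

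\textbf{Contraction case.} For \(\sigma:i^\epsilon i^\epsilon\to i^\epsilon\), the incidence gives, for \(\tau\in T\) with \(a_\tau=(a_1,a_2)\),
\[
 \eps_{[0,1],\tau}=\epsilon a_1,\qquad \eps_{[1,2],\tau}=\epsilon(a_2-a_1),\qquad \eps_{[2,\infty],\tau}=-\epsilon a_2 ,
\]
and the other colors contribute zero. We mutate at \(k=[1,2]\), where \(\eps_{[0,1],k}=\epsilon\) and \(\eps_{k,[2,\infty]}=\epsilon\) up to sign conventions fixed by Figure~\ref{fig:contraction-quiver-mutation}. Applying the mutation rule, the new entry \(\eps'_{[0,1],\tau}\) becomes \(\epsilon a_1+\tfrac12(|\epsilon|\cdot\epsilon(a_2-a_1)+\epsilon|a_2-a_1|)=\epsilon\max(a_1,a_2)\)-type expressions. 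One must get \(\epsilon\min(a_1,a_2)\); the orientation signs of the arrows \([0,1]\leftarrow k\) will produce the min. Similarly \(\eps'_{[2,\infty],\tau}\) becomes \(-\epsilon\min(a_1,a_2)\), which matches \(\operatorname{inc}_{\mathbf b'}(\Phi_{ii\to i}(a_\tau))\) since \(\Phi(a_1,a_2)=\min(a_1,a_2)\). The new vertex \(\sigma\) has entries \(-\eps_{\ell,k}\), which are exactly \(\operatorname{inc}_{\mathbf b'}((1))\). The dual statement follows from \(d\)-scaling, because all positions have color \(i\) and \(\Phi^\vee=\Phi=\min\), so \(\min\) commutes with scaling by \(d_i/d_\tau\).

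\textbf{Braid move and shuffle case.} No vertex is created, so \(T'=T\). A shuffle is only a relabeling (\cite[Proposition~3.9]{ShenWeng2021}), and \(\Phi_\sigma\) swaps the entries, which matches the swap of string incidences. For a braid move, the task is to show that the Shen--Weng mutation sequence transforms a column \(\operatorname{inc}_{\mathbf b}(a)\) with \(a\in\mathbb N^{\operatorname{Pos}(\mathbf b)}\) into \(\operatorname{inc}_{\mathbf b'}(\Phi_\sigma(a))\). The approach is to interpret the column \(\operatorname{inc}_{\mathbf b}(a)\) as the tropical exponent vector of a frozen vertex, that is, as a \(g\)/\(c\)-vector-type datum. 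Mutation of the column \(\eps_{\cdot\tau}\) is tropical \(\mathcal X\)-mutation (min-plus). On the other hand, Shen--Weng's sequence realizes the change of Lusztig factorization \(R_{\mathbf b\to\mathbf b'}\) at the level of \(\mathcal X\)-coordinates, since the \(u\)-variables on horizontal edges are monomials in the \(\mathcal X\)-coordinates. Tropicalizing gives exactly \(\Phi_\sigma\). Concretely, I would first try a direct check in rank two (types \(A_2,B_2,G_2\); \(A_1\times A_1\) is a relabeling), using Figure~\ref{fig:SW-basic-triangles} amalgamations restricted to \(\mathbf b_{ij}\). Nonnegativity is already given, since \(a\in\mathbb N\) and \(\Phi\) preserves \(\mathbb N\). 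The dual version runs the same argument with \(-\operatorname{inc}(a^\vee)\) rows and the Langlands dual factorization, consistent with skew-symmetrizability.

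\textbf{Main obstacle.} The main obstacle is the braid-move case, especially \(B_2\) and \(G_2\), where the mutation sequence is long. There the identification "column mutation \(=\) tropicalized Lusztig transition" has to be proved uniformly rather than by brute force. The fallback is a case-by-case computation in these finitely many rank-two configurations, one for each sign \(\epsilon\).
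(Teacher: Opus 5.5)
Your overall plan follows the paper's proof: a direct one-mutation computation for the contraction, and, for braid moves, identifying the mutation of a frozen column with the min-plus tropicalization of the Lusztig transition. The paper only asserts that identification too, and it quotes \cite[Lemma~6.1]{CasalsEtAlBraidVarieties} for the dual weights. The shuffle case, however, contains a false step. A shuffle is a pure relabeling only when \(i\ne j\). For \(\mathbf b=i^+i^-\to\mathbf b'=i^-i^+\) and its reverse, the Shen--Weng move is a mutation at the middle string \(k=(i,[1,2])\), as for the move \(i\bar i\to\bar i i\) in \cite{BerensteinFominZelevinsky2005}. This is why Lemma~\ref{lem:component-shuffle-finite-identity} carries a separate \(i=j\) correction term. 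A relabeling cannot work there. In \(Q(i^+i^-)\) one has \(\eps_{k,(i,[0,1])}=\eps_{k,(i,[2,\infty])}=1\), whereas in \(Q(i^-i^+)\) both signs are reversed. Likewise, the column \(\operatorname{inc}_{\mathbf b}(a_\tau)\) has \(i\)-entries \((a_1,-(a_1+a_2),a_2)\), but it must become \(\operatorname{inc}_{\mathbf b'}(a_2,a_1)\), whose \(i\)-entries are \((-a_2,a_1+a_2,-a_1)\).

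To close the gap, carry out the mutation at \(k\) just as you did for the contraction. Here \(\eps_{(i,[0,1]),k}=\eps_{(i,[2,\infty]),k}=-1\) and \(\eps_{k\tau}=-(a_1+a_2)\le0\). For \(j\ne i\) the string \((j,[0,\infty])\) has \(\eps_{(j,[0,\infty]),k}=-c_{ji}\ge0\); this differs from the contraction case, where that entry is \(0\).
\begin{itemize}
\item The mutation rule gives \(a_1-(a_1+a_2)=-a_2\), \(a_1+a_2\), and \(a_2-(a_1+a_2)=-a_1\) on the three \(i\)-strings.
\item At \((j,[0,\infty])\) the correction \([\eps_{jk}]_+[\eps_{k\tau}]_+-[-\eps_{jk}]_+[-\eps_{k\tau}]_+\) vanishes.
\item This is exactly where \(a_\tau\in\mathbb N^2\) is genuinely needed. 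If \(a_1+a_2<0\), the other-colour entries would become nonzero and the column would leave the image of \(\operatorname{inc}_{\mathbf b'}\).
\item The reverse shuffle is the same computation with \(\eps_{k\tau}=a_1+a_2\ge0\) and \(\eps_{(j,[0,\infty]),k}=c_{ji}\le0\).
\end{itemize}

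Separately, in your contraction computation the correct signs are \(\eps_{(i,[0,1]),k}=\eps_{k,(i,[2,\infty])}=-\epsilon\). With these, the mutation rule gives \(\epsilon\min(a_1,a_2)\) and \(-\epsilon\min(a_1,a_2)\) directly, with no appeal to ``orientation signs''. You should also record that \(\eps_{(j,[0,\infty]),k}=0\) in that case, so the other colours are untouched.
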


\begin{proof}
For a braid move or contraction with a single tag,
\cite[Lemma~6.1]{CasalsEtAlBraidVarieties}, and for a shuffle,
equation~\eqref{eq:shuffle-chart-transport}, give
\((a'_\tau)^\vee=\Phi_\sigma^\vee(a_\tau^\vee)\) for \(\tau\in T\).  At the
new vertex \(\sigma\) added by a contraction, \(d_\sigma=d_i\), so
\((a'_\sigma)^\vee=(1)\).

Because the mutation vertices of \(\mu_\sigma\) belong to
\(\Vcal(\mathbf b)\), the change in the
\(\Vcal(\mathbf b)\times\Vcal(\mathbf b)\) block is independent of the
connections with \(T\).  For a braid move or shuffle, the local cluster
transformations of Shen--Weng
\cite[Propositions~3.7 and~3.9]{ShenWeng2021} carry this block to the exchange
matrix of \(Q(\mathbf b')\).  Moreover, applying the same mutation formula to
the \(\tau\)-row and \(\tau\)-column for each \(\tau\in T\), their changes are
the min-plus tropicalizations of the factorization coordinate transformation;
by~\eqref{eq:lusztig-weight-transport}, they are carried to
\(-\operatorname{inc}_{\mathbf b'}\Phi_\sigma^\vee(a_\tau^\vee)\) and
\(\operatorname{inc}_{\mathbf b'}\Phi_\sigma(a_\tau)\), respectively.  The
seed isomorphism induced by the transpose
\cite[Proposition~3.9]{ShenWeng2021} gives the same conclusion in the minus-tag
case.

For a contraction, only the middle string is mutated.  Substituting
\eqref{eq:extension-datum-mixed-blocks},
\eqref{eq:extension-datum-dual-mixed-block}, and
\(\Phi_{ii\to i}=\Phi_{ii\to i}^\vee=\min\) into the mutation formula for the
exchange matrix, and then performing the relabeling in
Definition~\ref{def:basic-mutation-step-with-extra-vertices}, makes the block
on the string vertices the exchange matrix of \(Q(\mathbf b')\), and gives
\[
 \bigl(\eps^{Q'}_{\tau\ell}\bigr)_{\ell\in\Vcal(\mathbf b')}
 =-\operatorname{inc}_{\mathbf b'}((a'_\tau)^\vee),
 \qquad
 \bigl(\eps^{Q'}_{\ell\tau}\bigr)_{\ell\in\Vcal(\mathbf b')}
 =\operatorname{inc}_{\mathbf b'}(a'_\tau)
 \qquad(\tau\in T').
\]
The minus-tag case is the same calculation with the two input positions
interchanged.  Therefore \(\mathcal E'\) is an extension datum for
\(Q(\mathbf b')\).
\end{proof}

Next, we compare the mutation correction term in the remaining
\(T\times T\) block with the boundary pairing and local intersection pairing.
For this purpose, we record the local identities needed for the type \(A_2\)
braid move, shuffle, and contraction.  Below, write \([x]_+:=\max(x,0)\).

\begin{lemma}[Change of the boundary pairing under a type \(A_2\) braid move]
\label{lem:component-A2-finite-identity}
Suppose that \(i,j\in I\) satisfy \(c_{ij}=c_{ji}=-1\), and set
\(\mathbf b=iji\) and \(\mathbf b'=jij\).  For \(a,b\in\mathbb N^3\), set
\[
 a':=\Phi_{\mathbf b\to\mathbf b'}^\vee(a),
 \qquad
 b':=\Phi_{\mathbf b\to\mathbf b'}(b),
\]
and write \(e(x):=x_3-x_1\).  Then
\begin{equation}
 [-e(a)]_+[e(b)]_+-[e(a)]_+[-e(b)]_+
 =\Omega_{\mathbf b'}(a',b')-\Omega_{\mathbf b}(a,b).
 \label{eq:component-A2-finite-check}
\end{equation}
\end{lemma}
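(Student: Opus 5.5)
The plan is a direct computation: make \(\Omega_{\mathbf b}\) and \(\Omega_{\mathbf b'}\) explicit, then use that \(\Phi\) is piecewise linear with breaks only along \(e=0\). Since \(c_{ij}=c_{ji}=-1\), we have \(d_i=d_j\), so \(\Phi^\vee_{\mathbf b\to\mathbf b'}=\Phi_{\mathbf b\to\mathbf b'}\) is given by~\eqref{eq:Phi3}. All letters carry the same tag, so \(\operatorname{ord}\) is \(\pm\) the ordinary order. I first treat the plus tag, where \(\operatorname{ord}_{\mathbf b}(r,s)=1\) for \(r<s\). The minus tag flips the sign of both \(\Omega\)'s and of the left-hand side under \(a\leftrightarrow b\)-type symmetry, so I expect it to follow identically; alternatively it follows from the transpose remark.

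\textbf{Step 1: the root sequences and \(\Omega\).} For \(\mathbf b=iji\) the root sequence~\eqref{eq:component-root-sequence} is \((\alpha_i,\alpha_i+\alpha_j,\alpha_j)\). For \(\mathbf b'=jij\) it is the same with \(i,j\) swapped. In both cases the off-diagonal pairings are \(\langle\rho_1,\rho_2^\vee\rangle=1\), \(\langle\rho_1,\rho_3^\vee\rangle=-1\), and \(\langle\rho_2,\rho_3^\vee\rangle=1\), and the matrix is symmetric. The diagonal terms vanish because \(\operatorname{ord}(r,r)=0\). Hence both pairings are given by the single antisymmetric form
\[
 \Omega(x,y)=\tfrac12\bigl[(x_1y_2-x_2y_1)-(x_1y_3-x_3y_1)+(x_2y_3-x_3y_2)\bigr].
\]

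\textbf{Step 2: linearize \(\Phi\).} Write \(P:=[-e(a)]_+\) and \(R:=[-e(b)]_+\), so that \(\min(a_1,a_3)=a_1-P\) and \(\min(b_1,b_3)=b_1-R\). Then
\[
 a'=(a_3-a_1+a_2+P,\ a_1-P,\ a_2+P),
\]
and similarly for \(b'\). Note that \(e(a')=-e(a)\).

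\textbf{Step 3: expand.} Substituting into \(\Omega(a',b')-\Omega(a,b)\) gives an expression of degree at most two in the variables \((a,P)\) and \((b,R)\). I will organize it as
\[
 \Omega(La,Lb)-\Omega(a,b)+\bigl(\text{terms linear in }P\text{ or }R\bigr)+c\,(PS-RQ)\text{-type terms},
\]
where \(L\) is the linear part of the map in Step 2. The first part should vanish: \(L\) is an involution-like map, and one checks that it preserves \(\Omega\). The terms linear in \(P\) should assemble into \(P\cdot(\text{linear form in }b)\) with the linear form proportional to \(e(b)\), and symmetrically for \(R\). The resulting total is then
\[
 \tfrac12\bigl(P\,e(b)-R\,e(a)\bigr)+\text{(cross term in }PR\text{, which should cancel)}.
\]

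\textbf{Step 4: compare with the left-hand side.} Using \(e=[e]_+-[-e]_+\), the quantity \(\tfrac12(P\,e(b)-R\,e(a))\) expands as
\[
 \tfrac12\bigl([-e(a)]_+[e(b)]_+-[-e(a)]_+[-e(b)]_+-[-e(b)]_+[e(a)]_++[-e(b)]_+[-e(a)]_+\bigr).
\]
The two \([-e(a)]_+[-e(b)]_+\) terms cancel, leaving \(\tfrac12\bigl([-e(a)]_+[e(b)]_+-[e(a)]_+[-e(b)]_+\bigr)\). This is off by a factor \(2\) from the left-hand side, so the bookkeeping of the \(\tfrac12\) in Step 3 must be done with care. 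The linear coefficient is likely \(e(b)\) rather than \(\tfrac12e(b)\) once both the \(P\)-row and the \(P\)-column contributions are counted.

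\textbf{Main obstacle.} The hard step is this coefficient tracking in Step 3. As a safeguard, I will verify the final identity separately in the four sign cases of \((e(a),e(b))\), where both sides are bilinear. In the cases \(e(a),e(b)\ge0\) and \(e(a),e(b)\le0\) the left-hand side is \(0\). Checking these four cases with a couple of test vectors, such as \(a=(1,0,0)\) and \(b=(0,0,1)\), pins down the constants.
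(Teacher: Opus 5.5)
Your plan is the paper's proof: both \(\Omega_{iji}\) and \(\Omega_{jij}\) equal \(\tfrac12\omega\), \(\Phi^\vee=\Phi\) in type \(A_2\), and one substitutes \(\min(a_1,a_3)=a_1-[-e(a)]_+\) and \(\min(b_1,b_3)=b_1-[-e(b)]_+\) into the two determinants.

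The factor-of-two doubt resolves as you guessed. With \(v=(1,-1,1)\) you have \(a'=La+[-e(a)]_+v\), and:
\begin{itemize}
\item \(\omega(La,Lb)=\omega(a,b)\), since \(L(1,1,1)=(1,1,1)\) and \(\det L=1\);
\item \(\omega(x,v)=2e(x)\);
\item \(e(La)=-e(a)\).
\end{itemize}
Hence \(\Omega_{\mathbf b'}(a',b')-\Omega_{\mathbf b}(a,b)=[-e(a)]_+e(b)-[-e(b)]_+e(a)\), which is exactly the left-hand side with no extra \(\tfrac12\).

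You should drop the minus-tag aside rather than expect it to "follow identically." The lemma concerns plus-tagged words, and for minus tags \(\Omega\) changes sign while the displayed left-hand side does not.
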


\begin{proof}
In type \(A_2\), \(\Phi_{iji\to jij}^\vee=\Phi_{iji\to jij}\).  Also,
$\Omega_{iji}(a,b)=\frac12\omega(a,b)$, so
\begin{align*}
 \Omega_{\mathbf b'}(a',b')-\Omega_{\mathbf b}(a,b)
 &=\frac12\bigl(\omega(a',b')-\omega(a,b)\bigr)\\
 &=[-e(a)]_+[e(b)]_+-[e(a)]_+[-e(b)]_+.
\end{align*}
Indeed, substituting \(\min(a_1,a_3)=a_1-[-e(a)]_+\) and
\(\min(b_1,b_3)=b_1-[-e(b)]_+\) into the two determinants cancels the linear
terms and leaves only the right-hand side.
\end{proof}

\begin{figure}[htbp]
 \centering
 \begin{tikzpicture}[x=0.92cm,y=0.78cm]
\path[use as bounding box] (-0.20,0.05) rectangle (17.05,8.05);

\node[wqdiagramlabel] at (3.70,7.50) {$Q$};
\node[wqweavefrozen] (source-tau) at (1.70,6.50) {};
\node[wqweavefrozen] (source-taup) at (5.70,6.50) {};
\node[wqvlabel,above left=0.8mm and 0.6mm of source-tau] {$\tau$};
\node[wqvlabel,above right=0.8mm and 0.6mm of source-taup] {$\tau'$};

\draw[wqqbox] (0.10,0.25) rectangle (7.30,3.55);
\node[wqqboxtitle] at (0.40,0.25) {$Q(iji)$};
\node[wqlevel,anchor=east] at (0.03,2.45) {$i$};
\node[wqlevel,anchor=east] at (0.03,0.95) {$j$};
\draw[wqguide] (0.17,2.45)--(7.17,2.45);
\draw[wqguide] (0.17,0.95)--(7.17,0.95);

\node[wqstringfrozen] (source-il) at (0.85,2.45)
  {$[0,1]$};
\node[wqpivot] (source-k) at (3.70,2.45)
  {$[1,3]$};
\node[wqstringfrozen] (source-ir) at (6.55,2.45)
  {$[3,\infty]$};
\node[wqstringfrozen] (source-jl) at (1.95,0.95)
  {$[0,2]$};
\node[wqstringfrozen] (source-jr) at (5.45,0.95)
  {$[2,\infty]$};

  \wqextensiontriangle[][bend left=9]
    {source-il}{source-tau}{source-k}
  \wqextensiontriangle{source-jl}{source-tau}{source-jr}
  \wqextensiontriangle[bend left=9]
    {source-k}{source-tau}{source-ir}
  \wqextensiontriangle[][bend left=9]
    {source-il}{source-taup}{source-k}
  \wqextensiontriangle{source-jl}{source-taup}{source-jr}
  \wqextensiontriangle[bend left=9]
    {source-k}{source-taup}{source-ir}
  \draw[wqstringarrow] (source-k)--(source-il);
  \draw[wqstringarrow] (source-ir)--(source-k);
  \draw[wqstringarrow] (source-jr)--(source-jl);
  \draw[wqstringhalf] (source-il)--(source-jl);
  \draw[wqstringarrow] (source-jl)--(source-k);
  \draw[wqstringarrow] (source-k)--(source-jr);
  \draw[wqstringhalf] (source-jr)--(source-ir);
  \draw[wqlocalarrow] (source-tau)--(source-taup);

\path (source-tau)--node[wqdiagramlabel,above=0.8mm] {$x$}
  (source-taup);
\path (source-il.north)--node[wqweightlabel,pos=0.28]
  {$a_1$}(source-tau);
\path (source-tau) to[bend left=9]
  node[wqweightlabel,pos=0.28]
  {$a_1$}(source-k.north);
\path (source-jl.north)--node[wqweightlabel,pos=0.28]
  {$a_2$}(source-tau);
\path (source-tau)--node[wqweightlabel,pos=0.28]
  {$a_2$}(source-jr.north);
\path (source-k.north) to[bend left=9]
  node[wqweightlabel,pos=0.28]
  {$a_3$}(source-tau);
\path (source-tau)--node[wqweightlabel,pos=0.28]
  {$a_3$}(source-ir.north);
\path (source-il.north)--node[wqweightlabel,pos=0.28]
  {$b_1$}(source-taup);
\path (source-taup) to[bend left=9]
  node[wqweightlabel,pos=0.28]
  {$b_1$}(source-k.north);
\path (source-jl.north)--node[wqweightlabel,pos=0.28]
  {$b_2$}(source-taup);
\path (source-taup)--node[wqweightlabel,pos=0.28]
  {$b_2$}(source-jr.north);
\path (source-k.north) to[bend left=9]
  node[wqweightlabel,pos=0.28]
  {$b_3$}(source-taup);
\path (source-taup)--node[wqweightlabel,pos=0.28]
  {$b_3$}(source-ir.north);

\draw[wqtimearrow] (7.72,2.25)--(8.43,2.25);
\node[wqmovelabel,above=0.9mm] at (8.075,2.25)
  {$\mu_\sigma$};

\node[wqdiagramlabel] at (12.90,7.50) {$Q'$};
\node[wqweavefrozen] (target-tau) at (9.95,6.50) {};
\node[wqweavefrozen] (target-taup) at (15.85,6.50) {};
\node[wqvlabel,above left=0.8mm and 0.6mm of target-tau] {$\tau$};
\node[wqvlabel,above right=0.8mm and 0.6mm of target-taup] {$\tau'$};

\draw[wqqbox] (8.95,0.25) rectangle (16.85,3.55);
\node[wqqboxtitle] at (9.25,0.25) {$Q(jij)$};
\node[wqlevel,anchor=east] at (8.88,2.45) {$i$};
\node[wqlevel,anchor=east] at (8.88,0.95) {$j$};
\draw[wqguide] (9.02,2.45)--(16.72,2.45);
\draw[wqguide] (9.02,0.95)--(16.72,0.95);

\node[wqstringfrozen] (target-il) at (10.55,2.45)
  {$[0,2]$};
\node[wqstringfrozen] (target-ir) at (15.25,2.45)
  {$[2,\infty]$};
\node[wqstringfrozen] (target-jl) at (9.55,0.95)
  {$[0,1]$};
\node[wqpivot] (target-k) at (12.90,0.95)
  {$[1,3]$};
\node[wqstringfrozen] (target-jr) at (16.25,0.95)
  {$[3,\infty]$};

  \wqextensiontriangle[][bend left=9]
    {target-jl}{target-tau}{target-k}
  \wqextensiontriangle{target-il}{target-tau}{target-ir}
  \wqextensiontriangle[bend left=9]
    {target-k}{target-tau}{target-jr}
  \wqextensiontriangle[][bend left=9]
    {target-jl}{target-taup}{target-k}
  \wqextensiontriangle{target-il}{target-taup}{target-ir}
  \wqextensiontriangle[bend left=9]
    {target-k}{target-taup}{target-jr}
  \draw[wqstringarrow] (target-ir)--(target-il);
  \draw[wqstringarrow] (target-k)--(target-jl);
  \draw[wqstringarrow] (target-jr)--(target-k);
  \draw[wqstringhalf] (target-jl)--(target-il);
  \draw[wqstringarrow] (target-il)--(target-k);
  \draw[wqstringarrow] (target-k)--(target-ir);
  \draw[wqstringhalf] (target-ir)--(target-jr);
  \draw[wqlocalarrow] (target-tau)--(target-taup);

\path (target-tau)--node[wqdiagramlabel,above=0.8mm]
  {$x+\Omega_{\mathbf b'}(a',b')-\Omega_{\mathbf b}(a,b)$}
  (target-taup);
\path (target-jl.north)--node[wqweightlabel,pos=0.28]
  {$a'_1$}(target-tau);
\path (target-tau) to[bend left=9]
  node[wqweightlabel,pos=0.28]
  {$a'_1$}(target-k.north);
\path (target-il.north)--node[wqweightlabel,pos=0.28]
  {$a'_2$}(target-tau);
\path (target-tau)--node[wqweightlabel,pos=0.28]
  {$a'_2$}(target-ir.north);
\path (target-k.north) to[bend left=9]
  node[wqweightlabel,pos=0.28]
  {$a'_3$}(target-tau);
\path (target-tau)--node[wqweightlabel,pos=0.28]
  {$a'_3$}(target-jr.north);
\path (target-jl.north)--node[wqweightlabel,pos=0.28]
  {$b'_1$}(target-taup);
\path (target-taup) to[bend left=9]
  node[wqweightlabel,pos=0.28]
  {$b'_1$}(target-k.north);
\path (target-il.north)--node[wqweightlabel,pos=0.28]
  {$b'_2$}(target-taup);
\path (target-taup)--node[wqweightlabel,pos=0.28]
  {$b'_2$}(target-ir.north);
\path (target-k.north) to[bend left=9]
  node[wqweightlabel,pos=0.28]
  {$b'_3$}(target-taup);
\path (target-taup)--node[wqweightlabel,pos=0.28]
  {$b'_3$}(target-jr.north);
\end{tikzpicture}
 \caption{The extended cluster transformation \(\mu_\sigma:Q\to Q'\)
 associated with the type \(A_2\) braid move \(\sigma:iji\to jij\).  The
 components \(a_\tau^\vee=(a_1,a_2,a_3)\) and
 \(a_{\tau'}=(b_1,b_2,b_3)\) change according to the tropical Lusztig map, and
 the change in the edge between \(\tau,\tau'\) is given by the local
 intersection term~\eqref{eq:braid-local}.}
 \label{fig:a2-extended-mutation-local-intersection}
\end{figure}

\begin{lemma}[Change of the boundary pairing under a shuffle]
\label{lem:component-shuffle-finite-identity}
Let \(i,j\in I\), set \(\mathbf b=i^+j^-\) and \(\mathbf b'=j^-i^+\), and let
\(\sigma:\mathbf b\to\mathbf b'\) be the shuffle.  For
\(a,b\in\mathbb N^{\operatorname{Pos}(\mathbf b)}\), set
\[
 a':=\Phi_\sigma^\vee(a),\qquad
 b':=\Phi_\sigma(b).
\]
Then
\[
 \Omega_{\mathbf b}(a,b)=\Omega_{\mathbf b'}(a',b')=0.
\]
If \(i=j\), then
\[
 [a_1+a_2]_+[-b_1-b_2]_+
 -[-a_1-a_2]_+[b_1+b_2]_+=0.
\]
For the reverse shuffle, the same statements hold with \(\mathbf b\) and
\(\mathbf b'\) interchanged.
\end{lemma}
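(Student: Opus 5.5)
This is a direct computation from Definition~\ref{def:double-boundary-pairing}; no earlier structural result is needed. For \(\mathbf b=i^+j^-\) the pairing \(\Omega_{\mathbf b}(a,b)\) is a sum over \(r,s\in\{1,2\}\), and I will check that every term vanishes.

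\emph{Diagonal terms.} The terms with \(r=s\) vanish because \(\operatorname{ord}_{\mathbf b}(r,r)=0\).

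\emph{Off-diagonal terms.} I first compute the root sequence:
\[
 \rho_{\mathbf b,1}=\alpha_{i^+}=(\alpha_i,0),
 \qquad
 \rho_{\mathbf b,2}=s_{i^+}(\alpha_{j^-})=(0,\alpha_j).
\]
The second equality holds because \(s_{i^+}=(s_i,1)\) acts trivially on the minus component. In the same way,
\[
 \rho^\vee_{\mathbf b,1}=(\alpha_i^\vee,0),
 \qquad
 \rho^\vee_{\mathbf b,2}=(0,\alpha_j^\vee).
\]
The off-diagonal terms therefore involve \(\langle(\alpha_i,0),(0,\alpha_j^\vee)\rangle\) or \(\langle(0,\alpha_j),(\alpha_i^\vee,0)\rangle\). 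Both are zero, since the componentwise pairing kills weights and coweights supported on different tags. Hence \(\Omega_{\mathbf b}(a,b)=0\) for all \(a,b\).

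The same calculation applies to \(\mathbf b'=j^-i^+\), giving
\[
 \rho_{\mathbf b',1}=(0,\alpha_j),
 \qquad
 \rho_{\mathbf b',2}=s_{j^-}\bigl((\alpha_i,0)\bigr)=(\alpha_i,0).
\]
So \(\Omega_{\mathbf b'}\) vanishes identically, and in particular \(\Omega_{\mathbf b'}(a',b')=0\) whatever \(a',b'\) are; the description of \(\Phi_\sigma,\Phi_\sigma^\vee\) as swaps is not even needed for this part.

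For the identity in the case \(i=j\), the point is that \(a,b\in\mathbb N^{2}\) have nonnegative entries. Then \(a_1+a_2\ge0\) and \(b_1+b_2\ge0\), so
\[
 [-a_1-a_2]_+=[-b_1-b_2]_+=0 .
\]
Each of the two products then contains a zero factor, so the expression is \(0\).

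For the reverse shuffle \(j^-i^+\to i^+j^-\), the same argument applies verbatim with the roles of \(\mathbf b\) and \(\mathbf b'\) exchanged. I do not expect any step to be an obstacle; the only point requiring care is confirming that \(s_{i^+}\) and \(s_{j^-}\) fix the opposite-tag component, which is immediate from \(s_{i^+}=(s_i,1)\) and \(s_{j^-}=(1,s_j)\).
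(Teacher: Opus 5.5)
Your proposal is correct and follows the same route as the paper: \(\Omega_{\mathbf b}\) and \(\Omega_{\mathbf b'}\) vanish because the two positions carry different tags, so every cross-tag root--coroot pairing is zero and the diagonal terms are killed by \(\operatorname{ord}=0\); the \(i=j\) identity then holds because all the weights are nonnegative. You additionally write out the root sequences explicitly, which the paper's one-line proof leaves implicit.
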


\begin{proof}
The two positions have different tags, so
Definition~\ref{def:double-boundary-pairing} gives
\(\Omega_{\mathbf b}(a,b)=\Omega_{\mathbf b'}(a',b')=0\).  The second
statement follows from \(a_1,a_2,b_1,b_2\ge0\), and the reverse case reduces
to the same calculation by~\eqref{eq:shuffle-chart-transport}.
\end{proof}

\begin{lemma}[Local identity for a contraction]
\label{lem:component-contraction-finite-identity}
Let \(i\in I\), and let \(\sigma:ii\to i\) be a contraction.  Set
\[
 a=(a_1,a_2),\ b=(b_1,b_2)\in\mathbb N^2,
 \qquad
 a'=(a'_1):=\Phi_{ii\to i}^\vee(a),
 \quad
 b'=(b'_1):=\Phi_{ii\to i}(b).
\]
Thus \(a'_1=\min(a_1,a_2)\) and \(b'_1=\min(b_1,b_2)\).  Then
\begin{equation}
 [a_1-a_2]_+[b_2-b_1]_+
 -[a_2-a_1]_+[b_1-b_2]_+
 =\omega\bigl((a_1,a_2;a'_1),(b_1,b_2;b'_1)\bigr).
 \label{eq:component-contraction-existing-pair}
\end{equation}
Moreover,
\begin{equation}
 \begin{aligned}
  b_1-b_2&=\omega\bigl((0,0;1),(b_1,b_2;b'_1)\bigr),\\
  a_2-a_1&=\omega\bigl((a_1,a_2;a'_1),(0,0;1)\bigr),\\
  0&=\omega\bigl((0,0;1),(0,0;1)\bigr).
 \end{aligned}
 \label{eq:component-contraction-new-cycle-entries}
\end{equation}
\end{lemma}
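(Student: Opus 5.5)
The plan is a direct computation: expand the determinant \(\omega\) and then compare it with the left-hand side by splitting into cases on the signs of \(a_1-a_2\) and \(b_1-b_2\). By definition,
\[
 \omega\bigl((x_1,x_2,x_3),(y_1,y_2,y_3)\bigr)
 =(x_2y_3-x_3y_2)-(x_1y_3-x_3y_1)+(x_1y_2-x_2y_1).
\]
This is an alternating bilinear form, and it vanishes whenever either argument is the constant vector \((1,1,1)\). So we may subtract constants from each argument without changing the value.

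The three identities in \eqref{eq:component-contraction-new-cycle-entries} are immediate from this expansion. With \(x=(0,0,1)\), only the terms involving \(x_3=1\) survive, giving \(-y_2+y_1=b_1-b_2\). With \(y=(0,0,1)\) we similarly get \(x_2-x_1=a_2-a_1\). Finally \(\omega(x,x)=0\) by alternation.

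For \eqref{eq:component-contraction-existing-pair}, subtract \(a'_1=\min(a_1,a_2)\) from every entry of the first argument and \(b'_1=\min(b_1,b_2)\) from every entry of the second. The first argument becomes \(([a_1-a_2]_+,[a_2-a_1]_+;0)\) and the second becomes \(([b_1-b_2]_+,[b_2-b_1]_+;0)\). For vectors with third coordinate zero, \(\omega\) reduces to \(x_1y_2-x_2y_1\). This gives exactly
\[
 [a_1-a_2]_+[b_2-b_1]_+-[a_2-a_1]_+[b_1-b_2]_+ .
\]

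The argument is routine. The only point needing care is the translation invariance of \(\omega\): adding \(c(1,1,1)\) to a row changes the determinant by a multiple of a determinant with two equal rows, hence by zero. Once that is stated, no case analysis is actually required.
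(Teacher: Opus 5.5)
Your proof is correct and follows essentially the paper's route: the paper asserts that the left-hand side equals \(\det\!\begin{pmatrix}1&1&1\\ a_1&a_2&a'_1\\ b_1&b_2&b'_1\end{pmatrix}\) and that the remaining identities follow directly from the definition of \(\omega\), without writing out the verification. Your step of subtracting \(\min(a_1,a_2)\) and \(\min(b_1,b_2)\) times \((1,1,1)\) from the two rows supplies that omitted verification cleanly, so no case analysis is needed.
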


\begin{proof}
\eqref{eq:component-contraction-existing-pair} follows from
\begin{align*}
 [a_1-a_2]_+[b_2-b_1]_+
   -[a_2-a_1]_+[b_1-b_2]_+
 = \det\!\begin{pmatrix}
  1&1&1\\ a_1&a_2&a'_1\\ b_1&b_2&b'_1
 \end{pmatrix}.
\end{align*}
\eqref{eq:component-contraction-new-cycle-entries} follows immediately
from the definition of \(\omega\).
\end{proof}

By these identities, the change in the \(T\times T\) block is given by the
local intersection pairing.
\begin{lemma}[Quiver mutation and the local intersection pairing]
\label{lem:extended-mutation-local-intersection}
Use the setup and notation of
Lemma~\ref{lem:basic-mutation-step-with-extra-vertices}.  Then, for
\(\tau,\tau'\in T\),
\begin{equation}
 \eps^{Q'}_{\tau\tau'}-\eps^Q_{\tau\tau'}
 =\sharp_\sigma\left(
  (a_\tau^\vee;(a'_\tau)^\vee)
  \mathbin{\cdot}(a_{\tau'};a'_{\tau'})
 \right).
 \label{eq:extra-vertex-local-intersection}
\end{equation}
Moreover, if \(\sigma\) is a contraction, then
\begin{equation}
 \eps^{Q'}_{\tau\sigma}
 =\sharp_\sigma\left(
  (a_\tau^\vee;(a'_\tau)^\vee)
  \mathbin{\cdot}(0,0;1)
 \right),
 \qquad
 \eps^{Q'}_{\sigma\tau}
 =\sharp_\sigma\left(
  (0,0;1)
  \mathbin{\cdot}(a_\tau;a'_\tau)
 \right)
 \qquad(\tau\in T).
 \label{eq:extra-vertex-new-local-intersection}
\end{equation}
\end{lemma}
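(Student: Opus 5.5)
The plan is to compute \(\eps^{Q'}_{\tau\tau'}\) directly from the mutation formula, using the extension-datum description of the mixed blocks, and to match the resulting correction terms with the local identities in Lemmas~\ref{lem:component-A2-finite-identity}, \ref{lem:component-shuffle-finite-identity} and~\ref{lem:component-contraction-finite-identity}. By \eqref{eq:extension-datum-mixed-blocks} and \eqref{eq:extension-datum-dual-mixed-block}, the entries between \(\tau\in T\) and a string \(\ell\) are \(-\operatorname{inc}_{\mathbf b}(a_\tau^\vee)_\ell\) and \(\operatorname{inc}_{\mathbf b}(a_{\tau'})_\ell\). Since all mutations in \(\mu_\sigma\) occur at string vertices, each mutation at \(k\) changes \(\eps_{\tau\tau'}\) by
\[
\tfrac12\bigl(|\eps_{\tau k}|\eps_{k\tau'}+\eps_{\tau k}|\eps_{k\tau'}|\bigr)
=[\eps_{\tau k}]_+[\eps_{k\tau'}]_+-[-\eps_{\tau k}]_+[-\eps_{k\tau'}]_+ .
\]
The argument then proceeds case by case on the type of \(\sigma\).

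\emph{Contraction.} Here we mutate only at \(k=(i,[1,2])\), and \(\operatorname{inc}_{ii}(a)_k=\epsilon(a_2-a_1)\). This gives \(\eps_{\tau k}=\epsilon(a_1-a_2)\) with \(a=a_\tau^\vee\), and \(\eps_{k\tau'}=\epsilon(b_2-b_1)\) with \(b=a_{\tau'}\). For \(\epsilon=+\), the correction is exactly the left side of \eqref{eq:component-contraction-existing-pair}, which equals \(\omega\) and therefore \(\sharp_\sigma\) by \eqref{eq:contraction-local}. For \(\epsilon=-\), both signs flip; this amounts to swapping \(a_1\leftrightarrow a_2\) and \(b_1\leftrightarrow b_2\), which is why the definition uses the reversed order. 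For the new vertex, after mutation \(\eps^{Q'}_{\tau\sigma}=-\eps^Q_{\tau k}=\epsilon(a_2-a_1)\) and \(\eps^{Q'}_{\sigma\tau}=-\eps^Q_{k\tau}=\epsilon(b_1-b_2)\). These match \eqref{eq:component-contraction-new-cycle-entries}, again with the order swapped for the minus tag.

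\emph{Shuffle.} The Shen--Weng sequence for a shuffle mutates only when \(i=j\), at the single string between the two letters. The mixed entries there involve \(\epsilon\)-weighted sums, which give the expression in Lemma~\ref{lem:component-shuffle-finite-identity}; nonnegativity forces it to vanish. This matches \(\sharp_\sigma=\Omega_{\mathbf b'}-\Omega_{\mathbf b}=0\).

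\emph{Braid moves.} In type \(A_2\), the sequence is the single mutation at \((i,[1,3])\). Here \(\operatorname{inc}(a)_k=a_3-a_1=e(a)\), with sign given by the tag, so the correction is the left side of \eqref{eq:component-A2-finite-check}. Lemma~\ref{lem:component-A2-finite-identity} then gives \(\Omega_{\mathbf b'}(a',b')-\Omega_{\mathbf b}(a,b)\), which is \eqref{eq:braid-local}. The minus tag reduces to this case by the transpose seed isomorphism, as in Lemma~\ref{lem:basic-mutation-step-with-extra-vertices}.

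The main obstacle is the remaining braid moves: commutations \(m_{ij}=2\), which involve no mutation, and \(B_2\) and \(G_2\), which involve several mutations. For commutations I will check directly that \(\Omega\) is unchanged, because \(\langle\rho_r,\rho_s^\vee\rangle\) is preserved when \(c_{ij}=0\). For \(B_2\) and \(G_2\) I would avoid a case-by-case computation. The idea is that the total correction in the \(T\times T\) block is a bilinear-in-tropical-coordinates quantity determined by the \(\mathbf c\)-vector signs along the Shen--Weng sequence. I would prove the identity by comparing two expressions for \(\eps_{\tau\tau'}\): one is the tropical invariance of the Shen--Weng form \(\Omega\) under the cluster transformation \(R_{\mathbf b\to\mathbf b'}\) (the cluster Poisson form is mutation invariant, and its tropicalization on \(\mathbb N^{\operatorname{Pos}}\) is \(\Omega\) plus the accumulated piecewise-linear corrections); the other is the explicit sequence. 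If this structural argument fails, the fallback is a finite computer-checkable verification over the finitely many linearity chambers of \(\Phi_{\mathbf b\to\mathbf b'}\).
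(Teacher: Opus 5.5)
This is essentially the paper's proof: a case-by-case check in which the per-mutation correction \([\eps_{\tau k}]_+[\eps_{k\tau'}]_+-[-\eps_{\tau k}]_+[-\eps_{k\tau'}]_+\) is matched with Lemmas~\ref{lem:component-A2-finite-identity}, \ref{lem:component-shuffle-finite-identity} and~\ref{lem:component-contraction-finite-identity}, where \(\eps_{\tau k}=-\operatorname{inc}_{\mathbf b}(a_\tau^\vee)_k\) and \(\eps_{k\tau'}=\operatorname{inc}_{\mathbf b}(a_{\tau'})_k\), and commutations are trivial. Your sign bookkeeping is correct for the contraction (including the new vertex and the minus-tag swap), for the shuffle, and for the \(A_2\) move.

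For \(B_2\) and \(G_2\), the paper simply leaves the finite computation to the reader, which is your fallback. Your ``structural'' shortcut is not an argument as stated: \(\Omega\) is not invariant under \(\Phi_{\mathbf b\to\mathbf b'}\), since its change is exactly the correction you are trying to compute. So those cases rest on the finite check over the sign chambers of the mutation sequence, using \(\Phi^\vee\) on the \(\tau\)-row and \(\Phi\) on the \(\tau'\)-column.
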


\begin{proof}
The type \(A_1\times A_1\) braid move involves no mutation, and the assertion
is immediate to check.  In type \(A_2\), it follows from the mutation formula
for the exchange matrix and Lemma~\ref{lem:component-A2-finite-identity}.  The
cases of types \(B_2,G_2\) are likewise finite computations that simultaneously
apply the mutation formula for the exchange matrix to the \(\tau\)-row and the
\(\tau'\)-column, and we leave them to the reader.  For a shuffle, the result
follows from Lemma~\ref{lem:component-shuffle-finite-identity}, and for a
contraction, from Lemma~\ref{lem:component-contraction-finite-identity}.
\end{proof}

\begin{lemma}
\label{lem:component-one-step-quiver-transition}
Let \(\fW:\beta\to\delta\) be a double weave, and let
\(\fW':\beta\to\delta'\) be its one-step extension as in
\eqref{eq:double-weave-one-step-extension} by an elementary double
weave \(\sigma:\mathbf b\to\mathbf b'\).  Assume that \(\Dcal(\fW)\) is
defrostable in \(Q^{\mathrm{am,fr}}(\fW)\), and write
\begin{equation}
 \widehat Q_\fW
 :=\operatorname{defrost}_{\Dcal(\fW)}Q^{\mathrm{am,fr}}(\fW).
 \label{eq:one-step-source-defrost}
\end{equation}
Then the mutation sequence and relabeling defining \(\sigma\) induce a cluster
transformation with target \(Q'\),
\begin{equation}
 \mu_{\fW\to\fW'}:\widehat Q_\fW\longrightarrow Q'.
 \label{eq:one-step-amalgamated-mutation}
\end{equation}
Moreover, \(\Dcal(\fW')\) is defrostable in
\(Q^{\mathrm{am,fr}}(\fW')\), and
\begin{equation}
 Q'
 =\operatorname{defrost}_{\Dcal(\fW')}Q^{\mathrm{am,fr}}(\fW')
 \label{eq:component-one-step-quiver-transition}
\end{equation}
holds.
\end{lemma}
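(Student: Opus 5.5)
Write \(\delta=\mathbf L\,\mathbf b\,\mathbf R\) and \(\delta'=\mathbf L\,\mathbf b'\,\mathbf R\). The plan is to reduce everything to the local statements already proved: Lemma~\ref{lem:basic-mutation-step-with-extra-vertices} for the mixed string--extra blocks, and Lemma~\ref{lem:extended-mutation-local-intersection} for the extra--extra block.

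First, view \(\widehat Q_\fW\) as an extension datum. By Lemma~\ref{lem:terminal-extension-datum} and Proposition~\ref{prop:double-amalgamation-exchange-matrix}, \(\widehat Q_\fW\) contains \(Q(\delta)\) as a full subquiver. Its extra vertex set is \(T=V_3(\fW)\), with weights \(a_\tau=\partial\gamma_\tau\); these are zero when \(\tau\notin\Dcal(\fW)\), which matches the absence of arrows from non-shared vertices to strings. So \(\bigl(\widehat Q_\fW,(\partial\gamma_\tau)_\tau\bigr)\) is an extension datum for \(Q(\delta)\), with \(\eps_{\sigma\tau}=\sharp_\fW(\gamma^\vee_\sigma\cdot\gamma_\tau)\) on \(T\times T\).

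Next, localize to the window \(\mathbf b\). The strings of \(\delta\) crossing the window and the strings attached to its letters form a copy of \(Q(\mathbf b)\), amalgamated along its boundary strings with the rest of \(Q(\delta)\). This follows from the amalgamation description \eqref{eq:SW-quiver-as-amalgamation}, since \(\operatorname{inc}_\delta\) is a sum of local contributions. The mutation vertices of \(\mu_\sigma\) are internal strings of \(\mathbf b\) (for a contraction, the string \((i,[1,2])\) of \(\mathbf b\), which is internal in \(\delta\)). Mutation at these vertices only affects entries incident to their neighbours. By the Shen--Weng locality of the moves, the \(\Vcal\times\Vcal\) block becomes \(Q(\delta')\). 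Applying Lemma~\ref{lem:basic-mutation-step-with-extra-vertices} to the window, with the outside contributions of each \(a_\tau\) left unchanged, gives mixed blocks \(\operatorname{inc}_{\delta'}(a'_\tau)\) and \(-\operatorname{inc}_{\delta'}((a'_\tau)^\vee)\). Here \(a'_\tau\) equals \(\Phi_\sigma\) applied on the window and the identity elsewhere, so \(a'_\tau=\partial\gamma^{\fW'}_\tau\) by the definition of Lusztig cycles. For a new contraction vertex, \(a'_\sigma=(1)\) at the output position equals \(\partial\gamma^{\fW'}_\sigma\).

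Then handle the extra--extra block. Lemma~\ref{lem:extended-mutation-local-intersection} shows that the change in \(\eps_{\tau\tau'}\) is exactly the local term \(\sharp_\sigma\), and that the entries involving the new vertex \(\sigma\) equal \(\sharp_\sigma\) with \((0,0;1)\). Since \(\sharp_{\fW'}=\sharp_\fW+\sharp_\sigma\), and the cycles \(\gamma_\tau\) on \(\fW'\) restrict to those on \(\fW\) (the new cycle \(\gamma_\sigma\) vanishes before \(\sigma\)), we get \(\eps^{Q'}_{\tau\tau'}=\sharp_{\fW'}(\gamma^\vee_\tau\cdot\gamma_{\tau'})\). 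This is the entry of \(Q^{\mathrm{am,fr}}(\fW')\) by Proposition~\ref{prop:double-amalgamation-exchange-matrix}.

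Finally, check the mutable sets. A vertex of \(\Dcal(\fW)\) whose terminal weight becomes zero lies outside \(\Dcal(\fW')\), and it has no arrows to strings, which is consistent with the amalgamation in which it is unshared and mutable. Since \(Q'\) is a quiver whose mutable set is \(I_{\mathrm{uf}}(Q(\delta'))\sqcup V_3(\fW')\), all entries touching \(\Dcal(\fW')\) are integral. This gives defrostability, and \eqref{eq:component-one-step-quiver-transition} follows.

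The main obstacle is the localization step: we must justify that the Shen--Weng local mutation acts on the big quiver exactly as on the window, including the half-integral frozen boundary entries and the outside summands of \(\operatorname{inc}_\delta\). The first attempt would be to write both \(\widehat Q_\fW\) and \(Q'\) as amalgamations of the window extension datum with an untouched complement, and to use the fact that mutation commutes with amalgamation at vertices that are not glued.
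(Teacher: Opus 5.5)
Your proposal is correct and follows essentially the same route as the paper's proof. The paper likewise writes \(\widehat Q_\fW\) as an amalgamation of a window extension datum for \(Q(\mathbf b)\) with untouched factors; this window datum carries the restrictions of \(\partial\gamma_\tau\) and the full \(V_3(\fW)\times V_3(\fW)\) block. It pushes \(\mu_\sigma\) through the amalgamation because the mutation vertices are unglued internal strings. It then matches the string, mixed, and \(V_3\times V_3\) entries via Lemmas~\ref{lem:basic-mutation-step-with-extra-vertices} and~\ref{lem:extended-mutation-local-intersection} and Proposition~\ref{prop:double-amalgamation-exchange-matrix}. Finally, it deduces defrostability from the integrality of \(Q'\) on its mutable set \(I_{\mathrm{int}}(\delta')\sqcup V_3(\fW')\).
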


\begin{proof}
By~\eqref{eq:quiver-amalgamation-sum}
and~\eqref{eq:SW-quiver-as-amalgamation}, define a quiver \(Q\) on the vertex
set
\[
 I(Q):=\Vcal(\mathbf b)\sqcup V_3(\fW)
\]
by combining \(Q(\mathbf b)\) with the entries indexed by \(V_3(\fW)\).
For each \(\tau\in V_3(\fW)\), let \(a_\tau\) be the restriction of
\(\partial\gamma_\tau\) to the subword \(\mathbf b\).  By
\eqref{eq:dual-cycle-scaling}, its dual weight is the restriction of
\(\partial\gamma_\tau^\vee\) to \(\mathbf b\).  By
Definition~\ref{def:frozen-quiver-amalgamation} and
\eqref{eq:ext-exchange-matrix},
\[
 \mathcal E
 :=\bigl(Q,(a_\tau)_{\tau\in V_3(\fW)}\bigr)
\]
is an extension datum for \(Q(\mathbf b)\).  The mutation vertices in
Definition~\ref{def:basic-mutation-step-with-extra-vertices} map to internal
strings of \(\delta\), so they are mutable in \(\widehat Q_\fW\).  By the
nonidentification condition in Definition~\ref{def:quiver-amalgamation}, none
is identified with a vertex of another factor.  Therefore,
applying the \(\mu_\sigma\) of that definition to \(Q\) and the identity
transformation to the other factors gives the cluster transformation in
\eqref{eq:one-step-amalgamated-mutation}.

By the local conditions in Definition~\ref{def:lusztig-cycle} and
Lemma~\ref{lem:basic-mutation-step-with-extra-vertices}, the exchange-matrix
entries involving the transformed string vertices agree with the entries of
\(Q^{\mathrm{am,fr}}(\fW')\).  Here the amalgamation factors other than the
subword \(\mathbf b\) do not change.

On the output side, the restrictions of \(a_\tau^\vee\) and \(a_\tau\) are
\(\Phi_\sigma^\vee(a_\tau^\vee)\) and \(\Phi_\sigma(a_\tau)\), respectively.
Thus, for \(\tau,\tau'\in V_3(\fW)\), Lemma
\ref{lem:extended-mutation-local-intersection} and the additivity of the local
intersection pairing give
\[
 \sharp_{\fW'}\bigl(\gamma_\tau^\vee
   \mathbin{\cdot}\gamma_{\tau'}\bigr)
 =\sharp_\fW\bigl(\gamma_\tau^\vee
   \mathbin{\cdot}\gamma_{\tau'}\bigr)
  +\sharp_\sigma\left(
   (a_\tau^\vee;\Phi_\sigma^\vee(a_\tau^\vee))
   \mathbin{\cdot}
   (a_{\tau'};\Phi_\sigma(a_{\tau'}))
  \right).
\]
By Proposition~\ref{prop:double-amalgamation-exchange-matrix}, this means that
the transformed \(V_3(\fW)\times V_3(\fW)\) block agrees with that of
\(Q^{\mathrm{am,fr}}(\fW')\).

If \(\sigma\) is a contraction, then the input and output weights of the new
Langlands-dual Lusztig cycle \(\gamma_\sigma^\vee\) and Lusztig cycle
\(\gamma_\sigma\) beginning at \(\sigma\) are each \((0,0;1)\), by
Definition~\ref{def:based-lusztig-cycle}.  Therefore, the contraction formulas
in Lemmas~\ref{lem:basic-mutation-step-with-extra-vertices}
and~\ref{lem:extended-mutation-local-intersection} give the remaining entries
involving \(\sigma\).  Thus all entries of the exchange matrices agree.

The relabeling in Definition~\ref{def:basic-mutation-step-with-extra-vertices}
identifies the vertex sets and preserves the multipliers.  Moreover, the set
of mutable vertices of \(Q'\) is
\(I_{\mathrm{int}}(\delta')\sqcup V_3(\fW')\).  Hence the fact that \(Q'\) is
a quiver, together with the agreement of the exchange matrices, shows that
\(\Dcal(\fW')\) is defrostable in \(Q^{\mathrm{am,fr}}(\fW')\), and gives
\eqref{eq:component-one-step-quiver-transition}.
\end{proof}

\begin{theorem}[Sweep mutation sequences along weaves]
\label{thm:quiver}
Let \(\fW:\beta\to\delta\) be a double weave.  Then a quiver
\(Q^{\mathrm{sweep}}(\fW)\) and a sweep mutation sequence along \(\fW\)
\[
 \mu_\fW:Q(\beta)\longrightarrow Q^{\mathrm{sweep}}(\fW)
\]
can be constructed recursively with respect to the identity double weave and
one-step extensions.  In this construction, \(\Dcal(\fW)\) is defrostable in
\(Q^{\mathrm{am,fr}}(\fW)\), and
\begin{equation}
 Q^{\mathrm{sweep}}(\fW)
 =\operatorname{defrost}_{\Dcal(\fW)}
  Q^{\mathrm{am,fr}}(\fW)
 \label{eq:quiver-amalgamation-theorem}
\end{equation}
holds.
\end{theorem}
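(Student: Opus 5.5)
The plan is an induction along the recursive structure of \(\fW\) as a morphism in \(\mathsf{DWeave}(I)\). Every double weave is either an identity or a one-step extension \eqref{eq:double-weave-one-step-extension} of a shorter double weave. We will define \(\mu_\fW\) and \(Q^{\mathrm{sweep}}(\fW)\) by recursion on the number of elementary double weaves. At the same time we will prove that \(\Dcal(\fW)\) is defrostable in \(Q^{\mathrm{am,fr}}(\fW)\) and that \eqref{eq:quiver-amalgamation-theorem} holds. The whole inductive step is essentially Lemma~\ref{lem:component-one-step-quiver-transition}, so most of the work is the base case and a careful statement of the recursion.

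\emph{Base case.} Take \(\fW=\id_\beta\), so \(\delta=\beta\) and \(V_3(\fW)=\Dcal(\fW)=\varnothing\). Then \(Q^{\mathrm{ext}}(\beta;\fW)\) has vertex set \(\Vcal(\beta)\) and exchange matrix \(\eps^{Q(\beta)}\) by \eqref{eq:ext-exchange-matrix}. The weave quiver \(Q(\fW)\) is empty, so the amalgamation \(Q^{\mathrm{am,fr}}(\fW)\) is just \(Q(\beta)\), with the same mutable set \(I_{\mathrm{uf}}(Q(\beta))\). Defrosting the empty set is vacuous. We set \(Q^{\mathrm{sweep}}(\id_\beta):=Q(\beta)\) and take \(\mu_{\id}\) to be the empty mutation sequence; this gives \eqref{eq:quiver-amalgamation-theorem}.

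\emph{Inductive step.} Suppose \(\fW'\) is the one-step extension of \(\fW:\beta\to\delta\) by \(\sigma:\mathbf b\to\mathbf b'\). By induction, \(\mu_\fW:Q(\beta)\to Q^{\mathrm{sweep}}(\fW)\) is already constructed, \(\Dcal(\fW)\) is defrostable, and \(Q^{\mathrm{sweep}}(\fW)=\widehat Q_\fW\) in the notation of \eqref{eq:one-step-source-defrost}. This is exactly the hypothesis of Lemma~\ref{lem:component-one-step-quiver-transition}.
\begin{enumerate}
 \item Lemma~\ref{lem:component-one-step-quiver-transition} supplies the cluster transformation \(\mu_{\fW\to\fW'}:\widehat Q_\fW\to Q'\), whose mutations are at internal strings of \(\delta\), hence at mutable vertices.
 \item Define \(Q^{\mathrm{sweep}}(\fW'):=Q'\) and \(\mu_{\fW'}:=\mu_{\fW\to\fW'}\circ\mu_\fW\).
 \item The same lemma gives defrostability of \(\Dcal(\fW')\) and \eqref{eq:component-one-step-quiver-transition}, which is \eqref{eq:quiver-amalgamation-theorem} for \(\fW'\).
\end{enumerate}

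The main subtlety is that a double weave may be written as iterated one-step extensions in many ways, and the recursion depends on that choice. I would not claim independence of the decomposition. The theorem only asserts that \(\mu_\fW\) can be constructed recursively, so I would fix a decomposition, for instance the sequence of slices used to present \(\fW\). The point to check is that \eqref{eq:quiver-amalgamation-theorem} is independent of the decomposition, and this holds because the right-hand side is defined intrinsically through Lusztig cycles, \(\sharp_\fW\) and \(\Omega_\delta\).

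A second check is that restricting \(\partial\gamma_\tau\) to the subword \(\mathbf b\) is compatible with \(\iota^\delta_t\) inside the amalgamation. Here the strings of \(\delta\) outside the window of \(\mathbf b\), the boundary strings of \(\mathbf b\), and the identification of \(\Vcal(\mathbf b)\) with a subset of \(\Vcal(\delta)\) must be matched. This identification is implicit in the lemma's proof, and I expect it to be the only place needing care.
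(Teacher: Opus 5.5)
Your proposal is correct and matches the paper's proof. The paper uses the same induction: the base case is the identity double weave, with \(\Dcal(\id_\beta)=\varnothing\) and \(Q^{\mathrm{am,fr}}(\id_\beta)=Q(\beta)\). In the step, Lemma~\ref{lem:component-one-step-quiver-transition} supplies \(\mu_{\fW\to\fW'}\), the defrostability of \(\Dcal(\fW')\), and the identity for \(\fW'\), and one sets \(Q^{\mathrm{sweep}}(\fW'):=Q'\) and \(\mu_{\fW'}:=\mu_{\fW\to\fW'}\circ\mu_\fW\).
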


\begin{proof}
We use induction, with the identity double weave as the base case and a
one-step extension as the induction step.  At the same time as constructing
\(Q^{\mathrm{sweep}}(\fW)\) and \(\mu_\fW\), we prove that
\(\Dcal(\fW)\) is defrostable and establish
\eqref{eq:quiver-amalgamation-theorem}.  For the identity double weave,
\(\Dcal(\id_\beta)=\varnothing\) and
\(Q^{\mathrm{am,fr}}(\id_\beta)=Q(\beta)\), so it suffices to set
\[
 Q^{\mathrm{sweep}}(\id_\beta):=Q(\beta),
 \qquad
 \mu_{\id_\beta}:=\id_{Q(\beta)}.
\]

Next, let \(\fW':\beta\to\delta'\) be a one-step extension of
\(\fW:\beta\to\delta\), and assume that the sweep mutation sequence
\(\mu_\fW\) along \(\fW\) has been constructed, that \(\Dcal(\fW)\) is
defrostable, and that \eqref{eq:quiver-amalgamation-theorem} holds.
By that equation,
\[
 Q^{\mathrm{sweep}}(\fW)
 =\operatorname{defrost}_{\Dcal(\fW)}Q^{\mathrm{am,fr}}(\fW),
\]
and hence Lemma~\ref{lem:component-one-step-quiver-transition} gives a cluster
transformation
\[
 \mu_{\fW\to\fW'}:Q^{\mathrm{sweep}}(\fW)\longrightarrow Q'.
\]
Set
\begin{equation}
 Q^{\mathrm{sweep}}(\fW'):=Q',
 \qquad
 \mu_{\fW'}:=\mu_{\fW\to\fW'}\circ\mu_\fW.
 \label{eq:term-one-step-composition}
\end{equation}
By the same lemma, \(\Dcal(\fW')\) is defrostable in
\(Q^{\mathrm{am,fr}}(\fW')\), and
\[
 Q^{\mathrm{sweep}}(\fW')
 =Q'
 =\operatorname{defrost}_{\Dcal(\fW')}Q^{\mathrm{am,fr}}(\fW').
\]
This completes the induction step.
\end{proof}

Write
\[
 Q^{\mathrm{am}}(\fW)
 :=\operatorname{defrost}_{\Dcal(\fW)}Q^{\mathrm{am,fr}}(\fW).
\]
Thus Theorem~\ref{thm:quiver} gives
\(Q^{\mathrm{sweep}}(\fW)=Q^{\mathrm{am}}(\fW)\).
Let \(\Sigma_{\mathrm{sweep}}(\fW)\) be the seed obtained by applying the
sweep mutation sequence \(\mu_\fW\) to the initial Shen--Weng seed, and let
\(\Sigma_{\mathrm{sweep}}^{\mathrm{fr}}(\fW)\) be the seed obtained from it by
freezing \(\Dcal(\fW)\).

\begin{definition}[Terminal-amalgamation extension datum]
\label{def:terminal-amalgamation-extension-datum}
Let \(\fW\) be a double weave from \(\beta\) to \(\delta\).  Define the
\emph{terminal-amalgamation extension datum} by
\[
 \mathcal E_\fW
 :=\bigl(Q^{\mathrm{am}}(\fW),
         (\partial\gamma_\sigma)_{\sigma\in V_3(\fW)}\bigr).
\]
This is an extension datum for \(Q(\delta)\) by
\eqref{eq:weave-frozen-set} and
Lemma~\ref{lem:terminal-extension-datum}.
\end{definition}

We call \(\mu_\fW\) constructed in Theorem~\ref{thm:quiver} the
\emph{sweep mutation sequence along \(\fW\)}.

For every double weave \(\fW:\beta\to\delta\), the map defined by restricting
coordinates to the respective factors,
\begin{equation}
 R_\fW:
 \Tcal_{Q^{\mathrm{am}}(\fW)}
 \xrightarrow{\sim}
 \Tcal_{Q^{\mathrm{ext}}(\delta;\fW)}
 \times_{\Tcal_{\Dcal(\fW)}}
 \Tcal_{Q(\fW)}
\label{eq:amalgamation-torus-isomorphism}
\end{equation}
is an isomorphism.

\section{Cluster coordinates}
\label{sec:coordinate-compatibility}

Theorem~\ref{thm:quiver} identifies the quiver \(Q^{\mathrm{sweep}}(\fW)\) with
an amalgamation, and in this section we prove the corresponding identification
of the cluster variables.

\subsection{Cartan corrections for extension data}
\label{sec:cartan-corrections-extension-data}

\begin{definition}[\(u\)-variables]
\label{def:extension-datum-maps}
Let \(\beta=\beta(1)\cdots\beta(m)\), and let
\[
 \mathcal E=\bigl(Q,(a_\tau)_{\tau\in T}\bigr)
\]
be an extension datum for \(Q(\beta)\).  For each
\(t\in\operatorname{Pos}(\beta)\), define
\(u_t^{\mathcal E}:\Tcal_T\to\Gm\) by
\begin{equation}
 u_t^{\mathcal E}:=
 \prod_{\tau\in T}A_\tau^{a_\tau(t)}.
 \label{eq:extension-datum-u-monomial}
\end{equation}
\end{definition}

\begin{definition}[Cartan exponent]
\label{def:cartan-exponent}
Let \(\delta=\delta(1)\cdots\delta(L)\) be a double word.  For
\(0\le t\le L\), define maps
\[
 H_{t,+}^{\delta},H_{t,-}^{\delta}:
 X_*(\torusH)\times\mathbb Z^{\operatorname{Pos}(\delta)}
 \longrightarrow X_*(\torusH).
\]
For \(\eta\in X_*(\torusH)\) and
\(\lambda\in\mathbb Z^{\operatorname{Pos}(\delta)}\), their values are
determined recursively by
\begin{equation}
\begin{aligned}
 H_{0,+}^{\delta}(\eta;\lambda)&=\eta,&
 H_{t,+}^{\delta}(\eta;\lambda)&=
 \begin{cases}
  s_iH_{t-1,+}^{\delta}(\eta;\lambda)+\lambda(t)\alpha_i^\vee,
    &\delta(t)=i^+,\\
  H_{t-1,+}^{\delta}(\eta;\lambda),&\delta(t)\in I^-,
 \end{cases}\\
 H_{L,-}^{\delta}(\eta;\lambda)&=\eta,&
 H_{t-1,-}^{\delta}(\eta;\lambda)&=
 \begin{cases}
  H_{t,-}^{\delta}(\eta;\lambda),&\delta(t)\in I^+,\\
  s_iH_{t,-}^{\delta}(\eta;\lambda)+\lambda(t)\alpha_i^\vee,
    &\delta(t)=i^-.
 \end{cases}
\end{aligned}
\label{eq:cartan-exponent-recursion}
\end{equation}
where the recursive formulas apply for \(1\le t\le L\).  For
\(\ell=(i,[r,s])\in\Vcal(\delta)\), define
\[
 c_\ell^\delta:X_*(\torusH)^2
 \times\mathbb Z^{\operatorname{Pos}(\delta)}\longrightarrow\mathbb Z
\]
by
\begin{equation}
 \begin{aligned}
 c_\ell^{\delta}((\eta_+,\eta_-);\lambda)
 &:={}
 \left\langle\omega_i,H_{t,+}^\delta(\eta_+;\lambda)\right\rangle
 +\left\langle\omega_i,H_{t,-}^\delta(\eta_-;\lambda)\right\rangle,
 &&t\in\operatorname{Slice}_\delta(\ell).
 \end{aligned}
 \label{eq:cartan-exponent}
\end{equation}
Write \(c_\ell^\delta(\lambda):=c_\ell^\delta((0,0);\lambda)\), and suppress
\(\delta\) when it is clear.
\end{definition}

\begin{lemma}
\label{lem:cartan-exponent-choice-independence}
For \(\ell=(i,[r,s])\in\Vcal(\delta)\),
\(\eta\in X_*(\torusH)\),
\(\lambda\in\mathbb Z^{\operatorname{Pos}(\delta)}\), and
\(t,t'\in\operatorname{Slice}_\delta(\ell)\), we have
\[
 \left\langle\omega_i,H_{t,\pm}^\delta(\eta;\lambda)\right\rangle
 =\left\langle\omega_i,H_{t',\pm}^\delta(\eta;\lambda)\right\rangle.
\]
In particular, \(c_\ell^\delta\) is well-defined.
\end{lemma}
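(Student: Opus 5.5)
It suffices to compare consecutive indices, since \(\operatorname{Slice}_\delta(\ell)=\{r,r+1,\ldots,s-1\}\) is an interval of \(\{0\}\sqcup\operatorname{Pos}(\delta)\) (with \(r=0\) allowed and \(s\) possibly \(\infty\), in which case the slice runs to \(L\)). I will show that for \(t\) with \(t,t-1\in\operatorname{Slice}_\delta(\ell)\), that is \(r<t<s\), the pairing with \(\omega_i\) does not change when passing from index \(t-1\) to \(t\). By definition of the string \(\ell=(i,[r,s])\) with \(r\prec s\), no position strictly between \(r\) and \(s\) has color \(i\). Hence for every \(t\) with \(r<t<s\) we have \(|\delta(t)|=j\neq i\).

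For the plus recursion, fix such a \(t\). If \(\delta(t)\in I^-\), then \(H_{t,+}^\delta=H_{t-1,+}^\delta\) and there is nothing to prove. If \(\delta(t)=j^+\) with \(j\ne i\), then
\[
 \langle\omega_i,H_{t,+}^\delta(\eta;\lambda)\rangle
 =\langle\omega_i,s_jH_{t-1,+}^\delta(\eta;\lambda)\rangle+\lambda(t)\langle\omega_i,\alpha_j^\vee\rangle .
\]
The last term vanishes because \(\langle\omega_i,\alpha_j^\vee\rangle=\delta_{ij}=0\). For the first term, use \(W\)-invariance of the pairing to move \(s_j\) onto the weight: \(\langle\omega_i,s_j\xi\rangle=\langle s_j\omega_i,\xi\rangle\). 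Since \(s_j\omega_i=\omega_i-\langle\omega_i,\alpha_j^\vee\rangle\alpha_j=\omega_i\), this term equals \(\langle\omega_i,H_{t-1,+}^\delta\rangle\).

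The minus recursion is handled identically, now running downward in \(t\). For \(r<t<s\), the step from \(H_{t,-}^\delta\) to \(H_{t-1,-}^\delta\) is either the identity (when \(\delta(t)\in I^+\)) or an application of \(s_j\) plus a multiple of \(\alpha_j^\vee\) with \(j\ne i\), and both are invisible to \(\omega_i\) by the same two facts. Chaining these equalities across the interval gives the claim for all \(t,t'\in\operatorname{Slice}_\delta(\ell)\).

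Consequently the sum defining \(c_\ell^\delta\) in \eqref{eq:cartan-exponent} does not depend on the chosen \(t\), so \(c_\ell^\delta\) is well-defined. There is no real obstacle. The only point needing care is the index bookkeeping: the slice includes \(r\) itself (possibly \(0\)), which is where the recursions start, and the steps that occur inside the slice are exactly those at positions \(t\) with \(r<t<s\). This holds for both directions of recursion, since the plus recursion relates \(t-1\) to \(t\) through the letter at \(t\), and the minus recursion does the same.
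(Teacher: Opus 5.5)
Your proof is correct and follows the paper's argument: every letter strictly between \(r\) and \(s\) has color \(j\ne i\), and since \(\langle\omega_i,s_j\nu\rangle=\langle\omega_i,\nu\rangle\) and \(\langle\omega_i,\alpha_j^\vee\rangle=0\), each step of the forward plus recursion and of the backward minus recursion inside the slice leaves the \(\omega_i\)-pairing unchanged. Your explicit check that consecutive indices \(t-1,t\) in the slice correspond exactly to positions \(r<t<s\) (including the cases \(r=0\) and \(s=\infty\)) is a correct spelling-out of the bookkeeping that the paper leaves implicit.
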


\begin{proof}
Between two cuts in the same \(i\)-string all letters have color \(j\ne i\).
Since \(\langle\omega_i,s_j\nu\rangle=\langle\omega_i,\nu\rangle\) and
\(\langle\omega_i,\alpha_j^\vee\rangle=0\), the assertion follows from the
forward recursion for \(+\) and the backward recursion for \(-\).
\end{proof}

Suppose \(\delta=\mathbf L\mathbf b\mathbf R\), and let
\(\lambda\in\mathbb Z^{\operatorname{Pos}(\delta)}\).  Write
\(\lambda|_{\mathbf b}(r):=\lambda(|\mathbf L|+r)\).  Each string of
\(\mathbf b\) is contained in a unique string of \(\delta\), which defines an
injection
\[
 j_{\delta,\mathbf b}:\Vcal(\mathbf b)\hookrightarrow\Vcal(\delta),
 \qquad
 (i,[r,s])\longmapsto(i,[\hat r,\hat s]),
\]
where
\[
 \hat r:=
 \begin{cases}
  |\mathbf L|+r
   & (r\in\operatorname{Pos}_i(\mathbf b)),\\
  \max\bigl(\{0\}\sqcup
   \{t\in\operatorname{Pos}_i(\delta)\mid t\le|\mathbf L|\}\bigr)
   & (r=0),
 \end{cases}
\]
\[
 \hat s:=
 \begin{cases}
  |\mathbf L|+s
   & (s\in\operatorname{Pos}_i(\mathbf b)),\\
  \min\bigl(\{t\in\operatorname{Pos}_i(\delta)
   \mid t>|\mathbf L|+|\mathbf b|\}\sqcup\{\infty\}\bigr)
   & (s=\infty).
 \end{cases}
\]
Define
\(\eta_{\mathbf b}^{\delta}(\lambda)\in X_*(\torusH)^2\) by
\begin{equation}
 \eta_{\mathbf b}^{\delta}(\lambda)
 :=\bigl(H_{|\mathbf L|,+}^{\delta}(0;\lambda),
         H_{|\mathbf L|+|\mathbf b|,-}^{\delta}(0;\lambda)\bigr).
 \label{eq:cartan-subword-values}
\end{equation}
Write \(\eta_{\mathbf b,\pm}^{\delta}(\lambda)\) for its components.

\begin{lemma}
\label{lem:cartan-subword-restriction}
For \(0\le r\le|\mathbf b|\),
\begin{equation}
 H_{|\mathbf L|+r,\pm}^{\delta}(0;\lambda)
 =H_{r,\pm}^{\mathbf b}
   \bigl(\eta_{\mathbf b,\pm}^{\delta}(\lambda);
         \lambda|_{\mathbf b}\bigr).
 \label{eq:cartan-subword-restriction}
\end{equation}
Consequently, for every \(\ell\in\Vcal(\mathbf b)\),
\begin{equation}
 c_{j_{\delta,\mathbf b}(\ell)}^\delta(\lambda)
 =c_\ell^{\mathbf b}
   \bigl(\eta_{\mathbf b}^{\delta}(\lambda);
         \lambda|_{\mathbf b}\bigr).
 \label{eq:cartan-exponent-restriction}
\end{equation}
\end{lemma}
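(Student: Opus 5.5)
The plan is to prove the first identity by induction on \(r\), treating the plus and minus recursions separately, and then to deduce the second identity from the definitions of \(c_\ell\) and of the slices.

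\textbf{Plus side.} I induct upward on \(r\).

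For \(r=0\), both sides equal \(H_{|\mathbf L|,+}^{\delta}(0;\lambda)\). On the left this is immediate. On the right, \(H_{0,+}^{\mathbf b}(\eta;\cdot)=\eta\) with \(\eta=\eta^\delta_{\mathbf b,+}(\lambda)\), which is \(H_{|\mathbf L|,+}^{\delta}(0;\lambda)\) by~\eqref{eq:cartan-subword-values}.

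For the induction step, \(\delta(|\mathbf L|+r)=\mathbf b(r)\), and \(\lambda(|\mathbf L|+r)=\lambda|_{\mathbf b}(r)\). So the recursions~\eqref{eq:cartan-exponent-recursion} for \(\delta\) at step \(|\mathbf L|+r\) and for \(\mathbf b\) at step \(r\) apply the same affine map. This is \(\nu\mapsto s_i\nu+\lambda(|\mathbf L|+r)\alpha_i^\vee\) when the letter is \(i^+\), and the identity when it is minus-tagged.

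\textbf{Minus side.} I induct downward from \(r=|\mathbf b|\).

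At \(r=|\mathbf b|\), the right side is \(H_{|\mathbf b|,-}^{\mathbf b}(\eta_-;\cdot)=\eta_-=H_{|\mathbf L|+|\mathbf b|,-}^\delta(0;\lambda)\). The step from \(r\) to \(r-1\) again uses the same letter \(\delta(|\mathbf L|+r)=\mathbf b(r)\) and the same weight, so the backward recursions coincide.

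\textbf{The consequence~\eqref{eq:cartan-exponent-restriction}.} Let \(\ell=(i,[r,s])\in\Vcal(\mathbf b)\) and \(\hat\ell=j_{\delta,\mathbf b}(\ell)=(i,[\hat r,\hat s])\). By Lemma~\ref{lem:cartan-exponent-choice-independence}, \(c\) can be evaluated at any cut in the slice. I claim that \(t=r\in\operatorname{Slice}_{\mathbf b}(\ell)\) corresponds to \(|\mathbf L|+r\in\operatorname{Slice}_\delta(\hat\ell)\), i.e.\ \(\hat r\le|\mathbf L|+r<\hat s\). There are two cases.

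If \(r\in\operatorname{Pos}_i(\mathbf b)\), then \(\hat r=|\mathbf L|+r\).

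If \(r=0\), then \(\hat r\le|\mathbf L|\) by the definition of \(\hat r\) as a maximum over positions \(\le|\mathbf L|\).

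The strict inequality \(|\mathbf L|+r<\hat s\) holds in both cases for \(s\) finite or \(s=\infty\): in either case \(\hat s\ge|\mathbf L|+s>|\mathbf L|+r\) or \(\hat s>|\mathbf L|+|\mathbf b|\ge|\mathbf L|+r\).

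Evaluating both \(c\)'s at these matching cuts and applying the first identity to each sign component gives
\(\langle\omega_i,H^\delta_{|\mathbf L|+r,\pm}(0;\lambda)\rangle=\langle\omega_i,H^{\mathbf b}_{r,\pm}(\eta_\pm;\lambda|_{\mathbf b})\rangle\). Summing the two signs yields~\eqref{eq:cartan-exponent-restriction}.

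\textbf{Main obstacle.} The only delicate point is the index bookkeeping. One must check that the cut \(|\mathbf L|+r\) genuinely lies in the \(\delta\)-slice of \(\hat\ell\), including in the boundary cases \(r=0\) and \(s=\infty\). One must also check that the slice of \(\hat\ell\) is the same \(i\)-string, so that Lemma~\ref{lem:cartan-exponent-choice-independence} applies on the \(\delta\) side. Both follow from \(\hat r\) and \(\hat s\) being the nearest \(i\)-positions outside \(\mathbf b\), since between them there are no other \(i\)-positions of \(\delta\).
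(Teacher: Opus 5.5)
Your proof is correct and takes the same route as the paper. The paper also gets the first identity by running the plus recursion forward from the cut \(|\mathbf L|\) and the minus recursion backward from the cut \(|\mathbf L|+|\mathbf b|\), then adds the \(\omega_i\)-pairings at corresponding cuts; your check that the cut \(|\mathbf L|+r\) lies in \(\operatorname{Slice}_\delta(j_{\delta,\mathbf b}(\ell))\) spells out the index bookkeeping the paper leaves implicit.
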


\begin{proof}
The two identities in \eqref{eq:cartan-subword-restriction} follow by
continuing the plus recursion from the cut after \(\mathbf L\) and the minus
recursion backwards from the cut before \(\mathbf R\), respectively.  Adding
their \(\omega_i\)-pairings at corresponding cuts gives
\eqref{eq:cartan-exponent-restriction}.
\end{proof}

\begin{definition}[Cartan factors and correction]
\label{def:extension-datum-cartan-correction}
Let \(\beta=\beta(1)\cdots\beta(m)\), and let
\(\mathcal E=(Q,(a_\tau)_{\tau\in T})\) be an extension datum for
\(Q(\beta)\).  For \(\eta:T\to X_*(\torusH)\) and \(0\le t\le m\),
define the algebraic group homomorphism
\(h_{\mathcal E,t}^\pm(\eta):\Tcal_T\to\torusH\) by
\begin{equation}
 h_{\mathcal E,t}^\pm(\eta)
 :=\prod_{\tau\in T}H_{t,\pm}^\beta(\eta_\tau;a_\tau)(A_\tau).
 \label{eq:extension-datum-cartan-monomial-components}
\end{equation}

For \(\eta:T\to X_*(\torusH)^2\), write
\(\eta_\tau=(\eta_{\tau,+},\eta_{\tau,-})\) and define
\(\eta_\pm:T\to X_*(\torusH)\) by
\(\tau\mapsto\eta_{\tau,\pm}\).  For
\(\ell=(i,[r,s])\in\Vcal(\beta)\), define
\(M_\ell^{\mathcal E}(\eta):\Tcal_T\to\Gm\) by
\begin{equation}
 \begin{aligned}
 M_\ell^{\mathcal E}(\eta)
 &:={}
 \prod_{\tau\in T}A_\tau^{c_\ell^{\beta}(\eta_\tau;a_\tau)}\\
 &=\omega_i(h_{\mathcal E,t}^+(\eta_+))
   \omega_i(h_{\mathcal E,t}^-(\eta_-)),
 \qquad t\in\operatorname{Slice}_\beta(\ell),
 \end{aligned}
 \label{eq:extension-datum-cartan-monomial}
\end{equation}
where the second equality follows from
\eqref{eq:extension-datum-cartan-monomial-components} and
\eqref{eq:cartan-exponent}.  Write
\(M_\ell^{\mathcal E}:=M_\ell^{\mathcal E}(0)\).  Below, we use the same
notation for the pullbacks of \(u_t^{\mathcal E}\),
\(h_{\mathcal E,t}^\pm(\eta)\), and \(M_\ell^{\mathcal E}(\eta)\) along
\(\Tcal_Q\to\Tcal_T\).
\end{definition}

\begin{lemma}
\label{lem:cartan-exponent-exchange-identity}
For every \(\eta\in X_*(\torusH)^2\), every
\(\lambda\in\mathbb Z^{\operatorname{Pos}(\delta)}\), and
\(k\in I_{\mathrm{int}}(\delta)\),
\begin{equation}
 \sum_{\ell\in\Vcal(\delta)}
 \eps^{Q(\delta)}_{k,\ell}c_\ell^{\delta}(\eta;\lambda)
 =-\bigl(\operatorname{inc}_\delta\lambda\bigr)_k.
 \label{eq:cartan-exponent-exchange-identity}
\end{equation}
\end{lemma}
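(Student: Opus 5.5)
Both sides of \eqref{eq:cartan-exponent-exchange-identity} are additive in the pair \((\eta,\lambda)\), since \(H^\delta_{t,\pm}\) is affine-linear in \(\eta\) and \(\lambda\) and \(c^\delta_\ell\) is its \(\omega_i\)-pairing. We therefore split the statement into two pieces. The first is the homogeneous identity \(\sum_\ell \eps^{Q(\delta)}_{k\ell}c^\delta_\ell(\eta;0)=0\). The second treats \(\eta=0\) and \(\lambda=\mathbf e_t\) for a single position \(t\). For \(\lambda=0\), the recursion gives \(H^\delta_{t,+}(\eta_+;0)=w_{\le t}^+\eta_+\), where \(w_{\le t}^+\) is the product of the plus reflections up to \(t\); similarly \(H^\delta_{t,-}(\eta_-;0)\) is a product of minus reflections applied to \(\eta_-\). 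The pairing \(\langle\omega_i,w\eta\rangle\) is then a Weyl-group evaluation attached to the \(i\)-chamber at the slice of \(\ell\). The homogeneous identity thus becomes the statement that these "tropical fundamental-weight" functions lie in the kernel of the mutable rows of \(Q(\delta)\). We check this through the amalgamation description \eqref{eq:SW-quiver-as-amalgamation}. Row \(k\) of \(\eps^{Q(\delta)}\) is a sum over the elementary triangles \(Q_\triangle(\delta(t))\) meeting \(k\). Since \(k=(i,[r,s])\) is internal, only the two triangles at the endpoints \(r,s\) and the triangles of letters of colors \(j\) with \(c_{ji}\ne0\) strictly between \(r\) and \(s\) contribute.

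Next we prove the identity one triangle at a time. For a single letter \(\delta(t)=i^\epsilon\) and a string \(k\) of color \(j\) adjacent to \(t\), we compute \(\sum_\ell \eps^{Q_\triangle}_{k\ell}\,c_\ell\) using \eqref{eq:SW-basic-horizontal}--\eqref{eq:SW-basic-other}. The values \(c_\ell\) for \(\ell=i_L,i_R\) are the pairings \(\langle\omega_i,\nu\rangle\) and \(\langle\omega_i,s_i\nu+\lambda(t)\alpha_i^\vee\rangle=\langle\omega_i,\nu\rangle-\langle\alpha_i,\nu\rangle+\lambda(t)\) in the plus case, and the minus case is analogous with the order reversed. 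The values \(c_j\) for \(j\ne i\) are \(\langle\omega_j,\nu\rangle\), which are unchanged across \(t\). The difference \(c_{i_R}-c_{i_L}\) equals \(\epsilon(\lambda(t)-\langle\alpha_i,\nu\rangle)\) in suitable normalization. Expanding \(\langle\alpha_i,\nu\rangle=\sum_j c_{ij}\langle\omega_j,\nu\rangle\), the \(\nu\)-dependent terms should telescope against the slanted half-entries \(\pm\epsilon c_{ji}/2\) once all triangles touching \(k\) are summed. The \(\lambda(t)\) term produces exactly \(\mp\epsilon\lambda(t)\) at \(i_L\) and \(i_R\), which matches \(-(\operatorname{inc}_\delta\lambda)_k\) with \(\operatorname{inc}\) as in \eqref{eq:component-terminal-incidence}. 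To keep the bookkeeping organized, we fix \(k\) and take the single slice \(t\in\operatorname{Slice}(k)\) guaranteed by Lemma~\ref{lem:cartan-exponent-choice-independence}. We then transport \(H\) across each letter in \((r,s)\) using Lemma~\ref{lem:cartan-subword-restriction}, so every triangle is evaluated with an explicit local \(\eta\).

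The main obstacle is the \(\nu\)-dependent cancellation. We must show that the contributions \(\frac{\epsilon c_{ji}}2(c_{i_L}-c_{i_R})\) from crossing letters, together with the horizontal contributions at the endpoints, sum to zero. Two features make this delicate: the plus and minus recursions run in opposite directions, and the half-integral slanted entries must pair up across the two endpoint triangles of \(k\). Our first attempt will be to reduce to \(\delta\) of length two or three by restriction (Lemma~\ref{lem:cartan-subword-restriction}) and verify the identity directly. Failing that, we will exploit the Shen--Weng fact that \(\sum_\ell\eps_{k\ell}\langle\omega_{|\ell|},\cdot\rangle\) is the tropicalized exchange binomial degree. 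Concretely, the monomials \(\prod_\ell A_\ell^{c_\ell}\) for \(\lambda=0\) are the weights of the \(\torusH\)-action on \(\Conf\). Since mutable cluster variables have \(\torusH^2\)-invariant exchange ratios \(\prod_\ell A_\ell^{\eps_{k\ell}}\), the vanishing follows from Lemma~\ref{lem:minor-cartan-semi-invariance} and the \(\torusH\)-equivariance of the Shen--Weng coordinates.
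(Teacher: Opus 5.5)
\textbf{There is a genuine gap.} The step you call the main obstacle is that the crossing-letter contributions and the endpoint contributions along \(k\) cancel. You assert this (``should telescope'') but never prove it, and neither fallback closes it.

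\begin{itemize}
\item Reducing to \(\delta\) of length two or three via Lemma~\ref{lem:cartan-subword-restriction} cannot work. The row of \(k=(i,[r,s])\) involves every letter strictly between \(r\) and \(s\), and there can be arbitrarily many such letters.
\item The equivariance argument works only for \(\lambda=0\). It uses that the decoration \(\torusH^2\)-weights of \(A_\ell\) are \(c_\ell(\eta;0)\) and that mutable exchange ratios are invariant, which rests on external Shen--Weng input. It says nothing about the \(\lambda\)-part.
\item Your local accounting of the \(\lambda\)-terms is incorrect. If \(\delta(r)\) is a plus letter, the row of \(k=\iota_r^\delta(i_R)\) in the triangle at \(r\) only involves values at slice \(r-1\), so \(\lambda(r)\) does not appear there at all.
\item Each crossing letter \(t\) of color \(j\) contributes an explicit term \(-\tfrac{c_{ij}}{2}\lambda(t)\), which must cancel.
\item The required \(\epsilon_r\lambda(r)\) (and symmetrically \(-\epsilon_s\lambda(s)\) when \(\delta(s)\) is a minus letter) appears only after the propagated effect is summed along the whole string. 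That is the same telescoping you have not carried out.
\item Expanding \(\langle\alpha,\nu\rangle\) in fundamental weights at a crossing letter goes the wrong way and does not produce any telescoping along \(k\).
\end{itemize}

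\textbf{The missing idea is a potential attached to the color of \(k\).} Set
\(\kappa(t):=\langle\alpha_i,H_{t,+}-H_{t,-}\rangle\) and
\(\tau(t):=\langle\alpha_i,H_{t,+}+H_{t,-}\rangle\).

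Take a crossing letter \(\delta(t)=j^{\epsilon_t}\) with \(r<t<s\). Write \(c_L,c_R\) for the values of \(c^\delta_\ell(\eta;\lambda)\) on the \(j\)-strings ending and starting at \(t\). In both sign cases \(H_{+}-H_{-}\) changes across \(t\) by \(\epsilon_t(c_R-c_L)\alpha_j^\vee\). Hence the contribution \(\tfrac{\epsilon_t c_{ij}}{2}(c_L-c_R)\) equals \(-\tfrac12(\kappa(t)-\kappa(t-1))\). This handles \(\eta\) and \(\lambda\) simultaneously, so no splitting is needed.

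Using \(\sum_{j\ne i}c_{ij}\omega_j=\alpha_i-2\omega_i\), the endpoint triangles contribute \(\tfrac{\epsilon_r}{2}\tau(r-1)\) and \(-\tfrac{\epsilon_s}{2}\tau(s)\).

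The recursion at the two \(i\)-letters gives
\(\kappa(r)=-\epsilon_r\tau(r-1)+2\epsilon_r\lambda(r)\) and
\(\kappa(s-1)=-\epsilon_s\tau(s)+2\epsilon_s\lambda(s)\).
Substituting, the telescoped sum collapses to
\(\epsilon_r\lambda(r)-\epsilon_s\lambda(s)=-(\operatorname{inc}_\delta\lambda)_k\).
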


\begin{proof}
Let \(k=(i,[r,s])\), and suppress \(\delta,\eta,\lambda\) from the
notation.  Set
\[
 \epsilon_t:=\operatorname{sgn}(\delta(t)),\qquad
 \kappa(t):=\langle\alpha_i,H_{t,+}-H_{t,-}\rangle,\qquad
 \tau(t):=\langle\alpha_i,H_{t,+}+H_{t,-}\rangle.
\]
The backward recursion for a minus letter is equivalently
\(H_{t,-}=s_iH_{t-1,-}+\lambda(t)\alpha_i^\vee\).  Expanding
\eqref{eq:SW-quiver-as-amalgamation} and
\eqref{eq:quiver-amalgamation-sum}, the contributions at \(r\), at
\(r<t<s\), and at \(s\), using
\eqref{eq:SW-basic-horizontal}--\eqref{eq:SW-basic-other}, are
\[
 \frac{\epsilon_r}{2}\tau(r-1),\qquad
 -\frac12\bigl(\kappa(t)-\kappa(t-1)\bigr),\qquad
 -\frac{\epsilon_s}{2}\tau(s),
\]
respectively.  Summing gives
\[
 \sum_{\ell\in\Vcal(\delta)}
 \eps^{Q(\delta)}_{k,\ell}c_\ell^\delta(\eta;\lambda)
 =\frac{\epsilon_r}{2}\tau(r-1)
  -\frac12\bigl(\kappa(s-1)-\kappa(r)\bigr)
  -\frac{\epsilon_s}{2}\tau(s).
\]
The recursions at the two positions of color \(i\) give
\[
 \kappa(r)=-\epsilon_r\tau(r-1)+2\epsilon_r\lambda(r),
 \qquad
 \kappa(s-1)=-\epsilon_s\tau(s)+2\epsilon_s\lambda(s).
\]
Substituting and using \eqref{eq:component-terminal-incidence} proves the
assertion.
\end{proof}

\begin{lemma}
\label{lem:cartan-correction-exchange-homogeneity}
For every \(\eta:T\to X_*(\torusH)^2\) and every
\(i\in I_{\mathrm{uf}}(Q)\cap\Vcal(\beta)\),
\begin{equation}
 \prod_{\ell\in\Vcal(\beta)}
 M_\ell^{\mathcal E}(\eta)^{\eps^Q_{i\ell}}
 \prod_{\tau\in T}A_\tau^{\eps^Q_{i\tau}}=1.
 \label{eq:component-cartan-monomial-identity}
\end{equation}
\end{lemma}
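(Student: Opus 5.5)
The plan is to take logarithms and reduce \eqref{eq:component-cartan-monomial-identity} to an identity of exponents of each \(A_\tau\). Since \(M_\ell^{\mathcal E}(\eta)=\prod_{\tau}A_\tau^{c_\ell^\beta(\eta_\tau;a_\tau)}\), the left-hand side equals
\[
 \prod_{\tau\in T}A_\tau^{\,\sum_{\ell\in\Vcal(\beta)}\eps^Q_{i\ell}\,c_\ell^\beta(\eta_\tau;a_\tau)\;+\;\eps^Q_{i\tau}} .
\]
Because the \(A_\tau\) are independent characters of \(\Tcal_T\), it suffices to show, for each \(\tau\in T\),
\[
 \sum_{\ell\in\Vcal(\beta)}\eps^Q_{i\ell}\,c_\ell^\beta(\eta_\tau;a_\tau)=-\eps^Q_{i\tau}.
\]

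First, since \(Q(\beta)\) is a full subquiver of \(Q\), the entries \(\eps^Q_{i\ell}\) for \(\ell\in\Vcal(\beta)\) are exactly \(\eps^{Q(\beta)}_{i\ell}\). Next, the mutable vertices of \(Q\) lying in \(\Vcal(\beta)\) are the mutable vertices of the full subquiver \(Q(\beta)\), namely the internal strings \(I_{\mathrm{int}}(\beta)\). Lemma~\ref{lem:cartan-exponent-exchange-identity}, applied with \(\delta=\beta\), \(\eta=\eta_\tau\), \(\lambda=a_\tau\) and \(k=i\), then gives
\[
 \sum_{\ell}\eps^{Q(\beta)}_{i\ell}\,c^\beta_\ell(\eta_\tau;a_\tau)=-\bigl(\operatorname{inc}_\beta a_\tau\bigr)_i .
\]

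It remains to identify \((\operatorname{inc}_\beta a_\tau)_i\) with \(\eps^Q_{i\tau}\). Condition~\eqref{eq:extension-datum-mixed-blocks} of the extension datum states exactly that \((\eps^Q_{\ell\tau})_{\ell\in\Vcal(\beta)}=\operatorname{inc}_\beta(a_\tau)\); this is the correct orientation, row \(i\) and column \(\tau\). Substituting gives the required cancellation, and the lemma follows.

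The only delicate point is checking that the index hypothesis \(i\in I_{\mathrm{uf}}(Q)\cap\Vcal(\beta)\) matches the hypothesis \(k\in I_{\mathrm{int}}(\beta)\) of Lemma~\ref{lem:cartan-exponent-exchange-identity}. This follows from the full-subquiver condition together with the definition of \(Q(\beta)\) as the defrosting of \(I_{\mathrm{int}}(\beta)\). One should also note that Lemma~\ref{lem:cartan-exponent-exchange-identity} was stated for arbitrary \(\eta\in X_*(\torusH)^2\), so the nonzero base values \(\eta_\tau\) cause no difficulty.
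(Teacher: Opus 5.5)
Your proof is correct and is essentially the paper's own argument: the exponent of each \(A_\tau\) is \(\sum_{\ell\in\Vcal(\beta)}\eps^Q_{i\ell}c_\ell^\beta(\eta_\tau;a_\tau)+\eps^Q_{i\tau}\), which vanishes by Lemma~\ref{lem:cartan-exponent-exchange-identity} and \eqref{eq:extension-datum-mixed-blocks}. Your remark identifying \(I_{\mathrm{uf}}(Q)\cap\Vcal(\beta)\) with \(I_{\mathrm{int}}(\beta)\) spells out a step the paper leaves implicit.
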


\begin{proof}
The exponent of \(A_\tau\) is
\[
 \sum_{\ell\in\Vcal(\beta)}
 \eps^Q_{i\ell}c_\ell^{\beta}(\eta_\tau;a_\tau)+\eps^Q_{i\tau}
 =-(\operatorname{inc}_\beta a_\tau)_i
  +(\operatorname{inc}_\beta a_\tau)_i=0
\]
by Lemma~\ref{lem:cartan-exponent-exchange-identity} and
\eqref{eq:extension-datum-mixed-blocks}.
\end{proof}

\begin{example}
For the weave \(\fW:1122112\to212\) in \eqref{eq:a2-direct-weave}, consider
the final quiver in Figure~\ref{fig:a2-five-step-mutation-late}.  The boundary weights of
the Lusztig cycles are
\[
 \partial\gamma_{\sigma_1}=(0,0,0),\qquad
 \partial\gamma_{\sigma_2}=\partial\gamma_{\sigma_3}=(1,0,0),\qquad
 \partial\gamma_{\sigma_4}=(0,0,1).
\]
Hence the \(u\)-variables of \(\mathcal E_\fW\) are
\[
 (u_1,u_2,u_3)
 =\bigl(A_{\sigma_2}A_{\sigma_3},1,A_{\sigma_4}\bigr).
\]
The corresponding plus Cartan factors in \(\torusH\subset\mathrm{SL}_3\) are
\[
\begin{alignedat}{2}
 h_0^+&=\operatorname{diag}(1,1,1),\qquad&
 h_1^+&=\operatorname{diag}\left(
  1,A_{\sigma_2}A_{\sigma_3},
  (A_{\sigma_2}A_{\sigma_3})^{-1}\right),\\
 h_2^+&=\operatorname{diag}\left(
  A_{\sigma_2}A_{\sigma_3},1,
  (A_{\sigma_2}A_{\sigma_3})^{-1}\right),\qquad&
 h_3^+&=\operatorname{diag}\left(
  A_{\sigma_2}A_{\sigma_3},
  \frac{A_{\sigma_4}}{A_{\sigma_2}A_{\sigma_3}},
  A_{\sigma_4}^{-1}\right).
\end{alignedat}
\]
The Cartan corrections for the five strings of \(212\) are
\[
\begin{array}{c|ccccc}
 \ell
 &(2,[0,1])&(1,[0,2])&(2,[1,3])&(1,[2,\infty])&(2,[3,\infty])\\ \hline
 M_\ell
 &1&1&A_{\sigma_2}A_{\sigma_3}&A_{\sigma_2}A_{\sigma_3}&A_{\sigma_4}
\end{array}
\]
\end{example}

\subsection{Amalgamated coordinates}

Let \(\fW\) be a double weave from \(\beta\) to \(\delta\).  We do not
assume that the terminal word \(\delta\) is reduced.

For every \(t\in\operatorname{Pos}(\delta)\), \(\ell\in\Vcal(\delta)\), and
\(s\in\{0\}\sqcup\operatorname{Pos}(\delta)\), define rational maps
\[
 u_t^\fW,M_\ell^\fW:
 X(p)\times X(q^{\mathrm{op}})\dashrightarrow\Gm,
 \qquad
 h_s^{\fW,+},h_s^{\fW,-}:
 X(p)\times X(q^{\mathrm{op}})\dashrightarrow\torusH
\]
by
\[
 u_t^\fW:=\mathbf A_\fW^*u_t^{\mathcal E_\fW},\qquad
 M_\ell^\fW:=\mathbf A_\fW^*M_\ell^{\mathcal E_\fW},\qquad
 h_s^{\fW,\pm}:=\mathbf A_\fW^*h_{\mathcal E_\fW,s}^\pm(0).
\]
By \eqref{eq:extension-datum-u-monomial}
and~\eqref{eq:extension-datum-cartan-monomial} and the definition of
\(\Dcal(\fW)\),
\begin{equation}
 \begin{aligned}
  u_t^\fW
  &=\prod_{\sigma\in\Dcal(\fW)}
    (A_\sigma^\fW)^{(\partial\gamma_\sigma)(t)},
  &\qquad
  M_\ell^\fW
  &=\prod_{\sigma\in\Dcal(\fW)}
    (A_\sigma^\fW)^{c_\ell^\delta(\partial\gamma_\sigma)}.
 \end{aligned}
 \label{eq:weave-terminal-framing-monomial}
\end{equation}
The first identity in \eqref{eq:weave-terminal-framing-monomial}, on
the plus component, is the formula for \(u_e\) in
\cite[Section~5]{CasalsEtAlBraidVarieties} restricted to the terminal edges, and on the minus
component, it is the transpose of that formula.
\begin{definition}[Amalgamated coordinate map]
\label{def:amalgamated-functions}
For \(c\in\Conf(p',q')\) and
\(x\in X(p)\times X(q^{\mathrm{op}})\), define the amalgamated coordinates
corresponding to \(\ell\in\Vcal(\delta)\) and \(\sigma\in V_3(\fW)\) by
\begin{equation}
 A_\ell^{\mathrm{am}}(c,x)
 :=
 A_\ell^\delta(c)\,M_\ell^\fW(x),
 \label{eq:terminal-coordinate}
\end{equation}
and
\[
 A_\sigma^{\mathrm{am}}(c,x):=A_\sigma^\fW(x).
\]
Combining these coordinates gives a rational map
\begin{equation}
 \mathbf A_{\delta,\fW}^{\mathrm{am}}
 :=
 (A_v^{\mathrm{am}})_{v\in\Vcal(\delta)\sqcup V_3(\fW)}:
 \Conf(p',q')
 \times X(p)\times X(q^{\mathrm{op}})
 \dashrightarrow\Tcal_{Q^{\mathrm{am}}(\fW)}.
 \label{eq:def-Aam}
\end{equation}
Under the inverse \(R_\fW^{-1}\) of the torus isomorphism
in~\eqref{eq:amalgamation-torus-isomorphism}, this is the map obtained by
identifying the extended Shen--Weng coordinates and the weave coordinates on
the shared components \(A_\sigma^{\mathrm{ext}}=A_\sigma^\fW\).

When \(\fW_-=\id_{\mathbf q}\), the coordinates in \(V_3(\fW)\) and the
Cartan factors \(h_t^{\fW,+},h_t^{\fW,-}\) depend only on \(X(p)\).  Hence
\(\mathbf A_{\delta,\fW}^{\mathrm{am}}\) factors uniquely through the map
that forgets the second braid-variety factor.  We also denote this
factorization by
\[
 \mathbf A_{\delta,\fW}^{\mathrm{am}}:
 \Conf(p',q)\times X(p)
 \dashrightarrow\Tcal_{Q^{\mathrm{am}}(\fW)}.
\]
Similarly, when \(\fW_+=\id_{\mathbf p}\), we also write
\[
 \mathbf A_{\delta,\fW}^{\mathrm{am}}:
 \Conf(p,q')
 \times X(q^{\mathrm{op}})
 \dashrightarrow\Tcal_{Q^{\mathrm{am}}(\fW)}.
\]
\end{definition}

\begin{proposition}
If \(\mathbf p',\mathbf q'\) are reduced words for their respective Demazure
products, then the coordinate transformation
\[
 \mathbf A_{\delta,\fW}^{\mathrm{am}}
 \circ
 \bigl(\mathbf A_\delta^{\mathrm{SW}}\times\mathbf A_\fW\bigr)^{-1}:
 \Tcal_{Q(\delta)}\times\Tcal_{Q(\fW)}
 \xrightarrow{\sim}\Tcal_{Q^{\mathrm{am}}(\fW)}
\]
is an isomorphism of algebraic tori.  Its inverse is given by
\begin{equation}
 A_\sigma=A_\sigma^{\mathrm{am}}
 \quad(\sigma\in V_3(\fW)),\qquad
 A_\ell^\delta
 =
 A_\ell^{\mathrm{am}}(M_\ell^\fW)^{-1}
 \quad(\ell\in\Vcal(\delta)).
 \label{eq:amalgamated-to-product}
\end{equation}
In particular, \(\mathbf A_{\delta,\fW}^{\mathrm{am}}\) is a birational map.
\end{proposition}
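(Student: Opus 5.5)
The plan is to write the coordinate transformation explicitly as a monomial map and check that it is unimodular and triangular. By Definition~\ref{def:amalgamated-functions}, the composite
\[
 \mathbf A_{\delta,\fW}^{\mathrm{am}}\circ\bigl(\mathbf A_\delta^{\mathrm{SW}}\times\mathbf A_\fW\bigr)^{-1}
\]
sends a point with coordinates \((A_\ell^\delta)_{\ell\in\Vcal(\delta)}\) and \((A_\sigma)_{\sigma\in V_3(\fW)}\) to
\[
 A_\sigma^{\mathrm{am}}=A_\sigma,
 \qquad
 A_\ell^{\mathrm{am}}=A_\ell^\delta\,M_\ell,
 \qquad
 M_\ell=\prod_{\sigma\in\Dcal(\fW)}A_\sigma^{c_\ell^\delta(\partial\gamma_\sigma)},
\]
where the exponents come from \eqref{eq:weave-terminal-framing-monomial}. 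First I need this composite to make sense. When \(\mathbf p',\mathbf q'\) are reduced, \(\mathbf A_\delta^{\mathrm{SW}}\) is birational by Shen--Weng, and \(\mathbf A_\fW=\mathbf A_{\fW_+}\times\mathbf A_{\fW_-^{\mathrm{op}}}\) is birational by \cite[Lemma~5.14]{CasalsEtAlBraidVarieties}, as recorded before Section~\ref{subsec:terminal-amalgamation}. So \(\mathbf A_\delta^{\mathrm{SW}}\times\mathbf A_\fW\) is a birational map from \(\Conf(p',q')\times X(p)\times X(q^{\mathrm{op}})\) onto \(\Tcal_{Q(\delta)}\times\Tcal_{Q(\fW)}\).

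Next I identify the composite with the monomial map above, using that \(M_\ell^\fW=\mathbf A_\fW^*M_\ell^{\mathcal E_\fW}\) depends on \(x\) only through the weave coordinates. The exponents \(c_\ell^\delta(\partial\gamma_\sigma)\) are integers, since the Cartan exponent is built from integer pairings. The map is therefore given on character lattices by a block unitriangular integer matrix: the identity on the \(V_3(\fW)\) block, the identity on the \(\Vcal(\delta)\) block, and an off-diagonal block \((c_\ell^\delta(\partial\gamma_\sigma))\). Such a matrix lies in \(\mathrm{GL}\) over \(\mathbb Z\), so the map is an isomorphism of algebraic tori. Solving the triangular system, that is, dividing \(A_\ell^{\mathrm{am}}\) by \(M_\ell^\fW\) expressed in \(A_\sigma=A_\sigma^{\mathrm{am}}\), gives the inverse \eqref{eq:amalgamated-to-product}.

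Finally, \(\mathbf A_{\delta,\fW}^{\mathrm{am}}\) equals this torus isomorphism composed with the birational map \(\mathbf A_\delta^{\mathrm{SW}}\times\mathbf A_\fW\), hence is birational.

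I expect the only subtle point to be the bookkeeping that \(M_\ell^\fW\) is honestly a function of \(\mathbf A_\fW\) alone. This holds because \(\mathcal E_\fW\) has \(T=V_3(\fW)\) and the definition of \(M_\ell^{\mathcal E_\fW}\) only involves \(\Tcal_T\). Beyond that, the argument is routine.
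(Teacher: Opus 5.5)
Your proposal is correct and follows the paper's own proof. Both arguments observe that \(M_\ell^\fW\) is a Laurent monomial in the weave coordinates alone, so the transformation is a unimodular monomial map whose inverse is obtained by dividing out \(M_\ell^\fW\); birationality then follows by composing with the birational map \(\mathbf A_\delta^{\mathrm{SW}}\times\mathbf A_\fW\). Your block-unitriangularity check simply spells out what the paper calls the ``Laurent-monomial inverse.''
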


\begin{proof}
By \eqref{eq:weave-terminal-framing-monomial}, \(M_\ell^\fW\) is a
Laurent monomial in the double-weave coordinates.  Hence
\eqref{eq:amalgamated-to-product} is the Laurent-monomial inverse of
\eqref{eq:terminal-coordinate}.  The last assertion follows from the
birationality of
\(\mathbf A_\delta^{\mathrm{SW}}\times\mathbf A_\fW\).
\end{proof}

Under the assumptions of the proposition, set
\[
 \Sigma_{\mathrm{am}}(\fW):=\Sigma_{Q^{\mathrm{am}}(\fW)}.
\]
The map \(\mathbf A_{\delta,\fW}^{\mathrm{am}}\) in
\eqref{eq:def-Aam} is a realization of this seed.

Set
\[
 u:=\operatorname{dem}(p)
   =\operatorname{dem}(p'),
 \qquad
 v:=\operatorname{dem}(q)
   =\operatorname{dem}(q').
\]
In the construction below, we do not assume that \(\mathbf p',\mathbf q'\) are
reduced.

\begin{definition}[Flag propagation along a double weave]
\label{def:component-terminal-propagation}
Let \(y=[\flagA^0;\flagB^\bullet;\flagB_\bullet;\flagA_n]\) be an element of
\(\Conf(p,q)_{u,v}\).  Apply the top-to-bottom construction of framed flags in
\cite[Section~5]{CasalsEtAlBraidVarieties} to the upper flag sequence, stopping at
the terminal end of \(\fW_+\).  Apply the same construction for
\(\fW_-^{\mathrm{op}}\) to the lower flag sequence and transpose it, and at a
shuffle relabel the two flag sequences.  Denote the resulting flag
configuration by \(\operatorname{prop}_{\fW}(y)\), and write the rational map
thus defined as
\[
 \operatorname{prop}_{\fW}:
 \Conf(p,q)_{u,v}
 \dashrightarrow
 \Conf(p',q')_{u,v}.
\]
In non-simply-laced type, use the construction in
\cite[Section~6.1]{CasalsEtAlBraidVarieties}.
\end{definition}

\begin{proposition}[Flag propagation and one-step extensions]
\label{prop:component-terminal-propagation}
For a one-step extension \(\fW':\beta\to\delta'\) as in
\eqref{eq:double-weave-one-step-extension}, write
\(\mathbf p'',\mathbf q''\) for the words obtained from \(\delta'\), and
\(p'',q''\) for the positive braids represented by these words.  The flag
propagation associated with
\(\id_{\mathbf L}\otimes\sigma\otimes\id_{\mathbf R}\) satisfies
\begin{equation}
 \begin{gathered}
  \operatorname{prop}_{
   \id_{\mathbf L}\otimes\sigma\otimes\id_{\mathbf R}}:
  \Conf(p',q')_{u,v}
  \dashrightarrow
  \Conf(p'',q'')_{u,v},\\
  \operatorname{prop}_{\fW'}
  =\operatorname{prop}_{
    \id_{\mathbf L}\otimes\sigma\otimes\id_{\mathbf R}}
   \circ\operatorname{prop}_{\fW}.
 \end{gathered}
 \label{eq:component-one-step-terminal-propagation}
\end{equation}
Moreover, under a one-step extension of a double weave, for every existing
\(\tau\in V_3(\fW)\),
\begin{equation}
 A_\tau^{\fW'}=A_\tau^\fW.
 \label{eq:component-existing-coordinate-preservation}
\end{equation}
If \(\fW=\id_\beta\), then \(\operatorname{prop}_{\fW}=\id\).  Furthermore,
if \(\mathbf p',\mathbf q'\) are reduced words for \(u,v\), respectively,
\begin{equation}
 \operatorname{prop}_{\fW}
 =\operatorname{red}
 \label{eq:reduced-terminal-propagation}
\end{equation}
holds.
\end{proposition}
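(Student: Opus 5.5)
The plan is to read each assertion of the proposition off the definition of \(\operatorname{prop}_\fW\) as an iterated, slice-by-slice construction. The order is: first the composition law \eqref{eq:component-one-step-terminal-propagation}, then the base case, then the preservation of existing coordinates \eqref{eq:component-existing-coordinate-preservation}, and last the comparison \eqref{eq:reduced-terminal-propagation} with \(\operatorname{red}\).

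\emph{Composition law and base case.} The framed-flag construction of \cite[Section~5]{CasalsEtAlBraidVarieties} proceeds one elementary piece at a time, from the top boundary down. At each elementary weave it replaces only the flags of the affected subword and keeps the others.
\begin{itemize}
\item For a contraction \(ii\to i\), it forgets the middle flag. This is well defined because \(\operatorname{dem}\) is preserved, so the two outer flags are at relative position \(s_i\).
\item For a braid move, it replaces the local chain by the unique chain of the other reduced type (Lemma~\ref{lem:reduced-flag-chain}).
\item For a shuffle, it only relabels between the plus and minus sequences.
\end{itemize}
Hence propagating along \(\fW'\) is the same as propagating along \(\fW\) and then applying the local rule of \(\id_{\mathbf L}\otimes\sigma\otimes\id_{\mathbf R}\), which gives \eqref{eq:component-one-step-terminal-propagation}. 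The endpoint flags \(\flagB^0,\flagB^m,\flagB_0,\flagB_n\) and the decorations are never modified. Together with \eqref{eq:demazure-weave-preserves-dem}, this shows that the target lies in \(\Conf(p'',q'')_{u,v}\). For \(\fW=\id_\beta\) no step is performed, so \(\operatorname{prop}_\fW=\id\).

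\emph{Preservation of existing coordinates.} For \(\tau\in V_3(\fW)\), the variable \(A_\tau^\fW\) of \cite[Theorem~5.12]{CasalsEtAlBraidVarieties} is defined from the framed flags in the slices at or above \(\tau\), together with the Lusztig-cycle monomials \(u_e\) on edges above \(\tau\). Extending the weave below its terminal slice changes neither of these inputs, so \(A_\tau^{\fW'}=A_\tau^{\fW}\). On the minus component the same argument applies after transposing to \(\fW_-^{\mathrm{op}}\), using Proposition~\ref{prop:double-weave-plus-minus-decomposition}. I expect the main care to be needed here: one must check that the cited construction does not renormalize earlier framings when new vertices are appended. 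I would settle this by inspecting the inductive definition, where each framing is fixed at its own vertex.

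\emph{Comparison with \(\operatorname{red}\).} Suppose \(\mathbf p'\) and \(\mathbf q'\) are reduced words for \(u\) and \(v\). The propagated plus sequence is a flag chain of type \(\mathbf p'\) joining \(\flagB^0\) and \(\flagB^m\), and the propagated minus sequence is likewise a chain of type \(\mathbf q'\) joining \(\flagB_0\) and \(\flagB_n\). By Lemma~\ref{lem:reduced-flag-chain}, each of these is the unique reduced chain between its endpoints. The decorations \(\flagA^0,\flagA_n\) are unchanged, so the result is \(\operatorname{red}_{\mathbf p',\mathbf q'}(y)\) as in Definition~\ref{def:double-reduction}. Through the identifications of Section~\ref{subsec:positive-braid-descent}, this map is \(\operatorname{red}\).
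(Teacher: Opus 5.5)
Your route is the same as the paper's. The composition law and the identity case come from the top-to-bottom recursion that defines \(\operatorname{prop}_\fW\). The equality \(A_\tau^{\fW'}=A_\tau^\fW\) holds because a trivalent coordinate depends only on the part of the weave above it; your remark that Lusztig cycles are supported on the terminal side of their source spells this out more than the paper does. Finally, \(\operatorname{prop}_\fW=\operatorname{red}\) follows from the uniqueness in Lemma~\ref{lem:reduced-flag-chain}.

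One justification in your contraction step is false, however. Preservation of the Demazure product does not put the outer flags \(\flagB^{a-1},\flagB^{a+1}\) at relative position \(s_i\). The Demazure product only bounds relative positions from above, and the endpoint condition only constrains \(\pos(\flagB^0,\flagB^m)\), not intermediate pairs.

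For a concrete failure, take \(\mathbf p=iii\) and \(\flagB^a=B_i(z_1)\cdots B_i(z_a)\borelB_+\). If \(z_3=0\), then \(\dot s_i^2=\alpha_i^\vee(-1)\in\torusH\) gives \(\flagB^3=\flagB^1\). Yet \(\pos(\flagB^0,\flagB^3)=s_i=\operatorname{dem}(iii)\) still holds, so together with a generic minus flag this is a point of \(\Conf(iii,\varnothing)_{s_i,e}\). At this point the contraction of positions \(2,3\) cannot be performed.

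The correct statement is that each contraction step is defined only on the open locus where the two outer flags are at position \(s_i\). This is exactly why \(\operatorname{prop}_\fW\) is a rational map rather than a morphism. With that correction, read \eqref{eq:component-one-step-terminal-propagation} and the other identities as equalities of rational maps on the dense open set where every step of \(\fW'\) is defined. The rest of your argument then goes through unchanged.
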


\begin{proof}
The construction cited in Definition~\ref{def:component-terminal-propagation}
is a top-to-bottom recursion.  Thus, continuing with the final local move gives
\eqref{eq:component-one-step-terminal-propagation}.  Since the trivalent
vertex coordinates already introduced do not change, it also gives
\eqref{eq:component-existing-coordinate-preservation}.  On the minus
component, transpose this argument, and at a shuffle relabel the vertices.  The
assertion for the identity double weave also follows from the construction.
Finally, if \(\mathbf p',\mathbf q'\) are reduced, then the propagated flag
sequences and the flag sequences in Definition~\ref{def:double-reduction} have
the same endpoints and the same reduced types.  The uniqueness in
Lemma~\ref{lem:reduced-flag-chain} therefore gives
\eqref{eq:reduced-terminal-propagation}.
\end{proof}

\begin{definition}[Coordinate map on the initial configuration]
\label{def:component-intermediate-functions}
Using the braid-variety factor map of
Definition~\ref{def:double-braid-factors}, define the coordinate map on the
initial configuration by
\begin{equation}
 \mathbf A^\fW
 :=
 \mathbf A_{\delta,\fW}^{\mathrm{am}}
 \circ(\operatorname{prop}_{\fW},\operatorname{braid}):
 \Conf(p,q)_{u,v}
 \dashrightarrow\Tcal_{Q^{\mathrm{am}}(\fW)},
 \label{eq:component-partial-coordinate-map}
\end{equation}
and write \(A_v^\fW\) for its \(v\)-component.
\end{definition}

\begin{lemma}
\label{lem:component-terminal-functions-well-defined}
If \(\fW=\id_\beta\) and \(\delta=\beta\), then
\begin{equation}
 \mathbf A^\fW=\mathbf A_\beta^{\mathrm{SW}}.
 \label{eq:component-identity-coordinate-map}
\end{equation}
\end{lemma}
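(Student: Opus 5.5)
The plan is to unwind Definition~\ref{def:component-intermediate-functions} in the special case \(\fW=\id_\beta\), \(\delta=\beta\), and check that each factor in \eqref{eq:component-partial-coordinate-map} trivializes.

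First, by Proposition~\ref{prop:component-terminal-propagation}, \(\operatorname{prop}_{\id_\beta}=\id\). So \(\mathbf A^{\id_\beta}(y)=\mathbf A_{\beta,\id_\beta}^{\mathrm{am}}(y,\operatorname{braid}(y))\) for \(y\in\Conf(p,q)_{u,v}\).

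Second, the identity weave has no trivalent vertices. Hence \(V_3(\id_\beta)=\varnothing\) and \(\Dcal(\id_\beta)=\varnothing\), and as in the base case of Theorem~\ref{thm:quiver} we have \(Q^{\mathrm{am}}(\id_\beta)=Q(\beta)\). In particular \(\Tcal_{Q^{\mathrm{am}}(\id_\beta)}=\Tcal_{Q(\beta)}\), and the extension datum \(\mathcal E_{\id_\beta}\) has empty index set \(T\).

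Third, the correction monomials all become trivial. With \(T=\varnothing\), the products in \eqref{eq:weave-terminal-framing-monomial} and \eqref{eq:extension-datum-cartan-monomial} are empty, so \(M_\ell^{\id_\beta}=1\) for every \(\ell\in\Vcal(\beta)\). By \eqref{eq:terminal-coordinate}, \(A_\ell^{\mathrm{am}}(c,x)=A_\ell^\beta(c)\), and there are no \(\sigma\)-components. Therefore \(\mathbf A_{\beta,\id_\beta}^{\mathrm{am}}(c,x)=\mathbf A_\beta^{\mathrm{SW}}(c)\), independent of \(x\).

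Composing the three steps gives \(\mathbf A^{\id_\beta}=\mathbf A_\beta^{\mathrm{SW}}\), understood as rational maps restricted to the open subvariety \(\Conf(p,q)_{u,v}\).

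The only point needing care is well-definedness of the birational setup when \(\beta\) is not reduced. There the dependence on the \(X(p)\times X(q^{\mathrm{op}})\) factor is empty, so \(\operatorname{braid}\) drops out, and no reducedness hypothesis is needed. I expect no real obstacle; the argument is pure bookkeeping with empty products.
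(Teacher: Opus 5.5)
Your proposal is correct and is essentially the paper's argument. The paper's proof likewise observes that \(V_3(\id_\beta)=\varnothing\), that \(\operatorname{prop}_{\id_\beta}=\id\), and that \(\mathbf A_{\beta,\id_\beta}^{\mathrm{am}}\) is exactly the Shen--Weng coordinate map; your extra bookkeeping (empty extension datum, \(M_\ell^{\id_\beta}=1\) as an empty product, and reading the equality as one of rational maps on \(\Conf(p,q)_{u,v}\)) only makes explicit what the paper leaves implicit.
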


\begin{proof}
For the identity double weave, \(V_3(\fW)=\varnothing\) and
\(\operatorname{prop}_{\fW}=\id\), while
\(\mathbf A_{\beta,\fW}^{\mathrm{am}}\) is precisely the Shen--Weng
coordinate map.  This gives
\eqref{eq:component-identity-coordinate-map}.
\end{proof}

\subsection{Coordinate compatibility of sweep mutation sequences along weaves}
\label{subsec:coordinate-compatibility}

For an ordinary Demazure weave \(\fW:\mathbf p\to\mathbf p'\), write
\[
 \widetilde z_e^{\fW}:X(p)\dashrightarrow\mathbb A^1,
 \qquad
 u_e^{\fW}:X(p)\dashrightarrow\Gm
\]
for the two labels of a solid weave edge \(e\) in the top-to-bottom
construction of framed flags in
\cite[Section~5]
{CasalsEtAlBraidVarieties}.  For a double weave, we use the labels of \(\fW_+\)
on the plus component.  On the minus component, we use the labels of
\(\fW_-^{\mathrm{op}}\) via the horizontal reflection in
Proposition~\ref{prop:double-weave-plus-minus-decomposition}.

\begin{lemma}
\label{lem:framed-edge-cartan-transport}
Let \(i\in I\), \(g,g'\in\groupG\), \(h,h'\in\torusH\),
\(z\in\mathbb G_a\), and \(u\in\Gm\) satisfy
\[
 g'=gB_i(z),
 \qquad
 h'=s_i(h)\alpha_i^\vee(u).
\]
Set \(\widetilde z:=\alpha_i(h)^{-1}z\).  Then
\begin{equation}
 g'h'=ghB_i(\widetilde z)\alpha_i^\vee(u).
 \label{eq:framed-edge-cartan-action}
\end{equation}
In particular, the framed flags represented by \(gh\) and \(g'h'\) satisfy
the compatibility condition of
\cite[Section~5]{CasalsEtAlBraidVarieties} for a solid edge labeled by
\((\widetilde z,u)\).
\end{lemma}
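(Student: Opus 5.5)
The identity \eqref{eq:framed-edge-cartan-action} should follow from a direct computation in \(\groupG\) with two standard commutation rules. The second assertion then follows by comparing the result with the defining relation of a solid edge in \cite[Section~5]{CasalsEtAlBraidVarieties}.

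First, substitute the hypotheses:
\[
 g'h'=g\,x_i(z)\,\dot s_i\,s_i(h)\,\alpha_i^\vee(u).
\]
Move the torus element \(s_i(h)\) to the left across \(\dot s_i\). Since \(\dot s_i\in N_\groupG(\torusH)\) represents \(s_i\), conjugation gives \(\dot s_i\,h''\,\dot s_i^{-1}=s_i(h'')\) for every \(h''\in\torusH\). Because \(s_i\) is an involution on \(\torusH\), this yields \(\dot s_i\,s_i(h)=h\,\dot s_i\). Next, move \(h\) to the left across the root subgroup element. The relation \(h^{-1}x_i(z)h=x_i(\alpha_i(h)^{-1}z)\) gives
\[
 x_i(z)\,h=h\,x_i\bigl(\alpha_i(h)^{-1}z\bigr)=h\,x_i(\widetilde z).
\]
Combining the two steps,
\[
 g'h'=g\,h\,x_i(\widetilde z)\,\dot s_i\,\alpha_i^\vee(u)=gh\,B_i(\widetilde z)\,\alpha_i^\vee(u),
\]
which is \eqref{eq:framed-edge-cartan-action}.

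For the second assertion, I will recall the solid-edge condition of \cite[Section~5]{CasalsEtAlBraidVarieties}. It states that the framed flag after the edge is represented by the representative before the edge multiplied on the right by \(B_i(\widetilde z)\alpha_i^\vee(u)\). Equivalently, the underlying flags are in relative position \(s_i\), and the framing changes by \(\alpha_i^\vee(u)\). This is exactly what \eqref{eq:framed-edge-cartan-action} says for the representatives \(gh\) and \(g'h'\).

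The only real obstacle is bookkeeping of conventions. The sign convention for \(\alpha_i(h)^{\pm1}\) in \(h^{-1}x_i(z)h\) must be checked against \(x_i(t)=\varphi_i\bigl(\begin{smallmatrix}1&t\\0&1\end{smallmatrix}\bigr)\). The side on which \(\alpha_i^\vee(u)\) acts must match the framing convention of \cite{CasalsEtAlBraidVarieties}, which is right multiplication, consistent with the decoration action \((g\unipotentU_+)\cdot h=gh\unipotentU_+\). I would verify both with a quick \(\mathrm{SL}_2\) computation via \(\varphi_i\).
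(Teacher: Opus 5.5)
Your argument is correct and is the same as the paper's proof, which packages your two commutation steps into the single identity \(B_i(z)s_i(h)=hB_i(\alpha_i(h)^{-1}z)\) and then multiplies by \(g\) on the left and by \(\alpha_i^\vee(u)\) on the right. Your expansion via \(\dot s_i\,s_i(h)=h\,\dot s_i\) and \(x_i(z)h=h\,x_i(\alpha_i(h)^{-1}z)\) verifies exactly that identity, and, as in the paper, the ``in particular'' clause is just the displayed equation read against the solid-edge rule.
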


\begin{proof}
The definitions of \(B_i\) and the Weyl-group action on \(\torusH\) give
\[
 B_i(z)s_i(h)=hB_i\bigl(\alpha_i(h)^{-1}z\bigr).
\]
Hence
\[
 g'h'
 =gB_i(z)s_i(h)\alpha_i^\vee(u)
 =ghB_i(\widetilde z)\alpha_i^\vee(u).
\]
\end{proof}

Let \(\mathcal E=(Q,(a_\tau)_{\tau\in T})\) be an extension datum for
\(Q(\mathbf b)\), and let \(\eta:T\to X_*(\torusH)^2\).  For
\(\tau\in T\), define
\(\deg_{\mathcal E,\tau}(\eta_\tau)\in\mathbb Z^{I(Q)}\) by
\[
 (\deg_{\mathcal E,\tau}(\eta_\tau))_v
 =\begin{cases}
 c_v^{\mathbf b}(\eta_\tau;a_\tau),&v\in\Vcal(\mathbf b),\\
 1,&v=\tau,\\
 0,&v\in T\setminus\{\tau\}.
 \end{cases}
\]

\begin{lemma}
\label{lem:cartan-degree-tropical-transport}
Let \(\sigma:\mathbf b\to\mathbf b'\) be a braid move or a shuffle, and let
\(\mathcal E=(Q,(a_\tau)_{\tau\in T})\) be an extension datum for
\(Q(\mathbf b)\).  Let \(\eta:T\to X_*(\torusH)^2\).  Let
\(\mu_\sigma:Q\to Q'\) be the extended cluster
transformation of Definition~\ref{def:basic-mutation-step-with-extra-vertices},
and let \(\mathcal E'=(Q',(a'_\tau)_{\tau\in T})\) be the extension datum
obtained in Lemma~\ref{lem:basic-mutation-step-with-extra-vertices}.  Then the
integer min-plus tropicalization
\[
 \operatorname{Trop}(\mu_\sigma):
 \mathbb Z^{I(Q)}\longrightarrow\mathbb Z^{I(Q')}
\]
satisfies
\begin{equation}
 \operatorname{Trop}(\mu_\sigma)
 \bigl(\deg_{\mathcal E,\tau}(\eta_\tau)\bigr)
 =\deg_{\mathcal E',\tau}(\eta_\tau)
 \qquad(\tau\in T).
 \label{eq:cartan-degree-tropical-transport}
\end{equation}
\end{lemma}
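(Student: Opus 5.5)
Since \(\tau\) is fixed, I will argue one \(\tau\in T\) at a time. Write \(d:=\deg_{\mathcal E,\tau}(\eta_\tau)\). Tropicalizing a mutation \(\mu_k\) gives the piecewise-linear map
\[
 d'_k=-d_k+\max\Bigl(\sum_{\eps_{kv}>0}\eps_{kv}d_v,\ \sum_{\eps_{kv}<0}-\eps_{kv}d_v\Bigr)\quad\text{(min-plus form, with sign conventions matching \(\mu_k^*\))},
\]
with \(d'_v=d_v\) for \(v\ne k\). The mutation vertices of \(\mu_\sigma\) all lie in \(\Vcal(\mathbf b)\cap I_{\mathrm{uf}}(Q)\). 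My approach is to show that, at every intermediate stage of the Shen--Weng sequence, the degree vector still has the form \(\deg_{\mathcal E_{\mathrm{int}},\tau}(\eta_\tau)\) for the intermediate extension datum. In that situation the two monomials in the exchange relation at \(k\) have equal degree. Indeed, applying Lemma~\ref{lem:cartan-correction-exchange-homogeneity} to the single vertex \(\tau\) shows that \(\sum_v\eps_{kv}d_v=0\), where the contributions of the \(c_\ell\)-entries and of the \(\tau\)-entry cancel. Hence both branches of the max agree, and the tropical mutation becomes linear:
\[
 d'_k=-d_k+\sum_{\eps_{kv}>0}\eps_{kv}d_v .
\]
The \(\tau\)-coordinate and the \(T\setminus\{\tau\}\)-coordinates are never mutated, so they stay equal to \(1\) and \(0\), as required.

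The key step is then to verify that this linear formula, applied to \(d_k=c_k^{\mathbf b}(\eta_\tau;a_\tau)\), produces \(c^{\mathbf b'}_{k'}(\eta_\tau;a'_\tau)\) for the new string \(k'\), and leaves unchanged the \(c\)-values of strings not mutated. I would organize this through the Cartan exponent \(H_{t,\pm}\). For strings outside the local word \(\mathbf b\) there is nothing to check: by Lemma~\ref{lem:cartan-subword-restriction} the values \(H^{\mathbf b}_{0,+}\) and \(H^{\mathbf b}_{|\mathbf b|,-}\) are the boundary inputs \(\eta_\tau\), so it suffices to show that the outgoing boundary values agree, namely \(H^{\mathbf b}_{|\mathbf b|,+}(\eta_+;a)=H^{\mathbf b'}_{|\mathbf b'|,+}(\eta_+;a')\), and dually for the minus side. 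Boundary strings of \(\mathbf b\) are then automatically correct by Lemma~\ref{lem:cartan-exponent-choice-independence}.

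For a shuffle this is immediate. The recursion for \(H_{+}\) skips minus letters and the recursion for \(H_{-}\) skips plus letters, and \(\Phi_\sigma\) simply swaps the weights. No mutation occurs, so only the relabeling has to be checked.

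For a braid move I would reduce to rank two. The identity \(s_{i_1}\cdots s_{i_m}\eta+\sum_r\lambda_r\,s_{i_1}\cdots s_{i_{r}}\alpha^\vee_{i_r}\cdots\) — equivalently the statement that the map \(\lambda\mapsto H_{m,+}\) intertwines with \(\Phi\) — is the tropical shadow of the Cartan parts of the Lusztig factorization \(x_{\mathbf b}(t)=x_{\mathbf b'}(t')\) written in the form \(B_{b_1}(z_1)\cdots\). I expect it to follow from Lemma~\ref{lem:framed-edge-cartan-transport} together with the fact that \(\prod B_{b_r}(z_r)\) is preserved by braid moves. Concretely, I would check in the coroot lattice that
\[
 \sum_r a(r)\,s_{b_{r+1}}\cdots s_{b_m}\!\cdots
\]
transforms correctly under \(\Phi\); in type \(A_2\) this is a direct check using formula~\eqref{eq:Phi3}. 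For the internal strings, whose \(c\)-value is not forced by the boundary, I would use the linear tropical mutation formula above together with Lemma~\ref{lem:cartan-exponent-exchange-identity} applied in \(Q(\mathbf b')\). That identity determines \(c_{k'}\) from its neighbours and from \((\operatorname{inc}a')_{k'}\), because the mutated vertex appears with coefficient \(\pm1\) after negating its row.

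I expect the main obstacle to be the non-simply-laced braid moves (\(B_2\), \(G_2\)), which are mutation sequences of length greater than one and require the intermediate invariance at each step. I would handle them like Lemma~\ref{lem:extended-mutation-local-intersection}, as finite checks, ideally by establishing the general principle that linear tropical mutation preserves the solution of the system \eqref{eq:cartan-exponent-exchange-identity}. Since that system determines the internal \(c\)-values uniquely given the boundary values, the result would follow without case analysis.
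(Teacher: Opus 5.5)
There is a genuine gap, located at the mutated internal string, and a second error in the shuffle case. Several parts of your plan are fine. Lemma~\ref{lem:cartan-correction-exchange-homogeneity}, read for a single \(\tau\), makes each tropical mutation linear, and the \(T\)-entries are untouched. The frozen string entries do reduce, as you say, to \(H^{\mathbf b}_{|\mathbf b|,+}(\eta_{\tau,+};a_\tau)=H^{\mathbf b'}_{|\mathbf b'|,+}(\eta_{\tau,+};\Phi_\sigma(a_\tau))\) and its minus analogue. For \(B_2\) and \(G_2\), however, the intermediate seeds are not word seeds, so there is no ``intermediate extension datum.'' What you should carry along instead is the row condition \(\sum_v\eps_{lv}d_v=0\): a linear mutation at a vertex satisfying it preserves it at every other row \(l\) that satisfied it, and that is all you need.

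The gap itself is this. Lemma~\ref{lem:cartan-exponent-exchange-identity} at the row \(k'\) never contains \(c_{k'}\), because \(\eps_{k'k'}=0\); it only constrains the neighbours of \(k'\). For \(iji\to jij\), the new string \(k'=(j,[1,3])\) is the only internal string of \(\mathbf b'\). So the internal block of \(Q(\mathbf b')\) is the \(1\times1\) zero matrix, and \eqref{eq:cartan-exponent-exchange-identity} says nothing about \(c_{k'}\). Your uniqueness principle therefore fails exactly in the basic simply-laced case. It does hold for \(B_2\) and \(G_2\), where the internal block of \(Q(\mathbf b')\) is nondegenerate, so there your argument goes through once the \(H\)-identity is actually checked.

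The second error: the same-colour shuffle \(i^+i^-\to i^-i^+\) is not a relabeling. Shen--Weng realize it by one mutation at \((i,[1,2])\), which is what the \(i=j\) clause of Lemma~\ref{lem:component-shuffle-finite-identity} is for. Again the mutated string is the only internal one.

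Both cases need a direct computation from \eqref{eq:cartan-exponent-recursion}, and it does work. For \(iji\to jij\), write \(a_\tau=(a_1,a_2,a_3)\). The linear formula gives \(d'_k=\langle\omega_j,s_j\eta_{\tau,+}\rangle+\langle\omega_j,\eta_{\tau,-}\rangle+a_2+[a_3-a_1]_+\). This equals \(c^{jij}_{(j,[1,3])}(\eta_\tau;\Phi_\sigma(a_\tau))\), since \(\Phi_\sigma(a_\tau)_1=a_2+[a_3-a_1]_+\). For \(i^+i^-\), the linear formula gives \(\langle\omega_i,\eta_{\tau,+}\rangle+\langle\omega_i,\eta_{\tau,-}\rangle=c^{i^-i^+}_{(i,[1,2])}(\eta_\tau;a'_\tau)\).

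With these added, and with the \(B_2,G_2\) cases of the \(H\)-identity verified, your combinatorial route is a legitimate alternative. The paper argues geometrically instead:
\begin{itemize}
\item It scales \(A_\tau\mapsto zA_\tau\) and reads off the source degrees of the pair minors from Lemmas~\ref{lem:minor-cartan-semi-invariance} and~\ref{lem:framed-edge-cartan-transport}.
\item It observes that the framed braid and shuffle identities move the edge exponents from \(a_\tau\) to \(a'_\tau\) without touching the boundary decorations.
\item Hence the target coordinates, being pair minors again, have degree \(\deg_{\mathcal E',\tau}(\eta_\tau)\). Homogeneity is used only to identify degree with tropicalization.
\end{itemize}
That handles the internal strings, the boundary identity, and all Cartan types at once.
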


\begin{proof}
Fix \(\tau\in T\), replace \(A_\tau\) by \(zA_\tau\), and leave the other
\(A_{\tau'}\) fixed.  The edge labels \(u_t^{\mathcal E}\) are then multiplied
by \(z^{a_\tau(t)}\), while the two boundary decorations are replaced by their
respective \(\eta_{\tau,-}(z)\)- and \(\eta_{\tau,+}(z)\)-translates.  By
Lemma~\ref{lem:framed-edge-cartan-transport}, Lemma
\ref{lem:minor-cartan-semi-invariance}, and
\eqref{eq:extension-datum-cartan-monomial}, the source degrees are
\(\deg_{\mathcal E,\tau}(\eta_\tau)\).  The framed braid and shuffle
identities carry the edge exponents from \(a_\tau\) to \(a'_\tau\) by
\eqref{eq:lusztig-weight-transport}, without changing the two boundary
decorations.  The Shen--Weng pair-minor identities
\cite[Section~3]{ShenWeng2021} therefore give target
degree \(\deg_{\mathcal E',\tau}(\eta_\tau)\).  Finally, Lemma
\ref{lem:cartan-correction-exchange-homogeneity} makes every exchange relation
homogeneous in \(z\), so its target degree is the integer min-plus
tropicalization of its source degree.  For a move in the minus component, the
same argument applies after horizontal reflection, which reverses the local
cuts, exchanges the boundary decorations, and turns the backward recursion in
\eqref{eq:cartan-exponent-recursion} into the forward recursion while
exchanging the two \(\omega_i\)-summands in
\eqref{eq:cartan-exponent}.  This proves
\eqref{eq:cartan-degree-tropical-transport}.
\end{proof}

For the rest of this subsection, let \(\fW\) be a double weave from \(\beta\)
to \(\delta\), and let
\[
 \fW':\beta\longrightarrow\delta',
 \qquad
 \delta=\mathbf L\,\mathbf b\,\mathbf R,
 \qquad
 \delta'=\mathbf L\,\mathbf b'\,\mathbf R,
 \qquad
 \sigma:\mathbf b\longrightarrow\mathbf b'
\]
be a one-step extension as in
\eqref{eq:double-weave-one-step-extension}.  The coordinate maps
\(\mathbf A^\fW\) and \(\mathbf A^{\fW'}\) are those of
Definition~\ref{def:component-intermediate-functions}.

\begin{theorem}
\label{thm:one-step-coordinate-term}
For the one-step extension \(\fW\to\fW'\) specified above, regardless of which
elementary double weave \(\sigma\) occurs, under the correspondence with the
vertices of the target quiver determined by \(\mu_{\fW\to\fW'}\),
\begin{equation}
 \mathbf A^{\fW'}
 =\mu_{\fW\to\fW'}\circ\mathbf A^\fW:
 \Conf(p,q)_{u,v}
 \dashrightarrow\Tcal_{Q^{\mathrm{am}}(\fW')}
 \label{eq:one-step-coordinate-compatibility}
\end{equation}
holds.
\end{theorem}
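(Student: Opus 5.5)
The proof will go coordinate by coordinate over the vertex set \(I(Q^{\mathrm{am}}(\fW'))=\Vcal(\delta')\sqcup V_3(\fW')\), comparing each component of the two sides of \eqref{eq:one-step-coordinate-compatibility}. Three kinds of vertices occur. First, the existing trivalent vertices \(\tau\in V_3(\fW)\): these are untouched by \(\mu_{\fW\to\fW'}\), and \eqref{eq:component-existing-coordinate-preservation} gives \(A_\tau^{\fW'}=A_\tau^\fW\). Second, the string vertices of \(\delta'\) that do not meet the subword \(\mathbf b'\). Third, the string vertices \(j_{\delta',\mathbf b'}(\Vcal(\mathbf b'))\), together with the new vertex \(\sigma\) when \(\sigma\) is a contraction.

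For a string vertex \(\ell\) away from \(\mathbf b\), two facts combine. The Shen--Weng coordinate \(A_\ell^\delta\) of \(\operatorname{prop}_\fW(y)\) equals \(A_\ell^{\delta'}\) of \(\operatorname{prop}_{\fW'}(y)\), by \eqref{eq:component-one-step-terminal-propagation} and because the pair-minor defining it only sees flags outside the local move. The correction \(M_\ell\) is also unchanged, since \(c_\ell^\delta(\partial\gamma_\tau)\) is computed from the Cartan recursion \eqref{eq:cartan-exponent-recursion}. To justify this, apply Lemma~\ref{lem:cartan-subword-restriction} and check that the Cartan value at the cut before \(\mathbf b\) and the one after \(\mathbf b\) (plus and minus sides) are preserved by the local transport \(\Phi_\sigma\). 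This reduces to checking that \(\sum_t\) of the weighted coroots, propagated through \(\mathbf b\), is invariant under \(\Phi_\sigma\). That is the tropical shadow of \(x_{\mathbf b}=x_{\mathbf b'}\) on the torus part, via Lemma~\ref{lem:framed-edge-cartan-transport}.

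The local vertices are the core of the proof, and I would reduce them to a computation on \(\mathbf b\) alone. Restrict to the extension datum \(\mathcal E\) of Lemma~\ref{lem:component-one-step-quiver-transition}, with Cartan offsets \(\eta_\tau=\eta_{\mathbf b}^{\delta}(\partial\gamma_\tau)\), so that by \eqref{eq:cartan-exponent-restriction} \(M_{j(\ell)}^\fW=M_\ell^{\mathcal E}(\eta)\). I then split by the type of \(\sigma\).

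\emph{Braid move or shuffle.} Shen--Weng's Propositions~3.7/3.9 give \(\mu_\sigma\circ\mathbf A^{\mathrm{SW}}_{\mathbf b}=\mathbf A^{\mathrm{SW}}_{\mathbf b'}\) on uncorrected coordinates. What remains is that the monomial corrections \(M\) transform compatibly. Since every exchange relation is homogeneous (Lemma~\ref{lem:cartan-correction-exchange-homogeneity}), rescaling each \(A_\tau\) by a parameter \(z\) makes the corrected variables transform by \(\operatorname{Trop}(\mu_\sigma)\) of their degrees. Lemma~\ref{lem:cartan-degree-tropical-transport} identifies these target degrees with \(\deg_{\mathcal E',\tau}(\eta_\tau)\), which gives the correct \(M'\).

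\emph{Contraction.} This case must be done by hand. Write the framed flags near \(\mathbf b=ii\) using Lemma~\ref{lem:framed-edge-cartan-transport} with edge labels \((\widetilde z_e,u_e)\) from \cite{CasalsEtAlBraidVarieties}. Here \(B_i(z_1)B_i(z_2)=B_i(z_1')\cdot\)(Cartan term), and the new weave variable \(A_\sigma\) is the one produced at the trivalent vertex. Then verify two things. The mutated variable \(\mu_k^*(A'_k)\) at \(k=(i,[1,2])\), computed with the amalgamated quiver's arrows (incidence of \(a_\tau\) and the \(\min\) rule), equals \(A_\sigma^{\fW'}\). The outer strings \((i,[0,1])\), \((i,[2,\infty])\mapsto(i,[1,\infty])\) receive exactly the extra factor \(A_\sigma^{c}\) from \(a'_\sigma=(1)\) in \(M'\).

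The main obstacle is this contraction exchange relation: matching the pair-minor of the middle string against the CGGLSS formula for \(A_\sigma\) in terms of \(\widetilde z\) and the \(u\)-variables, including signs and the \(\tfrac{d_e}{d_\sigma}\) scalings in the non-simply-laced case. I would first do it in \(\mathrm{SL}_2\) via \(\varphi_i\), where the relation becomes a \(2\times2\) Plücker identity. I would then pull it back to general \(\groupG\) using that \(\Delta_i\) restricted to \(\varphi_i(\mathrm{SL}_2)\) times the torus factors is a matrix coefficient, and use Lemma~\ref{lem:minor-cartan-semi-invariance} to absorb the Cartan parts into \(M\). The minus-tag cases follow by the transpose of Definition~\ref{def:double-bs-transposition}.
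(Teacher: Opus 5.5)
The gap is in the contraction case, and it concerns the string vertices on the far side of the block. These are the strings to the right of \(\mathbf b\) for \(i^+i^+\to i^+\) and to the left for \(i^-i^-\to i^-\), including the outer string \((i,[2,\infty])\mapsto(i,[1,\infty])\). Your treatment of braid moves and shuffles (Shen--Weng pair-minor identities, Lemma~\ref{lem:cartan-correction-exchange-homogeneity}, Lemma~\ref{lem:cartan-degree-tropical-transport}) is the paper's, and for those moves your argument for strings outside \(\mathbf b\) is sound. For contractions, both of your claimed invariances fail.

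\emph{The Shen--Weng coordinates change.} The plus decoration at a cut is propagated from \(\flagA^0\) through every plus letter to its left, hence through the block. In \(\mathrm{SL}_2\) one has \(B(z_1)B(z_2)=B(z_1-z_2^{-1})\,\alpha^\vee(z_2)\,x(-z_2^{-1})\). So after the contraction every later decoration is shifted by a torus element, and \(A^{\delta'}_\ell(\operatorname{prop}_{\fW'}(y))\) differs from \(A^{\delta}_\ell(\operatorname{prop}_{\fW}(y))\) by a character. This is not merely the identical flag data re-read.

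\emph{The corrections \(M_\ell\) change.} \(M^{\fW'}_\ell\) is a product over \(\Dcal(\fW')\), which contains the new vertex \(\sigma\) because its terminal weight is \((1)\). Moreover, for existing \(\tau\) the recursion through \(ii\) is \(H\mapsto H+(a_2-a_1)\alpha_i^\vee\), whereas through \(i\) it is \(H\mapsto s_iH+\min(a_1,a_2)\alpha_i^\vee\). So the coroot-sum invariance you plan to verify is false for \(\Phi_{ii\to i}\).

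Concretely, in the \(A_2\) example, take the string \((1,[4,5])\) of \(122112\) after the first contraction. It has \(M_\ell=A_{\sigma_1}^{-1}=z_2^{-1}\), while its Shen--Weng coordinate becomes \(x_3z_2\); only the product \(x_3\) is invariant. For the same reason, your claim that the outer strings only ``receive the extra factor from \(a'_\sigma=(1)\)'' is incorrect.

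What is missing is a reason why these two changes cancel. The paper gets it by never splitting \(A^{\mathrm{am}}_\ell\) into its two factors. Using Lemma~\ref{lem:minor-cartan-semi-invariance} and \eqref{eq:local-cartan-factor-restriction}, it writes \(A^{\mathrm{am}}_\ell\) as the pair minor \eqref{eq:local-framed-pair-minor} of Cartan-twisted decorations. Lemma~\ref{lem:framed-edge-cartan-transport} then identifies those twisted decorations with the framed flags of \cite{CasalsEtAlBraidVarieties}. The framed trivalent construction there removes only the framed flag between the two input edges and keeps all the others. Hence all string coordinates other than \(k\) are preserved, including those at \(k_\pm\) and beyond the block. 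Only the exchange relation \(A_ku_s=A_{k_-}u_2+A_{k_+}u_1\) remains to be checked.

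You could keep your factor-by-factor route, but then you must do the following:
\begin{enumerate}
\item Compute the character by which the Shen--Weng coordinates shift. Pushing the torus element created at the block through the letters of \(\mathbf R\) reproduces the recursion \eqref{eq:cartan-exponent-recursion}.
\item Show that this character cancels the total change in the exponents \(c_\ell\), coming both from the new vertex \(\sigma\) and from the altered recursion for existing \(\tau\).
\end{enumerate}
The second step requires the relation between that torus element, the \(u\)-labels and \(A_\sigma\), which is exactly the framed trivalent identity. So the framed-flag input cannot be avoided.
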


\begin{proof}
Use the quiver \(Q\) defined in the proof of
Lemma~\ref{lem:component-one-step-quiver-transition}.  Here
\(T=V_3(\fW)\).  For each \(\tau\in T\), let \(a_\tau\) be the restriction
of \(\partial\gamma_\tau\) to \(\mathbf b\).  Then
\[
 \mathcal E:=\bigl(Q,(a_\tau)_{\tau\in T}\bigr)
\]
is an extension datum for \(Q(\mathbf b)\).  By
\eqref{eq:dual-cycle-scaling}, \(a_\tau^\vee\) is the restriction of
\(\partial\gamma_\tau^\vee\) to \(\mathbf b\).  Set
\(\eta:T\to X_*(\torusH)^2\) by
\[
 \eta_\tau:=\eta_{\mathbf b}^\delta(\partial\gamma_\tau).
\]
Let \(\mathcal E'\) be the extension datum on the target side obtained in
Lemma~\ref{lem:basic-mutation-step-with-extra-vertices}.

Let \(y\in\Conf(p,q)_{u,v}\) be generic, and set
\(c=\operatorname{prop}_\fW(y)\) and
\(y_{\mathrm{br}}=\operatorname{braid}(y)\).  Let \(x\in\Tcal_Q\) be the
restriction of \(\mathbf A^\fW(y)\) to the factor \(Q\).  By
\eqref{eq:terminal-coordinate}, \eqref{eq:cartan-exponent-restriction}, and
\eqref{eq:extension-datum-cartan-monomial},
\begin{equation}
 A_\ell(x)
 =A_{j_{\delta,\mathbf b}(\ell)}^\delta(c)
  M_\ell^{\mathcal E}(\eta)(x)
 \qquad(\ell\in\Vcal(\mathbf b)).
 \label{eq:local-coordinate-cartan-factor}
\end{equation}
\eqref{eq:cartan-subword-restriction}
and~\eqref{eq:extension-datum-cartan-monomial-components} also give
\begin{equation}
 h_{\mathcal E,t}^\pm(\eta_\pm)(x)
 =h_{|\mathbf L|+t}^{\fW,\pm}(y_{\mathrm{br}})
 \qquad(0\le t\le|\mathbf b|).
 \label{eq:local-cartan-factor-restriction}
\end{equation}

We identify the left-hand side of
\eqref{eq:local-coordinate-cartan-factor} with the pair minor in the framed
construction.  Let \(\flagA_t^-\) and \(\flagA_t^+\) be the compatible
decorations on the diagonal at the ambient cut \(|\mathbf L|+t\) used in
\cite[Definition~3.20]{ShenWeng2021} to define
\(A_{j_{\delta,\mathbf b}(\ell)}^\delta(c)\), where
\(\ell=(i,[r,s])\) and
\(t\in\operatorname{Slice}_{\mathbf b}(\ell)\).  By
Lemma~\ref{lem:minor-cartan-semi-invariance},
\begin{equation}
 \Delta_i\left(
   h_{\mathcal E,t}^-(\eta_-)(x)\mathbin{\cdot}\flagA_t^-,
   \flagA_t^+\mathbin{\cdot}h_{\mathcal E,t}^+(\eta_+)(x)
  \right)=
 A_{j_{\delta,\mathbf b}(\ell)}^\delta(c)
 M_\ell^{\mathcal E}(\eta)(x)
 =A_\ell(x).
 \label{eq:local-framed-pair-minor}
\end{equation}
By \cite[Proposition~3.11, Definition~5.8, Lemma~5.9, and Theorem~5.12]
{CasalsEtAlBraidVarieties}, the framed labels determine a unique compatible
collection of framed flags.  Together with
\(h_0^{\fW,+}=h_{|\delta|}^{\fW,-}=1\),
Lemma~\ref{lem:framed-edge-cartan-transport},
\eqref{eq:weave-terminal-framing-monomial}, and
\eqref{eq:local-cartan-factor-restriction} identify their restrictions with
the two \(H\)-translates in \eqref{eq:local-framed-pair-minor}.  On the minus
component, this identification is obtained after transposition on
\(\fW_-^{\mathrm{op}}\).

For a braid move or shuffle, the transformed boundary weight agrees with
\(\partial\gamma_\tau\) outside \(\mathbf b\) by
\eqref{eq:lusztig-weight-transport}.  The two entries in
\eqref{eq:cartan-subword-values} use only the restrictions to
\(\mathbf L\) and \(\mathbf R\), respectively.  Hence the target factor has
the same \(\eta_\tau\).  In \eqref{eq:local-coordinate-cartan-factor}, the
Shen--Weng pair-minor identities
\cite[Proposition~3.25]{ShenWeng2021} transform the first factor, while
Lemma~\ref{lem:cartan-correction-exchange-homogeneity} factors out the common
Cartan monomial and Lemma~\ref{lem:cartan-degree-tropical-transport}
identifies its target exponent.  Thus the string coordinates transform by the
required mutations and relabeling.

Suppose next that \(\sigma:i^+i^+\to i^+\).  Write
\[
 k_-=(i,[0,1]),\qquad k=(i,[1,2]),\qquad k_+=(i,[2,\infty]),
\]
and let \(u_1,u_2,u_s\) be the \(u\)-labels on the two input edges and the
output edge, respectively.  Evaluating the framed trivalent identity of
\cite[Definition~5.8 and Lemma~5.9]{CasalsEtAlBraidVarieties} in the
\(i\)-th fundamental representation against the common negative decoration,
and using \eqref{eq:local-framed-pair-minor}, gives
\begin{equation}
 A_k(x)u_s=A_{k_-}(x)u_2+A_{k_+}(x)u_1.
 \label{eq:geometric-contraction-identity}
\end{equation}

Set \(m_\tau=\min(a_\tau(1),a_\tau(2))\).  At the output edge,
\(a'_\sigma(1)=1\) and \(a'_\tau(1)=m_\tau\).  By
\eqref{eq:weave-terminal-framing-monomial} and
\eqref{eq:component-existing-coordinate-preservation},
\[
 u_r=\prod_{\tau\in T}A_\tau(x)^{a_\tau(r)}\quad(r=1,2),
 \qquad
 u_s=A_\sigma^{\fW'}(y_{\mathrm{br}})
     \prod_{\tau\in T}A_\tau(x)^{m_\tau}.
\]
Since
\(m_\tau+[a_\tau(2)-a_\tau(1)]_+=a_\tau(2)\) and
\(m_\tau+[a_\tau(1)-a_\tau(2)]_+=a_\tau(1)\), the extended mutation of
Definition~\ref{def:basic-mutation-step-with-extra-vertices}, with the target
exponents from Lemma~\ref{lem:basic-mutation-step-with-extra-vertices}, gives
\[
 \begin{aligned}
 &A_k(x)(\mu_\sigma^*A_\sigma)(x)
  \prod_{\tau\in T}A_\tau(x)^{m_\tau}\\
 &\qquad={}
  A_{k_-}(x)\prod_{\tau\in T}A_\tau(x)^{a_\tau(2)}
  +A_{k_+}(x)\prod_{\tau\in T}A_\tau(x)^{a_\tau(1)}\\
 &\qquad={}
  A_{k_-}(x)u_2+A_{k_+}(x)u_1.
 \end{aligned}
\]
Comparing this with \eqref{eq:geometric-contraction-identity} and the formula
for \(u_s\), and cancelling the nonzero common factors gives
\[
 A_\sigma^{\fW'}(y_{\mathrm{br}})
 =(\mu_\sigma^*A_\sigma)(x).
\]
The same cited trivalent construction preserves the two pair minors at
\(k_-\) and \(k_+\).  Through \eqref{eq:local-framed-pair-minor}, this gives
the required string relabeling.
For \(j\ne i\), the local \(j\)-string has no endpoint in the block, so its
pair minor may be evaluated at the same cut on the two sides and is unchanged.

For \(\sigma:i^-i^-\to i^-\), apply the preceding calculation to the
horizontally reflected lower component \(\fW_-^{\mathrm{op}}\).  Under
transposition, the cut \(t\) corresponds to \(2-t\), the \(u\)-labels
\(u_1,u_2\) are interchanged, and the two boundary decorations are exchanged.
Hence the backward recursion in
\eqref{eq:cartan-exponent-recursion} becomes the forward recursion and
the two \(\omega_i\)-summands in \eqref{eq:cartan-exponent} are exchanged.
Together with \eqref{eq:extension-datum-cartan-monomial}, this gives the same
Cartan factors as in the plus calculation.  This also interchanges
\(k_-,k_+\), exactly as in the
seed transposition of
\cite[Proposition~3.9]{ShenWeng2021}.  Hence the same calculation proves the
minus case.

Finally, pair minors disjoint from the block are unchanged, while existing
contraction coordinates are preserved by
\eqref{eq:component-existing-coordinate-preservation}.  This proves
\eqref{eq:one-step-coordinate-compatibility} at every vertex.
\end{proof}

\begin{corollary}
\label{cor:reduced-iterated-coordinate-term}
For a double weave \(\fW:\beta\to\delta\),
\[
 \mathbf A^\fW
 =\mu_\fW\circ\mathbf A_\beta^{\mathrm{SW}}:
 \Conf(p,q)_{u,v}
 \dashrightarrow\Tcal_{Q^{\mathrm{am}}(\fW)}.
\]
Moreover, if \(\mathbf p',\mathbf q'\) are reduced words for \(u,v\),
respectively, then
\begin{equation}
 \mathbf A_{\delta,\fW}^{\mathrm{am}}\circ\Split
 =\mu_\fW\circ\mathbf A_\beta^{\mathrm{SW}}:
 \Conf(p,q)_{u,v}
 \dashrightarrow\Tcal_{Q^{\mathrm{am}}(\fW)}
 \label{eq:iterated-component-coordinate-term}
\end{equation}
holds.
\end{corollary}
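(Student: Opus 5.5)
The plan is induction on the construction of \(\fW\): start from the identity double weave and pass through one-step extensions, exactly the recursion that defines \(\mu_\fW\) in Theorem~\ref{thm:quiver}. For the base case \(\fW=\id_\beta\), Lemma~\ref{lem:component-terminal-functions-well-defined} gives \(\mathbf A^{\id_\beta}=\mathbf A_\beta^{\mathrm{SW}}\). Since \(\mu_{\id_\beta}=\id\), the first identity holds there. One small point needs care. \(\mathbf A_\beta^{\mathrm{SW}}\) is defined on all of \(\Conf(p,q)\), and we restrict it to the open subset \(\Conf(p,q)_{u,v}\). This is harmless, because everything is an identity of rational maps on this open subvariety, and that subvariety is dense in \(\Conf(p,q)\) wherever it is nonempty.

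For the induction step, let \(\fW'\) be a one-step extension of \(\fW\) by \(\sigma\), and assume \(\mathbf A^\fW=\mu_\fW\circ\mathbf A_\beta^{\mathrm{SW}}\). Theorem~\ref{thm:one-step-coordinate-term} gives \(\mathbf A^{\fW'}=\mu_{\fW\to\fW'}\circ\mathbf A^\fW\), with target vertices identified through \(\mu_{\fW\to\fW'}\). By \eqref{eq:term-one-step-composition}, \(\mu_{\fW'}=\mu_{\fW\to\fW'}\circ\mu_\fW\), and the vertex labelling of \(Q^{\mathrm{sweep}}(\fW')=Q^{\mathrm{am}}(\fW')\) is the one produced by these relabelings. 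Composing the two identities therefore gives the first claim for \(\fW'\).

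We must check that the composite is well defined as a rational map. Each mutation is a birational map of tori, and \(\mathbf A^\fW\) is dominant whenever the first identity holds, so the compositions make sense generically. The one subtle point is that every double weave is reachable by this recursion, which holds because \(\mathsf{DWeave}(I)\) is free monoidal. If a weave admits several decompositions into one-step extensions, the map \(\mu_\fW\) is defined relative to the chosen decomposition; the statement is proved for that choice, and this is consistent with how Theorem~\ref{thm:quiver} defines \(\mu_\fW\).

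For the second assertion, assume \(\mathbf p',\mathbf q'\) are reduced. Then \eqref{eq:reduced-terminal-propagation} gives \(\operatorname{prop}_\fW=\operatorname{red}\), so
\[
 (\operatorname{prop}_\fW,\operatorname{braid})=(\operatorname{red},\operatorname{braid}_+,\operatorname{braid}_-)=\Split.
\]
Substituting this into Definition~\ref{def:component-intermediate-functions} gives \(\mathbf A^\fW=\mathbf A^{\mathrm{am}}_{\delta,\fW}\circ\Split\), and \eqref{eq:iterated-component-coordinate-term} then follows from the first part. The mathematical content is carried entirely by Theorem~\ref{thm:one-step-coordinate-term}. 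The only real obstacle is bookkeeping: the vertex identifications from successive relabelings must compose coherently, so that the \(\sigma\)-coordinates introduced at earlier contractions stay fixed under later steps. This is guaranteed by \eqref{eq:component-existing-coordinate-preservation}.
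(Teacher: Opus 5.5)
Your proposal is correct and follows the paper's proof essentially verbatim. You use induction from the identity weave, with Lemma~\ref{lem:component-terminal-functions-well-defined} as the base case and Theorem~\ref{thm:one-step-coordinate-term} together with \eqref{eq:term-one-step-composition} for the step, and then \eqref{eq:reduced-terminal-propagation} and Theorem~\ref{thm:flag-product} to identify \((\operatorname{prop}_\fW,\operatorname{braid})\) with \(\Split\) for the second claim; your extra remarks on well-definedness of the composites and on decomposing a weave into one-step extensions are harmless bookkeeping that the paper leaves implicit.
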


\begin{proof}
The first identity follows by induction, with the identity double weave as the
base case and a one-step extension as the induction step.  At the initial end,
use \eqref{eq:component-identity-coordinate-map}.  In the induction
step, use Theorem~\ref{thm:one-step-coordinate-term} and
\eqref{eq:term-one-step-composition}.  If the terminal words are
reduced, Proposition~\ref{prop:component-terminal-propagation} and
Theorem~\ref{thm:flag-product} give
\(\operatorname{prop}_{\fW}=\operatorname{red}\) and
\(\Split=(\operatorname{prop}_{\fW},\operatorname{braid})\).  Hence the
left-hand side of \eqref{eq:component-partial-coordinate-map} is
\(\mathbf A_{\delta,\fW}^{\mathrm{am}}\circ\Split\).
\end{proof}

\begin{proposition}
\label{prop:one-sided-coordinate-compatibility}
Let \(\fW:\beta\to\delta\) be a double weave.  If
\(\fW_-=\id_{\mathbf q}\) and \(\mathbf p'\) is a reduced word for \(u\),
then
\begin{equation}
 \mathbf A_{\delta,\fW}^{\mathrm{am}}\circ\Split_+
 =\mu_\fW\circ\mathbf A_\beta^{\mathrm{SW}}:
 \Conf(p,q)_{u,*}
 \dashrightarrow\Tcal_{Q^{\mathrm{am}}(\fW)}.
 \label{eq:plus-one-sided-coordinate-compatibility}
\end{equation}
Similarly, if \(\fW_+=\id_{\mathbf p}\) and \(\mathbf q'\) is a reduced word
for \(v\), then
\begin{equation}
 \mathbf A_{\delta,\fW}^{\mathrm{am}}\circ\Split_-
 =\mu_\fW\circ\mathbf A_\beta^{\mathrm{SW}}:
 \Conf(p,q)_{*,v}
 \dashrightarrow\Tcal_{Q^{\mathrm{am}}(\fW)}
 \label{eq:minus-one-sided-coordinate-compatibility}
\end{equation}
holds.
\end{proposition}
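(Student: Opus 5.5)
The plan is to rerun the argument of Corollary~\ref{cor:reduced-iterated-coordinate-term} on the larger open set \(\Conf(p,q)_{u,*}\), where the minus endpoint condition is dropped. We treat the plus case; the minus case then follows from the transpose isomorphism of Definition~\ref{def:double-bs-transposition}, which exchanges \(\Split_+\) and \(\Split_-\) and is compatible with the Shen--Weng seed transposition \cite[Proposition~3.9]{ShenWeng2021}.

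\emph{Step 1: one-sided coordinate map.} Assume \(\fW_-=\id_{\mathbf q}\). Then the minus component of \(\fW\) has no trivalent vertices, no Lusztig cycle is supported on minus edges, and the minus terminal weights vanish. Hence \(h^{\fW,-}_t\) is constant, and the Cartan monomials \(M_\ell^\fW\) and the coordinates \(A_\sigma^\fW\) depend only on the \(X(p)\) factor. This is exactly the factorization of \(\mathbf A^{\mathrm{am}}_{\delta,\fW}\) through \(\Conf(p',q)\times X(p)\) recorded in Definition~\ref{def:amalgamated-functions}. We define a one-sided flag propagation \(\operatorname{prop}_\fW:\Conf(p,q)_{u,*}\dashrightarrow\Conf(p',q)_{u,*}\) that propagates only the upper flag sequence along \(\fW_+\), shuffles only relabel positions, and the lower sequence is left unchanged. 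We then set \(\mathbf A^\fW_+:=\mathbf A^{\mathrm{am}}_{\delta,\fW}\circ(\operatorname{prop}_\fW,\operatorname{braid}_+)\). The analogue of Proposition~\ref{prop:component-terminal-propagation} holds verbatim, since the top-to-bottom construction never uses the minus endpoint condition. In particular, if \(\mathbf p'\) is reduced, the uniqueness in Lemma~\ref{lem:reduced-flag-chain} gives \(\operatorname{prop}_\fW=\operatorname{red}_{\mathbf u,*}\), so \(\mathbf A^\fW_+=\mathbf A^{\mathrm{am}}_{\delta,\fW}\circ\Split_+\).

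\emph{Step 2: induction.} We prove \(\mathbf A^\fW_+=\mu_\fW\circ\mathbf A^{\mathrm{SW}}_\beta\) on \(\Conf(p,q)_{u,*}\) by induction on one-step extensions. The base case is Lemma~\ref{lem:component-identity-coordinate-map}, which is the identity weave and holds on all of \(\Conf(p,q)\). The induction step repeats the proof of Theorem~\ref{thm:one-step-coordinate-term}. That proof is local: it uses the pair-minor identities at the cuts of the block \(\mathbf b\), the framed trivalent identity, and the Cartan bookkeeping of Lemmas~\ref{lem:cartan-correction-exchange-homogeneity} and~\ref{lem:cartan-degree-tropical-transport}. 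None of these uses the minus endpoint relative position. Only plus moves and shuffles occur, so the minus-contraction case is not needed. At a shuffle the Cartan exponents are unchanged, because the minus recursion carries zero weights.

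\emph{Main obstacle.} The delicate point is checking that \(\operatorname{braid}_+\) and the framed flags are genuinely defined on \(\Conf(p,q)_{u,*}\) and not only on \(\Conf(p,q)_{u,v}\). The framed construction uses the pinning pair \((\flagA^0,gnw_0\borelB_+)\) from Definition~\ref{def:double-braid-factors}, which involves only \(\flagA^0\), \(\flagB_0\) and \(\flagB^m\). The remaining check is that the decorations \(\flagA^\pm_t\) in \eqref{eq:local-framed-pair-minor} for Shen--Weng minors near the right end are compatible with the untouched minus chain and with \(\flagA_n\). This holds because \(h^{\fW,-}\equiv1\), so the minus decorations agree with the original Shen--Weng ones. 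Once this is established, the two sides of \eqref{eq:plus-one-sided-coordinate-compatibility} are rational maps that agree on a dense open subset of the irreducible variety \(\Conf(p,q)_{u,*}\), and hence they are equal.
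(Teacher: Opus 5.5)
Your argument is essentially sound, but it takes a much longer route than the paper. You rebuild a one-sided flag propagation on all of \(\Conf(p,q)_{u,*}\) and re-run the induction of Theorem~\ref{thm:one-step-coordinate-term} there. This forces you to check that every ingredient of that proof survives without the minus endpoint condition: the framed flags, the pair-minor identities, and the Cartan bookkeeping. You assert that these carry over ``verbatim'' rather than verifying it.

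The paper avoids all of this with one observation. Since \(v=\operatorname{dem}(q)\) is the generic relative position of \(\flagB_0,\flagB_n\), the stratum \(\Conf(p,q)_{u,v}\) is a dense open subvariety of \(\Conf(p,q)_{u,*}\). On that stratum the following facts already hold:
\begin{itemize}
\item The first identity of Corollary~\ref{cor:reduced-iterated-coordinate-term}, which needs no reducedness hypothesis, gives \(\mathbf A^\fW=\mu_\fW\circ\mathbf A_\beta^{\mathrm{SW}}\).
\item Because \(\fW_-=\id_{\mathbf q}\), \(\operatorname{prop}_\fW\) leaves the minus chain untouched.
\item Because \(\mathbf p'\) is reduced, \(\operatorname{prop}_\fW\) agrees with \(\operatorname{red}_{\mathbf u,*}\) on the plus side by Lemma~\ref{lem:reduced-flag-chain}.
\item \(\mathbf A_{\delta,\fW}^{\mathrm{am}}\) factors through forgetting the \(X(q^{\mathrm{op}})\) factor.
\end{itemize}
Together these give \(\mathbf A^\fW=\mathbf A_{\delta,\fW}^{\mathrm{am}}\circ\Split_+\) on \(\Conf(p,q)_{u,v}\). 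Equality of rational maps on a dense open set then finishes the plus case, and the minus case follows by transposition, as in your proposal.

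Your final sentence already invokes density, but only for an unspecified generic open set. The observation you missed is that \(\Conf(p,q)_{u,v}\) itself is such a dense open set. Once you see that, your Step~2 and your ``main obstacle'' become unnecessary.

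What your route buys is a self-contained one-sided version of the propagation machinery, showing for example that \(\operatorname{braid}_+\) and the framed flags need only the plus endpoint condition. For the stated equality of rational maps, that extra work is not needed.
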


\begin{proof}
We prove the assertion on the plus side, and set
\(v:=\operatorname{dem}(q)\).  Since \(v\) is the Demazure product of \(q\),
\(\Conf(p,q)_{u,v}\) is a dense open subvariety of
\(\Conf(p,q)_{u,*}\).  Since \(\fW_-=\id_{\mathbf q}\),
\(\operatorname{prop}_{\fW}\) does not change the minus flag sequence.
Moreover, the terminal plus flag sequence is reduced, so
Lemma~\ref{lem:reduced-flag-chain} gives, on this dense open subvariety,
\[
 \operatorname{prop}_{\fW}=\operatorname{red}_{u,q}.
\]
Furthermore, \(\mathbf A_{\delta,\fW}^{\mathrm{am}}\) factors through the map
that forgets the second braid-variety factor, so the \(\mathbf A^\fW\) of
Definition~\ref{def:component-intermediate-functions} equals
\(\mathbf A_{\delta,\fW}^{\mathrm{am}}\circ\Split_+\).  The first identity in
Corollary~\ref{cor:reduced-iterated-coordinate-term} therefore gives
\eqref{eq:plus-one-sided-coordinate-compatibility} on the dense open
subvariety.  Thus its two sides define the same rational map on
\(\Conf(p,q)_{u,*}\).  The assertion on the minus side follows from the
transpose isomorphism in Definition~\ref{def:double-bs-transposition} and the
seed isomorphism induced by Shen--Weng transposition
\cite[Proposition~3.9]{ShenWeng2021}.
\end{proof}

\subsection{Cluster localization}
\label{subsec:endpoint-locus-cluster-variables}

Recall that \(\mathbf p'\) and \(\mathbf q'\) are the plus and minus parts of the 
terminal word of \(\fW\).
In this subsection, 
when \(\mathbf p'\) is reduced, we write it as
\(\mathbf u=u_1\cdots u_r\), and when \(\mathbf q'\) is reduced, we write it as
\(\mathbf v=v_1\cdots v_s\).

\subsubsection{Nonvanishing conditions for endpoint relative positions}

The seed \(\Sigma_{\mathrm{sweep}}(\fW)\) is realized on \(\Conf(p,q)\) by
\[
 \mathbf A_{\mathrm{sweep}}^\fW
 :=\mu_\fW\circ\mathbf A_\beta^{\mathrm{SW}}.
\]
Write its coordinates as
\begin{equation}
 A_z^{\mathrm{sweep}}
 :=\left(\mathbf A_{\mathrm{sweep}}^\fW\right)_z
 \qquad
 \bigl(z\in\Vcal(\delta)\sqcup V_3(\fW)\bigr).
 \label{eq:term-regular-functions}
\end{equation}
The result \cite[Theorem~3.45]{ShenWeng2021} identifies \(\mathcal O(\Conf(p,q))\) with
the corresponding upper cluster algebra, so these are regular functions on
\(\Conf(p,q)\).  Below, for a regular function \(f\), write its principal open
subvariety as
\[
 D(f):=\{x\mid f(x)\ne0\}.
\]

For each \(x\in\Conf(p,q)\), choose the unique representative satisfying
\(\flagA^0=\unipotentU_+\) and \(\flagB_0=\borelB_-\), using
\eqref{eq:component-opposite-pair-isomorphism}.  Following the
Shen--Weng construction
\cite[Definition~2.9 and Lemma~2.10]{ShenWeng2021}, decorate the upper flag
sequence successively starting from \(\flagA^0\), and write its terminal end
as \(\flagA_{\beta,m}^+(x)\in\Aplus\).

For each \(i\in I\), define a map
\(f_i:\Conf(p,q)\to\mathbb A^1\) by
\begin{equation}
 f_i(x)
 :=\Delta_i\left(
  \unipotentU_-\dot u^{-1},
   \flagA_{\beta,m}^+(x)
  \right).
 \label{eq:plus-endpoint-minor}
\end{equation}

\begin{lemma}
\label{lem:plus-endpoint-minors}
As open subschemes,
\begin{equation}
 \Conf(p,q)_{u,*}
 =D\left(\prod_{i\in I}f_i\right)
 \label{eq:plus-endpoint-minor-locus}
\end{equation}
holds.
\end{lemma}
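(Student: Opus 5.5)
Work pointwise on \(x\in\Conf(p,q)\) with the normalized representative \(\flagA^0=\unipotentU_+\), \(\flagB_0=\borelB_-\).  Write \(\flagA_{\beta,m}^+(x)=g\unipotentU_+\), so that \(\flagB^m=g\borelB_+\) and \(\pos_+(\borelB_+,\flagB^m)\) is the Bruhat cell of \(g\).  The condition \(x\in\Conf(p,q)_{u,*}\) means \(g\in\borelB_+u\borelB_+\).  By the definition of \(\Delta_i\),
\[
 f_i(x)=\Delta^{\groupG}_{\omega_i}(\dot u^{-1}g),
\]
so the lemma reduces to the following statement about \(g\): \(g\in\borelB_+u\borelB_+\) if and only if \(\Delta_{\omega_i}^{\groupG}(\dot u^{-1}g)\ne0\) for all \(i\in I\).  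This must hold for the \(g\) that actually occur.

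The key input is a structural fact from the Shen--Weng decoration (\cite[Definition~2.9, Lemma~2.10]{ShenWeng2021}).  Propagating the decoration along the chain gives
\[
 g=B_{p_1}(z_1)\cdots B_{p_m}(z_m)
\]
up to the normalization of the \(\dot s_i\).  Each \(B_i(z)=x_i(z)\dot s_i\) lies in \(\unipotentU_+\dot s_i\) or in \(\borelB_+\), so \(g\) lies in \(\borelB_+w\borelB_+\) for some \(w\le\operatorname{dem}(p)=u\).

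I will show, by induction on \(m\) using the Demazure recursion, the finer statement that \(g\in\unipotentU_+\dot w\,\unipotentU_+\) for some \(w\le u\).  The multiplication rules are
\[
 \dot w\,x_i(z)\dot s_i=\dot w\,\dot s_i\,(\text{element of }\unipotentU_+)\quad\text{when }ws_i>w,
\]
and, when \(ws_i<w\), \(x_i(z)\dot s_i\) lies either in \(\unipotentU_+\dot s_i\unipotentU_+\) or, for \(z\ne0\), in \(\unipotentU_+\alpha_i^\vee(\cdot)\unipotentU_-\)-type factorizations.  The last case brings in torus factors, so I would keep track of \(g\in\unipotentU_+\torusH\dot w\unipotentU_+\).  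The key claim is that the torus part is trivial on the stratum where \(w=u\); in general \(\Delta_{\omega_i}(\dot u^{-1}\cdot)\) only sees the \(\torusH\)-part, through \(\omega_i\).

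With this in hand, the two directions go as follows.

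\emph{If \(w=u\).}  Then \(\dot u^{-1}g\in\dot u^{-1}\unipotentU_+\dot u\,h\,\unipotentU_+\).  Write \(\unipotentU_+=\unipotentU_+(u)\,(\unipotentU_+\cap u\unipotentU_+u^{-1})\).  The second factor conjugates into \(\unipotentU_+\), and \(\dot u^{-1}\unipotentU_+(u)\dot u\subset\unipotentU_-\).  Hence \(\dot u^{-1}g\in\unipotentU_-h\unipotentU_+\), and
\[
 \Delta_{\omega_i}^{\groupG}(\dot u^{-1}g)=\omega_i(h)\ne0 .
\]

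\emph{If \(w<u\).}  Then \(\dot u^{-1}g\in\borelB_-?\) fails.  The standard criterion is that \(y\in\unipotentU_-\torusH\unipotentU_+\) if and only if all \(\Delta_{\omega_i}(y)\ne0\).  So it suffices to show that \(\dot u^{-1}\borelB_+\dot w\borelB_+\) does not meet \(\borelB_-\borelB_+\) when \(w<u\).  This holds because \(\dot u^{-1}\borelB_+\dot w\borelB_+\subset\borelB_-\dot u^{-1}\borelB_+\dot w\borelB_+\), and
\[
 \borelB_-\borelB_+\cap \borelB_-\dot u^{-1}\borelB_+\dot w\borelB_+\ne\varnothing
\]
forces \(\borelB_+\dot w\borelB_+\) to meet \(\dot u\borelB_-\borelB_+\).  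The latter is contained in \(\bigcup_{w'\ge u}\borelB_+w'\borelB_+\): this is the opposite-cell closure relation, and it is the step I would verify most carefully, via the Deodhar / Kazhdan--Lusztig intersection \(\borelB_+w\borelB_+\cap\dot u\borelB_-\borelB_+\ne\varnothing\iff u\le w\).  Hence \(w\ge u\), contradicting \(w<u\), and the minors vanish off the stratum.

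Combining the two directions gives equality of sets.  For the scheme-theoretic equality, both sides are open subsets of the reduced variety \(\Conf(p,q)\) with the same points, so they coincide as open subschemes.  The main obstacle is the intersection fact relating \(w\le u\) to \(\dot u\borelB_-\borelB_+\); I would cite it from Deodhar or Fomin--Zelevinsky's treatment of double Bruhat cells.
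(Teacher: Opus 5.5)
Your argument is correct, but it is organized differently from the paper's.

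The paper handles both directions with one weight computation. For \(\eta\in\borelB_+w\borelB_+\), \(f_i(x)\) is, up to a nonzero scalar, the \(u\omega_i\)-component of \(\eta v_{\omega_i}\). The weights of \(\eta v_{\omega_i}\) lie in \(w\omega_i+\sum_j\mathbb N\alpha_j\), and its \(w\omega_i\)-component is nonzero. Since \(w\le u\) gives \(w\omega_i-u\omega_i\in\sum_j\mathbb N\alpha_j\), this yields \(f_i(x)\ne0\iff w\omega_i=u\omega_i\). Requiring this for all \(i\) is exactly \(w=u\).

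You instead split into two cases.
\begin{itemize}
\item For \(w=u\), the factorization \(\unipotentU_+=\unipotentU_+(u)\,(\unipotentU_+\cap u\unipotentU_+u^{-1})\) puts \(\dot u^{-1}g\) in \(\unipotentU_-h\unipotentU_+\), so \(f_i=\omega_i(h)\ne0\).
\item For \(w<u\), you combine the Fomin--Zelevinsky description of \(\borelB_-\borelB_+\) by principal minors with the fact that \(\borelB_+w\borelB_+\cap\dot u\borelB_-\borelB_+\ne\varnothing\) forces \(u\le w\).
\end{itemize}
That last fact is true. It is the Kazhdan--Lusztig chart statement rather than Deodhar's criterion, since \(\dot u\borelB_-\borelB_+\) is not left \(\borelB_-\)-stable. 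It also has a one-line proof. Take a regular cocharacter contracting \(\dot u\unipotentU_-\dot u^{-1}\); under it, every point of \(\dot u\borelB_-\borelB_+/\borelB_+\) flows to \(\dot u\borelB_+\) as \(t\to0\). So if this chart meets the \(\torusH\)-stable cell \(\borelB_+w\borelB_+/\borelB_+\), then \(\dot u\borelB_+\) lies in its closure, i.e.\ \(u\le w\).

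The paper's route is more self-contained and gives a finer per-\(i\) criterion. Yours uses \(w\le u\) only to reduce the vanishing direction to \(w<u\). Its \(w=u\) computation \(f_i=\omega_i(h)\) is exactly the mechanism behind the paper's next lemma, where \(h\) is identified with a Cartan factor built from cluster variables.

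You should delete two asides.
\begin{itemize}
\item The induction toward \(g\in\unipotentU_+\dot w\,\unipotentU_+\) is unnecessary, because \(\borelB_+w\borelB_+=\unipotentU_+\torusH\dot w\,\unipotentU_+\) by the Bruhat decomposition.
\item The ``key claim'' that the torus part is trivial when \(w=u\) is false. For \(\mathbf p=ii\), your own \(g=B_i(z_1)B_i(z_2)\) equals \(x_i(z_1-z_2^{-1})\,\dot s_i\,\alpha_i^\vee(z_2)\,x_i(-z_2^{-1})\) on the stratum \(z_2\ne0\).
\end{itemize}
Neither aside is used in your argument, because \(\omega_i(h)\ne0\) for every \(h\in\torusH\).
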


\begin{proof}
Let \(x\in\Conf(p,q)\).  Set
\(w:=\pos_+(\flagB^0,\flagB^m)\).  Since
\((\flagB^0,\ldots,\flagB^m)\) is a flag chain of type \(\mathbf p\), we have
\(w\le\operatorname{dem}(\mathbf p)=u\).  If we write
\(\flagA_{\beta,m}^+(x)=\eta\unipotentU_+\), then
\(\eta\in\borelB_+w\borelB_+\) and
\[
 f_i(x)=\Delta_{\omega_i}^{\groupG}(\dot u^{-1}\eta).
\]
Up to a nonzero scalar, the minor above is the \(u\omega_i\)-weight component
of \(\eta v_{\omega_i}\).  Since \(\eta\in\borelB_+w\borelB_+\), its weights
belong to \(w\omega_i+\sum_{j\in I}\mathbb N\alpha_j\), and its
\(w\omega_i\)-weight component is nonzero.  On the other hand, if \(w\le u\),
then \(w\omega_i-u\omega_i\in\sum_{j\in I}\mathbb N\alpha_j\).  Thus
\(f_i(x)\ne0\) if and only if \(w\omega_i=u\omega_i\), and this equality for
all fundamental weights is equivalent to \(w=u\).  This proves
\eqref{eq:plus-endpoint-minor-locus}.
\end{proof}

Write the positions occupied by the plus subsequence of \(\delta\) as
\[
 t_1<\cdots<t_r.
\]
For \(\sigma\in V_3(\fW_+)\), set
\begin{equation}
 \nu_\sigma^+
 :=H_{L,+}^{\delta}\bigl(0;\partial\gamma_\sigma\bigr)
 \in X_*(\torusH).
 \label{eq:plus-terminal-coweight}
\end{equation}

\begin{lemma}
\label{lem:plus-endpoint-minor-monomial}
Suppose that \(\mathbf p'\) is reduced.  For every \(i\in I\),
\begin{equation}
 f_i
 =\prod_{\sigma\in V_3(\fW_+)}
   \left(A_\sigma^{\mathrm{sweep}}\right)^{
    \langle\omega_i,\nu_\sigma^+\rangle}
 \label{eq:plus-endpoint-minor-monomial}
\end{equation}
holds in \(\mathcal O(\Conf(p,q))\).  All the exponents are nonnegative
integers, and
\begin{equation}
 \nu_\sigma^+\ne0
 \quad\Longleftrightarrow\quad
 \sigma\in\Dcal(\fW_+)
 \label{eq:plus-terminal-coweight-support}
\end{equation}
holds.
\end{lemma}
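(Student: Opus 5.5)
The plan is to compute \(f_i\) on the dense open subset \(\Conf(p,q)_{u,v}\subset\Conf(p,q)_{u,*}\) in terms of the framed-flag construction along \(\fW_+\). Then I compare with \(\mathbf A^\fW_{\mathrm{sweep}}\) through Corollary~\ref{cor:reduced-iterated-coordinate-term}. Both sides of \eqref{eq:plus-endpoint-minor-monomial} are rational functions on the irreducible variety \(\Conf(p,q)\). So equality on a dense open subset gives equality in \(\mathcal O(\Conf(p,q))\); the right-hand side is regular by \cite[Theorem~3.45]{ShenWeng2021}. The combinatorial statements about \(\nu_\sigma^+\) are separate and purely root-theoretic.

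\emph{Step 1: the reduced terminal decoration.} Fix the representative with \(\flagA^0=\unipotentU_+\) and \(\flagB_0=\borelB_-\). Decorate the reduced terminal chain \(\operatorname{red}(x)\) of type \(\mathbf u\) by the Shen--Weng rule, which uses the maps \(B_{u_k}(z_k)=x_{u_k}(z_k)\dot s_{u_k}\). Its terminal decorated flag is \(n\dot u\unipotentU_+\) with \(n\in\unipotentU_+\). Since \(\dot u^{-1}n\dot u\) lies in \(\unipotentU_-\unipotentU_+\)-type position, the pair minor against \(\unipotentU_-\dot u^{-1}\) equals \(1\).

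\emph{Step 2: the original decoration versus the framed propagation.} The original terminal decoration \(\flagA^+_{\beta,m}(x)\) is obtained from the framed flags of \cite[Section~5]{CasalsEtAlBraidVarieties} along \(\fW_+\). By Lemma~\ref{lem:framed-edge-cartan-transport} applied step by step, and by the identification of the \(H\)-translates in the proof of Theorem~\ref{thm:one-step-coordinate-term} via \eqref{eq:local-cartan-factor-restriction}, the terminal framed flag is the reduced-chain decoration of Step~1 right-translated by \(h^{\fW,+}_{L}\). The input \(h_0^{\fW,+}=1\) fixes the normalization at the start. Lemma~\ref{lem:minor-cartan-semi-invariance} then gives
\[
 f_i=\omega_i\bigl(h_L^{\fW,+}\bigr)
 =\prod_{\sigma\in V_3(\fW_+)}\bigl(A_\sigma^{\fW}\bigr)^{\langle\omega_i,\nu_\sigma^+\rangle}.
\]
The second equality is \eqref{eq:extension-datum-cartan-monomial-components} with \(\eta=0\) and \(a_\sigma=\partial\gamma_\sigma\). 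Cycles with \(\partial\gamma_\sigma=0\) contribute trivially, so the product may be taken over all of \(V_3(\fW_+)\). Finally, \(A_\sigma^\fW\circ\operatorname{braid}=A^{\mathrm{sweep}}_\sigma\) by \eqref{eq:iterated-component-coordinate-term}, since \(A_\sigma^{\mathrm{am}}=A_\sigma^\fW\). This step, matching the Shen--Weng decoration of the original flag sequence with the framed-flag output including its Cartan factor, is the main obstacle. I would handle it by induction on one-step extensions, tracking that contractions and braid moves preserve the terminal decorated flag up to the recorded \(h\).

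\emph{Step 3: nonnegativity and support.} Unwinding \eqref{eq:cartan-exponent-recursion} gives
\[
 \nu_\sigma^+=\sum_{k=1}^r(\partial\gamma_\sigma)(t_k)\,s_{u_r}\cdots s_{u_{k+1}}\alpha_{u_k}^\vee.
\]
Only plus positions occur, because \(\gamma_\sigma\) is supported on the plus component by Proposition~\ref{prop:double-weave-plus-minus-decomposition}. Since \(\mathbf u\) is reduced, each \(s_{u_r}\cdots s_{u_{k+1}}\alpha_{u_k}^\vee\) is a positive coroot, so its pairing with \(\omega_i\) is a nonnegative integer. The weights \(\partial\gamma_\sigma\) are in \(\mathbb N\), so all exponents are nonnegative integers. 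A nonnegative combination of positive coroots vanishes only when all coefficients vanish. Hence \(\nu_\sigma^+\neq0\) if and only if \(\partial\gamma_\sigma\neq0\), that is, \(\sigma\in\Dcal(\fW_+)\), which proves \eqref{eq:plus-terminal-coweight-support}.
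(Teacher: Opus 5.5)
Your proposal is correct and follows essentially the same route as the paper: write the terminal Shen--Weng decoration as the reduced-chain decoration \(n\dot u\,\unipotentU_+\) right-translated by \(h_L^{\fW,+}\) (via the framed-flag construction and Lemma~\ref{lem:framed-edge-cartan-transport}), read off \(f_i=\omega_i(h_L^{\fW,+})\) by Lemma~\ref{lem:minor-cartan-semi-invariance}, and obtain nonnegativity and the support statement from positivity of the coroots \(s_{u_r}\cdots s_{u_{a+1}}\alpha_{u_a}^\vee\). Two small fixes. First, the lemma assumes only \(\mathbf p'\) reduced, so obtain \(A_\sigma^{\mathrm{sweep}}=A_\sigma^\fW\circ\operatorname{braid}\) from the first identity of Corollary~\ref{cor:reduced-iterated-coordinate-term}, which needs no reducedness, rather than from \eqref{eq:iterated-component-coordinate-term}. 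Second, say explicitly that minus-component cycles contribute nothing to \(h_L^{\fW,+}\), because the plus recursion ignores minus positions.
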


\begin{proof}
We prove the assertion on a common dense open subvariety on which all the
coordinates appearing below are defined.  Let \(x\in\Conf(p,q)_{u,v}\), and,
following Definition~\ref{def:double-braid-factors}, write
\[
 \flagB^m=gn u\borelB_+,
 \qquad n\in\unipotentU_+(u)
\]
and set \(x_{\mathrm{br}}:=\operatorname{braid}(x)\).  Apply the construction
of cluster variables in
\cite[Subsection~4.1, Theorem~5.1, and Subsection~6.1]
{CasalsEtAlBraidVarieties} to \(\fW_+\).  Since the terminal word \(\mathbf u\)
is reduced, its standard flag sequence and
\eqref{eq:weave-terminal-framing-monomial} give
\begin{equation}
 \flagA_{\beta,m}^+(x)
 =gn\dot s_{u_1}\alpha_{u_1}^\vee(u_{t_1}^\fW(x_{\mathrm{br}}))
     \cdots
     \dot s_{u_r}\alpha_{u_r}^\vee(u_{t_r}^\fW(x_{\mathrm{br}}))
     \unipotentU_+
 =gn\dot u\,h_L^{\fW,+}(x_{\mathrm{br}})\unipotentU_+.
 \label{eq:terminal-decoration-cartan-factor}
\end{equation}
The last identity follows from
\eqref{eq:extension-datum-cartan-monomial-components} and the plus recursion
in \eqref{eq:cartan-exponent-recursion} at the positions \(t_1,\ldots,t_r\).

Since \(n\in\unipotentU_+(u)\), we have
\(\dot u^{-1}n\dot u\in\unipotentU_-\).  Hence
Lemma~\ref{lem:minor-cartan-semi-invariance} and
\eqref{eq:terminal-decoration-cartan-factor} give
\begin{equation}
 f_i(x)=\omega_i\bigl(h_L^{\fW,+}(x_{\mathrm{br}})\bigr).
 \label{eq:plus-endpoint-minor-cartan-evaluation}
\end{equation}
Apply \eqref{eq:extension-datum-cartan-monomial-components} to the right-hand
side.
By Proposition~\ref{prop:double-weave-plus-minus-decomposition}, a cycle in the
minus component has no support at a plus position, and therefore
\(H_{L,+}^\delta(0;\partial\gamma_\sigma)=0\).  Using also Corollary
\ref{cor:reduced-iterated-coordinate-term} and
\(A_\sigma^{\mathrm{am}}=A_\sigma^\fW\), we obtain
\eqref{eq:plus-endpoint-minor-monomial} on the common dense open
subvariety.

By Proposition~\ref{prop:double-weave-plus-minus-decomposition},
\(\partial\gamma_\sigma\) is supported only at plus positions.  The plus
recursion in \eqref{eq:cartan-exponent-recursion} gives
\[
 \nu_\sigma^+
 =\sum_{a=1}^r(\partial\gamma_\sigma)(t_a)
   s_{u_r}\cdots s_{u_{a+1}}(\alpha_{u_a}^\vee).
\]
Since \(\mathbf u\) is reduced, the coroots
\[
 s_{u_r}\cdots s_{u_{a+1}}(\alpha_{u_a}^\vee)
 \qquad(1\le a\le r)
\]
are positive coroots.  Moreover,
\((\partial\gamma_\sigma)(t_a)\in\mathbb N\), so the exponents in
\eqref{eq:plus-endpoint-minor-monomial} are nonnegative integers.
The displayed sum is zero if and only if all its coefficients are zero;
hence \(\nu_\sigma^+=0\) if and only if
\(\partial\gamma_\sigma=0\).  By
\eqref{eq:weave-frozen-set}, this gives
\eqref{eq:plus-terminal-coweight-support}.  Since all the exponents
are nonnegative, both sides of
\eqref{eq:plus-endpoint-minor-monomial} are regular on
\(\Conf(p,q)\).  Furthermore, since \(u,v\) are their respective Demazure
products, the identity obtained above on a dense open subvariety extends to
all of \(\Conf(p,q)\).
\end{proof}

\begin{theorem}
\label{thm:endpoint-locus-cluster-variables}
The following statements hold.
\begin{enumerate}
\item If \(\mathbf p'\) is reduced, then, as open subschemes,
\begin{equation}
 \Conf(p,q)_{u,*}
 =\bigcap_{\sigma\in\Dcal(\fW_+)}
   D\left(A_\sigma^{\mathrm{sweep}}\right).
 \label{eq:plus-endpoint-term-locus}
\end{equation}
\item If \(\mathbf q'\) is reduced, then, as open subschemes,
\begin{equation}
 \Conf(p,q)_{*,v}
 =\bigcap_{\sigma\in\Dcal(\fW_-^{\mathrm{op}})}
   D\left(A_\sigma^{\mathrm{sweep}}\right).
 \label{eq:minus-endpoint-term-locus}
\end{equation}
\item If both \(\mathbf p'\) and \(\mathbf q'\) are reduced, then, as open
subschemes,
\begin{equation}
 \Conf(p,q)_{u,v}
 =\bigcap_{\sigma\in\Dcal(\fW)}
   D\left(A_\sigma^{\mathrm{sweep}}\right).
 \label{eq:double-endpoint-term-locus}
\end{equation}
\end{enumerate}
Here we use the identification of vertices in
\eqref{eq:double-weave-quiver-decomposition}.
\end{theorem}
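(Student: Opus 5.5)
The plan is to combine Lemma~\ref{lem:plus-endpoint-minors} with Lemma~\ref{lem:plus-endpoint-minor-monomial} for part~(1), deduce part~(2) by transposition, and obtain part~(3) by intersecting. For (1), Lemma~\ref{lem:plus-endpoint-minors} gives \(\Conf(p,q)_{u,*}=D(\prod_i f_i)=\bigcap_i D(f_i)\). Since \(\mathbf p'\) is reduced, Lemma~\ref{lem:plus-endpoint-minor-monomial} writes each \(f_i\) as the monomial \(\prod_{\sigma\in V_3(\fW_+)}(A_\sigma^{\mathrm{sweep}})^{\langle\omega_i,\nu_\sigma^+\rangle}\) with nonnegative integer exponents, as an identity in \(\mathcal O(\Conf(p,q))\). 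Each \(A_\sigma^{\mathrm{sweep}}\) is regular by \cite[Theorem~3.45]{ShenWeng2021}. For a product of regular functions with positive exponents, the nonvanishing locus is the intersection of the loci of the factors that actually occur. So \(\bigcap_i D(f_i)=\bigcap_\sigma D(A_\sigma^{\mathrm{sweep}})\), where \(\sigma\) ranges over those vertices with \(\langle\omega_i,\nu_\sigma^+\rangle>0\) for some \(i\).

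The main point to check is that this index set is exactly \(\Dcal(\fW_+)\). A coweight is zero exactly when all its pairings with the fundamental weights vanish, so \(\sigma\) appears for some \(i\) exactly when \(\nu_\sigma^+\ne0\). By \eqref{eq:plus-terminal-coweight-support}, this is equivalent to \(\sigma\in\Dcal(\fW_+)\). The identity of principal opens is scheme-theoretic: \(D(\prod_i f_i)\) equals \(D(g)\) for \(g=\prod_{\sigma\in\Dcal(\fW_+)}A_\sigma^{\mathrm{sweep}}\), because each divides a power of the other in \(\mathcal O(\Conf(p,q))\). This proves \eqref{eq:plus-endpoint-term-locus}. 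Here the vertices of \(V_3(\fW_+)\) are identified with plus-component vertices of \(V_3(\fW)\) via \eqref{eq:double-weave-quiver-decomposition}, and \(\Dcal(\fW)\cap V_3(\fW_+)=\Dcal(\fW_+)\).

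For (2), I would apply the transpose isomorphism of Definition~\ref{def:double-bs-transposition}, \(\Conf(p,q)\cong\Conf(q^{\mathrm{op}},p^{\mathrm{op}})\). It carries \(\Conf(p,q)_{*,v}\) to \(\Conf(q^{\mathrm{op}},p^{\mathrm{op}})_{v^{-1},*}\), turns the minus part of \(\fW\) into a plus part \(\fW_-^{\mathrm{op}}\) with reduced terminal word \(\mathbf q'^{\mathrm{op}}\), and matches the seeds by \cite[Proposition~3.9]{ShenWeng2021}. The step needing the most care is the compatibility of the sweep mutation sequence with transposition. I expect it to follow from the way the minus-tagged steps were defined via transposition in Definition~\ref{def:basic-mutation-step-with-extra-vertices} and Theorem~\ref{thm:one-step-coordinate-term}, so that \(A_\sigma^{\mathrm{sweep}}\) for minus trivalent vertices corresponds to the plus-side sweep coordinates of the transposed weave. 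Then (1) applied on the transposed side gives \eqref{eq:minus-endpoint-term-locus}. Finally, (3) follows by intersecting: \(\Conf(p,q)_{u,v}=\Conf(p,q)_{u,*}\cap\Conf(p,q)_{*,v}\), and \(\Dcal(\fW)=\Dcal(\fW_+)\sqcup\Dcal(\fW_-^{\mathrm{op}})\) by Proposition~\ref{prop:double-weave-plus-minus-decomposition}.
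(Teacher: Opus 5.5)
Your proposal is correct and takes essentially the same route as the paper. Part (1) combines Lemma~\ref{lem:plus-endpoint-minors} with the monomial formula of Lemma~\ref{lem:plus-endpoint-minor-monomial}, part (2) transposes via \cite[Proposition~3.9]{ShenWeng2021}, and part (3) intersects using Proposition~\ref{prop:double-weave-plus-minus-decomposition}. The only difference is cosmetic: the paper finds the support by pairing with \(\rho=\sum_i\omega_i\) and using \(\langle\rho,\alpha^\vee\rangle>0\) on positive coroots, while you use that each \(\langle\omega_i,\nu_\sigma^+\rangle\ge0\) and that a coweight pairing to zero with every fundamental weight is zero, which amounts to the same argument.
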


\begin{proof}
Assume first that \(\mathbf p'\) is reduced.
Set \(\rho:=\sum_{i\in I}\omega_i\).  Multiplying
\eqref{eq:plus-endpoint-minor-monomial} over \(i\in I\) gives
\[
 \prod_{i\in I}f_i
 =\prod_{\sigma\in V_3(\fW_+)}
   \left(A_\sigma^{\mathrm{sweep}}\right)^{
    \langle\rho,\nu_\sigma^+\rangle}.
\]
For every positive coroot \(\alpha^\vee\), we have
\(\langle\rho,\alpha^\vee\rangle>0\), so
\eqref{eq:plus-terminal-coweight-support} gives
\[
 \langle\rho,\nu_\sigma^+\rangle>0
 \quad\Longleftrightarrow\quad
 \sigma\in\Dcal(\fW_+).
\]
Lemma~\ref{lem:plus-endpoint-minors} and the properties of principal open
subvarieties now give \eqref{eq:plus-endpoint-term-locus}.

Assume next that \(\mathbf q'\) is reduced.  After applying the transpose in
Definition~\ref{def:double-bs-transposition},
the endpoint condition on the minus side becomes the endpoint condition for
\(v^{-1}\) on the plus side of \(\mathbf q^{\mathrm{op}}\).  The word
\((\mathbf q')^{\mathrm{op}}\) is a reduced word for \(v^{-1}\), and under the
seed isomorphism induced by Shen--Weng transposition
\cite[Proposition~3.9]{ShenWeng2021}, the corresponding weave is
\(\fW_-^{\mathrm{op}}\).  Thus, transposing the result on the plus side gives
\eqref{eq:minus-endpoint-term-locus}.  Finally, if both terminal words are
reduced,
Definition~\ref{def:endpoint-strata} and
Proposition~\ref{prop:double-weave-plus-minus-decomposition} give
\eqref{eq:double-endpoint-term-locus}.
\end{proof}

If both \(\mathbf p'\) and \(\mathbf q'\) are reduced, then
Theorem~\ref{thm:endpoint-locus-cluster-variables} gives
\begin{equation}
 \Conf(p,q)_{u,v}
 =D\left(\prod_{\sigma\in\Dcal(\fW)}A_\sigma^{\mathrm{sweep}}\right).
 \label{eq:endpoint-term-product}
\end{equation}
Under the same hypothesis, \(\Conf(p,q)\) is affine by the Shen--Weng theorem
\cite[Theorem~2.30]{ShenWeng2021}, and hence
\begin{equation}
 \mathcal O\bigl(\Conf(p,q)_{u,v}\bigr)
 =\mathcal O\bigl(\Conf(p,q)\bigr)
  \left[(A_\sigma^{\mathrm{sweep}})^{-1}
        \mid\sigma\in\Dcal(\fW)\right].
 \label{eq:endpoint-term-localization}
\end{equation}

\subsubsection{Frozen seeds and product seeds}

Set the Shen--Weng seed and the product seed to be
\[
 \Sigma_\delta:=\Sigma_{Q(\delta)},
 \qquad
 \Sigma_\times(\fW):=\Sigma_\delta\sqcup\Sigma_\fW.
\]
Also set
\begin{equation}
 \Sigma_{\mathrm{am}}^{\mathrm{fr}}(\fW)
 :=\operatorname{freeze}_{\Dcal(\fW)}
    \Sigma_{\mathrm{am}}(\fW).
 \label{eq:def-frozen-amalgamation-seed}
\end{equation}
If both \(\mathbf p'\) and \(\mathbf q'\) are reduced, then
\(\Sigma_\delta\) and \(\Sigma_\times(\fW)\) are realized on \(\Conf(u,v)\)
and \(\Conf(u,v)\times X(p)\times X(q^{\mathrm{op}})\) by
\(\mathbf A_\delta^{\mathrm{SW}}\) and
\(\mathbf A_\delta^{\mathrm{SW}}\times\mathbf A_\fW\), respectively.  In this
case, regard \(\Sigma_{\mathrm{sweep}}^{\mathrm{fr}}(\fW)\) as restricted to
\(\Conf(p,q)_{u,v}\).

If both \(\mathbf p'\) and \(\mathbf q'\) are reduced, then the identification
of vertices in Theorem~\ref{thm:quiver} and
\eqref{eq:iterated-component-coordinate-term} give
\begin{equation}
 A_z^{\mathrm{sweep}}=A_z^{\mathrm{am}}\circ\Split
 \qquad\bigl(z\in\Vcal(\delta)\sqcup V_3(\fW)\bigr).
 \label{eq:full-frozen-exact-seed}
\end{equation}
Thus, in this case, \(\Split\) gives an isomorphism between these two
realizations.

For a seed \(\Sigma\) and a mutable vertex \(j\), write its \(X\)-coordinate
as
\[
 X_j^\Sigma
 :=\prod_{i\in I(\Sigma)}A_i^{\eps^\Sigma_{ji}}.
\]

\begin{theorem}
\label{thm:frozen-product-comparison}
Consider the torus isomorphism defined by the following coordinate
transformation:
\[
 \id_{\times\to\mathrm{am}}:
 \Tcal_{\Sigma_\times(\fW)}
 \xrightarrow{\sim}
 \Tcal_{\Sigma_{\mathrm{am}}^{\mathrm{fr}}(\fW)}.
\]
The sets of mutable vertices of the two seeds are
\begin{equation}
 I_{\mathrm{uf}}\bigl(\Sigma_\times(\fW)\bigr)
 =I_{\mathrm{uf}}\bigl(\Sigma_{\mathrm{am}}^{\mathrm{fr}}(\fW)\bigr)
 =I_{\mathrm{int}}(\delta)
  \sqcup\bigl(V_3(\fW)\setminus\Dcal(\fW)\bigr).
 \label{eq:full-product-mutable-set}
\end{equation}
The pullback of this isomorphism is
\begin{equation}
 \begin{aligned}
 (\id_{\times\to\mathrm{am}})^*A_\ell^{\mathrm{am}}
 &=A_\ell^\delta
   \prod_{\tau\in\Dcal(\fW)}
   (A_\tau^\fW)^{c_\ell(\partial\gamma_\tau)}
 &&(\ell\in\Vcal(\delta)),\\
 (\id_{\times\to\mathrm{am}})^*A_\sigma^{\mathrm{am}}
 &=A_\sigma^\fW
 &&(\sigma\in V_3(\fW)),
 \end{aligned}
 \label{eq:full-coefficient-rescaling}
\end{equation}
and, for every mutable vertex \(j\),
\begin{equation}
 (\id_{\times\to\mathrm{am}})^*X_j^{\mathrm{am}}
 =X_j^\times
 \label{eq:full-x-preservation}
\end{equation}
holds.  In particular, \(\id_{\times\to\mathrm{am}}\) is a quasi-cluster
isomorphism.
\end{theorem}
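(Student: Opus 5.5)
The proof has three parts: identify the mutable sets, read off the pullback formula from the definitions, and verify preservation of \(X\)-coordinates. The substantive step is the last one, which reduces to the cancellation identity of Lemma~\ref{lem:cartan-correction-exchange-homogeneity}.

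\emph{Mutable sets.} First I would establish \eqref{eq:full-product-mutable-set}. In \(\Sigma_\times(\fW)=\Sigma_\delta\sqcup\Sigma_\fW\), the mutable vertices are \(I_{\mathrm{uf}}(Q(\delta))=I_{\mathrm{int}}(\delta)\) together with \(V_3(\fW)\setminus\Dcal(\fW)\), by Definition~\ref{def:weave-quiver}. In \(Q^{\mathrm{am,fr}}(\fW)\) (Definition~\ref{def:quiver-amalgamation}), a vertex is mutable exactly when it is mutable in its factor and is not one of the shared frozen vertices \(\Dcal(\fW)\). By Definition~\ref{def:extended}, the mutable vertices of \(Q^{\mathrm{ext}}(\delta;\fW)\) are \(I_{\mathrm{int}}(\delta)\). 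Defrosting and then refreezing \(\Dcal(\fW)\) changes nothing, so the two mutable sets agree.

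\emph{Pullback formula.} Next, \eqref{eq:full-coefficient-rescaling} is essentially a restatement. The torus isomorphism is the coordinate transformation inverse to \eqref{eq:amalgamated-to-product}, and by \eqref{eq:terminal-coordinate} and \eqref{eq:weave-terminal-framing-monomial} we have \(A^{\mathrm{am}}_\ell=A^\delta_\ell M^\fW_\ell\), where \(M_\ell^\fW=\prod_{\tau\in\Dcal(\fW)}(A^\fW_\tau)^{c_\ell(\partial\gamma_\tau)}\), and \(A^{\mathrm{am}}_\sigma=A^\fW_\sigma\). The only point to note is that vertices outside \(\Dcal(\fW)\) have \(\partial\gamma_\tau=0\), so \(c_\ell(\partial\gamma_\tau)=0\) for them and the product may be taken over \(\Dcal(\fW)\).

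\emph{Preservation of \(X\)-coordinates.} The main step is \eqref{eq:full-x-preservation}, which I split by the type of the mutable vertex \(j\).

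\begin{enumerate}
\item For \(j=k\in I_{\mathrm{int}}(\delta)\): the row of the amalgamated matrix at \(k\) consists of \(\eps^{Q(\delta)}_{k\ell}\) on string vertices and \(\widetilde\eps_{k\tau}=(\operatorname{inc}_\delta\partial\gamma_\tau)_k\) on \(\tau\in\Dcal(\fW)\), with no entries to vertices outside \(\Dcal(\fW)\). Substituting the pullback, \(X_k^{\mathrm{am}}\) pulls back to \(X_k^{\delta}\) times
\[
 \prod_\ell M_\ell^{\eps_{k\ell}}\prod_\tau (A^\fW_\tau)^{\eps_{k\tau}}.
\]
This factor equals \(1\) by Lemma~\ref{lem:cartan-correction-exchange-homogeneity}, applied to the terminal extension datum of Lemma~\ref{lem:terminal-extension-datum} with \(\eta=0\). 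Equivalently, by Lemma~\ref{lem:cartan-exponent-exchange-identity} the exponent of each \(A_\tau\) is \(-(\operatorname{inc}_\delta\partial\gamma_\tau)_k+(\operatorname{inc}_\delta\partial\gamma_\tau)_k=0\). Since \(X_k^\times=X_k^\delta\), this gives the claim.
\item For \(j=\sigma\in V_3(\fW)\setminus\Dcal(\fW)\): since \(\sigma\) is not shared, its row in the amalgamation is exactly its row in \(Q(\fW)\). The entries \(\widetilde\eps_{\sigma\ell}\) to string vertices do not occur, because \(\sigma\notin I(Q^{\mathrm{ext}})\); equivalently \(\partial\gamma^\vee_\sigma=0\). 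The coordinates \(A_\tau\) are unchanged under the pullback, so \(X_\sigma^{\mathrm{am}}\) pulls back to \(X_\sigma^\fW=X_\sigma^\times\).
\end{enumerate}

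\emph{Conclusion.} A torus isomorphism that multiplies cluster variables by frozen Laurent monomials, preserves the mutable set, and preserves all \(X\)-coordinates of mutable vertices is a quasi-cluster isomorphism in the standard (Fraser) sense; it then commutes with mutations, since the exchange matrix rows at mutable vertices are encoded by the \(X\)-coordinates. Two points need checking here. First, every \(\tau\in\Dcal(\fW)\) is frozen on both sides. Second, \(M_\ell^\fW\) involves only frozen variables. The main obstacle is the string case of the \(X\)-coordinate check, namely tracking the row conventions of \(\widetilde\eps\) correctly. That step is carried entirely by Lemma~\ref{lem:cartan-correction-exchange-homogeneity}.
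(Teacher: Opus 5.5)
Your proposal is correct and follows essentially the same proof as the paper. The pullback formula is read off from \eqref{eq:terminal-coordinate} and \eqref{eq:weave-terminal-framing-monomial}; the \(X\)-coordinates at \(k\in I_{\mathrm{int}}(\delta)\) are preserved because Lemma~\ref{lem:cartan-exponent-exchange-identity} cancels \(\widetilde\eps_{k\tau}=(\operatorname{inc}_\delta\partial\gamma_\tau)_k\); and the rows at \(V_3(\fW)\setminus\Dcal(\fW)\) coincide with those of \(Q(\fW)\) and have no string entries. The paper is more explicit on one point: on the frozen variables \((\Vcal(\delta)\setminus I_{\mathrm{int}}(\delta))\sqcup\Dcal(\fW)\) the pullback is block triangular with identity diagonal blocks, which is exactly what gives the isomorphism of frozen Laurent monomial groups behind your ``two points to check''.
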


\begin{proof}
By \eqref{eq:terminal-coordinate}
and~\eqref{eq:weave-terminal-framing-monomial},
\[
 A_\ell^{\mathrm{am}}
 =A_\ell^\delta
  \prod_{\tau\in\Dcal(\fW)}
  (A_\tau^\fW)^{c_\ell(\partial\gamma_\tau)}.
\]
This gives the first identity in
\eqref{eq:full-coefficient-rescaling}.  The second identity follows
from Definition~\ref{def:amalgamated-functions}.  The inverse map is given by
the same identities with the exponents replaced by
\(-c_\ell(\partial\gamma_\tau)\).

The set of frozen vertices of each seed is
\[
 \bigl(\Vcal(\delta)\setminus I_{\mathrm{int}}(\delta)\bigr)
 \sqcup\Dcal(\fW).
\]
If the frozen variables are ordered in this way, the exponent matrix of the
Laurent monomials in \((\id_{\times\to\mathrm{am}})^*\) is block triangular
with identity diagonal blocks.  Hence the pullback gives an isomorphism
between the groups of frozen Laurent monomials of the two seeds.

For \(j\in I_{\mathrm{int}}(\delta)\) and \(\sigma\in\Dcal(\fW)\), set
\[
 \eta_{j\sigma}
 :=\eps^{\mathrm{am}}_{j,\sigma}
  +\sum_{\ell\in\Vcal(\delta)}
        \eps^{Q(\delta)}_{j,\ell}
        c_\ell(\partial\gamma_\sigma).
\]
By Definition~\ref{def:extended} and
Lemma~\ref{lem:cartan-exponent-exchange-identity},
\[
 \eta_{j\sigma}
 =\bigl(\operatorname{inc}_\delta(\partial\gamma_\sigma)\bigr)_j
  -\bigl(\operatorname{inc}_\delta(\partial\gamma_\sigma)\bigr)_j
 =0.
\]
Therefore,
\[
 \begin{aligned}
 (\id_{\times\to\mathrm{am}})^*X_j^{\mathrm{am}}
 &=\prod_{\ell\in\Vcal(\delta)}
   (A_\ell^\delta)^{\eps^{Q(\delta)}_{j,\ell}}
   \prod_{\sigma\in\Dcal(\fW)}
   (A_\sigma^\fW)^{\eta_{j\sigma}}\\
 &=X_j^\times.
 \end{aligned}
\]
On the other hand, for
\(\sigma\in V_3(\fW)\setminus\Dcal(\fW)\), we have
\(\partial\gamma_\sigma=0\), so the exchange-matrix entries between
\(\sigma\) and \(\Vcal(\delta)\) vanish.  Moreover, by
\eqref{eq:double-am-contraction-contraction}, the \(\sigma\)-row on
\(V_3(\fW)\) equals the \(\sigma\)-row of \(Q(\fW)\).  Hence
\((\id_{\times\to\mathrm{am}})^*X_\sigma^{\mathrm{am}}=X_\sigma^\times\).
It follows that \(\id_{\times\to\mathrm{am}}\) is a quasi-cluster isomorphism
\cite[Definition~3.3 and the paragraph following Remark~3.4]
{GorskyKimScrogginSimental2025}.
\end{proof}

\begin{corollary}[Quasi-cluster property of the flag-configuration decomposition]
\label{cor:split-quasi-cluster}
If both \(\mathbf p'\) and \(\mathbf q'\) are reduced, then the
flag-configuration decomposition
\[
 \Split:
 \Conf(p,q)_{u,v}
 \xrightarrow{\sim}
 \Conf(u,v)\times X(p)\times X(q^{\mathrm{op}})
\]
is a quasi-cluster isomorphism with respect to
\(\Sigma_{\mathrm{sweep}}^{\mathrm{fr}}(\fW)\) and
\(\Sigma_\times(\fW)\).
\end{corollary}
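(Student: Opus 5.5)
The proof will compose the two identifications already established: \(\Split\) intertwines the sweep realization with the amalgamated realization, and the torus map \(\id_{\times\to\mathrm{am}}\) of Theorem~\ref{thm:frozen-product-comparison} is a quasi-cluster isomorphism between the amalgamated frozen seed and the product seed.

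First, since both \(\mathbf p'\) and \(\mathbf q'\) are reduced, \eqref{eq:full-frozen-exact-seed} (which comes from Corollary~\ref{cor:reduced-iterated-coordinate-term} and the vertex identification of Theorem~\ref{thm:quiver}) gives \(A_z^{\mathrm{sweep}}=A_z^{\mathrm{am}}\circ\Split\) for every \(z\in\Vcal(\delta)\sqcup V_3(\fW)\). By Theorem~\ref{thm:quiver}, the quiver of \(\Sigma_{\mathrm{sweep}}(\fW)\) is \(Q^{\mathrm{am}}(\fW)\). Freezing \(\Dcal(\fW)\) on both sides shows that \(\Sigma_{\mathrm{sweep}}^{\mathrm{fr}}(\fW)\), restricted to \(\Conf(p,q)_{u,v}\), is carried by \(\Split\) to \(\Sigma_{\mathrm{am}}^{\mathrm{fr}}(\fW)\), realized by \(\mathbf A_{\delta,\fW}^{\mathrm{am}}\) on \(\Conf(u,v)\times X(p)\times X(q^{\mathrm{op}})\). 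This transport preserves the same quiver, the same mutable set, and the same coordinates, so it is in particular a quasi-cluster (indeed cluster) isomorphism.

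Restriction to \(\Conf(p,q)_{u,v}\) is legitimate because this stratum is the principal open subset \(D\bigl(\prod_{\sigma\in\Dcal(\fW)}A_\sigma^{\mathrm{sweep}}\bigr)\) by \eqref{eq:endpoint-term-product}. The variables \(A_\sigma^{\mathrm{sweep}}\) with \(\sigma\in\Dcal(\fW)\) are frozen in \(\Sigma^{\mathrm{fr}}_{\mathrm{sweep}}(\fW)\), so they are invertible there, and the realization is birational on the dense open stratum.

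Second, on the target, the realization \(\mathbf A_{\delta,\fW}^{\mathrm{am}}\) is \(\id_{\times\to\mathrm{am}}\circ(\mathbf A_\delta^{\mathrm{SW}}\times\mathbf A_\fW)\), by Definition~\ref{def:amalgamated-functions} together with \eqref{eq:full-coefficient-rescaling}. Theorem~\ref{thm:frozen-product-comparison} shows that this change of realization has three properties:
\begin{enumerate}
 \item it preserves mutable sets, by \eqref{eq:full-product-mutable-set};
 \item it rescales cluster variables by frozen Laurent monomials, via the unitriangular exponent matrix;
 \item it preserves all \(X\)-coordinates, by \eqref{eq:full-x-preservation}.
\end{enumerate}
Hence the identity map on \(\Conf(u,v)\times X(p)\times X(q^{\mathrm{op}})\) is a quasi-cluster isomorphism from \(\Sigma_{\mathrm{am}}^{\mathrm{fr}}(\fW)\) to \(\Sigma_\times(\fW)\).

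Composing the two steps shows that \(\Split\) is a quasi-cluster isomorphism from \(\Sigma_{\mathrm{sweep}}^{\mathrm{fr}}(\fW)\) to \(\Sigma_\times(\fW)\), since the composite of a cluster isomorphism with a quasi-cluster isomorphism is quasi-cluster in the sense of \cite{GorskyKimScrogginSimental2025}.

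The main point needing care is the notion of quasi-cluster isomorphism between seed realizations on different spaces. One must check that pulling back along \(\Split\) matches the mutable sets and \(X\)-coordinates exactly. It also sends frozen monomials to frozen monomials: here the frozen set includes \(\Dcal(\fW)\), and the boundary strings of \(\delta\) are the strings that get rescaled. Both follow from the coordinate equality in the first step and the block-triangularity argument inside Theorem~\ref{thm:frozen-product-comparison}.
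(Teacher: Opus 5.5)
Your proposal is correct and follows the paper's proof: use \eqref{eq:full-frozen-exact-seed} together with Theorem~\ref{thm:quiver} to see that \(\Split\) carries the realization of \(\Sigma_{\mathrm{sweep}}^{\mathrm{fr}}(\fW)\) exactly to that of \(\Sigma_{\mathrm{am}}^{\mathrm{fr}}(\fW)\). Then compose with the quasi-cluster isomorphism \(\id_{\times\to\mathrm{am}}\) of Theorem~\ref{thm:frozen-product-comparison}; the paper additionally cites Theorem~\ref{thm:flag-product} for \(\Split\) being an isomorphism, which you use implicitly. Your extra remark justifying the restriction to \(\Conf(p,q)_{u,v}\) via \eqref{eq:endpoint-term-product} is a harmless elaboration of what the paper does by simply regarding \(\Sigma_{\mathrm{sweep}}^{\mathrm{fr}}(\fW)\) as restricted to that stratum.
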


\begin{proof}
This follows from Theorem~\ref{thm:flag-product},
\eqref{eq:full-frozen-exact-seed}, and
Theorem~\ref{thm:frozen-product-comparison}.
\end{proof}

\subsubsection{Cluster localization}

Below, let \(\mathcal A(\Sigma)\) and \(\mathcal U(\Sigma)\) denote the cluster
algebra and the upper cluster algebra associated with a seed \(\Sigma\).  We adopt
the convention that they contain the inverses of the frozen variables.  For
\(S\subset I_{\mathrm{uf}}(\Sigma)\), the identity map on the initial cluster
defines the canonical inclusion
\[
 \mathcal A\bigl(\operatorname{freeze}_S(\Sigma)\bigr)
 \lhook\joinrel\longrightarrow
 \mathcal A(\Sigma)[A_s^{-1}\mid s\in S].
\]
Following \cite[Definition~3.3]{Muller2013}, we call this freezing a
\emph{cluster localization} when this inclusion is an isomorphism.

\begin{theorem}[Cluster localization of an endpoint stratum]
\label{thm:endpoint-cluster-localization}
If both \(\mathbf p'\) and \(\mathbf q'\) are reduced, then the freezing from
\(\Sigma_{\mathrm{sweep}}(\fW)\) to
\(\Sigma_{\mathrm{sweep}}^{\mathrm{fr}}(\fW)\) is a cluster localization,
and
\begin{equation}
 \begin{aligned}
 \mathcal A\bigl(\Sigma_{\mathrm{sweep}}^{\mathrm{fr}}(\fW)\bigr)
 &=\mathcal U\bigl(\Sigma_{\mathrm{sweep}}^{\mathrm{fr}}(\fW)\bigr)\\
 &=\mathcal A\bigl(\Sigma_{\mathrm{sweep}}(\fW)\bigr)
   \left[(A_\sigma^{\mathrm{sweep}})^{-1}
         \mid\sigma\in\Dcal(\fW)\right]\\
 &=\mathcal O\bigl(\Conf(p,q)_{u,v}\bigr)
 \end{aligned}
 \label{eq:endpoint-cluster-localization}
\end{equation}
holds.
\end{theorem}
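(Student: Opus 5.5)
We prove the chain of equalities in \eqref{eq:endpoint-cluster-localization} by a sandwich argument: every ring in the chain will be squeezed between the cluster algebra of the frozen seed and \(\mathcal O(\Conf(p,q)_{u,v})\). Write \(\Sigma:=\Sigma_{\mathrm{sweep}}(\fW)\), \(\Sigma^{\mathrm{fr}}:=\Sigma_{\mathrm{sweep}}^{\mathrm{fr}}(\fW)\), and \(S:=\Dcal(\fW)\). The canonical inclusions are
\[
 \mathcal A(\Sigma^{\mathrm{fr}})
 \subseteq\mathcal A(\Sigma)[A_s^{-1}\mid s\in S]
 \subseteq\mathcal U(\Sigma)[A_s^{-1}\mid s\in S]
 =\mathcal O(\Conf(p,q))[A_s^{-1}\mid s\in S]
 =\mathcal O(\Conf(p,q)_{u,v}).
\]
The middle equality is \cite[Theorem~3.45]{ShenWeng2021}, using that \(\Sigma\) is mutation equivalent to the initial Shen--Weng seed. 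The last equality is \eqref{eq:endpoint-term-localization}, which is where Theorem~\ref{thm:endpoint-locus-cluster-variables} and affineness enter. In addition, \(\mathcal A(\Sigma^{\mathrm{fr}})\subseteq\mathcal U(\Sigma^{\mathrm{fr}})\) holds by the Laurent phenomenon. It therefore suffices to prove two things: \(\mathcal U(\Sigma^{\mathrm{fr}})\subseteq\mathcal O(\Conf(p,q)_{u,v})\), and \(\mathcal O(\Conf(p,q)_{u,v})\subseteq\mathcal A(\Sigma^{\mathrm{fr}})\).

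\textbf{Step 1: \(\mathcal O(\Conf(p,q)_{u,v})\subseteq\mathcal A(\Sigma^{\mathrm{fr}})\).} We transport the problem through \(\Split\). By Corollary~\ref{cor:split-quasi-cluster}, \(\Split\) is a quasi-cluster isomorphism from \(\Sigma^{\mathrm{fr}}\) to the product seed \(\Sigma_\times(\fW)=\Sigma_\delta\sqcup\Sigma_\fW\) on \(\Conf(u,v)\times X(p)\times X(q^{\mathrm{op}})\). A quasi-cluster isomorphism rescales each cluster variable by a frozen Laurent monomial, and frozen variables are invertible in our convention. Hence it identifies both the cluster algebras and the upper cluster algebras of the two seeds, and the question reduces to the product seed. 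For the product seed we use two inputs.
\begin{enumerate}
 \item For \(\Sigma_\delta\) we use \(\mathcal A=\mathcal U=\mathcal O(\Conf(u,v))\). For a reduced double word this is the double Bruhat cell case; we take it from Shen--Weng and Berenstein--Fomin--Zelevinsky, where it is known that \(\mathcal A=\mathcal U\) for reduced double words.
 \item For \(\Sigma_{\fW_+}\) and \(\Sigma_{\fW_-^{\mathrm{op}}}\) we use \(\mathcal A=\mathcal U=\mathcal O(X(\cdot))\), which is the main theorem of \cite{CasalsEtAlBraidVarieties}. The terminal words are reduced, so these seeds are realized on the braid varieties by their Lemma~5.14.
\end{enumerate}
The cluster algebra of a disjoint union of seeds is the tensor product of the cluster algebras, and likewise for the upper cluster algebras. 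Hence \(\mathcal A(\Sigma_\times)=\mathcal U(\Sigma_\times)=\mathcal O\) of the product.

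\textbf{Step 2: \(\mathcal U(\Sigma^{\mathrm{fr}})\subseteq\mathcal O(\Conf(p,q)_{u,v})\).} This follows from the same transport. Step 1 gives \(\mathcal U(\Sigma_\times)=\mathcal O\) of the product, so \(\mathcal U(\Sigma^{\mathrm{fr}})=\Split^*\mathcal O(\text{product})=\mathcal O(\Conf(p,q)_{u,v})\). Combining Steps 1 and 2, every inclusion in the chain becomes an equality. In particular the canonical map \(\mathcal A(\Sigma^{\mathrm{fr}})\to\mathcal A(\Sigma)[A_s^{-1}]\) is an isomorphism, so the freezing is a cluster localization in the sense of \cite[Definition~3.3]{Muller2013}.

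\textbf{Main obstacle.} The difficulty lies in justifying, cleanly, that quasi-cluster isomorphisms preserve \(\mathcal A\) and \(\mathcal U\). If the unfrozen \(X\)-coordinates and the frozen monomial groups match, as Theorem~\ref{thm:frozen-product-comparison} establishes, then mutation sequences commute with the rescaling. Consequently every cluster variable of one seed is a frozen monomial times the corresponding cluster variable of the other, and the Laurent-ring intersections correspond. We also need that the input equalities \(\mathcal A=\mathcal U\) are stated with frozen variables inverted, matching our convention. If the braid-variety statement were only available for \(\mathcal U\), we would instead invoke the local acyclicity / Muller localization criterion: cover by the sweep seed together with the isolated freezing.
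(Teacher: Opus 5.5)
Your proposal is correct and follows the paper's proof in outline. You transport \(\mathcal A=\mathcal U=\mathcal O\) from the product seed \(\Sigma_\times(\fW)\) through \(\Split\) via Corollary~\ref{cor:split-quasi-cluster}. You then use \eqref{eq:endpoint-term-localization} to identify \(\mathcal O(\Conf(p,q)_{u,v})\) with a localization of \(\mathcal O(\Conf(p,q))\).

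The one real difference is the last step. The paper cites \(\mathcal A(\Sigma_{\mathrm{sweep}}(\fW))=\mathcal O(\Conf(p,q))\) from \cite[Theorems~3.45 and~4.13]{ShenWeng2021} and localizes both sides. Your sandwich uses only \(\mathcal U(\Sigma_{\mathrm{sweep}}(\fW))=\mathcal O(\Conf(p,q))\), together with \(\mathcal O(\Conf(p,q)_{u,v})=\mathcal A(\Sigma_{\mathrm{sweep}}^{\mathrm{fr}}(\fW))\subseteq\mathcal A(\Sigma_{\mathrm{sweep}}(\fW))[(A_\sigma^{\mathrm{sweep}})^{-1}\mid\sigma\in\Dcal(\fW)]\). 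This gains a little at no extra cost. You never need \(\mathcal A=\mathcal U\) for the nonreduced cell \(\Conf(p,q)\), and you even recover \(\mathcal A=\mathcal U\) for \(\Sigma_{\mathrm{sweep}}(\fW)\) once the \(A_\sigma^{\mathrm{sweep}}\) with \(\sigma\in\Dcal(\fW)\) are inverted.

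You do still need \(\mathcal A(\Sigma_\delta)=\mathcal U(\Sigma_\delta)\) for the reduced double word, and the paper takes this from Shen--Weng too. Your attribution to Berenstein--Fomin--Zelevinsky should be dropped: they give the upper cluster algebra description of double Bruhat cells, not \(\mathcal A=\mathcal U\) in general. The Shen--Weng citation you also give is the one that carries this step.
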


\begin{proof}
By the cluster structures on double Bott--Samelson cells
\cite[Theorems~3.45 and~4.13]{ShenWeng2021} and on braid varieties
\cite[Theorem~1.1]{CasalsEtAlBraidVarieties},
\[
 \begin{aligned}
 \mathcal O\bigl(\Conf(u,v)\bigr)
 &=\mathcal U(\Sigma_\delta)
  =\mathcal A(\Sigma_\delta),\\
 \mathcal O\bigl(X(p)\bigr)
 &=\mathcal U(\Sigma_{\fW_+})
  =\mathcal A(\Sigma_{\fW_+}),\\
 \mathcal O\bigl(X(q^{\mathrm{op}})\bigr)
 &=\mathcal U\bigl(\Sigma_{\fW_-^{\mathrm{op}}}\bigr)
  =\mathcal A\bigl(\Sigma_{\fW_-^{\mathrm{op}}}\bigr).
 \end{aligned}
\]
Taking tensor products gives
\[
 \mathcal O\bigl(
  \Conf(u,v)\times X(p)\times X(q^{\mathrm{op}})
 \bigr)
 =\mathcal U\bigl(\Sigma_\times(\fW)\bigr)
 =\mathcal A\bigl(\Sigma_\times(\fW)\bigr).
\]
 Transporting this identity by \(\Split\), using
 Corollary~\ref{cor:split-quasi-cluster}, gives
\[
 \mathcal A\bigl(\Sigma_{\mathrm{sweep}}^{\mathrm{fr}}(\fW)\bigr)
 =\mathcal U\bigl(\Sigma_{\mathrm{sweep}}^{\mathrm{fr}}(\fW)\bigr)
 =\mathcal O\bigl(\Conf(p,q)_{u,v}\bigr).
\]
On the other hand, \(\Sigma_{\mathrm{sweep}}(\fW)\) is mutation equivalent
to the Shen--Weng seed, so the same two Shen--Weng theorems give
\[
 \mathcal A\bigl(\Sigma_{\mathrm{sweep}}(\fW)\bigr)
 =\mathcal O\bigl(\Conf(p,q)\bigr).
\]
Combining this with \eqref{eq:endpoint-term-localization} proves
\eqref{eq:endpoint-cluster-localization}.
\end{proof}

\begin{corollary}
\label{cor:plus-endpoint-cluster-localization}
Suppose that a double weave \(\fW:\beta\longrightarrow\delta\) satisfies
\[
 \fW_+:\mathbf p\longrightarrow\mathbf u,
 \qquad
 \fW_-=\id_{\mathbf q}.
\]
Then \(\Split_+\) is a quasi-cluster isomorphism with respect to the freezing
of \(\Sigma_{\mathrm{sweep}}(\fW)\) at \(\Dcal(\fW)\) and
\(\Sigma_\times(\fW)\).  Furthermore, this freezing at \(\Dcal(\fW)\) is a
cluster localization, and
\[
 \mathcal A\bigl(
  \operatorname{freeze}_{\Dcal(\fW)}\Sigma_{\mathrm{sweep}}(\fW)
 \bigr)
 =\mathcal U\bigl(
  \operatorname{freeze}_{\Dcal(\fW)}\Sigma_{\mathrm{sweep}}(\fW)
 \bigr)
 =\mathcal O\bigl(\Conf(p,q)_{u,*}\bigr)
\]
holds.
\end{corollary}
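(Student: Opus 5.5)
The plan is to run the proof of Theorem~\ref{thm:endpoint-cluster-localization} and Corollary~\ref{cor:split-quasi-cluster} again, with the one-sided decomposition \(\Split_+\) in place of \(\Split\). Proposition~\ref{prop:one-sided-coordinate-compatibility} plays the role of \eqref{eq:iterated-component-coordinate-term}, and part~(1) of Theorem~\ref{thm:endpoint-locus-cluster-variables} plays the role of \eqref{eq:double-endpoint-term-locus}.

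Since \(\fW_-=\id_{\mathbf q}\), Proposition~\ref{prop:double-weave-plus-minus-decomposition} gives \(V_3(\fW)=V_3(\fW_+)\) and \(\Dcal(\fW)=\Dcal(\fW_+)\). By Definition~\ref{def:amalgamated-functions}, \(\mathbf A_{\delta,\fW}^{\mathrm{am}}\) factors through \(\Conf(\mathbf u,\mathbf q)\times X(p)\). In this situation the product seed \(\Sigma_\times(\fW)=\Sigma_\delta\sqcup\Sigma_{\fW_+}\) is realized on \(\Conf(\mathbf u,\mathbf q)\times X(p)\) by \(\mathbf A_\delta^{\mathrm{SW}}\times\mathbf A_{\fW_+}\), with \(\delta\) the double word of \(\mathbf u,\mathbf q\). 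Here \(\mathbf q\) need not be reduced, but \(\mathbf A_\delta^{\mathrm{SW}}\) is still birational by Shen--Weng, and \(\mathbf A_{\fW_+}\) is birational because \(\mathbf u\) is reduced.

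\eqref{eq:plus-one-sided-coordinate-compatibility} then gives \(A_z^{\mathrm{sweep}}=A_z^{\mathrm{am}}\circ\Split_+\) on \(\Conf(p,q)_{u,*}\). Theorem~\ref{thm:frozen-product-comparison} is a purely combinatorial statement about the tori and does not use reducedness of \(\mathbf q'\). It shows that \(\id_{\times\to\mathrm{am}}\) is quasi-cluster. Composing the two with the isomorphism of Proposition~\ref{prop:one-sided-flag-product} proves that \(\Split_+\) is a quasi-cluster isomorphism.

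For the localization, part~(1) of Theorem~\ref{thm:endpoint-locus-cluster-variables} gives \(\Conf(p,q)_{u,*}=D\bigl(\prod_{\sigma\in\Dcal(\fW)}A_\sigma^{\mathrm{sweep}}\bigr)\). Affineness of \(\Conf(p,q)\) holds for arbitrary words by \cite[Theorem~2.30]{ShenWeng2021}, so \(\mathcal O(\Conf(p,q)_{u,*})\) is the localization of \(\mathcal O(\Conf(p,q))=\mathcal A(\Sigma_{\mathrm{sweep}}(\fW))\) at these variables. On the other side, \(\mathcal O(\Conf(\mathbf u,\mathbf q)\times X(p))=\mathcal A(\Sigma_\times)=\mathcal U(\Sigma_\times)\) by \cite[Theorems~3.45, 4.13]{ShenWeng2021} and \cite[Theorem~1.1]{CasalsEtAlBraidVarieties}. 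Transporting this through \(\Split_+\) identifies \(\mathcal A\) and \(\mathcal U\) of the frozen seed with \(\mathcal O(\Conf(p,q)_{u,*})\). Combining the two descriptions gives the cluster localization statement.

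The main point to check is that nothing in Theorem~\ref{thm:frozen-product-comparison} or in Shen--Weng's equality \(\mathcal A=\mathcal U\) for \(\Conf(\mathbf u,\mathbf q)\) secretly required \(\mathbf q\) to be reduced. A second check is that quasi-cluster transport respects \(\mathcal A\) and \(\mathcal U\) and not merely the rings of regular functions; I would verify the latter via the definition in \cite{GorskyKimScrogginSimental2025}.
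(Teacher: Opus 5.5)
Your proposal is correct and follows essentially the same route as the paper. The paper obtains the quasi-cluster property of \(\Split_+\) from Proposition~\ref{prop:one-sided-coordinate-compatibility} together with Theorem~\ref{thm:frozen-product-comparison}. It then reruns the argument of Theorem~\ref{thm:endpoint-cluster-localization} on \(\Conf(u,q)\times X(p)\), with \eqref{eq:plus-endpoint-term-locus} in place of \eqref{eq:double-endpoint-term-locus}, exactly as you do. The two points you flag (that no reducedness of \(\mathbf q\) is needed, and that quasi-cluster isomorphisms preserve \(\mathcal A\) and \(\mathcal U\) when frozen variables are inverted) are used implicitly in the paper's proof in the same way.
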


\begin{proof}
Proposition~\ref{prop:one-sided-coordinate-compatibility} and
Theorem~\ref{thm:frozen-product-comparison} show that \(\Split_+\) is a
quasi-cluster isomorphism with respect to
\(\operatorname{freeze}_{\Dcal(\fW)}\Sigma_{\mathrm{sweep}}(\fW)\) and
\(\Sigma_\times(\fW)\). Applying the argument of
Theorem~\ref{thm:endpoint-cluster-localization} to
\(\Conf(u,q)\times X(p)\), with
\eqref{eq:plus-endpoint-term-locus} in place of
\eqref{eq:double-endpoint-term-locus}, proves the remaining assertions.
\end{proof}

\begin{corollary}
\label{cor:minus-endpoint-cluster-localization}
Suppose that a double weave \(\fW:\beta\longrightarrow\delta\) satisfies
\[
 \fW_+=\id_{\mathbf p},
 \qquad
 \fW_-:\mathbf q\longrightarrow\mathbf v.
\]
Then \(\Split_-\) is a quasi-cluster isomorphism with respect to the freezing
of \(\Sigma_{\mathrm{sweep}}(\fW)\) at \(\Dcal(\fW)\) and
\(\Sigma_\times(\fW)\).  Furthermore, this freezing at \(\Dcal(\fW)\) is a
cluster localization, and
\[
 \mathcal A\bigl(
  \operatorname{freeze}_{\Dcal(\fW)}\Sigma_{\mathrm{sweep}}(\fW)
 \bigr)
 =\mathcal U\bigl(
  \operatorname{freeze}_{\Dcal(\fW)}\Sigma_{\mathrm{sweep}}(\fW)
 \bigr)
 =\mathcal O\bigl(\Conf(p,q)_{*,v}\bigr)
\]
holds.
\end{corollary}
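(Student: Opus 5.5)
The plan is to mirror the proof of Corollary~\ref{cor:plus-endpoint-cluster-localization}, either directly or by reducing to it through the transpose. The direct route has three steps.

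\emph{Step 1: coordinate compatibility.} With \(\fW_+=\id_{\mathbf p}\) and \(\mathbf q'=\mathbf v\) reduced, the second identity \eqref{eq:minus-one-sided-coordinate-compatibility} of Proposition~\ref{prop:one-sided-coordinate-compatibility} gives
\[
\mathbf A_{\delta,\fW}^{\mathrm{am}}\circ\Split_-=\mu_\fW\circ\mathbf A_\beta^{\mathrm{SW}}
\]
on \(\Conf(p,q)_{*,v}\). Here \(\mathbf A_{\delta,\fW}^{\mathrm{am}}\) is the factorization through \(\Conf(p,\mathbf v)\times X(q^{\mathrm{op}})\) from Definition~\ref{def:amalgamated-functions}. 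Thus \(\Split_-\) intertwines the realization of \(\operatorname{freeze}_{\Dcal(\fW)}\Sigma_{\mathrm{sweep}}(\fW)\), restricted to \(\Conf(p,q)_{*,v}\), with the amalgamated realization of \(\Sigma^{\mathrm{fr}}_{\mathrm{am}}(\fW)\).

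\emph{Step 2: quasi-cluster property.} Theorem~\ref{thm:frozen-product-comparison} is purely a statement about quivers and monomials, so it applies verbatim. Here \(\Dcal(\fW)=\Dcal(\fW_-^{\mathrm{op}})\) by Proposition~\ref{prop:double-weave-plus-minus-decomposition}, because \(\fW_+\) has no trivalent vertices. Combining it with Step~1 and the isomorphism \(\Split_-\) of Proposition~\ref{prop:one-sided-flag-product} shows that \(\Split_-\) is a quasi-cluster isomorphism onto \(\Conf(p,\mathbf v)\times X(q^{\mathrm{op}})\) with the seed \(\Sigma_\times(\fW)=\Sigma_\delta\sqcup\Sigma_{\fW_-^{\mathrm{op}}}\).

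\emph{Step 3: localization.} I would argue as in Theorem~\ref{thm:endpoint-cluster-localization}. The Shen--Weng theorems give \(\mathcal O(\Conf(p,\mathbf v))=\mathcal A(\Sigma_\delta)=\mathcal U(\Sigma_\delta)\); note that \(\delta\) need not be reduced on the plus side, but those theorems hold for arbitrary double words. Casals et al.\ give the same equality for \(X(q^{\mathrm{op}})\) and \(\Sigma_{\fW_-^{\mathrm{op}}}\). Tensoring these and transporting through \(\Split_-\) yields
\[
\mathcal A=\mathcal U=\mathcal O(\Conf(p,q)_{*,v})
\]
for the frozen seed. On the other side, \eqref{eq:minus-endpoint-term-locus} of Theorem~\ref{thm:endpoint-locus-cluster-variables} identifies \(\Conf(p,q)_{*,v}\) with the principal open set \(D\bigl(\prod_{\sigma\in\Dcal(\fW)}A^{\mathrm{sweep}}_\sigma\bigr)\). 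Since \(\Conf(p,q)\) is affine, \(\mathcal O(\Conf(p,q)_{*,v})\) is the localization of \(\mathcal O(\Conf(p,q))=\mathcal A(\Sigma_{\mathrm{sweep}}(\fW))\) at these variables. Comparing the two descriptions gives that the freezing is a cluster localization.

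\emph{Alternative: reduction by transpose.} The transpose of Definition~\ref{def:double-bs-transposition}, together with the Shen--Weng seed isomorphism of \cite[Proposition~3.9]{ShenWeng2021}, carries the situation to the plus-sided Corollary~\ref{cor:plus-endpoint-cluster-localization} for \(\mathbf q^{\mathrm{op}}\). This requires checking that the sweep sequence and \(\Dcal\) correspond under transposition.

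The main obstacle is the affineness input used in Step~3. Shen--Weng affineness \cite[Theorem~2.30]{ShenWeng2021} was invoked earlier under the hypothesis that both terminal words are reduced. I would first try to use the general affineness of decorated double Bott--Samelson cells, which is independent of \(\fW\). Failing that, I would fall back on the transpose reduction, which faces the same issue on the plus side and so must already be handled in Corollary~\ref{cor:plus-endpoint-cluster-localization}.
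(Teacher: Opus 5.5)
Your proposal is correct, and your ``alternative'' is exactly the paper's proof. The paper proves this corollary in one line: it transposes Corollary~\ref{cor:plus-endpoint-cluster-localization} using Definition~\ref{def:double-bs-transposition} and the seed isomorphism of \cite[Proposition~3.9]{ShenWeng2021}.

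Your primary route is also valid. It reruns the plus-side argument of Corollary~\ref{cor:plus-endpoint-cluster-localization} and Theorem~\ref{thm:endpoint-cluster-localization} on the minus side, using three inputs: \eqref{eq:minus-one-sided-coordinate-compatibility}, \eqref{eq:minus-endpoint-term-locus}, and Theorem~\ref{thm:frozen-product-comparison}. The last is purely a statement about tori and exchange matrices, so it applies here with \(\Dcal(\fW)=\Dcal(\fW_-^{\mathrm{op}})\). All three are already established in the paper.

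The difference between the two routes is organizational rather than substantive. The paper itself proves \eqref{eq:minus-one-sided-coordinate-compatibility} and \eqref{eq:minus-endpoint-term-locus} by transposition. So your route still relies on the transpose; it applies it to intermediate statements instead of to the final one. Your version makes explicit which minus-side inputs are needed; the paper's version is shorter.

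The obstacle you single out is not one. Affineness of \(\Conf(p,q)\) does not involve \(\fW\), and every pair \(p,q\) admits a double weave whose plus and minus terminal words are both reduced. So the paper's appeal to \cite[Theorem~2.30]{ShenWeng2021} just before \eqref{eq:endpoint-term-localization} already covers every \(\Conf(p,q)\). More simply, for any quasi-compact quasi-separated scheme \(X\) and \(f\in\mathcal O(X)\), one has \(\mathcal O(D(f))=\mathcal O(X)_f\). Hence the localization in your Step~3 needs nothing beyond \(\Conf(p,q)\) being a variety, together with \eqref{eq:minus-endpoint-term-locus}.
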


\begin{proof}
Using the seed isomorphism induced by Shen--Weng transposition
\cite[Proposition~3.9]{ShenWeng2021} and the transpose isomorphism in
Definition~\ref{def:double-bs-transposition}, it suffices to transpose
Corollary~\ref{cor:plus-endpoint-cluster-localization}.
\end{proof}

\section{Application to the splicing map conjecture}
\label{sec:splicing-specialization}

In this section, we specialize
Corollaries~\ref{cor:plus-endpoint-cluster-localization}
and~\ref{cor:minus-endpoint-cluster-localization} to half-decorated flag
configurations and deduce that the two splicing maps are quasi-cluster
isomorphisms.

Below, we use the inclusion
\(\Word(I)\hookrightarrow\Word(\widetilde I)\), \(i\mapsto i^+\), implicitly
and omit the plus tag.  In particular, \(Q(\mathbf p)\) and
\(\Vcal(\mathbf p)\) follow this convention.

\subsection{Half-decorated flag decompositions and splicing maps}

Below, let \(\Delta\) be a reduced positive braid for \(w_0\), and let
\(\boldsymbol\Delta\) be a reduced word for it.  Set \(N:=\len(\Delta)\).  When
\(\operatorname{dem}(p)=w_0\), applying
Corollary~\ref{cor:half-one-sided-flag-product} with
\(\mathbf q=\varnothing\) gives
\begin{align*}
 \Split_{\mathrm{half}}^+:
 \Confhalf^+(p,\varnothing)_{w_0,*}
 &\xrightarrow{\sim}
 \Confhalf^+(\Delta,\varnothing)\times X(p),\\
 \Split_{\mathrm{half}}^-:
 \Confhalf^-(\varnothing,p^{\mathrm{op}})_{*,w_0}
 &\xrightarrow{\sim}
 \Confhalf^-(\varnothing,\Delta^{\mathrm{op}})\times X(p).
\end{align*}
The second isomorphism is the transpose of the first under
\eqref{eq:half-conf-transposition}.

\begin{proposition}
\label{prop:delta-extension}
For any positive braid \(p\in\Br^+\), there are isomorphisms
\begin{equation}
 \iota_p^+:
 \Confhalf^+(p,\varnothing)\xrightarrow{\sim}X(p\Delta),
 \qquad
 \iota_p^-:
 \Confhalf^-(\varnothing,p^{\mathrm{op}})\xrightarrow{\sim}X(\Delta p).
 \label{eq:two-delta-extensions}
\end{equation}
\end{proposition}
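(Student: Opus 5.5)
We construct \(\iota_p^+\) explicitly on words and check that it descends to positive braids; \(\iota_p^-\) will then follow by transposition.

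Fix a word \(\mathbf p=p_1\cdots p_m\) for \(p\). A point of \(\Confhalf^+(\mathbf p,\varnothing)\) is a class \([\flagA^0;\flagB^0\to\cdots\to\flagB^m;\flagB_0]\) with \(\flagB_0\pitchfork\flagB^0\) and \(\flagB_0\pitchfork\flagB^m\), since \(\mathbf q=\varnothing\) forces \(\flagB_n=\flagB_0\).

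\textbf{Step 1: normalize.} By \eqref{eq:component-opposite-pair-isomorphism}, the \(\groupG\)-action lets us take \(\flagA^0=\unipotentU_+\) and \(\flagB_0=\borelB_-\), uniquely. The minus flag \(\borelB_-\) corresponds to the plus flag \(w_0\borelB_+\). Indeed, \(\borelB_-=\dot w_0\borelB_+\dot w_0^{-1}\), and for a minus flag \(\borelB_-x\) and a plus flag \(y\borelB_+\), genericity \(xy\in\borelB_-\borelB_+\) with \(x=1\) is equivalent to \(\pos_+(w_0\borelB_+,y\borelB_+)=w_0\).

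\textbf{Step 2: define the map.} Set \(\flagB:=w_0\borelB_+\). Then \((\flagA^0,\flagB)=(\unipotentU_+,w_0\borelB_+)\) is the normalized pinning pair, and \(\partial_{w_0}(\flagA^0,\flagB)=\flagB\). The condition \(\flagB^m\pitchfork\borelB_-\) says \(\pos(\flagB^m,\flagB)=w_0\). By Lemma~\ref{lem:reduced-flag-chain} applied to the reduced word \(\boldsymbol\Delta\), there is a unique chain \(\flagB^m\xrightarrow{\boldsymbol\Delta}\flagB\), and it depends algebraically on the two endpoints. Concatenating gives a chain of type \(\mathbf p\boldsymbol\Delta\) from \(\pi(\flagA^0)\) to \(\partial_{w_0}(\flagA^0,\flagB)\).

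This lands in \(X(\mathbf p\boldsymbol\Delta)\) because \(\operatorname{dem}(\mathbf p\boldsymbol\Delta)=w_0\). To see this, note that appending \(\boldsymbol\Delta\) to any word gives a Demazure product \(\ge\operatorname{dem}(\boldsymbol\Delta)=w_0\), hence equal to \(w_0\). Define \(\iota_{\mathbf p}^+\) to be this class, written in the normalization of Definition~\ref{def:endpoint-factorization}.

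\textbf{Step 3: the inverse.} Given a point \([\unipotentU_+,w_0\borelB_+;\flagB^\bullet]\) of \(X(\mathbf p\boldsymbol\Delta)\), keep its first \(m+1\) flags and set \(\flagB_0:=\borelB_-\). The required genericity \(\flagB^m\pitchfork\borelB_-\) holds for a length reason. The tail of the chain has type \(\boldsymbol\Delta\), which is reduced of length \(\len(w_0)\), and it ends at \(w_0\borelB_+\). A chain of reduced type has relative position equal to the product of its letters, so \(\pos(\flagB^m,w_0\borelB_+)=w_0\).

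The two constructions are mutually inverse by the uniqueness in Lemma~\ref{lem:reduced-flag-chain}, and both are algebraic by \eqref{eq:component-opposite-pair-isomorphism} and \eqref{eq:plus-opposite-pair-orbit}.

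\textbf{Step 4: descend to braids and transpose.} We need compatibility with Shen--Weng flag propagation under braid moves of \(\mathbf p\) (Section~\ref{subsec:positive-braid-descent}). Propagation only changes the \(\mathbf p\)-part of the chain and fixes its endpoints, so it commutes with appending the \(\boldsymbol\Delta\)-tail. The result is also independent of the choice of reduced word \(\boldsymbol\Delta\) for \(\Delta\), again by endpoint uniqueness.

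For \(\iota_p^-\), set \(\iota_p^-:=\iota^+_{p}\circ(-)^{\mathsf t}\), using \eqref{eq:half-conf-transposition}, which identifies \(\Confhalf^-(\varnothing,p^{\mathrm{op}})\) with \(\Confhalf^+(p,\varnothing)\). Then compose with the isomorphism \(X(p\Delta)\cong X(\Delta p)\). Producing this last isomorphism is the main obstacle. I would first try to build it as cyclic rotation of \(\Delta\) across the braid using \(\Delta i=i^*\Delta\), with \(i^*\) the involution \(w_0s_iw_0=s_{i^*}\). This gives \(p\Delta=\Delta p^*\), and then twisting by \(\dot w_0\) identifies \(X(\Delta p^*)\cong X(\Delta p)\) through the diagram automorphism. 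If that fails, I would construct \(\iota_p^-\) directly, mirroring Steps 1–3 on the minus side by prepending the reduced \(\boldsymbol\Delta\)-chain at the initial end instead.
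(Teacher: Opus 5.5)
\textbf{The plus side is fine.} Your construction of \(\iota_p^+\) is correct and complete. You normalize \(\flagA^0=\unipotentU_+\), \(\flagB_0=\borelB_-\), trade \(\borelB_-\) for the plus flag \(w_0\borelB_+\), and append the unique \(\boldsymbol\Delta\)-chain from \(\flagB^m\). Truncation inverts this, with genericity of \(\flagB^m\) recovered from the reduced tail, and compatibility with braid moves of \(\mathbf p\) is clear. The paper gives no argument here; it only cites \cite[Lemma~4.4]{GorskyKimScrogginSimental2025}. So this half of your proposal is a self-contained replacement for the citation. Reducing \(\iota_p^-\) to an isomorphism \(X(p\Delta)\cong X(\Delta p)\), via \eqref{eq:half-conf-transposition} and \(\iota_p^+\), is also legitimate.

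\textbf{The gap.} You never construct \(X(p\Delta)\cong X(\Delta p)\), and the route you sketch does not work as written. Inner automorphisms act on the flag variety by left translations. Conjugation by \(\dot w_0\) sends \(\borelB_+\) to \(\borelB_-\), and on Borel subgroups it is just \(g\borelB_+\mapsto\dot w_0g\borelB_+\). So it preserves relative positions and cannot turn chains of type \(\boldsymbol\Delta\mathbf p^*\) into chains of type \(\boldsymbol\Delta\mathbf p\). The twist \(i\mapsto i^*\) needs an outer automorphism fixing \(\unipotentU_+\), e.g.\ \(g\mapsto\dot w_0(g^{\mathsf t})^{-1}\dot w_0^{-1}\).

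Even repaired, the detour gives the wrong isomorphism for the paper's later use of \(\iota_p^-\). Take \(\groupG=\mathrm{SL}_2\): that outer automorphism is trivial, and \(p\Delta=\Delta p^*\) is literally the word \(1^{m+1}\). So your \(\iota_p^-\) is just \(\iota_p^+\) composed with transposition. The image of \(\Confhalf^-(\varnothing,p^{\mathrm{op}})_{*,w_0}\) is then cut out by a condition on the flags at positions \(0\) and \(m\), i.e.\ the \(X(p)\)-factor sits on the first \(m\) letters. But \(\Ucal_{N,e}(\Delta p)\) and \(\Psi_{N,e}\) split \(\Delta p\) at position \(N\), with \(X(p)\) on the last \(m\) letters, so this choice would not match the Gorsky--Kim--Scroggin--Simental splicing map.

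\textbf{The fix needs no diagram automorphism.} By Lemma~\ref{lem:reduced-flag-chain} the \(\boldsymbol\Delta\)-segments are forced. Hence a point of \(X(\mathbf p\boldsymbol\Delta)\) is the same as three pieces of data:
\begin{enumerate}
 \item a chain \(F_0\xrightarrow{\mathbf p}F_m\);
 \item a flag \(E\) with \(\pos(F_0,E)=\pos(F_m,E)=w_0\);
 \item a decoration of \(F_0\).
\end{enumerate}
A point of \(X(\boldsymbol\Delta\mathbf p)\) is the same data with the decoration on \(E\); its pinning pair is that decoration together with \(F_m\). Move the decoration across the generic pair \((F_0,E)\) by \((g\unipotentU_+,gw_0\borelB_+)\mapsto(g\dot w_0\unipotentU_+,g\borelB_+)\). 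Under \eqref{eq:plus-opposite-pair-orbit} this is \(g\mapsto g\dot w_0\), hence invertible.

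Composing this rotation with transposition and your \(\iota_p^+\) gives \(\iota_p^-\). This is your fallback of prepending the \(\boldsymbol\Delta\)-chain, made precise. It places the endpoint condition \(\pos(F_0,F_m)=w_0\) at positions \(N\) and \(N+m\) of \(\Delta p\), as the later sections require.
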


\begin{proof}
\cite[Lemma~4.4]{GorskyKimScrogginSimental2025}.
\end{proof}

Below, assume \(\operatorname{dem}(p)=w_0\) and \(r=\len(p)\).  Using the
isomorphisms of Proposition~\ref{prop:delta-extension}, set
\begin{equation}
\begin{aligned}
 \Ucal_{r,w_0}(p\Delta)
 &:=\iota_p^+
    \bigl(\Confhalf^+(p,\varnothing)_{w_0,*}\bigr),\\
 \Ucal_{N,e}(\Delta p)
 &:=\iota_p^-
    \bigl(\Confhalf^-(\varnothing,p^{\mathrm{op}})_{*,w_0}\bigr).
\end{aligned}
\label{eq:two-splicing-opens}
\end{equation}

\begin{definition}[splicing maps]
Define isomorphisms
\begin{align*}
 \Psi_{r,w_0}:
 X(p)\times X(\Delta^2)
 &\xrightarrow{\sim}\Ucal_{r,w_0}(p\Delta),\\
 \Psi_{N,e}:
 X(\Delta^2)\times X(p)
 &\xrightarrow{\sim}\Ucal_{N,e}(\Delta p),
\end{align*}
by
\begin{align}
 \Psi_{r,w_0}
 &:=\iota_p^+\circ(\Split_{\mathrm{half}}^+)^{-1}\circ
  ((\iota_{\Delta}^+)^{-1}\times\id_{X(p)})\circ
  \operatorname{swap},
 \label{eq:psi-r-w0}\\
 \Psi_{N,e}
 &:=\iota_p^-\circ(\Split_{\mathrm{half}}^-)^{-1}\circ
  ((\iota_{\Delta}^-)^{-1}\times\id_{X(p)}).
 \label{eq:psi-N-e}
\end{align}
\end{definition}

\subsection{Half-decoration}

For a positive braid \(p\) and a point
\(c=[\flagA^0;\flagB^\bullet;\flagB_0]
\in\Confhalf^+(p,\varnothing)\), there is a unique \(g\in\groupG\) satisfying
\((\flagA^0,\flagB_0)=(g\unipotentU_+,\borelB_-g^{-1})\).  By
\eqref{eq:component-opposite-pair-isomorphism},
\[
 s_p^+:\Confhalf^+(p,\varnothing)\longrightarrow\Conf(p,\varnothing),
 \qquad
 c\longmapsto
 [\flagA^0;\flagB^\bullet;\flagB_0;\unipotentU_-g^{-1}]
\]
is a section of \(\operatorname{half}_+\).  For a word \(\mathbf p\) that
represents \(p\), the left-boundary coordinates are
\[
 (s_p^+)^*A_\ell^{\mathbf p}=1
 \qquad
 \bigl(\ell\in I_{\partial_L}(\mathbf p)\bigr).
\]
Every fiber of \(\operatorname{half}_+\) is a \(\torusH\)-torsor, and by
Lemma~\ref{lem:minor-cartan-semi-invariance}, the left-boundary coordinates
give its \(\torusH\)-coordinates.  Hence the remaining pullbacks form torus
coordinates.  If we set
\[
 Q_{\mathrm{half}}(\mathbf p)
 :=Q(\mathbf p)\setminus I_{\partial_L}(\mathbf p),
 \qquad
 \Sigma_{\mathrm{half}}(\mathbf p)
 :=\Sigma_{Q_{\mathrm{half}}(\mathbf p)},
\]
then
\[
 \mathbf A_{\mathrm{half}}^{\mathbf p}
 :=
 \bigl((s_p^+)^*A_\ell^{\mathbf p}\bigr)_{
  \ell\in\Vcal(\mathbf p)\setminus I_{\partial_L}(\mathbf p)}
 :
 \Confhalf^+(p,\varnothing)
 \dashrightarrow\Tcal_{\Sigma_{\mathrm{half}}(\mathbf p)}
\]
is a realization of \(\Sigma_{\mathrm{half}}(\mathbf p)\).

The sweep mutation sequence along a Demazure weave
\(\fW:\mathbf p\to\boldsymbol\Delta\) does not mutate the left-boundary
vertices, so it induces a sweep mutation sequence
\(\mu_\fW^{\mathrm{half}}\) with these vertices deleted.  Set
\[
 \begin{aligned}
 \Sigma_{\mathrm{sweep}}^{\mathrm{half}}(\fW)
 &:=
 \operatorname{freeze}_{\Dcal(\fW)}
 \bigl(\mu_\fW^{\mathrm{half}}\Sigma_{\mathrm{half}}(\mathbf p)\bigr),\\
 \Sigma_{\times}^{\mathrm{half}}(\fW)
 &:=
 \Sigma_{\mathrm{half}}(\boldsymbol\Delta)\sqcup\Sigma_\fW.
 \end{aligned}
\]
The superscript \({\mathsf t}\) denotes the pushforward by the transpose in
\eqref{eq:half-conf-transposition} for
\(\Sigma_{\mathrm{sweep}}^{\mathrm{half}}\), and by the product of the
transpose and \(\id_{X(p)}\) for \(\Sigma_\times^{\mathrm{half}}\).

\begin{proposition}
\label{prop:half-split-quasi-cluster}
For every Demazure weave \(\fW:\mathbf p\to\boldsymbol\Delta\), the maps
\(\Split_{\mathrm{half}}^+\) and \(\Split_{\mathrm{half}}^-\) are,
respectively, quasi-cluster isomorphisms with respect to
\[
 \begin{aligned}
 \left(
   \Sigma_{\mathrm{sweep}}^{\mathrm{half}}(\fW),
   \Sigma_{\times}^{\mathrm{half}}(\fW)
  \right),\qquad
 \left(
   \bigl(\Sigma_{\mathrm{sweep}}^{\mathrm{half}}(\fW)\bigr)^{\mathsf t},
   \bigl(\Sigma_{\times}^{\mathrm{half}}(\fW)\bigr)^{\mathsf t}
  \right)
 \end{aligned}.
\]
Moreover,
\[
 \begin{aligned}
 \mathcal A\bigl(\Sigma_{\mathrm{sweep}}^{\mathrm{half}}(\fW)\bigr)
 &=\mathcal U\bigl(\Sigma_{\mathrm{sweep}}^{\mathrm{half}}(\fW)\bigr)
  =\mathcal O\bigl(\Confhalf^+(p,\varnothing)_{w_0,*}\bigr),\\
 \mathcal A\bigl((\Sigma_{\mathrm{sweep}}^{\mathrm{half}}(\fW))^{\mathsf t}\bigr)
 &=\mathcal U\bigl((\Sigma_{\mathrm{sweep}}^{\mathrm{half}}(\fW))^{\mathsf t}\bigr)
  =\mathcal O\bigl(\Confhalf^-(\varnothing,p^{\mathrm{op}})_{*,w_0}\bigr).
 \end{aligned}
\]
\end{proposition}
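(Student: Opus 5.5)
The plan is to deduce the statement from Corollary~\ref{cor:plus-endpoint-cluster-localization} applied with \(\mathbf q=\varnothing\), by passing to the quotient by the \(\torusH\)-action that forgets the terminal minus decoration. Regard \(\fW:\mathbf p\to\boldsymbol\Delta\) as a double weave with \(\fW_-=\id_\varnothing\). The corollary then says three things. First, \(\Split_+:\Conf(p,\varnothing)_{w_0,*}\to\Conf(\Delta,\varnothing)\times X(p)\) is a quasi-cluster isomorphism for the pair \(\operatorname{freeze}_{\Dcal(\fW)}\Sigma_{\mathrm{sweep}}(\fW)\) and \(\Sigma_\times(\fW)\). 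Second, the freezing is a cluster localization. Third, \(\mathcal A=\mathcal U=\mathcal O(\Conf(p,\varnothing)_{w_0,*})\).

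The first step is to pass to \(\Confhalf^+\) using the section \(s_p^+\). The image of \(s_p^+\) is exactly the locus where all left-boundary coordinates \(A_\ell\), \(\ell\in I_{\partial_L}\), equal \(1\). Those coordinates are frozen and are never mutated by \(\mu_\fW\).

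\begin{itemize}
\item Restricting the sweep seed to this slice gives \(\mu^{\mathrm{half}}_\fW\Sigma_{\mathrm{half}}(\mathbf p)\). On the slice, exchange relations only lose factors equal to \(1\), and the remaining quiver is the full subquiver with \(I_{\partial_L}\) deleted.
\item The same holds on the target side. By Corollary~\ref{cor:half-one-sided-flag-product}, \(\Split_+\) commutes with \(s^+\) and the \(\torusH\)-torsor structure. So on the slice, \(\Split_+\) restricts to \(\Split^+_{\mathrm{half}}\).
\item The left-boundary strings of \(\boldsymbol\Delta\) are the images of the left-boundary strings of \(\mathbf p\). They have Cartan correction \(M_\ell=1\), because \(H_{t,+}(0;\lambda)=0\) for \(t=0\) and \(c_\ell\) may be computed at the cut \(t=0\) by Lemma~\ref{lem:cartan-exponent-choice-independence}.
\item Consequently the rescaling \eqref{eq:full-coefficient-rescaling} restricts compatibly to the slice. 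The argument of Theorem~\ref{thm:frozen-product-comparison} then goes through with \(I_{\partial_L}\) removed: the \(X\)-coordinates of mutable vertices are unchanged, since entries from mutable vertices to deleted frozen vertices simply drop out of both \(X_j\). This gives the quasi-cluster statement for \(\Split^+_{\mathrm{half}}\).
\end{itemize}

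The second step is to identify the rings. Since \(\Conf(p,\varnothing)_{w_0,*}\to\Confhalf^+(p,\varnothing)_{w_0,*}\) is a trivial \(\torusH\)-torsor trivialized by the left-boundary coordinates (Lemma~\ref{lem:minor-cartan-semi-invariance}), we have
\[
\mathcal O(\Conf(p,\varnothing)_{w_0,*})\cong\mathcal O(\Confhalf^+(p,\varnothing)_{w_0,*})\otimes\C[A_\ell^{\pm1}\mid \ell\in I_{\partial_L}].
\]
On the cluster side, \(\mathcal A\) and \(\mathcal U\) of the full frozen sweep seed are the corresponding algebras of the half seed, tensored with the same Laurent ring in those frozen variables. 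This requires that the exchange polynomials, after setting these frozen variables to \(1\), give the half-seed relations. Taking the quotient of both sides by setting \(A_\ell=1\) yields \(\mathcal A=\mathcal U=\mathcal O(\Confhalf^+(p,\varnothing)_{w_0,*})\) for \(\Sigma^{\mathrm{half}}_{\mathrm{sweep}}(\fW)\).

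This step is the main obstacle, because the rescaling requires care. The frozen variables \(A_\ell\), \(\ell\in I_{\partial_L}\), may appear with nonzero exponents in exchange relations. So the factorization as a tensor product must be checked on the level of cluster monomials, not only on the torus. The intended approach is to show that the grading by the \(\torusH\)-action is compatible with mutation. Lemma~\ref{lem:cartan-correction-exchange-homogeneity} and Lemma~\ref{lem:cartan-degree-tropical-transport} make every exchange relation homogeneous, and this homogeneity should let the algebra of \(\torusH\)-invariants, equivalently the specialization, be identified with the half cluster algebra.

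The minus statements follow by transposition: apply \eqref{eq:half-conf-transposition}, the seed isomorphism of \cite[Proposition~3.9]{ShenWeng2021}, and Corollary~\ref{cor:minus-endpoint-cluster-localization}. The transpose carries \(\Split^+_{\mathrm{half}}\) to \(\Split^-_{\mathrm{half}}\), and by definition carries the seeds to their \({\mathsf t}\)-pushforwards.
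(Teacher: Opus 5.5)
Your argument for the quasi-cluster assertion is the paper's: restrict Corollary~\ref{cor:plus-endpoint-cluster-localization} with $\mathbf q=\varnothing$ to the slice $\{A_\ell=1\}$ via $s_p^+$. This uses that left-boundary vertices are frozen, never mutated, and have $c_\ell(\partial\gamma_\sigma)=0$.

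You obtain $\mathcal A=\mathcal U=\mathcal O$ by a different route. The paper never descends rings along the torsor. Both factors of $\Sigma_\times^{\mathrm{half}}(\fW)$ already satisfy $\mathcal A=\mathcal U=\mathcal O$ by \cite[Section~5.3 and Theorem~1.1]{CasalsEtAlBraidVarieties}. The paper tensors them and transports the equality along the quasi-cluster isomorphism $\Split^+_{\mathrm{half}}$ just established, as in Theorem~\ref{thm:endpoint-cluster-localization}. So the step you call the main obstacle never arises there.

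Your descent from the fully decorated corollary is also valid. It needs only the full Shen--Weng result for $\Conf(\Delta,\varnothing)$ instead of the half-decorated identification, but you left it as a sketch, and it closes as follows.
\begin{itemize}
\item Since $\groupG$ is simply connected, the degrees $\omega_i$ of the left-boundary variables form a basis of $X^*(\torusH)$.
\item Hence each cluster variable factors uniquely as an $\torusH$-invariant function times a monomial in the $A_\ell$. The invariant factor is the pullback of the corresponding half-seed variable.
\item This is a unitriangular monomial change on every seed torus. The rescaled exchange relations are exactly the half-seed ones, provided $\sum_j\eps_{kj}\deg A_j=0$ at every mutable $k$.
\item Therefore $\mathcal A$ and $\mathcal U$ of the frozen sweep seed are the half algebras tensored with $\C[A_\ell^{\pm1}]$. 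Taking $\torusH$-invariants in $\mathcal A=\mathcal U=\mathcal O(\Conf(p,\varnothing)_{w_0,*})$ gives the claim.
\end{itemize}

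The kernel condition comes from Lemma~\ref{lem:cartan-exponent-exchange-identity} with $\lambda=0$ and $\eta=(0,\eta_-)$ in the initial Shen--Weng seed, and it is preserved by mutation. It does not come from Lemmas~\ref{lem:cartan-correction-exchange-homogeneity} and~\ref{lem:cartan-degree-tropical-transport}, which you cite; those concern the grading by the weave variables $A_\tau$, not the $\torusH$-action on the minus decoration.
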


\begin{proof}
Apply Corollary~\ref{cor:plus-endpoint-cluster-localization} with
\(\mathbf q=\varnothing\).  The flag-configuration decomposition commutes
with the normalization sections, and
\[
 \Split_+\circ s_p^+
 =
 (s_\Delta^+\times\id_{X(p)})\circ\Split_{\mathrm{half}}^+.
\]
The left-boundary vertices are frozen and are not mutated during the sweep
mutation sequence.  Furthermore, by
Lemma~\ref{lem:cartan-exponent-choice-independence}, the exponents in
\eqref{eq:full-coefficient-rescaling} satisfy
\[
 c_\ell(\partial\gamma_\sigma)=0
 \qquad
 \bigl(
  \sigma\in\Dcal(\fW),\
  \ell\in I_{\partial_L}(\boldsymbol\Delta)
 \bigr).
\]
Thus the quasi-cluster isomorphism in that corollary preserves the
specialization that sets the left-boundary coordinates equal to \(1\), and
induces \(\Split_{\mathrm{half}}^+\) on the remaining seeds.  The second
assertion follows from
Corollary~\ref{cor:minus-endpoint-cluster-localization}, or equivalently from
the seed isomorphism induced by Shen--Weng transposition
\cite[Proposition~3.9]{ShenWeng2021}.

Both factors of \(\Sigma_\times^{\mathrm{half}}(\fW)\) satisfy
\(\mathcal A=\mathcal U=\mathcal O\) by
\cite[Section~5.3 and Theorem~1.1]{CasalsEtAlBraidVarieties}.
The same tensor-product argument as in
Theorem~\ref{thm:endpoint-cluster-localization} proves the displayed equalities.
\end{proof}

\begin{remark}
The equalities \(\mathcal A=\mathcal U=\mathcal O\) in
Proposition~\ref{prop:half-split-quasi-cluster} also follow from the argument of
\cite[Proposition~5.9 and the proof of Lemma~5.7]
{GorskyKimScrogginSimental2025}.
\end{remark}

\subsection{Quasi-cluster property of the splicing maps}

Following \cite[Definition~4.5 and Section~6.4]{CasalsEtAlBraidVarieties},
choose right and left inductive weaves with terminal word
\(\boldsymbol\Delta\),
\[
 \rind{\mathbf p}:\mathbf p\longrightarrow\boldsymbol\Delta,
 \qquad
 \lind{\mathbf p}:\mathbf p\longrightarrow\boldsymbol\Delta.
\]
Set the following seeds:
\begin{align*}
 \widehat{\Sigma}_{r,w_0}(\rind{\mathbf p})
 &:=
 (\iota_p^+)_*
 \Sigma_{\mathrm{sweep}}^{\mathrm{half}}(\rind{\mathbf p}),\\
 \Sigma_{\times,w_0}(\rind{\mathbf p})
 &:=
 \bigl(\operatorname{swap}\circ
       (\iota_{\Delta}^+\times\id_{X(p)})\bigr)_*
 \Sigma_{\times}^{\mathrm{half}}(\rind{\mathbf p}),\\
 \widehat{\Sigma}_{N,e}(\lind{\mathbf p})
 &:=
 (\iota_p^-)_*
 \bigl(\Sigma_{\mathrm{sweep}}^{\mathrm{half}}
       (\lind{\mathbf p})\bigr)^{\mathsf t},\\
 \Sigma_{\times,e}(\lind{\mathbf p})
 &:=
 (\iota_{\Delta}^-\times\id_{X(p)})_*
 \bigl(\Sigma_{\times}^{\mathrm{half}}
       (\lind{\mathbf p})\bigr)^{\mathsf t}.
\end{align*}
Here \(\rind{\mathbf p}\) and \(\lind{\mathbf p}\) are, respectively, the
left-to-right and right-to-left inductive weaves of
\cite[Proposition~5.9]{GorskyKimScrogginSimental2025}.  By
\eqref{eq:weave-frozen-set} and that proposition, the two
\(\widehat{\Sigma}\) are the seeds obtained by freezing the vertices specified
there, and the two \(\Sigma_\times\) are the product
seeds of the corresponding braid varieties.

\begin{proof}[Proof of Corollary~\ref{cor:splicing-conjecture-four-endpoints}]
Applying \eqref{eq:psi-r-w0} and
Proposition~\ref{prop:half-split-quasi-cluster} with
\(\fW=\rind{\mathbf p}\), and transporting them by
\(\operatorname{swap}\circ(\iota_\Delta^+\times\id_{X(p)})\) and
\(\iota_p^+\), gives the assertion for \(\Psi_{r,w_0}\).  Similarly,
applying \eqref{eq:psi-N-e} and the transposed assertion of the same
proposition with \(\fW=\lind{\mathbf p}\) gives the assertion for
\(\Psi_{N,e}\).
\end{proof}

\section{Examples}

In this section, we let \(\groupG=\mathrm{SL}_3\) and compute the cluster
coordinates associated with the Demazure weave \(\fW\) used in
Example~\ref{subsec:a2-mutation-term-example}, by applying the sweep mutation
sequence along \(\fW\).
The string expression and
quiver for the initial word \(\beta=1122112\) are shown in
Figure~\ref{fig:SW-1122112-string-quiver}, and the five mutations along \(\fW\)
are shown in Figures~\ref{fig:a2-five-step-mutation-early}
and~\ref{fig:a2-five-step-mutation-late}.

Let \(z=(z_1,\ldots,z_7)\) be the root-subgroup variables corresponding to
the letters of \(\beta\), and set
\[
 g_a(z):=B_{\beta_1}(z_1)\cdots B_{\beta_a}(z_a)
 \qquad(0\le a\le7).
\]
Let \(h\in\torusH\), and write \(t_k:=\omega_k(h)\ (k=1,2)\).  The
braid-variety factor of the point
\[
 c
 :=[\unipotentU_+;
     g_0(z)\borelB_+,\ldots,g_7(z)\borelB_+;
     \borelB_-;\unipotentU_-h]
 \in\Conf(\beta,\varnothing)_{w_0,e}
\]
is
\[
 \operatorname{braid}_+(c)
 =[\unipotentU_+,g_7(z)\borelB_+;
   g_0(z)\borelB_+,\ldots,g_7(z)\borelB_+]
 \in X(\beta).
\]
The string coordinates at \(c\) are
\[
 A_{k,[r,s]}^\beta=t_k\Delta_{\omega_k}(g_r(z)).
\]
Below, we omit evaluation at \(c\) from the notation and write the initial
coordinates as
\begin{align*}
 (x_0,\ldots,x_4)
 &:=(A_{1,[0,1]}^\beta,A_{1,[1,2]}^\beta,
     A_{1,[2,5]}^\beta,A_{1,[5,6]}^\beta,
     A_{1,[6,\infty]}^\beta),\\
 (y_0,\ldots,y_3)
 &:=(A_{2,[0,3]}^\beta,A_{2,[3,4]}^\beta,
     A_{2,[4,7]}^\beta,A_{2,[7,\infty]}^\beta).
\end{align*}

Write \(A'_{2,[1,2]}\) for the coordinate of the string \((2,[1,2])\) after
the fourth step.  The mutation sequence shown in
Figures~\ref{fig:a2-five-step-mutation-early}
and~\ref{fig:a2-five-step-mutation-late}, together with the minor expression
above, gives
\begin{align*}
 A_{\sigma_1}
 &=\frac{x_0+x_2}{x_1}=z_2,
 &
 A_{\sigma_2}
 &=\frac{y_0+y_2}{y_1}=z_4,\\
 A_{\sigma_3}
 &=\frac{x_2+x_4}{x_3}=z_6,
 &
 A'_{2,[1,2]}
 &=\frac{A_{\sigma_1}x_4y_0+A_{\sigma_3}x_0y_2}{x_2}\\
 &&&=t_2(z_2z_5z_6-z_3z_4z_6-z_2+z_6).
\end{align*}
Finally, mutating at \((2,[3,4])\) gives
\begin{align*}
 A_{\sigma_4}
 &=\frac{A_{\sigma_1}A_{\sigma_2}y_3
          +A'_{2,[1,2]}}{y_2}\\
 &=z_2z_4z_7-z_2z_5z_6+z_2-z_6.
\end{align*}
In terms of the initial Shen--Weng coordinates alone, this becomes the
subtraction-free expression
\[
 A_{\sigma_4}
 =\frac{(x_0+x_2)(y_0+y_2)y_3}{x_1y_1y_2}
  +\frac{(x_0+x_2)x_4y_0}{x_1x_2y_2}
  +\frac{(x_2+x_4)x_0}{x_2x_3}.
\]

\printbibliography

@misc{AsplundEtAlDecompositions2025,
  author        = {Asplund, Johan and Capovilla-Searle, Orsola and
                   Hughes, James and Leverson, Caitlin and Li, Wenyuan and
                   Wu, Angela},
  title         = {Decompositions of augmentation varieties via weaves and
                   rulings},
  year          = {2025},
  eprint        = {2508.20226},
  archivePrefix = {arXiv},
  primaryClass  = {math.SG}
}

@article{BerensteinZelevinsky2001,
  author  = {Berenstein, Arkady and Zelevinsky, Andrei},
  title   = {Tensor product multiplicities, canonical bases and totally
             positive varieties},
  journal = {Invent. Math.},
  volume  = {143},
  number  = {1},
  pages   = {77--128},
  year    = {2001},
  doi     = {10.1007/s002220000102},
  eprint        = {math/9912012},
  archivePrefix = {arXiv}
}

@article{BerensteinFominZelevinsky2005,
  author  = {Berenstein, Arkady and Fomin, Sergey and Zelevinsky, Andrei},
  title   = {Cluster algebras. {III}: {U}pper bounds and double {B}ruhat cells},
  journal = {Duke Math. J.},
  volume  = {126},
  number  = {1},
  pages   = {1--52},
  year    = {2005},
  doi     = {10.1215/S0012-7094-04-12611-9},
  eprint        = {math/0305434},
  archivePrefix = {arXiv}
}

@article{CasalsGorskyGorskySimental2020,
  author  = {Casals, Roger and Gorsky, Eugene and Gorsky, Mikhail and
             Simental, Jos\'e},
  title   = {Algebraic weaves and braid varieties},
  journal = {Am. J. Math.},
  volume  = {146},
  number  = {6},
  pages   = {1469--1576},
  year    = {2024},
  doi     = {10.1353/ajm.2024.a944357},
  eprint        = {2012.06931},
  archivePrefix = {arXiv},
  primaryClass  = {math.RT}
}

@article{CasalsEtAlBraidVarieties,
  author  = {Casals, Roger and Gorsky, Eugene and Gorsky, Mikhail and
             Le, Ian and Shen, Linhui and Simental, Jos\'e},
  title   = {Cluster structures on braid varieties},
  journal = {J. Am. Math. Soc.},
  volume  = {38},
  number  = {2},
  pages   = {369--479},
  year    = {2025},
  doi     = {10.1090/jams/1048},
  eprint        = {2207.11607},
  archivePrefix = {arXiv},
  primaryClass  = {math.AG}
}

@misc{CasalsEtAlComparing2025,
  author        = {Casals, Roger and Galashin, Pavel and Gorsky, Mikhail and
                   Shen, Linhui and Sherman-Bennett, Melissa and
                   Simental, Jos\'e},
  title         = {Comparing cluster algebras on braid varieties},
  year          = {2025},
  eprint        = {2508.03816},
  archivePrefix = {arXiv},
  primaryClass  = {math.AG}
}

@article{CasalsWeng2024,
  author  = {Casals, Roger and Weng, Daping},
  title   = {Microlocal theory of {L}egendrian links and cluster algebras},
  journal = {Geom. Topol.},
  volume  = {28},
  number  = {2},
  pages   = {901--1000},
  year    = {2024},
  doi     = {10.2140/gt.2024.28.901},
  eprint        = {2204.13244},
  archivePrefix = {arXiv}
}

@article{CasalsZaslow2022,
  author  = {Casals, Roger and Zaslow, Eric},
  title   = {{L}egendrian weaves: {\(N\)}-graph calculus, flag moduli and
             applications},
  journal = {Geom. Topol.},
  volume  = {26},
  number  = {8},
  pages   = {3589--3745},
  year    = {2022},
  doi     = {10.2140/gt.2022.26.3589},
  eprint        = {2007.04943},
  archivePrefix = {arXiv}
}

@article{DimofteGabellaGoncharov2016,
  author  = {Dimofte, Tudor and Gabella, Maxime and Goncharov, Alexander B.},
  title   = {{\(K\)}-decompositions and 3d gauge theories},
  journal = {J. High Energy Phys.},
  volume  = {2016},
  number  = {11},
  eid     = {151},
  pagetotal = {147},
  year    = {2016},
  doi     = {10.1007/JHEP11(2016)151},
  eprint        = {1301.0192},
  archivePrefix = {arXiv}
}

@article{FockGoncharov2006,
  author  = {Fock, Vladimir and Goncharov, Alexander},
  title   = {Moduli spaces of local systems and higher {T}eichm\"uller theory},
  journal = {Publ. Math., Inst. Hautes \'Etud. Sci.},
  volume  = {103},
  pages   = {1--211},
  year    = {2006},
  doi     = {10.1007/s10240-006-0039-4},
  eprint        = {math/0311149},
  archivePrefix = {arXiv}
}

@article{GaroufalidisGoernerZickert2015,
  author  = {Garoufalidis, Stavros and Goerner, Matthias and
             Zickert, Christian},
  title   = {Gluing equations for {\(\mathrm{PGL}(n,\mathbb{C})\)}-representations
             of 3-manifolds},
  journal = {Algebr. Geom. Topol.},
  volume  = {15},
  number  = {1},
  pages   = {565--622},
  year    = {2015},
  doi     = {10.2140/agt.2015.15.565},
  eprint        = {1207.6711},
  archivePrefix = {arXiv}
}

@article{GaroufalidisThurstonZickert2015,
  author  = {Garoufalidis, Stavros and Thurston, Dylan P. and
             Zickert, Christian K.},
  title   = {The complex volume of {\(\mathrm{SL}(n,\mathbb{C})\)}-representations
             of 3-manifolds},
  journal = {Duke Math. J.},
  volume  = {164},
  number  = {11},
  pages   = {2099--2160},
  year    = {2015},
  doi     = {10.1215/00127094-3121185},
  eprint        = {1111.2828},
  archivePrefix = {arXiv}
}

@article{GaoShenWeng2024,
  author  = {Gao, Honghao and Shen, Linhui and Weng, Daping},
  title   = {Augmentations, fillings, and clusters},
  journal = {Geom. Funct. Anal.},
  volume  = {34},
  number  = {3},
  pages   = {798--867},
  year    = {2024},
  doi     = {10.1007/s00039-024-00673-y},
  eprint        = {2008.10793},
  archivePrefix = {arXiv}
}

@article{Zickert2020,
  author  = {Zickert, Christian K.},
  title   = {{F}ock--{G}oncharov coordinates for rank two {L}ie groups},
  journal = {Math. Z.},
  volume  = {294},
  number  = {1-2},
  pages   = {251--286},
  year    = {2020},
  doi     = {10.1007/s00209-019-02307-8},
  eprint        = {1605.08297},
  archivePrefix = {arXiv}
}

@misc{GalashinLamShermanBennettSpeyer2022,
  author        = {Galashin, Pavel and Lam, Thomas and
                   Sherman-Bennett, Melissa and Speyer, David},
  title         = {Braid variety cluster structures, {I}: 3{D} plabic graphs},
  year          = {2022},
  eprint        = {2210.04778},
  archivePrefix = {arXiv},
  primaryClass  = {math.CO},
  label         = {GLSBS}
}

@article{GalashinLamShermanBennett2023,
  author  = {Galashin, Pavel and Lam, Thomas and
             Sherman-Bennett, Melissa},
  title   = {Braid variety cluster structures. {II}: General type},
  journal = {Invent. Math.},
  volume  = {243},
  number  = {3},
  pages   = {1079--1127},
  year    = {2026},
  doi     = {10.1007/s00222-025-01390-5},
  eprint        = {2301.07268},
  archivePrefix = {arXiv},
  label   = {GLSB}
}

@article{GorskyKimScrogginSimental2025,
  author  = {Gorsky, Eugene and Kim, Soyeon and Scroggin, Tonie and
             Simental, Jos\'e},
  title   = {Splicing braid varieties},
  journal = {Can. Math. Commun.},
  volume  = {1},
  eid     = {e4},
  pagetotal = {48},
  year    = {2026},
  doi     = {10.4153/S2976859426100034},
  eprint        = {2505.08211},
  archivePrefix = {arXiv}
}

@incollection{Lusztig1994,
  author    = {Lusztig, George},
  title     = {Total positivity in reductive groups},
  booktitle = {Lie Theory and Geometry},
  editor    = {Brylinski, Jean-Luc and others},
  series    = {Prog. Math.},
  number    = {123},
  pages     = {531--568},
  publisher = {Birkh\"auser Boston},
  year      = {1994},
  doi       = {10.1007/978-1-4612-0261-5_20}
}

@article{Muller2013,
  author  = {Muller, Greg},
  title   = {Locally acyclic cluster algebras},
  journal = {Adv. Math.},
  volume  = {233},
  number  = {1},
  pages   = {207--247},
  year    = {2013},
  doi     = {10.1016/j.aim.2012.10.002},
  eprint        = {1111.4468},
  archivePrefix = {arXiv}
}

@article{ShenWeng2021,
  author  = {Shen, Linhui and Weng, Daping},
  title   = {Cluster structures on double {B}ott--{S}amelson cells},
  journal = {Forum Math. Sigma},
  volume  = {9},
  eid     = {e66},
  pagetotal = {89},
  year    = {2021},
  doi     = {10.1017/fms.2021.59},
  eprint        = {1904.07992},
  archivePrefix = {arXiv}
}

\bigskip

\noindent
\textsc{Yuma Mizuno, School of Mathematical Sciences, University College Cork, Western Road,
Cork, Ireland.}\par
\noindent
\textit{Email address}: \texttt{mizuno.y.aj@gmail.com}

\end{document}